\makeatletter \renewcommand{\p@subsection}{} \makeatother				
\let\originalleft\left
\let\originalright\right
\renewcommand{\left}{\mathopen{}\mathclose\bgroup\originalleft}
\renewcommand{\right}{\aftergroup\egroup\originalright}
\def\cF{\mathcal{F}}
\def\cG{\mathcal{G}}
\def\cM{\mathcal{M}}
\def\co{o}
\def\cO{O}
\def\manualEndProof{\hfill$\Box$}
\def\rD{{\rm D}}
\def\re{{\rm e}}
\def\ri{{\rm i}}
\newtheorem{theorem}{Theorem}[section]
\newtheorem{lemma}[theorem]{Lemma}
\theoremstyle{definition}
\newtheorem{definition}{Definition}[section]
\begin{document}

\title{Twenty Hopf-like bifurcations in piecewise-smooth dynamical systems.}
\author{D.J.W.~Simpson}
\affiliation{ 
School of Fundamental Sciences,
Massey University,
Palmerston North, 4410,
New Zealand
}
\date{\today}

\begin{abstract}
For many physical systems the transition from a stationary solution to sustained small amplitude oscillations corresponds to a Hopf bifurcation. For systems involving impacts, thresholds, switches, or other abrupt events, however, this transition can be achieved in fundamentally different ways. This paper reviews 20 such `Hopf-like' bifurcations for two-dimensional ODE systems with state-dependent switching rules. The bifurcations include boundary equilibrium bifurcations, the collision or change of stability of equilibria or folds on switching manifolds, and limit cycle creation via hysteresis or time delay. In each case a stationary solution changes stability and possibly form, and emits one limit cycle. Each bifurcation is analysed quantitatively in a general setting: we identify quantities that govern the onset, criticality, and genericity of the bifurcation, and determine scaling laws for the period and amplitude of the resulting limit cycle. Complete derivations based on asymptotic expansions of Poincar\'e maps are provided. Many of these are new, done previously only for piecewise-linear systems. The bifurcations are collated and compared so that dynamical observations can be matched to geometric mechanisms responsible for the creation of a limit cycle. The results are illustrated with impact oscillators, relay control, automated balancing control, predator-prey systems, ocean circulation, and the McKean and Wilson-Cowan neuron models.
\end{abstract}

\keywords{
Hopf bifurcation; piecewise-linear; limit cycle; Filippov system; boundary equilibrium bifurcation
}
\maketitle

\onecolumngrid
\tableofcontents
\vspace{3mm}
\hrule
\vspace{3mm}
\twocolumngrid

\section{Introduction}
\setcounter{equation}{0}
\setcounter{figure}{0}
\setcounter{table}{0}
\label{sec:intro}

Sustained oscillations are central to many dynamical systems.
Ocean and atmospheric oscillations drive global weather patterns,
and seasonal variations direct annual cycles such as the Arctic sea ice extent \cite{Di13,KaEn13}.
An understanding of these oscillations may lead to better predictions regarding anthropogenic climate change.
Biological systems involve oscillations on diverse scales,
from circadian rhythms and day/night cycles to intra-cellular processes and neural firing \cite{Fo17,KeSn98,Wi01}.
Detailed knowledge of such oscillations is crucial to preventing and treating improper functioning such as cardiac arrhythmia.
Also mechanical systems oscillate both by design and as unwanted vibrations \cite{Di10,DuSr12}.

Mathematically an oscillation is a loop in {\em phase space}:
the abstract space representing all possible states of a system.
The loop is traced by the state of the system as time evolves.
For systems of ODEs (ordinary differential equations) the loop is a periodic orbit
and called a limit cycle if it is isolated from other periodic orbits.

For many systems limit cycles only exist over some range of parameter values.
At the endpoints of the parameter range the limit cycle is created or destroyed in a bifurcation,
the simplest of which is a Hopf bifurcation.
At a Hopf bifurcation an equilibrium loses stability
(more generally its stable and unstable manifolds change dimension)
and a small amplitude limit cycle is created.
The bifurcation occurs when a complex conjugate pair
of eigenvalues associated with the equilibrium crosses the imaginary axis in the complex plane.
Hopf bifurcations are important in
fluid dynamics \cite{MaTu95}, laser physics \cite{NiHa90}, and ecology \cite{FuEl00};
for further examples refer to the classic texts \cite{HaKa81,MaMc76}.

This paper concerns piecewise-smooth systems for which phase space
contains one or more switching manifolds where a map is applied or
the functional form of the equations of motion changes (as in Fig.~\ref{fig:schemIntro}).
Piecewise-smooth ODEs form natural mathematical models of
relay control systems \cite{BaVe01,Jo03,ZhMo03},
mechanical systems with impacts \cite{AwLa03,Br99,WiDe00},
and phenomena in other areas
including climate science \cite{Wi13}, neuroscience \cite{KeHo81}, and ecology \cite{DeGr07}.

\begin{figure}[b!]
\begin{center}
\includegraphics[width=5.6cm]{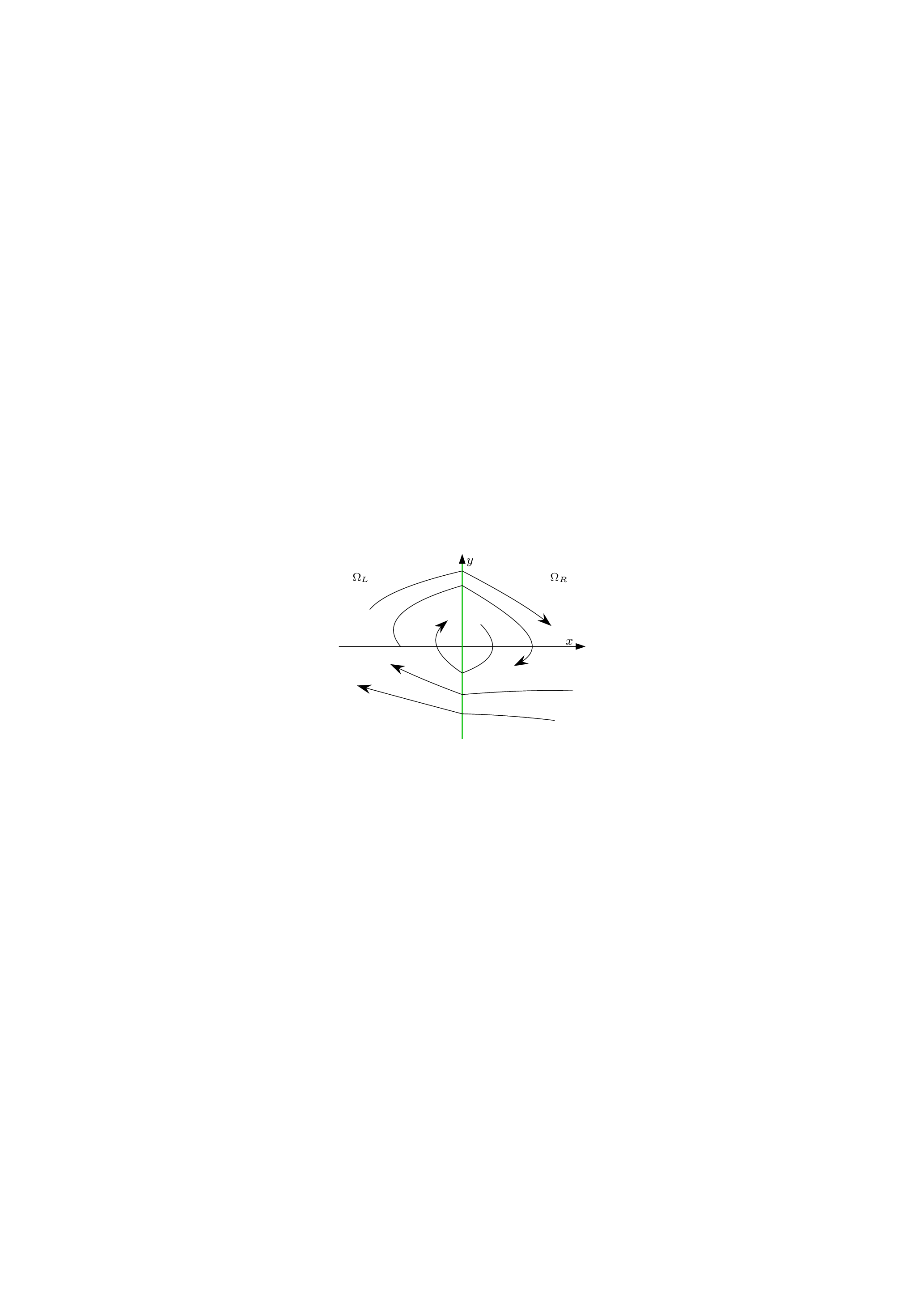}
\caption{
A sketch of the phase space of a two-dimensional piecewise-smooth ODE system
with switching manifold, $x = 0$.
The switching manifold divides phase space into two regions,
$\Omega_L$ and $\Omega_R$, where the ODEs are smooth.
Parts of typical orbits are shown.
\label{fig:schemIntro}
} 
\end{center}
\end{figure}

\begin{table*}
\begin{center}
\rowcolors{2}{gray!15}{white}
\setlength\arrayrulewidth{.3mm}
\begin{tabular}{|@{}c@{}|@{}c@{}|@{}c@{}|@{}c@{}|@{}c@{}|@{}c@{}|@{}c@{}|}
\noalign{\hrule height .3mm}
\rowcolor{gray!15}
\rule{0pt}{5mm}				
\parbox{4mm}{\centering \#~~}
& \parbox{40mm}{\centering description}
& \parbox{26mm}{\centering bifurcation onset}
& \parbox{28mm}{\centering non-degeneracy coefficient, $\alpha$}
& \parbox{21mm}{\centering amplitude exponent, $a$}
& \parbox{21mm}{\centering period exponent, $b$}
& \parbox{18mm}{\centering key \phantom{,}reference(s)\phantom{,}}
\\[2mm] 							
\noalign{\hrule height .3mm}
- & Hopf & $\omega = 0$ & \eqref{eq:hopfNondegCond} & $1/2$ & $0$ & \cite{AnWi30,Ho42,Po92} \\
$1$ & focus/focus BEB & BEB & $\frac{\lambda_L}{\omega_L} + \frac{\lambda_R}{\omega_R}$ & $1$ & $0$ & \cite{FrPo97,SiMe07} \\
$2$ & focus/node BEB & BEB & - & $1$ & $0$ & \cite{FrPo98} \\
$3$ & generic BEB & BEB & - & $1$ & $0$ & \cite{Fi88,	KuRi03} \\
$4$ & degenerate BEB & BEB & $\frac{\lambda_L}{\omega_L} + \frac{\lambda_R}{\omega_R}$ & $1$ & $0$ & \cite{Si18f} \\
$5$ & slipping foci & collision & $\frac{\lambda_L}{\omega_L} + \frac{\lambda_R}{\omega_R}$ & $1$ & $0$ & \cite{CaLl17} \\
$6$ & slipping focus/fold & collision & $\lambda_L$ & $1$ & $0$ & \cite{CaLl17} \\
$7$ & slipping folds & collision & $\sigma_{{\rm fold},L} - \sigma_{{\rm fold},R}$ & $1/2$ & $1/2$ & \cite{CaLl17,KuRi03} \\
$8$ & fixed foci & $\frac{\lambda_L}{\omega_L} + \frac{\lambda_R}{\omega_R} = 0$ & $\chi_{{\rm focus},L} - \chi_{{\rm focus},R}$ & $1$ & $0$ & \cite{CoGa01,ZoKu06} \\
$9$ & fixed focus/fold & $\lambda_L = 0$ & $\chi_{{\rm focus},L} - \frac{\sigma_{{\rm fold},R}}{3}$ & $1$ & $0$ & \cite{CoGa01} \\
$10$ & fixed folds & $\sigma_{{\rm fold},L} = \sigma_{{\rm fold},R}$ & $\chi_{{\rm fold},L} - \chi_{{\rm fold},R}$ & $1/2$ & $1/2$ & \cite{LiHu14} \\
$11$ & impacting admissible focus & BEB & \eqref{eq:impactingNondegCond} & $1$ & $0$ & \cite{DiNo08} \\
$12$ & impacting virtual focus & BEB & \eqref{eq:impactingNondegCond} & $1$ & $0$ & \cite{DiNo08} \\
$13$ & impacting virtual node & BEB & - & $1$ & $0$ & \cite{DiNo08} \\
$14$ & impulsive & \eqref{eq:impulsivexi} & \eqref{eq:impulsiveNondegCond} & $1$ & $0$ & \cite{Ak05} \\
$15$ & hysteretic pseudo-eq. & hysteresis & \eqref{eq:hysteresisNondeg} & $1$ & $1$ & \cite{MaHa17} \\
$16$ & time delayed pseudo-eq. & time delay & \eqref{eq:hysteresisNondeg} & $1$ & $1$ & \cite{LiYu08} \\
$17$ & hysteretic two-fold & hysteresis & $\sigma_{{\rm fold},L} - \sigma_{{\rm fold},R}$ & $1/3$ & $1/3$ & \cite{Ma17} \\
$18$ & time delayed two-fold & time delay & $\sigma_{{\rm fold},L} - \sigma_{{\rm fold},R}$ & $1/2$ & $1/2$ & \cite{Ko17,LiYu13} \\
$19$ & intersecting sw.~manifolds & \eqref{eq:fourPieceLambda} & \eqref{eq:fourPieceNondegCond} & $1$ & $1$ & \cite{HaEr15} \\
$20$ & square-root singularity & BEB & - & $1$ & $0$ & \cite{NiCa16} \\
\noalign{\hrule height .3mm}
\end{tabular}
\caption{
Attributes of Hopf bifurcations and $20$ Hopf-like bifurcations (HLBs).
The HLBs are numbered 1--20, as indicated in the first column; the second column gives a brief description
(BEB abbreviates {\em boundary equilibrium bifurcation}).
The exponents $a$ and $b$ refer to the scaling laws \eqref{eq:scalingLaws}
and for most HLBs the stability of the limit cycle (i.e.~the criticality of the bifurcation)
is determined by the sign of $\alpha$.
Full details including formulas for $\alpha$ and additional references are provided in later sections.
Note, $\alpha$ is not specified for four HLBs where
criticality is governed a combination of algebraic constraints.
\label{tb:bifs}
}
\end{center}
\end{table*}

Piecewise-smooth systems admit several types of stationary solutions (where the state of the system is constant in time).
These include regular equilibria (a zero of a smooth component of the vector field)
and certain points on switching manifolds.
Interactions between stationary solutions and switching manifolds
induce a wide variety of bifurcations that are unique to piecewise-smooth systems \cite{DiBu08}.
This paper concerns such bifurcations that are `Hopf-like'
in the sense that an isolated stationary solution
changes stability and possibly form and emits one limit cycle locally \cite{OlAn04}.
While there have been numerous studies of Hopf-like bifurcations (HLBs),
many focus on one particular scenario, consider only piecewise-linear systems, or provide only qualitative results.
The purpose of this paper is provide general quantitative results and compare different HLBs.
This extends the summary presented in \cite{Si18c};
below we provide formal statements for each HLB
in a general setting. 

So that the geometric mechanisms underpinning the HLBs can be realised with minimal complexity,
only two-dimensional systems are treated.
In more than two dimensions HLBs can occur in an essentially two-dimensional fashion \cite{KuHo13,SiKo09},
but other dynamics (including chaos) may be involved \cite{Gl18,Si16c,Si18d}.
HLBs have been described in three-dimensional systems
\cite{CaFe11,CaFe12,CaFr05b,FrPo05,FrPo07,HuYa16}
and some limited forms of dimension reduction have been achieved \cite{KoGl11,Ku08,KuHo13,PrTe16,WeKu12},
but in general higher dimensions cannot be accommodated via a centre manifold analysis
(the standard approach used for smooth systems \cite{Ku04,Me07})
because a centre manifold simply may not exist.
Much of the bifurcation theory of piecewise-smooth systems is dimension specific.
In $n$ dimensions bifurcations can be inextricably $n$-dimensional \cite{GlJe15}.

The HLBs are labelled $1$--$20$ and summarised by Table \ref{tb:bifs}.
The first two columns provide the number and a brief description.
The next two columns indicate the cause or onset of the bifurcation
and give a formula for the non-degeneracy coefficient $\alpha$.
In most cases a stable limit cycle is created if $\alpha < 0$
and unstable limit cycle is created if $\alpha > 0$.
For example, a (classical) Hopf bifurcation occurs when the imaginary part $\omega$ of the associated eigenvalues is zero,
and $\alpha$ is a combination of quadratic and cubic terms in a Taylor expansion of the ODEs, see \eqref{eq:hopfNondegCond}.
Several HLBs are boundary equilibrium bifurcations (BEBs)
that occur when a regular equilibrium collides with a switching manifold.

The next two columns provide scaling laws for the amplitude of the limit cycle
(as measured by its diameter in phase space) and its period.
If a HLB occurs at $\mu = 0$, where $\mu \in \mathbb{R}$ is a parameter,
then the limit cycle exists for small $\mu < 0$ or small $\mu > 0$ and satisfies
\begin{equation}
\begin{split}
\text{amplitude} &\sim k_1 |\mu|^a, \\
\text{period} &\sim k_2 |\mu|^b.
\end{split}
\label{eq:scalingLaws}
\end{equation}
The exponents $a > 0$ and $b \ge 0$ are determined by the type of HLB;
the coefficients $k_1 > 0$ and $k_2 > 0$ are system specific.
For Hopf bifurcations, $a = \frac{1}{2}$ (square-root growth) and $b = 0$
(the period limits to $\frac{2 \pi}{\omega}$ at the bifurcation).
For a physical system, $a$ and $b$ could be estimated experimentally
in which case the results here could assist model selection in that models
involving HLBs with incorrect values of $a$ and $b$ would be discarded.

The HLBs described here are local bifurcations.
For piecewise-smooth systems, limit cycles can be created in global bifurcations
such as `canard super-explosions' where an equilibrium transitions instantaneously
to a large amplitude limit cycle \cite{DeFr13,RoGl14,RoCo12,WaWi16},
For piecewise-linear systems, a limit cycle may be created at infinity \cite{LlPo99},
and if there are two switching manifolds
a limit cycle intersecting both manifolds may appear suddenly
\cite{LlOr13,LlPo15,PoRo15,ToGe03}.

Other bifurcations not considered here include
those at which two equilibria and a local limit cycle emanate from a single point on a switching manifold.
Such bifurcations combine the characteristic features of saddle-node and Hopf bifurcations
and arise in continuous systems \cite{SiMe12}, discontinuous systems \cite{KuRi03,PoRo18}, and impacting systems \cite{DiNo08}.
The bifurcation of an equilibrium into two limit cycles has been described
in switched control systems \cite{SiKu12b},
impacting systems \cite{MoBu14},
and abstract piecewise-linear systems \cite{ArLl14,DiEl14}.
Remarkably, the bifurcation of an equilibrium into three limit cycles is possible for
two-dimensional piecewise-linear discontinuous systems, see \cite{BrMe13,FrPo14,HuYa12b,Li14,LlNo15}.
For codimension-two HLBs refer to \cite{DeDe11,DiPa08,GiPl01,GuSe11,SiMe08}
and for non-autonomous perturbations (e.g.~periodic forcing) see \cite{FePo16}.

The remainder of this paper is organised as follows.
We begin in \S\ref{sec:smooth} by reviewing Hopf bifurcations
and stating the Hopf bifurcation theorem in a way that is later
mimicked, as much as possible, with the HLBs.
In \S\ref{sec:background} we review geometric aspects of piecewise-smooth systems (particularly Filippov systems)
such as folds, pseudo-equilibria, and sliding motion.
Section \ref{sec:lemmas} provides some technical results to support the proofs of the main theorems.
Each proof is based on the construction and analysis of a locally valid Poincar\'e map. 
An isolated fixed point of a Poincar\'e map corresponds to limit cycle.
Most of the Poincar\'e maps are constructed by combining two maps,
one for the system on each side of a switching manifold.
These two maps are return maps, capturing the effect of evolution from, and back to, a switching manifold.
In \S\ref{sec:lemmas} we provide asymptotic expansions for return maps
about an invisible fold, a boundary focus, and a regular focus in an affine system
(Lemmas \ref{le:focus}--\ref{le:affine}).
We also review our key analytical tool: the implicit function theorem (IFT),
and discuss the degree of smoothness of orbits and Poincar\'e maps.

The HLBs are examined in Sections \ref{sec:pwsc}--\ref{sec:other} in order.
For each HLB we provide a theorem, an example, and a proof (given in an appendix).
Most theorems have the following features.
There exists an isolated stationary solution for all values of $\mu$ in a neighbourhood of $0$.
There is a transversality condition $\beta \ne 0$
(to ensure that $\mu$ unfolds the bifurcation in a generic fashion).
Where appropriate $\beta$ is defined so that $\beta > 0$
(chosen without loss of generality in view of the substitution $\mu \mapsto -\mu$)
implies the stationary solution is stable for $\mu < 0$ and unstable for $\mu > 0$.
When $\mu = 0$ the stationary solution is stable if $\alpha < 0$ and unstable if $\alpha > 0$.
If $\alpha < 0$ then the bifurcation is {\em supercritical}
(a stable limit cycle exists for small $\mu > 0$),
while if $\alpha > 0$ then the bifurcation is {\em subcritical}
(an unstable limit cycle exists for small $\mu < 0$).
Then \S\ref{sec:conc} provides concluding remarks.

The theorems are proved in appendices by identifying a unique fixed point of a Poincar\'e map $P$ via the IFT
(an alternative is to use the intermediate value theorem for monotone functions).
Many of these involve a spatial scaling to `blow up' the dynamics about the origin,
or a brute-force evaluation of terms (using Lemmas \ref{le:focus} and \ref{le:fold}),
but several have unique technical complexities.
For instance, HLB 4 requires a careful analysis of nonlinear functions,
HLB 17 uses a non-constructive derivation,
and HLB 20 requires a multiple time-scales analysis.
The IFT is usually applied to a displacement function $D(r;\mu)$ whose zeros correspond to fixed points of $P(r;\mu)$.
For example, for HLBs $8$, $9$, $14$, and $19$ we use $D(r;\mu) = \frac{1}{r} \left( P(r;\mu) - r \right)$
so that the $r=0$ fixed point (corresponding to a stationary solution at the origin) is removed.
Typically $D$ is only defined for, say, $r > 0$, so $D$ must be smoothly extended
to a neighbourhood of $(r;\mu) = (0;0)$ before the IFT is applied.

\section{Hopf bifurcations}
\setcounter{equation}{0}
\setcounter{figure}{0}
\setcounter{table}{0}
\label{sec:smooth}

The mathematicians most closely affiliated with Hopf bifurcations are Poincar\'e, Andronov, and Hopf.
Poincar\'e studied Hopf bifurcations in the late $19^{\rm th}$ century \cite{Po92},
Andronov and coworkers later proved the Hopf bifurcation theorem
for two-dimensional systems of ODEs \cite{AnWi30},
and this was extended by Hopf to systems of arbitrary dimension in \cite{Ho42}.
Hopf bifurcations generalise to other equations, such as
partial differential equations \cite{HaKa81,MaMc76,Se94},
delay differential equations \cite{Er09,HaLu93},
and stochastic differential equations \cite{ArSr96}.

Here we state the Hopf bifurcation theorem, following \cite{Gl99}, for a two-dimensional system,
\begin{equation}
\begin{bmatrix} \dot{x} \\ \dot{y} \end{bmatrix} = F(x,y;\mu) =
\begin{bmatrix} f(x,y;\mu) \\ g(x,y;\mu) \end{bmatrix},
\label{eq:smoothODE}
\end{equation}
where $\mu \in \mathbb{R}$ is a parameter.
Throughout this paper dots represents differentiation with respect to time, $t$.
We first make two assumptions that allow us to state the theorem concisely.
For a general system
these assumptions can be imposed by performing an appropriate change of variables.

We first assume that the origin is an equilibrium for all values of $\mu$ in a neighbourhood of $0$
(this simplifies the formula for $\beta$ given below).
That is,
\begin{equation}
f(0,0;\mu) = g(0,0;\mu) = 0,
\label{eq:hopfEqCond}
\end{equation}
for small $\mu \in \mathbb{R}$.
Second we assume
\begin{equation}
\rD F(0,0;0) = \begin{bmatrix} 0 & -\omega \\ \omega & 0 \end{bmatrix},
\label{eq:hopfEigCond}
\end{equation}
for some $\omega \ne 0$.
That is, when $\mu = 0$ the eigenvalues associated with the origin are $\pm \ri \omega$,
and the Jacobian matrix is in real Jordan form.

In order for $\mu$ to unfold the Hopf bifurcation in a generic fashion,
the real part of the eigenvalues associated with the equilibrium
must change linearly, to leading order, with respect to $\mu$.
A simple calculation reveals that the real part of the eigenvalues is
$\frac{\beta \mu}{2} + \cO \left( \mu^2 \right)$, where
\begin{equation}
\beta = \left( \frac{\partial^2 f}{\partial \mu \partial x} + \frac{\partial^2 g}{\partial \mu \partial y}
\right) \bigg|_{x = y = \mu = 0}.
\label{eq:hopfTransCond}
\end{equation}
Consequently, $\beta \ne 0$ is the transversality condition for the Hopf bifurcation theorem.

We also require the quadratic and cubic terms in the Taylor expansion of $F(x,y;0)$ (centred about the origin)
to be such that a limit cycle is created in a non-degenerate fashion.
To this end, we define
\begin{align}
\alpha &= \left( \frac{\partial^3 f}{\partial x^3} +
\frac{\partial^3 g}{\partial x^2 \partial y} +
\frac{\partial^3 f}{\partial x \partial y^2} +
\frac{\partial^3 g}{\partial y^3} \right) \bigg|_{x = y = \mu = 0} \nonumber \\
&\quad+ \frac{1}{\omega} \bigg[ \frac{\partial^2 f}{\partial x \partial y}
\left( \frac{\partial^2 f}{\partial x^2} + \frac{\partial^2 f}{\partial y^2} \right) -
\frac{\partial^2 f}{\partial x^2} \frac{\partial^2 g}{\partial x^2} \nonumber \\
&\quad- \frac{\partial^2 g}{\partial x \partial y}
\left( \frac{\partial^2 g}{\partial x^2} + \frac{\partial^2 g}{\partial y^2} \right) +
\frac{\partial^2 f}{\partial y^2} \frac{\partial^2 g}{\partial y^2} \bigg] \bigg|_{x = y = \mu = 0}.
\label{eq:hopfNondegCond}
\end{align}
At the bifurcation (i.e.~with $\mu = 0$),
nearby orbits converge to the origin as $t \to \infty$ if $\alpha < 0$,
and as $t \to -\infty$ if $\alpha > 0$.
For this reason, $\alpha \ne 0$ is the non-degeneracy condition for the Hopf bifurcation theorem.

\begin{theorem}[Hopf bifurcation theorem]
Consider \eqref{eq:smoothODE} where $F$ is $C^3$.
Suppose \eqref{eq:hopfEqCond} and \eqref{eq:hopfEigCond} are satisfied and $\beta > 0$.
In a neighbourhood of $(x,y;\mu) = (0,0;0)$,
\begin{enumerate}
\item
the origin is the unique equilibrium 
and is stable for $\mu < 0$ and unstable for $\mu > 0$,
\item
if $\alpha < 0$ [$\alpha > 0$] there exists a unique stable [unstable] limit cycle
for $\mu > 0$ [$\mu < 0$], and no limit cycle for $\mu < 0$ [$\mu > 0$].
\end{enumerate}
The minimum and maximum $x$ and $y$-values of the limit cycle are asymptotically proportional to $\sqrt{|\mu|}$,
and its period is
$T = \frac{2 \pi}{|\omega|} + \cO(\mu)$.
\end{theorem}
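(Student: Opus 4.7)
The plan is to prove part (i) by the implicit function theorem together with a direct eigenvalue computation, then prove part (ii) by analysing a Poincar\'e return map in polar coordinates.

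For (i), hypothesis \eqref{eq:hopfEqCond} identifies $(0,0)$ as an equilibrium for every small $\mu$, and \eqref{eq:hopfEigCond} gives $\det \rD F(0,0;0) = \omega^2 \ne 0$, so the IFT applied to $(x,y) \mapsto F(x,y;\mu)$ yields a unique equilibrium branch, namely $(0,0)$, in some fixed neighbourhood for all small $\mu$. The eigenvalues of $\rD F(0,0;\mu)$ vary smoothly and form a complex conjugate pair with real part $\tfrac{\beta\mu}{2} + \cO(\mu^2)$, so $\beta > 0$ gives the claimed stability transition.

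For (ii), convert to polar coordinates $(r,\theta)$ and use that $\dot\theta = \omega + \cO(r,\mu)$ is bounded away from $0$ to divide through, obtaining a scalar non-autonomous ODE
\begin{equation*}
\frac{dr}{d\theta} = \frac{\beta\mu}{2\omega}\, r + a_2(\theta)\, r^2 + a_3(\theta)\, r^3 + \cO\left(r^4,\mu r^2\right),
\end{equation*}
whose coefficients are trigonometric polynomials in the partial derivatives of $f$ and $g$. Picard-iterating to third order in $r$ and first order in $\mu$ gives the Poincar\'e map
\begin{equation*}
P(r;\mu) = \re^{\pi\beta\mu/\omega}\, r + \frac{\pi\alpha}{4|\omega|}\, r^3 + \cO\left(r^4,\mu r^3\right).
\end{equation*}
The main technical step, and the one place where the proof becomes truly calculational, is verifying that the coefficient of $r^3$ is exactly $\tfrac{\pi\alpha}{4|\omega|}$ with $\alpha$ given by \eqref{eq:hopfNondegCond}; the quadratic terms $a_2(\theta)$ do not average to zero by themselves, but they re-enter the iteration as products that furnish the cross terms $\tfrac{\partial^2 f}{\partial x \partial y} \cdot \tfrac{\partial^2 f}{\partial x^2}$ and their companions, and these must be tracked carefully to assemble the bracketed expression in \eqref{eq:hopfNondegCond}.

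Non-trivial fixed points of $P$ correspond to limit cycles, so I would divide out the trivial root by setting $D(r;\mu) = (P(r;\mu)-r)/r$ and substituting $s = r^2$; the resulting function $\widetilde D(s;\mu)$ extends $C^1$-smoothly through $s = 0$ with $\widetilde D(0;0) = 0$ and $\partial_s \widetilde D(0;0) = \tfrac{\pi\alpha}{4|\omega|} \ne 0$. The IFT then yields a unique smooth branch $s(\mu) = -\tfrac{\beta\mu}{\alpha} + \cO(\mu^2)$, which is positive, and hence corresponds to a genuine limit cycle, precisely when $\mu$ has the sign opposite to $\alpha$; its amplitude is $\sqrt{s(\mu)} \sim \sqrt{|\mu|}$. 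Criticality follows from evaluating $\partial_r P$ at the fixed point, which to leading order equals $1 - \tfrac{\pi\beta\mu}{|\omega|}$, so $\alpha < 0$ yields a stable cycle and $\alpha > 0$ an unstable one, and integrating $dt/d\theta = 1/|\omega| + \cO(\mu)$ over one revolution gives $T = 2\pi/|\omega| + \cO(\mu)$.
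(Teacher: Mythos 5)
Your proposal is correct and takes essentially the approach the paper itself points to for this classical theorem: pass to polar coordinates, construct the Poincar\'e return map, and apply the implicit function theorem to the rescaled displacement function $\widetilde D(s;\mu)$ with $s=r^2$ --- exactly the route of the cited references \cite{HaKa81,MaMc76} and the template the paper follows for all twenty HLBs. The only blemishes are immaterial constant factors (your stated $r^3$ coefficient of $P$ and your fixed-point formula $s(\mu)=-\beta\mu/\alpha+\cO(\mu^2)$ are mutually inconsistent, and both differ from the normalisation implied by \eqref{eq:hopfNondegCond}), but these do not affect any sign and hence none of the existence, uniqueness, criticality, or scaling conclusions.
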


The Hopf bifurcation theorem can be proved by performing a sequence of near-identity transformations
so that in polar coordinates, or with a single complex-valued variable, the ODEs take a simple form.
The proof can then be completed by using the Poincar\'e-Bendixson theorem, as in \cite{Gl99},
or applying the implicit function theorem to a Poincar\'e map, as in \cite{HaKa81,MaMc76}.

\begin{figure*}
\begin{center}
\includegraphics[width=16.6cm]{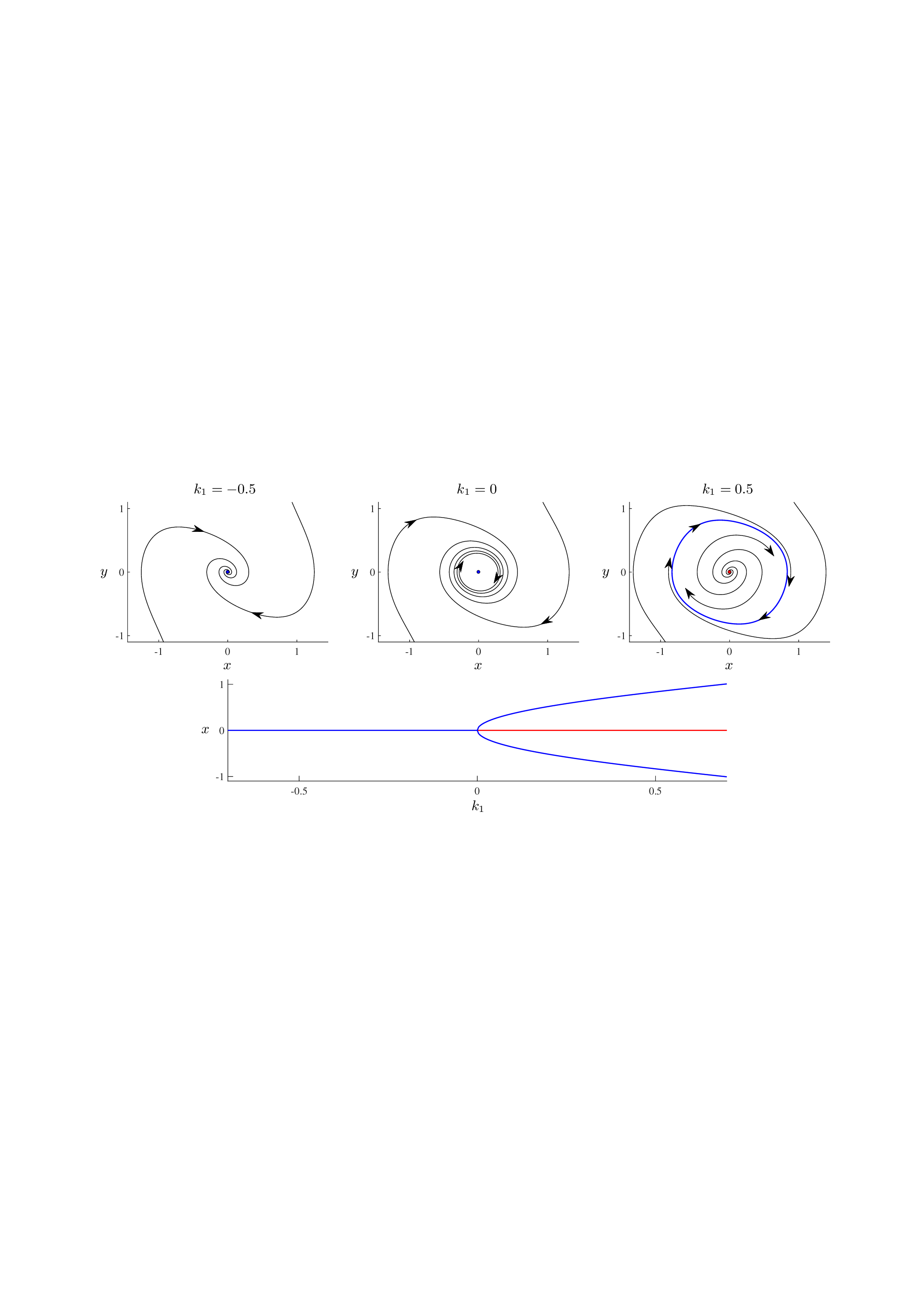}
\caption{
An illustration of the Hopf bifurcation for the van der Pol oscillator \eqref{eq:vdp} with $k_2 = 1$.
The bifurcation diagram shows the equilibrium
and the maximum and minimum $x$-values of the limit cycle.
Stable solutions are coloured blue; unstable solutions are coloured red.
These conventions are used in all subsequent bifurcation diagrams.
\label{fig:allVdP}
} 
\end{center}
\end{figure*}

The Hopf bifurcation theorem can be illustrated with the van der Pol oscillator \cite{Va26}.
Following \cite{Me07}, we consider
\begin{equation}
\begin{split}
\frac{d v}{d t} &= i, \\
\frac{d i}{d t} &= -v + k_1 i - k_2 i^3,
\end{split}
\label{eq:vdp}
\end{equation}
where $i(t)$ represents the non-dimensionalised current
in a circuit with a capacitor, an inductor, and an additional element
(vacuum tube or tunnel diode) with voltage drop $-k_1 i + k_2 i^3$,
and $v(t)$ represents the non-dimensionalised voltage
at a particular location in the circuit.

The origin, $(v,i) = (0,0)$, is an equilibrium of \eqref{eq:vdp}
and has purely imaginary eigenvalues when $k_1 = 0$.
Indeed the Hopf bifurcation theorem, as stated above, can be applied directly using $(x,y;\mu) = (v,i;k_1)$.
The transversality coefficient is $\beta = 1$ and
the non-degeneracy coefficient is $\alpha = -6 k_2$.
Fig.~\ref{fig:allVdP} shows a bifurcation diagram and representative phase portraits using $k_2 = 1$.
Here $\alpha < 0$, thus a stable limit cycle exists for $k_1 > 0$ as shown.
Notice the amplitude of the limit cycle is proportional to $\sqrt{k_1}$, to leading order.

\section{Fundamentals of Filippov systems}
\setcounter{equation}{0}
\setcounter{figure}{0}
\setcounter{table}{0}
\label{sec:background}

Filippov systems are discontinuous, piecewise-smooth ODE systems
with an additional rule defining evolution on switching manifolds, known as {\em sliding motion}.
Here we review the basic principles of Filippov systems
for two-dimensional systems with a single switching manifold.
More general expositions can be found in \cite{CoDi12,DiBu08,Fi88,Je18b}.

We consider ODE systems on $\mathbb{R}^2$ of the form
\begin{equation}
\begin{bmatrix} \dot{x} \\ \dot{y} \end{bmatrix} =
\begin{cases}
F_L(x,y), & x < 0, \\
F_R(x,y), & x > 0. \end{cases}
\label{eq:FilippovBackg}
\end{equation}
Here the switching manifold is $x=0$.
For systems with a more general switching manifold, say $H(x,y) = 0$,
the idea is that one may perform a change of variables to put the system in the form \eqref{eq:FilippovBackg}.
The change of variables may only be valid locally, but this is sufficient
for the local bifurcation theory of this paper.

We let
\begin{equation}
\begin{split}
\Omega_L &= \left\{ (x,y) \,\middle|\, x < 0, y \in \mathbb{R} \right\}, \\
\Omega_R &= \left\{ (x,y) \,\middle|\, x > 0, y \in \mathbb{R} \right\},
\end{split}
\label{eq:OmegaLR}
\end{equation}
denote the left and right half-planes.
We refer to $(\dot{x},\dot{y}) = F_L(x,y)$
and $(\dot{x},\dot{y}) = F_R(x,y)$
as the {\em left} and {\em right half-systems} of \eqref{eq:FilippovBackg}, respectively.
Orbits of \eqref{eq:FilippovBackg} are governed by the left half-system while in $\Omega_L$,
and by the right half-system while in $\Omega_R$.
Orbits may also slide on $x = 0$, as defined below.

To ensure sliding motion is well-defined,
we assume $F_J$ is $C^1$ on the closure of $\Omega_J$, for each $J \in \{ L,R \}$.
In mathematical models each $F_J$ is often well-defined (or extends analytically) beyond $\Omega_J$,
in which case we can consider the dynamics of $(\dot{x},\dot{y}) = F_J(x,y)$ outside $\Omega_J$ --- such dynamics is termed {\em virtual}.
Virtual dynamics is not exhibited by \eqref{eq:FilippovBackg},
but sometimes helps us understand the behaviour of \eqref{eq:FilippovBackg}.
We use the term {\em admissible} to describe dynamics
of $(\dot{x},\dot{y}) = F_J(x,y)$ in $\Omega_J$.

\begin{definition}
A {\em regular equilibrium} of \eqref{eq:FilippovBackg} is a point $(x,y)$ for which
$F_L(x,y) = (0,0)$ or $F_R(x,y) = (0,0)$.
\label{df:eq}
\end{definition}

If $F_L(x,y) = (0,0)$, then the equilibrium is admissible if $x < 0$ and virtual if $x > 0$
(and vice-versa if $F_R(x,y) = (0,0)$).
As parameters are varied, an admissible equilibrium becomes virtual
when it collides with $x=0$, where it is termed a {\em boundary equilibrium}.
This is known as a {\em boundary equilibrium bifurcation} (BEB) \cite{DiBu08}.
Many of the HLBs below are BEBs.

Next we distinguish different parts of the switching manifold, and to do this write
\begin{equation}
F_J(x,y) = \begin{bmatrix} f_J(x,y) \\ g_J(x,y) \end{bmatrix},
\label{eq:FJBackg}
\end{equation}
for each $J \in \{ L,R \}$.

\begin{definition}
A subset $\left\{ (0,y) \,\middle|\, y \in (a,b) \right\}$ of $\Sigma$ is said to be
\begin{enumerate}
\item
a {\em crossing region} if $f_L(0,y) f_R(0,y) > 0$ for all $y \in (a,b)$,
\item
an {\em attracting sliding region} if $f_L(0,y) > 0$ and $f_R(0,y) < 0$ for all $y \in (a,b)$,
\item
and a {\em repelling sliding region} if $f_L(0,y) < 0$ and $f_R(0,y) > 0$ for all $y \in (a,b)$.
\end{enumerate}
\label{df:swManDivision}
\end{definition}

Orbits of \eqref{eq:FilippovBackg} are directed into attracting sliding regions,
away from repelling sliding regions, and through crossing regions (see already Fig.~\ref{fig:schemSwManDivision}).
At endpoints of these regions we have $f_J(0,y) = 0$, for some $J \in \{ L,R \}$.
If $f_J(0,y) = 0$ and $g_J(0,y) \ne 0$,
then $F_J(0,y)$ is tangent to $x = 0$.
If also $\frac{\partial f_J}{\partial y}(0,y) \ne 0$, then
the orbit of the associated half-system that passes through $(0,y)$ has a quadratic tangency with $x = 0$ at this point.
In this case the tangent point is called a {\em fold},
and said to be visible if the orbit is admissible and invisible if it is virtual:

\begin{definition}
A point $(0,y)$ for which $f_L(0,y) = 0$ [$f_R(0,y) = 0$] is said to be
\begin{enumerate}
\item
a {\em visible fold} if
$\frac{\partial f_L}{\partial y}(0,y) g_L(0,y) < 0$ \\
$\left[ \frac{\partial f_R}{\partial y}(0,y) g_R(0,y) > 0 \right]$,
\item
and an {\em invisible fold} if
$\frac{\partial f_L}{\partial y}(0,y) g_L(0,y) > 0$ \\
$\left[ \frac{\partial f_R}{\partial y}(0,y) g_R(0,y) < 0 \right]$.
\end{enumerate}
\label{df:fold}
\end{definition}

For example, the system
\begin{equation}
\begin{bmatrix} \dot{x} \\ \dot{y} \end{bmatrix} =
\begin{cases}
\begin{bmatrix} y \\ 1 \end{bmatrix}, & x < 0, \\
\begin{bmatrix} -2(y-1) \\ -1 \end{bmatrix}, & x > 0,
\end{cases}
\label{eq:schemSwManDivision}
\end{equation}
depicted in Fig.~\ref{fig:schemSwManDivision},
has a visible fold at $(0,1)$ and an invisible fold at $(0,0)$.

{\em Two-folds} are points at which $f_L(0,y) = f_R(0,y) = 0$,
and are naturally classified as visible-visible,
visible-invisible, invisible-invisible, or degenerate.
For the two-dimensional system \eqref{eq:FilippovBackg},
two-folds are codimension-one phenomena and can correspond to the creation of a limit cycle (HLB 7).
In three-dimensional systems, two-folds occur generically
and may serve as a hub for important dynamics \cite{CoJe11,CrCa17,DiCo11,Te90}.

On sliding regions (both attracting and repelling)
we can define orbits by introducing a one-dimensional vector field:
\begin{equation}
\dot{y} = g_{\rm slide}(y).
\label{eq:gslide}
\end{equation}
To motivate the definition of $g_{\rm slide}$
we first present the general notion of a Filippov solution.

\begin{definition}
An absolutely continuous function $\phi_t : (a,b) \to \mathbb{R}^2$
is said to be a {\em Filippov solution} to \eqref{eq:FilippovBackg}
if it satisfies $\dot{\phi}_t \in \cF(\phi_t)$ for almost all $t \in (a,b)$,
where $\cF$ is the set-valued function
\begin{widetext}
\begin{equation}
\cF(x,y) = \begin{cases}
\{ F_L(x,y) \}, & x < 0, \\
\left\{ (1-s) F_L(0,y) + s F_R(0,y) \,\big|\, s \in [0,1] \right\}, & x = 0, \\
\{ F_R(x,y) \}, & x > 0.
\end{cases}
\label{eq:FSetValued}
\end{equation}
\end{widetext}
\label{df:FilippovSolution}
\end{definition}

The system \eqref{eq:FilippovBackg} is a {\em Filippov system} if we equate orbits to Filippov solutions.
Notice $\cF(0,y)$ is defined as the convex hull of $F_L(0,y)$ and $F_R(0,y)$.
For more general systems $\cF$ is defined as the convex hull of all
smooth components of the system associated with each point.

\begin{figure}[b!]
\begin{center}
\includegraphics[width=6cm]{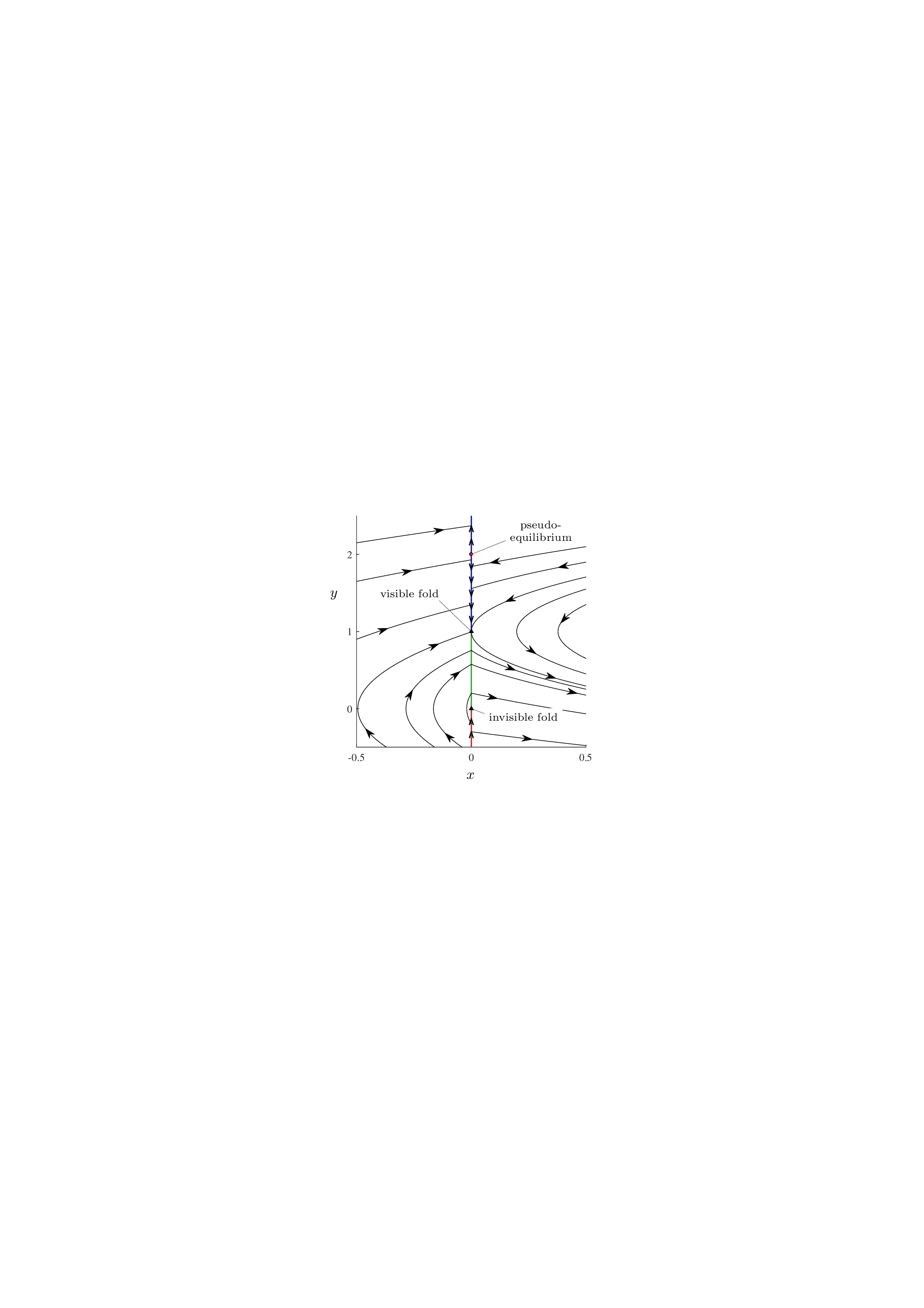}
\caption{
A phase portrait of the Filippov system \eqref{eq:schemSwManDivision}.
The switching manifold $x=0$ is a crossing region for $0 < y < 1$ (green),
an attracting sliding region for $y > 1$ (blue),
and a repelling sliding region for $y < 0$ (red).
On the sliding regions arrows indicate the direction of sliding motion.
\label{fig:schemSwManDivision}
} 
\end{center}
\end{figure}

Now consider a Filippov solution $\phi_t$
that is constrained to a sliding region for some interval of time.
In order to express the $y$-component of $\phi_t$ as a solution to \eqref{eq:gslide},
we use Definition \ref{df:FilippovSolution} to construct $g_{\rm slide}$.
The condition $\dot{\phi}_t \in \cF(\phi_t)$ implies
that the $x$-component of
$(1-s) F_L(0,y) + s F_R(0,y) = 0$ is zero.
Thus $s = \frac{f_L}{f_L - f_R} \big|_{x = 0}$ and
notice $s \in (0,1)$ because we are considering a sliding region.
By then defining $g_{\rm slide}$ as the $y$-component of $(1-s) F_L(0,y) + s F_R(0,y)$:
\begin{equation}
g_{\rm slide} = \left. \frac{f_L g_R - f_R g_L}{f_L - f_R} \right|_{x = 0} \,,
\label{eq:gslide2}
\end{equation}
the $y$-component of $\phi_t$ is a solution to \eqref{eq:gslide} as desired.

\begin{definition}
A point $(0,y)$ is said to be a {\em pseudo-equilibrium} of \eqref{eq:FilippovBackg} if $g_{\rm slide}(y) = 0$.
\end{definition}

The pseudo-equilibrium is admissible if $f_L(0,y) f_R(0,y) < 0$
(i.e.~it belongs to a sliding region)
and virtual if $f_L(0,y) f_R(0,y) > 0$.
As an example, again consider \eqref{eq:schemSwManDivision}.
Here $g_{\rm slide}(y) = \frac{y - 2}{3 y - 2}$,
thus there is a unique pseudo-equilibrium at $(0,2)$.
This pseudo-equilibrium is unstable
because $\frac{d g_{\rm slide}}{d y} \big|_{y=2} > 0$.

To summarise, orbits of the Filippov system \eqref{eq:FilippovBackg} are piecewise:
governed by the left half-system in $\Omega_L$,
by the right half-system in $\Omega_R$,
and by \eqref{eq:gslide} with \eqref{eq:gslide2} on sliding regions.
The forward orbit of a point can be non-unique.
For instance, the forward orbit of a point on a repelling sliding region
can immediately enter $\Omega_L$, immediately enter $\Omega_R$,
or slide along $x=0$ for some time before entering $\Omega_L$ or $\Omega_R$.

\section{Return maps and smoothness}
\setcounter{equation}{0}
\setcounter{figure}{0}
\setcounter{table}{0}
\label{sec:lemmas}

To study limit cycles in systems of the form \eqref{eq:FilippovBackg}
we construct Poincar\'e maps by using the switching manifold $x=0$ as a Poincar\'e section.
Each Poincar\'e map is a composition $P = P_L \circ P_R$,
where $P_R$ is the return map on $x=0$ for orbits of the right half-system,
and $P_L$ is the return map on $x=0$ for orbits of the left half-system, see Fig.~\ref{fig:schemPoinComposition}.
In this section we first clarify the smoothness of orbits and return maps, \S\ref{sub:analysis}.
We then derive three return maps on $x = 0$ for evolution in $x > 0$ for a smooth system
\begin{equation}
\begin{bmatrix} \dot{x} \\ \dot{y} \end{bmatrix} = F(x,y) =
\begin{bmatrix} f(x,y) \\ g(x,y) \end{bmatrix}.
\label{eq:smoothLemmasODE}
\end{equation}
These maps are used below for $P_R$ and also $P_L$ through a change of variables such as $(x,y) \mapsto (-x,-y)$.
For the first map the origin is a focus, \S\ref{sub:focus};
for the second map the origin is an invisible fold
(treating \eqref{eq:smoothLemmasODE} as the right half-system of \eqref{eq:FilippovBackg}), \S\ref{sub:fold}.
For these two maps we derive the first few terms in a series expansion about the origin.
For the third map the ODE system is affine and we provide an exact solution, \S\ref{sub:affine}.

\begin{figure}[b!]
\begin{center}
\includegraphics[width=5.6cm]{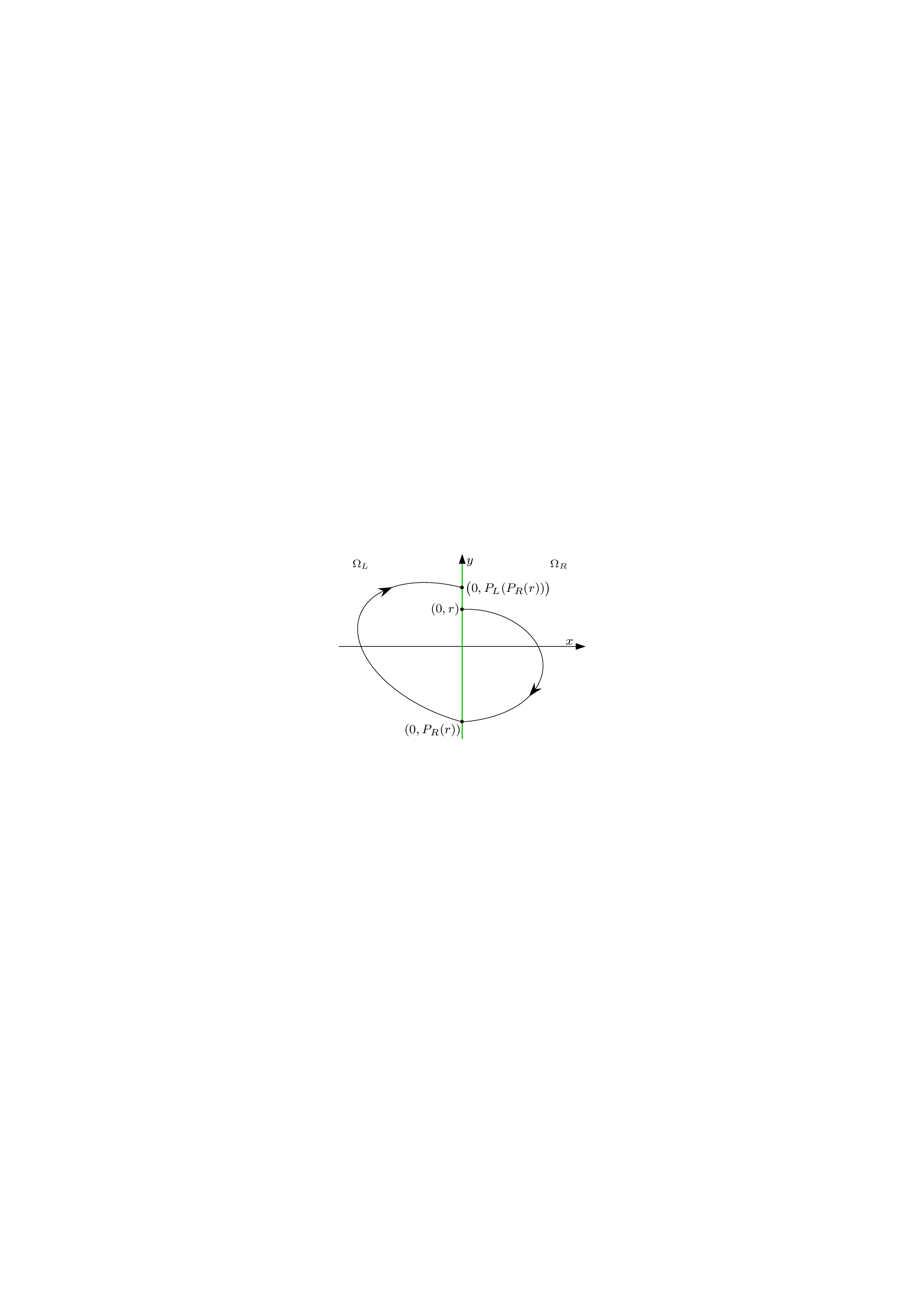}
\caption{
An illustration of the Poincar\'e map $P = P_L \circ P_R$
(defined more precisely in later sections subject to different assumptions)
for a Filippov system of the form \eqref{eq:FilippovBackg}.
\label{fig:schemPoinComposition}
} 
\end{center}
\end{figure}

\subsection{Key principles from analysis}
\label{sub:analysis}

The Picard-Lindel\"of theorem \cite{Me07} tells us that if $F$ is Lipschitz then
for any initial point $(x,y)$, \eqref{eq:smoothLemmasODE} has a unique solution $\phi_t(x,y)$
on some time interval $(a,b)$ containing $0$.
That is, $\phi_0(x,y) = (x,y)$ and $\dot{\phi}_t(x,y) = F(\phi_t(x,y))$ for all $t \in (a,b)$.
If $F$ is $C^k$ then this smoothness is exhibited by $\phi_t$ \cite{Ha02}:

\begin{lemma}
If $F$ is a $C^k$ ($k \ge 1$) function of $x$ and $y$,
then $\phi_t(x,y)$ is a $C^k$ function of $x$, $y$, and $t$.
\label{le:flowSmoothness}
\end{lemma}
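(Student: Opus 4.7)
The plan is to follow the standard bootstrapping argument that runs parallel to the proof of the Picard-Lindel\"of theorem cited just above the lemma. Writing $z = (x,y)$ for brevity, I would first recall local existence and uniqueness of $\phi_t(z)$ via the contraction mapping theorem applied to the integral operator $T[\phi](t) = z + \int_0^t F(\phi(s)) \, ds$ on an appropriate closed ball in the Banach space of continuous curves. Applying this construction to a one-parameter family of initial conditions and invoking Gr\"onwall's inequality on the difference of two solutions yields joint continuity of $\phi_t(z)$ in $(t,z)$.

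Next I would establish $C^1$ regularity in the initial conditions. The candidate Jacobian $M_t = \rD_z \phi_t(z)$ ought to satisfy the variational equation $\dot{M}_t = \rD F(\phi_t(z)) M_t$ with $M_0 = I$, a non-autonomous linear ODE with continuous coefficients that therefore has a unique global (on the existence interval) solution. The key step is to verify that this candidate is genuinely the derivative: one forms the difference quotient $\Delta_h(t) = \bigl( \phi_t(z+h) - \phi_t(z) - M_t h \bigr) / \|h\|$, differentiates in $t$ using the ODE, and applies Gr\"onwall to show $\|\Delta_h(t)\| \to 0$ as $h \to 0$, uniformly on compact time intervals. Differentiability in $t$ is then immediate from $\dot{\phi}_t = F(\phi_t)$, and continuity of all first partials follows because $\rD F$ and $\phi_t$ are continuous, so the mixed partials through the ODE are too.

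For $k \ge 2$, I would proceed by induction. Assuming the result at smoothness class $k - 1$, the enlarged system $(\dot{\phi}, \dot{M}) = (F(\phi), \rD F(\phi) M)$ has a $C^{k-1}$ right-hand side, since $F \in C^k$ forces $\rD F \in C^{k-1}$. Applying the inductive hypothesis to this augmented flow (viewing $M$ as an additional $4$-dimensional state variable) upgrades $M_t$ to a $C^{k-1}$ function of $(t,z)$, whence $\phi_t$ is $C^k$ in $z$. Higher-order smoothness in $t$ then follows by repeatedly differentiating $\dot{\phi}_t = F(\phi_t)$ and invoking the chain rule together with the regularity of $F$.

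The main obstacle is the $C^1$ step: controlling the remainder $F(\phi_t(z+h)) - F(\phi_t(z)) - \rD F(\phi_t(z)) (\phi_t(z+h) - \phi_t(z))$ uniformly in $t$ on a compact interval, using a mean-value estimate on $F$ together with the already-established continuity of $\phi_t$ in the initial data. Since this is a classical result (as signalled by the citation to \cite{Ha02}), rather than reproduce the full Gr\"onwall bookkeeping I would simply reference that text and sketch the variational-equation heuristic for the reader's intuition.
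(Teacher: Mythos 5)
Your outline is correct and is exactly the classical argument (Picard iteration, Gr\"onwall, the variational equation, and induction via the augmented system) that the paper itself does not reproduce: Lemma \ref{le:flowSmoothness} is stated without proof and simply cited to \cite{Ha02}, which contains precisely this argument. Your instinct at the end --- to reference the standard text and only sketch the variational-equation step --- matches what the paper does.
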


Actually an extra time derivative is available because $\phi_t$
can be written as an integral of $F$ over time, but we do not utilise this here.
Return maps share the same degree of differentiability as $\phi_t$,
as long as orbits intersect the Poincar\'e section transversally.
This is a simple consequence of the implicit function theorem,
here stated for a real-valued function $h(u,v)$.

\begin{theorem}[Implicit function theorem (IFT)]
Suppose $h : \mathbb{R} \times \mathbb{R}^m \to \mathbb{R}$ is $C^k$,
with $h(0,0) = 0$ and $\frac{\partial h}{\partial u}(0,0) \ne 0$.
Then there exists a neighbourhood $V \subset \mathbb{R}^m$ of $0$
and a unique $C^k$ function $\xi : V \to \mathbb{R}$ such that
$h(\xi(v),v) = 0$ for all $v \in V$.
\end{theorem}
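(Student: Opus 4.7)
The plan is to exploit the nonvanishing partial derivative to make $u \mapsto h(u,v)$ strictly monotone for each small $v$, extract existence and uniqueness of $\xi$ from the intermediate value theorem (IVT), and then bootstrap continuity of $\xi$ into $C^k$ smoothness through implicit differentiation and induction on $k$. Without loss of generality assume $\frac{\partial h}{\partial u}(0,0) > 0$. By continuity of $\frac{\partial h}{\partial u}$, I would choose constants $\delta, c > 0$ such that $\frac{\partial h}{\partial u}(u,v) \ge c$ whenever $|u| \le \delta$ and $\|v\| \le \delta$, so that for each such $v$ the map $u \mapsto h(u,v)$ is strictly increasing on $[-\delta, \delta]$. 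After possibly shrinking $\delta$ to some $\delta' \in (0, \delta]$, continuity of $h$ in $v$ together with $h(0,0)=0$ ensures $h(-\delta, v) < 0 < h(\delta, v)$ for $\|v\| < \delta'$, and the IVT yields a unique $\xi(v) \in (-\delta, \delta)$ with $h(\xi(v), v) = 0$. Taking $V = \{v : \|v\| < \delta'\}$ gives the required neighbourhood.

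Next I would establish continuity and then differentiability of $\xi$. Continuity follows from applying the same IVT/monotonicity argument on arbitrarily small subintervals $(\xi(v_0) - \varepsilon, \xi(v_0) + \varepsilon)$ around $\xi(v_0)$ for $v_0 \in V$. To get $\xi \in C^1$, I would fix $v_0 \in V$ and rewrite the identity $h(\xi(v), v) - h(\xi(v_0), v_0) = 0$ using the integral form of the mean value theorem along the segment from $(\xi(v_0), v_0)$ to $(\xi(v), v)$, producing
\begin{equation*}
0 = A(v) \left( \xi(v) - \xi(v_0) \right) + B(v) \cdot (v - v_0),
\end{equation*}
where $A(v) \to \frac{\partial h}{\partial u}(\xi(v_0), v_0) \neq 0$ and $B(v) \to \nabla_v h(\xi(v_0), v_0)$ as $v \to v_0$, by continuity of $\xi$ and of the derivatives of $h$. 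Solving for $\xi(v) - \xi(v_0)$ simultaneously proves $\xi$ is differentiable at $v_0$ and yields the familiar formula
\begin{equation*}
\nabla \xi(v) = - \frac{\nabla_v h(\xi(v), v)}{\frac{\partial h}{\partial u}(\xi(v), v)}.
\end{equation*}

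The final step is an induction on $k$ to upgrade $C^1$ to $C^k$. Given the formula for $\nabla \xi$, the inductive hypothesis $\xi \in C^j$ (for some $1 \le j < k$) implies that the right-hand side is a composition of the $C^{k-1}$ partial derivatives of $h$ with the $C^j$ map $v \mapsto (\xi(v), v)$, hence itself $C^j$; consequently $\nabla \xi \in C^j$ and $\xi \in C^{j+1}$. Iterating up to $j = k-1$ delivers $\xi \in C^k$. I expect the main obstacle to be not any single step but the bookkeeping in this induction: one must confirm that quotients of $C^j$ functions remain $C^j$ wherever the denominator is nonzero (which is automatic here by continuity, since $\frac{\partial h}{\partial u}$ stays bounded below by $c$ on the chosen neighbourhood), and that the domain $V$ selected at the outset does not need to shrink further as $j$ increases. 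Beyond this, every ingredient — monotonicity, the IVT, and the implicit differentiation identity — is standard real analysis.
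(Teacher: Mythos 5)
Your proof is correct and complete. Note, however, that the paper does not prove this statement at all: the IFT is stated in \S\ref{sub:analysis} purely as a standard tool imported from analysis, so there is no in-paper argument to compare against. Your route --- forcing strict monotonicity of $u \mapsto h(u,v)$ from the sign of $\frac{\partial h}{\partial u}$ near the origin, extracting existence and uniqueness of $\xi(v)$ from the intermediate value theorem, obtaining differentiability from the integral mean value form $0 = A(v)\left(\xi(v)-\xi(v_0)\right) + B(v)\cdot(v-v_0)$ with $A(v)$ bounded away from zero, and bootstrapping to $C^k$ by induction on the formula $\nabla\xi = -\nabla_v h \,/\, \frac{\partial h}{\partial u}$ --- is the standard elementary proof for a scalar equation in one solved variable, and every step is sound. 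It is worth remarking that the paper's introduction explicitly mentions ``the intermediate value theorem for monotone functions'' as an alternative to the IFT in its fixed-point arguments, so your proof in effect shows that, in this scalar setting, the two tools are the same device: the IFT is the IVT-plus-monotonicity argument packaged with the smoothness bootstrap. The only points deserving care, which you correctly flag, are that the segment used in the mean value step stays inside the convex box where $\frac{\partial h}{\partial u} \ge c$, and that the neighbourhood $V$ need not shrink during the induction because the denominator in the formula for $\nabla\xi$ is uniformly bounded below on $V$.
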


The IFT tells us we can solve $h(u,v) = 0$ for $u$,
the solution being $u = \xi(v)$.
As indicated in Fig.~\ref{fig:schemPoinComposition},
given $r \in \mathbb{R}$ let $P_R(r)$ be the $y$-value of the next intersection of the forward orbit of $(0,r)$ with $x=0$
(if the orbit never returns to $x=0$ leave $P_R(r)$ undefined).
Also write $\phi_t = (\varphi_t,\psi_t)$ (so that $\varphi_t$ and $\psi_t$ denote the $x$ and $y$-components of $\phi_t$ respectively). 
Then $P_R(r) = \psi_t(0,r)$, where $t > 0$ is defined via $\varphi_t(0,r) = 0$.
The following result follows from the smoothness of the flow and the IFT
(the conditions $f(0,r) \ne 0$ and $f(0,P_R(r)) \ne 0$ ensure the orbit intersects $x=0$ transversally).

\begin{lemma}
Suppose $F$ is $C^k$ ($k \ge 1$).
If $P_R(r)$ is well-defined, $f(0,r) \ne 0$, and $f(0,P_R(r)) \ne 0$, then $P_R(r)$ is $C^k$ at $r$.
\label{le:mapSmoothness}
\end{lemma}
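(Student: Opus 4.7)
The plan is to extract $P_R(r)$ as the $y$-coordinate of $\phi_t(0,r)$ evaluated at the first positive time that the orbit returns to $x=0$, and to show that this hitting time depends $C^k$ on $r$ via the implicit function theorem. Concretely, fix a base point $r_0$ at which $P_R(r_0)$ is well-defined, let $t_0 > 0$ be the associated first-return time, and define
\begin{equation*}
h(t,r) = \varphi_t(0,r),
\end{equation*}
where $\varphi_t$ denotes the $x$-component of the flow $\phi_t$. By Lemma~\ref{le:flowSmoothness}, $h$ is $C^k$ on its domain. We have $h(t_0,r_0) = 0$ and
\begin{equation*}
\frac{\partial h}{\partial t}(t_0,r_0) \;=\; \dot{\varphi}_{t_0}(0,r_0) \;=\; f\bigl(\varphi_{t_0}(0,r_0),\psi_{t_0}(0,r_0)\bigr) \;=\; f\bigl(0,P_R(r_0)\bigr),
\end{equation*}
which is nonzero by hypothesis. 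The IFT then produces a neighbourhood $V \subset \mathbb{R}$ of $r_0$ and a unique $C^k$ function $\tau : V \to \mathbb{R}$ with $\tau(r_0)=t_0$ and $\varphi_{\tau(r)}(0,r)=0$ for all $r\in V$.

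Next I would verify that $\tau(r)$ is in fact the first positive return time, so that $P_R(r) = \psi_{\tau(r)}(0,r)$ on $V$. The hypothesis $f(0,r_0)\ne 0$ means the orbit leaves $x=0$ transversally at $t=0$; by continuity of $f$ this persists for $r$ near $r_0$, so there is some uniform $\delta>0$ with $\varphi_t(0,r)\ne 0$ for $0<t<\delta$ and all $r$ sufficiently close to $r_0$. On the compact interval $[\delta,t_0-\delta]$ the orbit $\varphi_\bullet(0,r_0)$ is bounded away from $0$, and by continuous dependence the same holds for $r$ in a possibly smaller neighbourhood. Combining these two facts with $\tau(r)\to t_0$ shows that $\tau(r)$ is indeed the first positive zero of $t\mapsto \varphi_t(0,r)$, so it agrees with the return time defining $P_R$.

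Finally, $P_R(r)=\psi_{\tau(r)}(0,r)$ is a composition of the $C^k$ functions $(t,r)\mapsto \psi_t(0,r)$ (from Lemma~\ref{le:flowSmoothness}) and $\tau$, hence $C^k$ at $r_0$. Since $r_0$ was an arbitrary point at which the hypotheses hold, the lemma follows.

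The only genuinely delicate step is the localisation argument that upgrades the IFT-produced time $\tau(r)$ to the actual first-return time; the transversality at the initial point ($f(0,r)\ne 0$) is essential there, without which one could pick up spurious tangential intersections near $t=0$. The computation of $\partial h/\partial t$ and the appeal to smoothness of the flow are otherwise entirely routine.
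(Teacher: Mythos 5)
Your proposal is correct and follows exactly the route the paper intends: the paper offers only the one-line justification that the lemma ``follows from the smoothness of the flow and the IFT,'' with the two transversality hypotheses ensuring the intersections with $x=0$ are transversal, and your argument is precisely the fleshed-out version of that (IFT applied to $\varphi_t(0,r)$ at the return point, plus the localisation step using $f(0,r)\ne 0$ to identify the implicit solution with the first-return time). No discrepancy to report.
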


Throughout this paper we use big-O and little-o notation
to describe higher order terms in series expansions \cite{De81}.
For functions $G, H : \mathbb{R}^m \to \mathbb{R}$ these are defined as follows:
if ${\displaystyle \limsup_{z \to 0} \left| \tfrac{G(z)}{H(z)} \right|}$ is bounded then $G(z) = \cO(H(z))$,
and if ${\displaystyle \lim_{z \to 0} \tfrac{G(z)}{H(z)} = 0}$ then $G(z) = \co(H(z))$.
For example,
\begin{align}
\re^z = 1 + z + \tfrac{1}{2} z^2 + \cO \left( z^3 \right)
= 1 + z + \tfrac{1}{2} z^2 + \co \left( z^2 \right).
\nonumber
\end{align}
For multi-variable series ($m \ge 2$) we usually
use some power of the $1$-norm of $z$ for the function $H$.

\subsection{The return map about a focus}
\label{sub:focus}

Here we consider \eqref{eq:smoothLemmasODE} assuming
\begin{equation}
f(0,0) = g(0,0) = 0,
\label{eq:focusEqCond}
\end{equation}
so that the origin is an equilibrium.
We suppose that the eigenvalues associated with the origin are complex:
\begin{equation}
{\rm eig}(\rD F(0,0)) = \lambda \pm \ri \omega, \quad
{\rm with~} \lambda \in \mathbb{R}, \omega > 0.
\label{eq:focusEigCond}
\end{equation}
If $\lambda \ne 0$ then the origin is a focus,
but here we also allow $\lambda = 0$.

\begin{figure}
\begin{center}
\includegraphics[width=8.6cm]{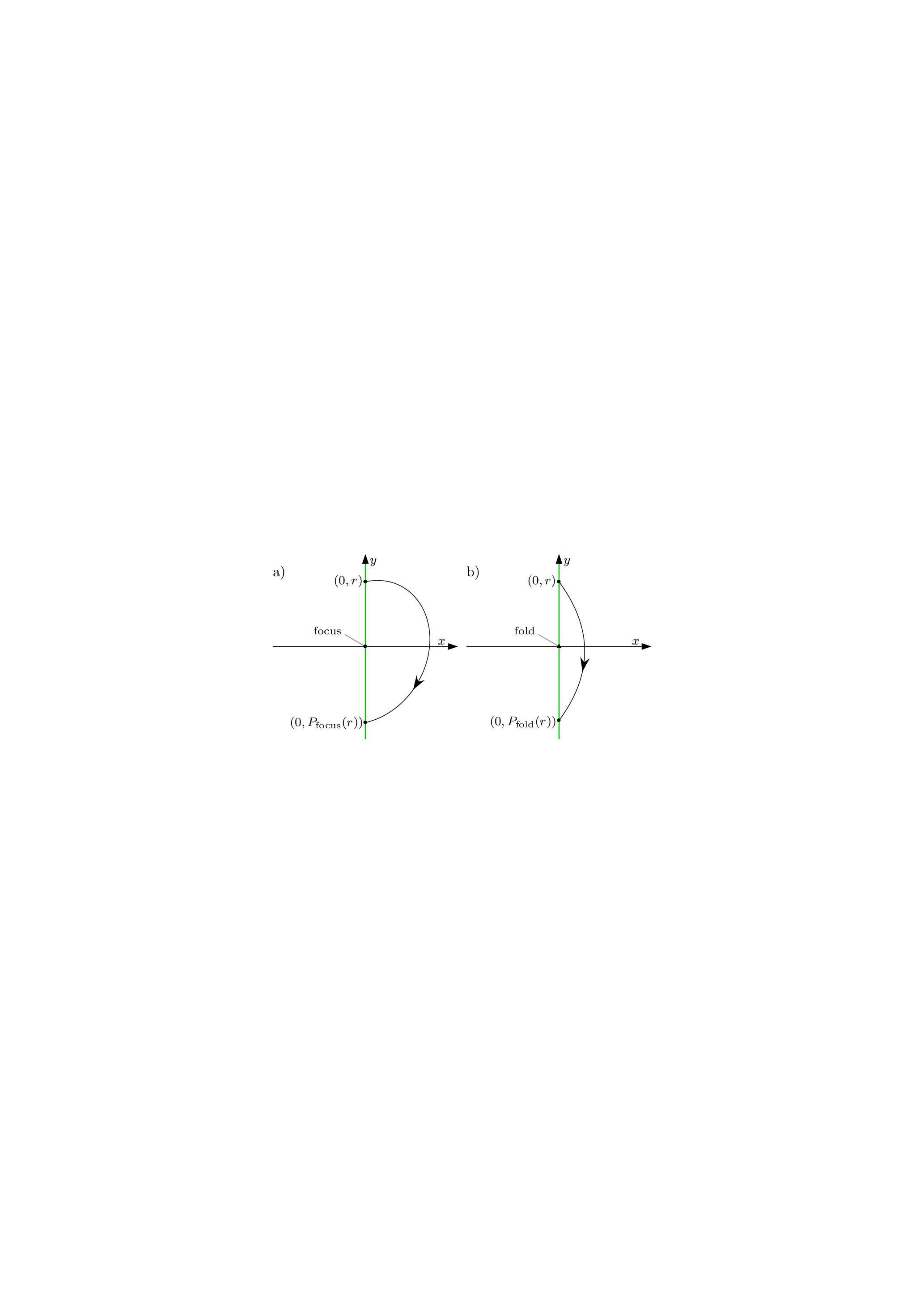}
\caption{
Typical orbits associated with the return maps $P_{\rm focus}$ \eqref{eq:focusP} (where the origin is a focus)
and $P_{\rm fold}$ \eqref{eq:foldP} (where the origin is an invisible fold).
\label{fig:schemPoinFocusFold}
} 
\end{center}
\end{figure}

For any $r > 0$, let $P_{\rm focus}(r)$ be the $y$-value
of the next intersection of the forward orbit of $(0,r)$ with $x = 0$, see Fig.~\ref{fig:schemPoinFocusFold}-a.
This is well-defined for small values of $r$.
Also let $T_{\rm focus}(r)$ denote the corresponding evolution time.
Thus the flow $\phi_t(x,y)$ satisfies
\begin{equation}
\phi_{T_{\rm focus}(r)}(0,r) = \left( 0, P_{\rm focus}(r) \right).
\nonumber
\end{equation}
We could assume orbits rotate clockwise,
so that the corresponding orbits evolve in $x > 0$ as discussed above,
but this assumption is not required here.
To state $P_{\rm focus}$ to second order we write
\begin{equation}
\rD F(0,0) = \begin{bmatrix} a_1 & a_2 \\ b_1 & b_2 \end{bmatrix},
\label{eq:focusDF}
\end{equation}
and let
\begin{align}
\chi_{\rm focus} &= \frac{1}
{\left( \lambda^2 + \omega^2 \right) \left( \lambda^2 + 9 \omega^2 \right)}
\bigg( k_1 \frac{\partial^2 f}{\partial x^2}
+ k_2 \frac{\partial^2 f}{\partial x \partial y} \nonumber \\
&\quad+ k_3 \frac{\partial^2 f}{\partial y^2}
+ \ell_1 \frac{\partial^2 g}{\partial x^2}
+ \ell_2 \frac{\partial^2 g}{\partial x \partial y}
+ \ell_3 \frac{\partial^2 g}{\partial y^2} \bigg) \bigg|_{x=y=0} \,,
\label{eq:focustau}
\end{align}
where
\begin{equation}
\begin{split}
k_1 &= -a_2 \left( 2 a_1 b_2 - 3 a_2 b_1 - b_2^2 \right), \\
k_2 &= -a_2 b_1 (4 a_1 + b_2) + a_1 b_2 (2 a_1 - b_2), \\
k_3 &= \frac{b_1 (2 a_1^2 + 7 a_1 b_2 - 3 a_2 b_1)}{2} -
\frac{b_2^2 (2 a_1^2 - a_1 b_2 + a_2 b_1)}{2 a_2}, \\
\ell_1 &= -a_2^2 (a_1 + b_2), \\
\ell_2 &= a_2 (2 a_1^2 - a_1 b_2 + 3 a_2 b_1), \\
\ell_3 &= -\frac{a_1 \left( 2 a_1^2 - 3 a_1 b_2 + b_2^2 \right)}{2} - \frac{a_2 b_1 \left(5 a_1 - b_2 \right)}{2}.
\end{split}
\label{eq:focusk1tol3}
\end{equation}

\begin{lemma}
Consider \eqref{eq:smoothLemmasODE} where $F$ is $C^2$.
Suppose \eqref{eq:focusEqCond} and \eqref{eq:focusEigCond} are satisfied.
Then
\begin{align}
P_{\rm focus}(r) &= -\re^{\frac{\lambda \pi}{\omega}} r + 
\re^{\frac{\lambda \pi}{\omega}} \left( \re^{\frac{\lambda \pi}{\omega}} + 1 \right) \chi_{\rm focus} r^2 \nonumber \\
&\quad+ \co \left( r^2 \right),
\label{eq:focusP} \\
T_{\rm focus}(r) &= \frac{\pi}{\omega} + \cO(r).
\label{eq:focusT}
\end{align}
\label{le:focus}
\end{lemma}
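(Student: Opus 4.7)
The approach is to expand the flow $\phi_t(0,r)$ of \eqref{eq:smoothLemmasODE} in powers of $r$, apply the IFT to determine $T_{\rm focus}(r)$ from $\varphi_{T_{\rm focus}(r)}(0,r)=0$, and substitute to obtain $P_{\rm focus}(r)=\psi_{T_{\rm focus}(r)}(0,r)$ to order $r^2$. By Lemma \ref{le:flowSmoothness} the flow is $C^2$, so I would write
\[
\phi_t(0,r) = r\,\phi_t^{(1)} + r^2\,\phi_t^{(2)} + \co(r^2),
\]
where $\phi_t^{(1)}$ solves the linearisation $\dot u = \rD F(0,0)\,u$ with $u(0)=(0,1)^T$, and $\phi_t^{(2)}$ is given by the variation-of-constants formula $\phi_t^{(2)} = \int_0^t \re^{(t-s)\rD F(0,0)}\,Q(\phi_s^{(1)})\,ds$, with $Q$ the quadratic Taylor part of $F$ at the origin.

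For the linear flow I would use the identity $\re^{tM} = \re^{\lambda t}\bigl[\cos(\omega t)I + \omega^{-1}\sin(\omega t)(M-\lambda I)\bigr]$, valid for any $2\times 2$ matrix $M$ with eigenvalues $\lambda\pm\ri\omega$. Applied to $(0,1)^T$, this produces an explicit $\phi_t^{(1)}$ whose first component is proportional to $a_2\re^{\lambda t}\sin(\omega t)$ and vanishes first (for $t>0$) at $T_0=\pi/\omega$, where the second component equals $-\re^{\lambda\pi/\omega}$. Note $a_2\ne 0$ because complex eigenvalues of a real $2\times 2$ matrix force $a_2 b_1<0$, which also gives $\dot\varphi_{T_0}^{(1)}=-a_2\re^{\lambda\pi/\omega}\ne 0$. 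Lemma \ref{le:mapSmoothness} (or equivalently the IFT applied to $\varphi_t(0,r)/r$ near $(T_0,0)$) then yields a unique $C^1$ return time $T_{\rm focus}(r)=T_0 + T_1 r + \co(r)$ with $T_1=\varphi_{T_0}^{(2)}/(a_2\re^{\lambda\pi/\omega})$, establishing \eqref{eq:focusT}. Substituting into $\psi_{T_{\rm focus}(r)}(0,r)$ and using $\dot\psi_{T_0}^{(1)}=-b_2\re^{\lambda\pi/\omega}$ gives
\[
P_{\rm focus}(r) = -\re^{\lambda\pi/\omega}\,r + \bigl[\dot\psi_{T_0}^{(1)}\,T_1 + \psi_{T_0}^{(2)}\bigr]r^2 + \co(r^2),
\]
so the $r^2$ coefficient is a definite linear combination of the two components of $\phi_{T_0}^{(2)}$ with rational coefficients in $a_1,a_2,b_1,b_2$.

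The main obstacle is evaluating $\phi_{T_0}^{(2)}$ explicitly and massaging the bracketed coefficient into the factored form $\re^{\lambda\pi/\omega}(\re^{\lambda\pi/\omega}+1)\chi_{\rm focus}$. Using the identities for $\sin^2$, $\cos^2$, and $\sin\cos$, the forcing $Q(\phi_s^{(1)})$ can be written as $\re^{2\lambda s}$ times a linear combination of $1$, $\cos(2\omega s)$, and $\sin(2\omega s)$; multiplication by the trigonometric components of $\re^{(T_0-s)M}$ and integration from $0$ to $T_0$ then reduces every scalar contribution to an integral of the shape $\int_0^{T_0}\re^{\lambda s}\cos(k\omega s+\phi)\,ds$ with $k\in\{1,3\}$. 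These produce exactly the resonance denominators $\lambda^2+\omega^2$ and $\lambda^2+9\omega^2$ whose product appears in \eqref{eq:focustau}; the boundary values $\sin(k\pi)=0$ and $\cos(k\pi)=-1$ at $s=T_0$ ensure no further $\omega$-dependent factors survive. Regrouping the integrated expression by the six second partials $\partial^2 f/\partial x^2,\ldots,\partial^2 g/\partial y^2$ gives the coefficients $k_1,\ldots,\ell_3$ of \eqref{eq:focusk1tol3}, and factoring out $\re^{\lambda\pi/\omega}(\re^{\lambda\pi/\omega}+1)$ yields $\chi_{\rm focus}$. I expect essentially all of the technical difficulty to lie in this final bookkeeping; the upstream steps are routine smooth-flow perturbation theory together with a single application of the IFT.
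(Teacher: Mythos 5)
Your proposal is correct and follows essentially the same route as the paper's proof: a regular expansion of the flow in powers of $r$ (the paper phrases this as a blow-up $\tilde x = x/r$, $\tilde y = y/r$ followed by expansion in $r$, which is the same computation), the IFT for the return time with $T_1 = \Phi^{(1)}_{\pi/\omega}/(a_2\re^{\lambda\pi/\omega})$, the identity $P^{(1)} = -\tfrac{b_2}{a_2}\Phi^{(1)}_{\pi/\omega} + \Psi^{(1)}_{\pi/\omega}$, and variation of constants reducing everything to the integrals $\int_0^{\pi/\omega}\re^{\lambda s}\cos^{3-n}(\omega s)\sin^n(\omega s)\,ds$, which produce the denominators $(\lambda^2+\omega^2)(\lambda^2+9\omega^2)$ and the common factor $\re^{\lambda\pi/\omega}(\re^{\lambda\pi/\omega}+1)$ exactly as you describe. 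The remaining work is indeed only the bookkeeping that yields $k_1,\ldots,\ell_3$.
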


Lemma \ref{le:focus} is proved in Appendix \ref{app:lemmas} via brute-force asymptotic expansions of the flow.
The result is used in many places later in the paper,
but the $r^2$-term in \eqref{eq:focusP} is only required for HLBs 8 and 9.

\begin{figure}[b!]
\begin{center}
\includegraphics[width=8cm]{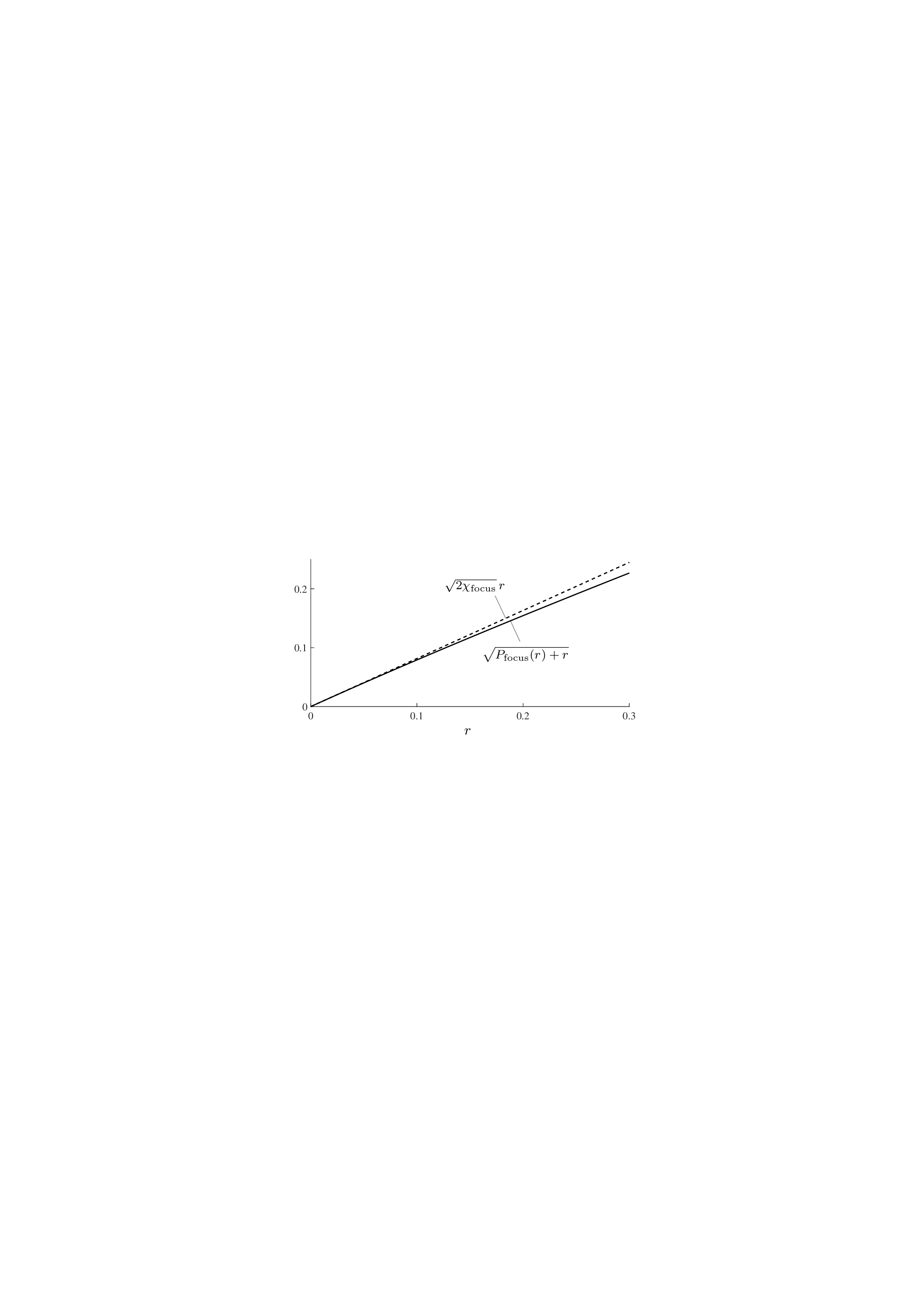}
\caption{
A numerical verification of Lemma \ref{le:focus} for the system \eqref{eq:focusExample}.
\label{fig:focusExample}
} 
\end{center}
\end{figure}

As a simple demonstration of Lemma \ref{le:focus}, consider the system
\begin{equation}
\begin{split}
\dot{x} &= y - y^2, \\
\dot{y} &= -x,
\end{split}
\label{eq:focusExample}
\end{equation}
for which $\lambda = 0$ and $\omega = 1$.
Here $\chi_{\rm focus} = -\frac{2 k_3}{9}$, where $k_3 = -\frac{3}{2}$.
By Lemma \ref{eq:focustau}, $P_{\rm focus}(r) = -r + 2 \chi_{\rm focus} r^2 + \co \left( r^2 \right)$.
Fig.~\ref{fig:focusExample} shows $\sqrt{P_{\rm focus}(r) + r}$
(computed by numerically simulating \eqref{eq:focusExample})
and we observe that its slope at $r=0$ indeed appears to be $\sqrt{2 \chi_{\rm focus}}$.

\vspace{2mm}		
\subsection{The return map about an invisible fold}
\label{sub:fold}

Here we suppose 
\begin{align}
f(0,0) &= 0, &
\frac{\partial f}{\partial y}(0,0) &> 0, &
g(0,0) &< 0.
\label{eq:foldCond}
\end{align}
If we consider \eqref{eq:smoothLemmasODE} only in $x > 0$ and treat $x=0$ as a switching manifold,
then the origin is an invisible fold (see Definition \ref{df:fold})
about which orbits rotate clockwise.

For any $r > 0$, let $P_{\rm fold}(r)$ be the $y$-value
of the next intersection of the forward orbit of $(0,r)$ with $x = 0$, see Fig.~\ref{fig:schemPoinFocusFold}-b,
and let $T_{\rm fold}(r)$ denote the corresponding evolution time.
These are well-defined for sufficiently small values of $r > 0$.
A brief calculation reveals that maximum $x$-value attained by the orbit
travelling from $(0,r)$ to $\left( 0, P_{\rm fold}(r) \right)$ is
$\frac{\frac{\partial f}{\partial y}(0,0)}{2 |g(0,0)|} \,y^2 + \cO \left( y^3 \right)$.
For this reason it is helpful to perform asymptotic expansions in $\sqrt{x}$ and $y$,
and so we write
\begin{equation}
\begin{split}
f(x,y) &= a_1 x + a_2 y + a_5 y^2 + \cO \left( \left( \sqrt{x} + |y| \right)^3 \right), \\
g(x,y) &= b_0 + b_2 y + \cO \left( \left( \sqrt{x} + |y| \right)^2 \right),
\end{split}
\label{eq:foldfg}
\end{equation}
where the coefficients have been labelled in a way that is consistent with other expressions in this paper.
Notice $a_2 > 0$ and $b_0 < 0$ by \eqref{eq:foldCond}.
Let
\begin{equation}
\sigma_{\rm fold} = \frac{a_1}{b_0} + \frac{b_2}{b_0} - \frac{a_5}{a_2},
\label{eq:foldsigma}
\end{equation}
and
\begin{widetext}
\begin{align}
\chi_{\rm fold} &= -\frac{a_1 (a_1 + b_2)(a_1 + 2 b_2)}{b_0^3}
+ \frac{(a_1 + b_2)(4 a_1 + 3 b_2)}{b_0^2} \,\sigma_{\rm fold}
- \frac{5 (a_1 + b_2)}{b_0} \,\sigma_{\rm fold}^2
+ \frac{40}{9} \,\sigma_{\rm fold}^3 \nonumber \\
&\quad+ \bigg[ \frac{1}{b_0} \left( \frac{a_1}{b_0} - \frac{a_5}{a_2} \right) \frac{\partial^2 f}{\partial x \partial y}
- \frac{1}{6 a_2} \left( \frac{2 a_1}{b_0} + \frac{2 b_2}{b_0} - \frac{5 a_5}{a_2} \right) \frac{\partial^3 f}{\partial y^3}
+ \frac{a_2}{b_0^2} \left( \frac{a_1}{b_0} + \frac{b_2}{b_0} \right) \frac{\partial g}{\partial x} \nonumber \\
&\quad+ \frac{1}{2 b_0} \left( \frac{a_1}{b_0} - \frac{b_2}{b_0} - \frac{2 a_5}{a_2} \right) \frac{\partial^2 g}{\partial y^2}
- \frac{a_2}{b_0^2} \,\frac{\partial^2 f}{\partial x^2}
+ \frac{1}{2 b_0} \,\frac{\partial^3 f}{\partial x \partial y^2}
- \frac{1}{8 a_2} \,\frac{\partial^4 f}{\partial y^4}
- \frac{a_2}{b_0^2} \,\frac{\partial^2 g}{\partial x \partial y}
+ \frac{1}{2 b_0} \,\frac{\partial^3 g}{\partial y^3} \bigg] \bigg|_{x = y = 0} \,.
\label{eq:foldtau}
\end{align}
\end{widetext}

\begin{lemma}
Consider \eqref{eq:smoothLemmasODE} where $F$ is $C^5$
and suppose \eqref{eq:foldCond} is satisfied.
Then
\begin{align}
P_{\rm fold}(r) &= -r + \frac{2 \sigma_{\rm fold}}{3} \,r^2 - \frac{4 \sigma_{\rm fold}^2}{9} \,r^3
+ \frac{2 \chi_{\rm fold}}{15} \,r^4 \nonumber \\
&\quad+ \co \left( r^4 \right),
\label{eq:foldP} \\
T_{\rm fold}(r) &= \frac{-2}{b_0} \,r + \cO \left( r^2 \right).
\label{eq:foldT}
\end{align}
\label{le:fold}
\end{lemma}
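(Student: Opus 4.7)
The plan is to reparametrize the orbit by its $y$-coordinate, reducing the problem to a scalar non-autonomous ODE, and then derive the series expansion of $P_{\rm fold}$ by solving this ODE perturbatively in $r$.

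First, since $g(0,0) = b_0 < 0$, continuity gives $g < 0$ on a neighbourhood of the origin, so $\dot y < 0$ along the relevant orbit for all small $r > 0$. Consequently $y$ is a strictly decreasing function of $t$ on the relevant time interval, and the orbit may be written as a graph $x = X(y;r)$ with $X(r;r) = 0$. The governing scalar equation is
\[
\frac{dX}{dy} = \frac{f(X,y)}{g(X,y)},
\]
and $P_{\rm fold}(r)$ is the value $y^\ast < 0$ for which $X(y^\ast;r) = 0$; the time of flight is recovered from $T_{\rm fold}(r) = \int_r^{y^\ast} \frac{dy}{g(X(y;r),y)}$.

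Second, I would Taylor-expand $f/g$ in $X$ and $y$ to sufficiently high order using \eqref{eq:foldfg}, available because $F \in C^5$. Since orbits through an invisible fold satisfy $X = \cO(y^2)$, I would treat $X$ as second-order and $y$ as first-order when bookkeeping. The leading equation $dX/dy = (a_2/b_0)\,y$ integrates to $X(y;r) = (a_2/(2 b_0))(y^2 - r^2) + \cO(r^3)$, recovering $P_{\rm fold}(r) = -r + \cO(r^2)$ at leading order. To extract higher-order coefficients I would write $X(y;r) = \sum_{j \ge 0} X_j(y;r)$ with $X_j = \cO(r^{j+2})$, substitute into the ODE, and match order by order. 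Inserting the ansatz $y^\ast = -r + q_2 r^2 + q_3 r^3 + q_4 r^4 + \cO(r^5)$ into $X(y^\ast;r) = 0$ and equating coefficients should reproduce $q_2 = 2 \sigma_{\rm fold}/3$, $q_3 = -4 \sigma_{\rm fold}^2/9$, and $q_4 = 2 \chi_{\rm fold}/15$. The existence and smoothness of $y^\ast = P_{\rm fold}(r)$ is confirmed by applying the implicit function theorem to the smooth quotient $\tilde X(y;r) = X(y;r)/(y - r)$, which has non-vanishing $y$-derivative at $(y,r) = (0,0)$ by the leading-order computation. Finally, the period formula follows by expanding the $T_{\rm fold}$ integral: at leading order $T_{\rm fold}(r) = (y^\ast - r)/b_0 + \cO(r^2) = -2 r / b_0 + \cO(r^2)$.

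The main obstacle is the bookkeeping required to extract the quartic coefficient $\chi_{\rm fold}$ given by \eqref{eq:foldtau}. That expression couples nearly every quadratic and cubic partial derivative of $f$ and $g$ through nonlinear back-reaction in the expansion of $X$: second-order ingredients such as the $b_2 y$ in $g$ and the $a_5 y^2$ in $f$ propagate into third- and fourth-order contributions through the $f/g$ division and through the self-consistent dependence of $X$ on them, producing the cubic-in-$\sigma_{\rm fold}$ terms visible at order $r^4$. Organizing the expansion so that $\sigma_{\rm fold}$ emerges as the clean coefficient at orders $r^2$ and $r^3$, and then isolating $\chi_{\rm fold}$ at order $r^4$, will require careful grouping of many contributions; by contrast, the lower-order coefficients follow almost immediately from the leading-order perturbative scheme.
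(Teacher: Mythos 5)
Your argument is sound and reaches the stated expansions by a route that differs from the paper's in its choice of independent variable. The paper's proof keeps time: it expands the flow components $\varphi_t(0,r)$ and $\psi_t(0,r)$ jointly in $r$ and $t$, removes the trivial root of $\varphi_t(0,r)=0$ by dividing by $t$, solves for $T_{\rm fold}(r)$ via the IFT, and then substitutes into $\psi$ to get $P_{\rm fold}$. You instead use $g<0$ near the origin to eliminate time entirely, reducing to the scalar equation $dX/dy=f/g$ for the orbit as a graph $x=X(y;r)$, obtaining $P_{\rm fold}(r)$ as the nontrivial zero of $X(\cdot\,;r)$ and recovering $T_{\rm fold}$ afterwards from $\int_r^{y^*}dy/g$. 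Your IFT step --- dividing $X$ by $y-r$, which costs one derivative and leaves a $C^4$ function whose $y$-derivative at the origin is $\tfrac{a_2}{2b_0}\ne 0$ --- is the exact analogue of the paper's division of $\varphi_t$ by $t$, and it correctly delivers both the existence of the return point and the $\co\left(r^4\right)$ remainder. Your scheme does reproduce the coefficients: with $X_0=\tfrac{a_2}{2b_0}\left(y^2-r^2\right)$, the order-$r^3$ correction evaluated at $y=-r$ equals $\tfrac{2a_2}{3b_0}\,\sigma_{\rm fold}\,r^3$, and matching against $X_0(-r+q_2r^2)=-\tfrac{a_2q_2}{b_0}r^3+\cO\left(r^4\right)$ gives $q_2=\tfrac{2\sigma_{\rm fold}}{3}$ as required. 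What your approach buys is a one-variable ODE hierarchy in place of the two-variable $(r,t)$ bookkeeping, at the cost of recovering the evolution time separately; what it loses is that the time expansion $T_{\rm fold}(r)=\tfrac{-2}{b_0}r+\tfrac{2\sigma_{\rm fold}}{3b_0}r^2+\cdots$ is no longer a by-product but a second computation. Like the paper, you defer the fourth-order coefficient $\chi_{\rm fold}$ in \eqref{eq:foldtau} to unstated bookkeeping; since the published proof likewise displays only the second-order terms and asserts that higher orders follow by including more terms, this omission is not a gap relative to the paper's own standard.
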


Lemma \ref{le:fold} is proved in Appendix \ref{app:lemmas} via brute-force
asymptotic expansions, except, for brevity, we have only stated enough terms to obtain $P_{\rm fold}$ to second order.
The full expression \eqref{eq:foldP} is obtained by simply including more terms in the expansions.
Indeed, while Lemma \ref{le:fold} is used below in several places,
the $r^4$-term in \eqref{eq:foldP} is only required for HLB 10.

It is interesting to note that quantity $\frac{P_{\rm fold}(r) + r}{r}$ has the same form
as the Poincar\'e map associated with a Hopf bifurcation, or a perturbed centre more generally \cite{AnLe71,DuLl06}.
For \eqref{eq:foldP} the $\cO \left( \left( \sqrt{x} + |y| \right)^2 \right)$ terms in $f$
and $\cO \left( \left( \sqrt{x} + |y| \right)^1 \right)$ terms in $g$
behave like linear terms and determine the value of $\sigma_{\rm fold}$.
If $\sigma_{\rm fold} = 0$, then the $r^2$ and $r^3$ terms of \eqref{eq:foldP} are both zero
in the same way that the first non-zero Lyapunov constant of a perturbed centre corresponds to an odd power of $r$.
For this reason the next two orders of terms contribute to the value of $\chi_{\rm fold}$,
and so, in particular, both $\frac{\partial g}{\partial x}$ and $\frac{\partial^4 f}{\partial y^4}$ appear in \eqref{eq:foldtau}.

\begin{figure}[b!]
\begin{center}
\includegraphics[width=8cm]{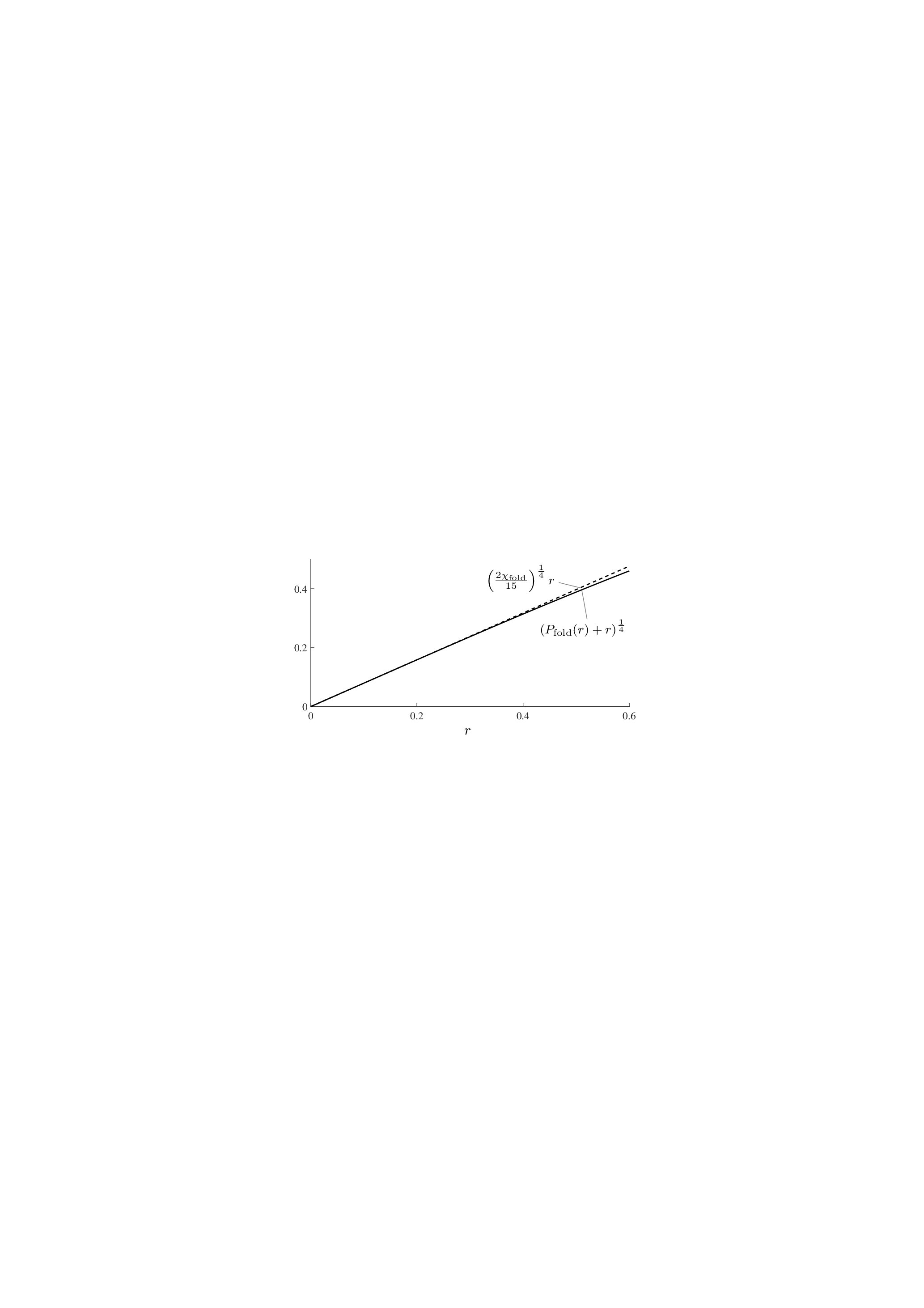}
\caption{
A numerical verification of Lemma \ref{le:fold} for the system \eqref{eq:foldExample}.
\label{fig:foldExample}
} 
\end{center}
\end{figure}

As a minimal example, consider
\begin{equation}
\begin{split}
\dot{x} &= y - y^4, \\
\dot{y} &= -1,
\end{split}
\label{eq:foldExample}
\end{equation}
for which $\sigma_{\rm fold} = 0$ and $\chi_{\rm fold} = 3$.
By Lemma \ref{le:fold}, $P_{\rm fold}(r) = -r + \frac{2 \chi_{\rm fold}}{15} \,r^4 + \co \left( r^4 \right)$.
Fig.~\ref{fig:foldExample} shows $\left( P_{\rm fold}(r)+r \right)^{\frac{1}{4}}$
(obtained by numerical simulation) and its linear appromixation.

\subsection{The return map for an affine system}
\label{sub:affine}

Here we suppose \eqref{eq:smoothLemmasODE} has the form
\begin{equation}
\begin{bmatrix} \dot{x} \\ \dot{y} \end{bmatrix} =
\begin{bmatrix} a_1 x + a_2 y \\ b_0 + b_1 x + b_2 y \end{bmatrix},
\label{eq:affineODE}
\end{equation}
and derive a return map on $x=0$ assuming that the Jacobian matrix
\begin{equation}
A = \begin{bmatrix} a_1 & a_2 \\ b_1 & b_2 \end{bmatrix},
\end{equation}
has complex eigenvalues:
\begin{equation}
{\rm eig}(A) = \lambda \pm \ri \omega, \quad
{\rm with~} \lambda \in \mathbb{R}, \omega > 0.
\label{eq:affineEigCond}
\end{equation}
The formulas obtained here are used in several places below.
This is because the dynamics near a BEB are well-approximated by a piecewise-linear system,
meaning each smooth component is affine (linear plus a constant term).
For the affine systems below, we are always able to find a coordinate change
that removes the constant term in the $\dot{x}$ equation, as in \eqref{eq:affineODE}.
For HLBs 2 and 13 we require the eigenvalues of $A$ to be real-valued.
This is remedied by using a purely imaginary value for $\omega$, but here we assume $\omega > 0$.

The assumption $\omega > 0$ ensures $A$ is invertible, and so \eqref{eq:affineODE} has the unique equilibrium
\begin{equation}
\begin{bmatrix} x^* \\ y^* \end{bmatrix} =
\frac{\kappa}{\omega} \begin{bmatrix} a_2 \\ -a_1 \end{bmatrix},
\label{eq:affineEq}
\end{equation}
where
\begin{equation}
\kappa = \frac{b_0 \omega}{\lambda^2 + \omega^2}.
\label{eq:affinekappa}
\end{equation}
Since the $\dot{x}$ equation has no constant term and $a_2 \ne 0$ (a consequence of $\omega > 0$),
if $b_0 \ne 0$ then
the system has a unique fold at the origin (treating $x=0$ as a switching manifold).

We now assume $a_2 > 0$, so that for any $r > 0$ the forward orbit of $(0,r)$ immediately enters $x > 0$.
For any $r > 0$, we let $P_{\rm affine}(r)$ be the $y$-value
of the next intersection of the forward orbit of $(0,r)$ with $x = 0$.
If this orbit does not reintersect $x=0$ we say $P_{\rm affine}(r)$ is undefined.
Also we let $T_{\rm affine}(r)$ denote the corresponding evolution time.
The functions $P_{\rm affine}$ and $T_{\rm affine}$ depend on
$a_1$, $a_2$, $b_0$, $b_1$, and $b_2$,
but, as we will see,
they are completely determined by the values of $\lambda$, $\omega$, and $b_0$.
Accordingly we write $P_{\rm affine}(r;\lambda,\omega,b_0)$ and $T_{\rm affine}(r;\lambda,\omega,b_0)$.

\begin{figure}[b!]
\begin{center}
\includegraphics[width=8cm]{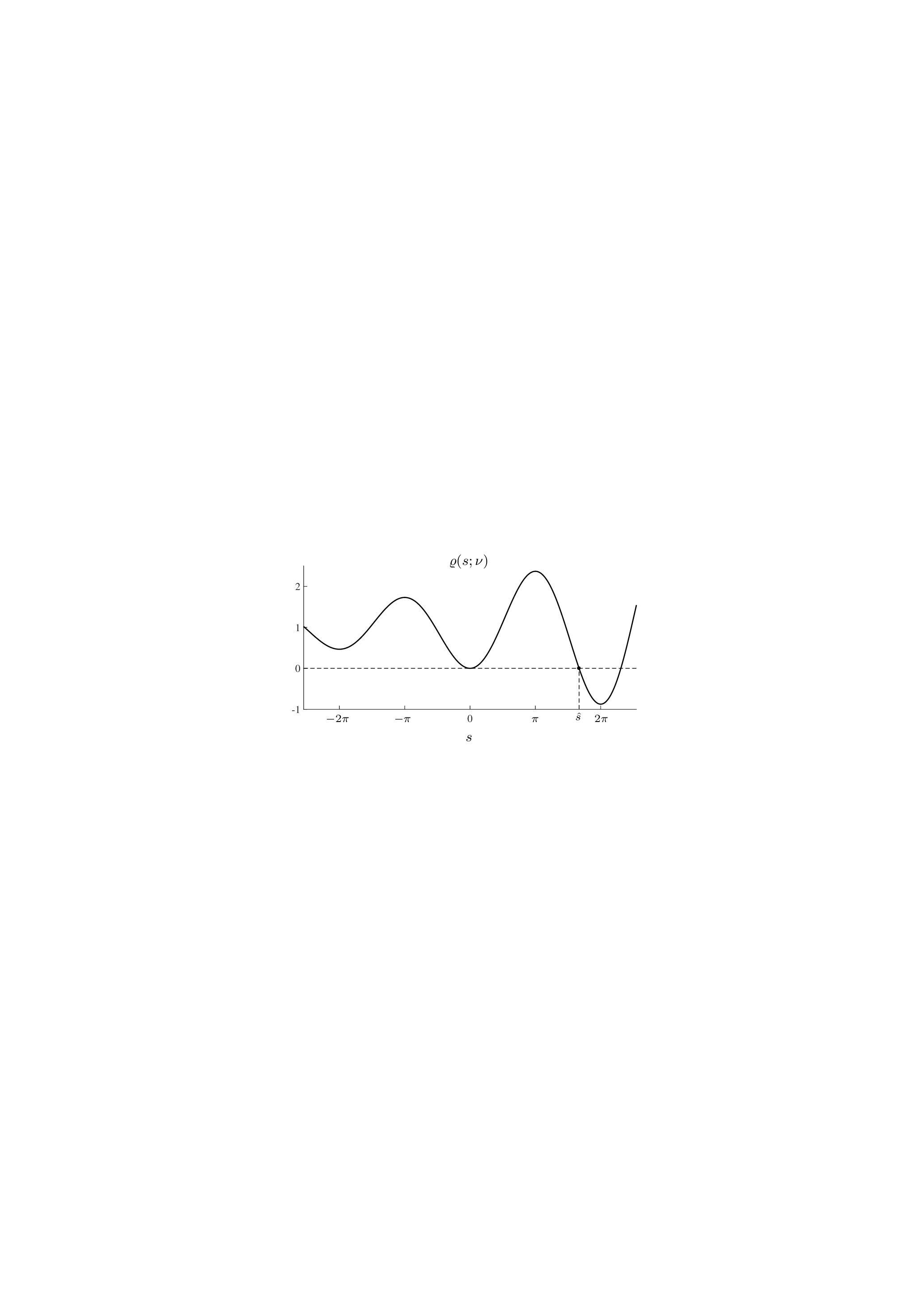}
\caption{
\label{fig:schemAuxFunc}
The auxiliary function \eqref{eq:auxFunc} with $\nu = 0.1$.
} 
\end{center}
\end{figure}

We cannot obtain an explicit closed form expression for $T_{\rm affine}$,
so write $P_{\rm affine}$ and $T_{\rm affine}$ implicitly via the auxiliary function
\begin{equation}
\varrho(s;\nu) = 1 - \re^{\nu s} \left( \cos(s) - \nu \sin(s) \right),
\label{eq:auxFunc}
\end{equation}
which was used effectively in Chapter VIII of \cite{AnVi66}.
Fig.~\ref{fig:schemAuxFunc} provides a plot of $\varrho(s;\nu)$ for a typical value of $\nu > 0$.
The function $\varrho$ attains its local maximum and minimum values at integer multiples of $\pi$ because
\begin{equation}
\frac{\partial \varrho}{\partial s} = \left( 1 + \nu^2 \right) \re^{\nu s} \sin(s).
\label{eq:auxFuncDeriv}
\end{equation}
For any $\nu > 0$, we have $\varrho(s;\nu) > 0$ for all nonzero $s \le \pi$,
and there exists unique $\hat{s} = \hat{s}(\nu) \in (\pi,2 \pi)$ such that
\begin{equation}
\varrho(\hat{s};\nu) = 0.
\label{eq:shat}
\end{equation}
The case $\nu < 0$ can be understood through the symmetry relation $\varrho(-s,-\nu) = \varrho(s,\nu)$.

\begin{lemma}
Consider \eqref{eq:affineODE} with $a_2 > 0$ and $b_0 \ne 0$,
and suppose \eqref{eq:affineEigCond} is satisfied.
For all $r > 0$, $T = T_{\rm affine}(r;\lambda,\omega,b_0)$ and $P = P_{\rm affine}(r;\lambda,\omega,b_0)$ satisfy
\begin{align}
r &= \frac{-\kappa \re^{-\lambda T} \varrho \left( \omega T; \frac{\lambda}{\omega} \right)}{\sin(\omega T)}, \label{eq:affineT2} \\
P &= \frac{\kappa \re^{\lambda T} \varrho \left( \omega T; -\frac{\lambda}{\omega} \right)}{\sin(\omega T)}. \label{eq:affineP2}
\end{align}
If $b_0 < 0$ [$b_0 > 0$] then $\frac{d T}{d r} > 0$ [$\frac{d T}{d r} < 0$] for all $r > 0$; also
\begin{equation}
\frac{d P}{d r} = \frac{r}{P} \,\re^{2 \lambda T}.
\label{eq:PaffineDeriv2}
\end{equation}
As $r \to \infty$, $T \to \frac{\pi}{\omega}$
and $\frac{P}{r} \to -\re^{\frac{\lambda \pi}{\omega}}$.
\begin{enumerate}[label=\Roman{*}),ref=\Roman{*}]
\item
\label{it:b0neg}
If $b_0 < 0$, then as $r \to 0$, $T \to 0$ and $P \to 0$.
\item
\label{it:b0poslambdaneg}
If $b_0 > 0$ and $\lambda < 0$,
let $\hat{r} = -\frac{b_0}{\omega} \,\re^{\nu \hat{s}(\nu)} \sin \left( \hat{s}(\nu) \right)$,
where $\nu = -\frac{\lambda}{\omega}$.
Then $T(\hat{r}) = \frac{\hat{s}(\nu)}{\omega}$, $P(\hat{r}) = 0$,
and $P$ is undefined for $r \in (0,\hat{r})$ ($P$ is defined in all other cases).
\item
\label{it:b0poslambdapos}
If $b_0 > 0$ and $\lambda > 0$,
then as $r \to 0$, $T \to \frac{\hat{s}(\nu)}{\omega}$
and $P \to \frac{b_0}{\omega} \,\re^{\nu \hat{s}(\nu)} \sin \left( \hat{s}(\nu) \right)$,
where $\nu = \frac{\lambda}{\omega}$.
\end{enumerate}
\label{le:affine}
\end{lemma}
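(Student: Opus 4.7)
The plan is to translate \eqref{eq:affineODE} to equilibrium coordinates $(u,v) = (x-x^*, y-y^*)$, reducing it to the linear system $\dot{w} = Aw$. Since $A$ has eigenvalues $\lambda \pm \ri\omega$, the Cayley--Hamilton theorem gives
\[
\re^{At} = \re^{\lambda t}\left[\cos(\omega t)\,I + \frac{\sin(\omega t)}{\omega}(A-\lambda I)\right],
\]
and applying this to the initial condition $(u_0,v_0)=(-x^*, r-y^*)$ will give explicit expressions for $x(t)$ and $y(t)$ along the orbit starting from $(0,r)$.

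Imposing $x(T)=0$ and simplifying with $\mathrm{tr}(A)=2\lambda$, $\det(A)=\lambda^2+\omega^2$, and the definition of $\kappa$, I expect the resulting linear combination of $\cos(\omega T)$ and $\sin(\omega T)$ terms to collapse into
\[
\kappa\,\varrho\!\left(\omega T;\tfrac{\lambda}{\omega}\right) + r\,\re^{\lambda T}\sin(\omega T) = 0,
\]
which rearranges to \eqref{eq:affineT2}. A parallel reduction of $y(T)$ will produce \eqref{eq:affineP2}, the twist being that the relevant parameter of $\varrho$ is $-\lambda/\omega$. Rather than differentiate these transcendental expressions, I will obtain \eqref{eq:PaffineDeriv2} from Liouville's identity $\det(\rD\phi_T) = \re^{\mathrm{tr}(A)T} = \re^{2\lambda T}$: writing $\rD\phi_T(0,1)^T = (X,Y)^T$ and using that $\rD\phi_T F(0,r)^T = F(0,P)^T$, the standard chain-rule expression $P'(r) = Y - (b_0+b_2 P)\,X/(a_2 P)$ for the derivative of a Poincar\'e map reduces, via the determinant identity applied to $[(0,1)^T, F(0,r)^T]$, to $P'(r) = (r/P)\,\re^{2\lambda T}$ after cancellation.

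Direct differentiation of \eqref{eq:affineT2} combined with $\partial_s\varrho(s;\nu) = (1+\nu^2)\re^{\nu s}\sin(s)$ then telescopes to the clean formula $dr/dT = -\omega\kappa\,\varrho(\omega T;-\lambda/\omega)/\sin^2(\omega T)$, whose sign, using that $\varrho(\cdot;-\lambda/\omega)>0$ on the relevant $T$-range in each case, tracks $-\mathrm{sign}(b_0)$, establishing the monotonicity claim. For $r\to\infty$, \eqref{eq:affineT2} can only blow up by $\sin(\omega T)\to 0$ with $\varrho(\omega T;\lambda/\omega)$ bounded away from zero, forcing $\omega T\to\pi$; evaluating $P/r$ at $\omega T=\pi$, where $\varrho(\pi;\pm\nu) = 1 + \re^{\pm\nu\pi}$, then gives $-\re^{\lambda\pi/\omega}$. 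The three subcases for $r\to 0$ are distinguished by which factor of $\varrho$ supplies the vanishing: in case I the factor $\varrho(\omega T;\lambda/\omega)$ vanishes as $T\to 0$; in case III it vanishes again at $\omega T=\hat{s}(\lambda/\omega)$; in case II the companion $\varrho(\omega T;-\lambda/\omega)$ in the numerator of $P$ is the one that vanishes, at $\omega T=\hat{s}(-\lambda/\omega)$, so it is $P$ rather than $r$ that reaches zero, at the positive value $\hat{r}$. Substituting $\omega T=\hat{s}(-\lambda/\omega)$ into \eqref{eq:affineT2} and using the identity $\re^{\nu\hat{s}}(\cos\hat{s}-\nu\sin\hat{s})=1$ that defines $\hat{s}$ will extract the stated closed form for $\hat{r}$.

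The hardest step will be the algebraic collapse in the second paragraph: verifying that the raw entries of $\re^{AT}$, which initially depend on $a_1,a_2,b_1,b_2$ individually, really do assemble into $\varrho$ with parameter $\pm\lambda/\omega$; the entry-level dependence cancels only because of $a_1+b_2=2\lambda$ and $a_1 b_2 - a_2 b_1 = \lambda^2+\omega^2$. The subtlest conceptual point will be case II: one must confirm that for $r\in(0,\hat{r})$ the orbit starting at $(0,r)$ truly spirals inward around the admissible focus $(x^*, y^*)\in\Omega_R$ without ever reintersecting $x=0$, so that $P$ is genuinely undefined (and not merely missed by the formula). This will follow by showing \eqref{eq:affineT2} has no $T>0$ solution for such $r$, which is a consequence of the range/monotonicity analysis already carried out.
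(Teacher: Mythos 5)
Your proposal is correct and follows the paper's own route almost step for step: explicit exponentiation of the translated affine system, collapse of the matrix entries into the auxiliary function $\varrho$ using $a_1+b_2=2\lambda$ and $\det A=\lambda^2+\omega^2$, and the same case analysis governed by the zeros of $\varrho$. The one genuine departure is your derivation of \eqref{eq:PaffineDeriv2}: the paper obtains it by differentiating the implicit relations \eqref{eq:affineT2}--\eqref{eq:affineP2} with the identity \eqref{eq:auxFuncFuncDeriv}, whereas you get it from Liouville's formula $\det\left(\rD\phi_T\right)=\re^{2\lambda T}$ together with the equivariance $\rD\phi_T\,F(0,r)=F(0,P)$; your route is slightly cleaner and makes transparent why only ${\rm trace}(A)=2\lambda$ and the boundary values of $f$ enter, at the cost of invoking the variational machinery. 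One ingredient you use tacitly and should state explicitly: the localisation $\omega T\in(0,\pi)$ for $b_0<0$ and $\omega T\in(\pi,2\pi)$ for $b_0>0$, which the paper justifies geometrically (the orbit completes less, respectively more, than half a revolution about $(x^*,y^*)$ according to the sign of $x^*$). This is what fixes the sign of $\sin(\omega T)$, guarantees the positivity of $\varrho\left(\omega T;-\tfrac{\lambda}{\omega}\right)$ on the relevant range in your monotonicity argument, and selects $\hat{s}(\nu)$ as the correct zero in cases II and III.
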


\begin{figure*}
\begin{center}
\includegraphics[width=16.6cm]{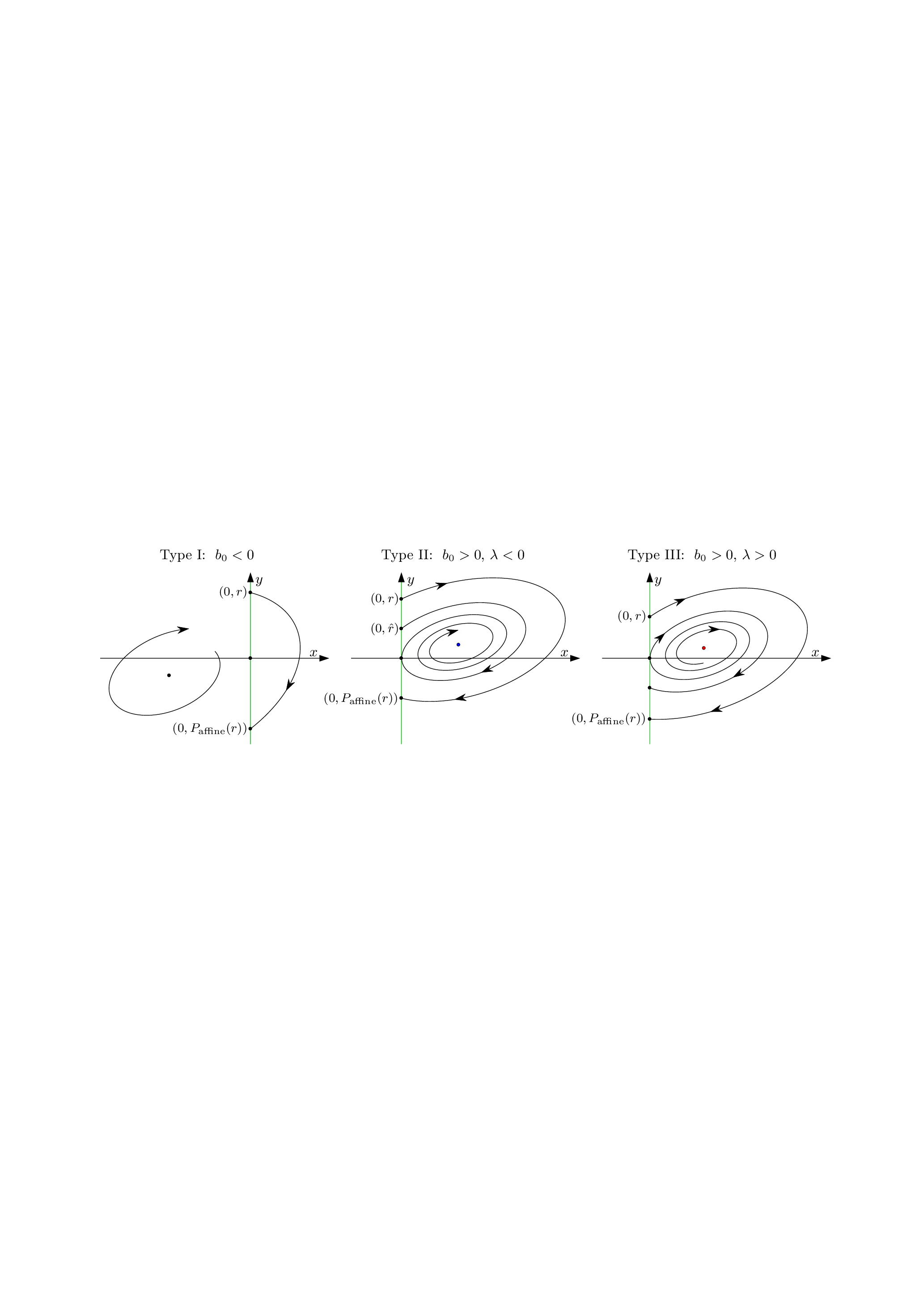}
\caption{
Phase portraits of the affine system \eqref{eq:affineODE}.
The return map $P_{\rm affine}$ conforms to one of three types, see Lemma \ref{le:affine}.
\label{fig:schemPoinAffine}
} 
\end{center}
\end{figure*}

Lemma \ref{le:affine} is proved in Appendix \ref{app:lemmas} via direct calculations.
There are three cases for the qualitative nature of $T_{\rm affine}$ and $P_{\rm affine}$,
see Fig.~\ref{fig:schemPoinAffine} (this ignores the special case $\lambda = 0$).
If $b_0 < 0$, we say $T_{\rm affine}$ and $P_{\rm affine}$ are of Type I.
The equilibrium $(x^*,y^*)$ lies in $x<0$,
so $(x^*,y^*)$ is virtual and the origin is an invisible fold
(in the context of the Filippov system \eqref{eq:FilippovBackg}).
If $b_0 > 0$ and $\lambda < 0$, we say $T_{\rm affine}$ and $P_{\rm affine}$ are of Type II.
Here $(x^*,y^*)$ is an admissible stable focus and the origin is a visible fold.
For all $r \in (0,\hat{r})$, $P_{\rm affine}$ is undefined because the forward orbit of $(0,r)$ converges
to $(x^*,y^*)$ without reintersecting $x=0$.
Finally if $b_0 > 0$ and $\lambda > 0$, we say $T_{\rm affine}$ and $P_{\rm affine}$ are of Type III.
Here $(x^*,y^*)$ is an admissible unstable focus and the origin is a visible fold.

Lemma \ref{le:affine} assumes $b_0 \ne 0$.
If $b_0 = 0$, then $P_{\rm affine}$ and $T_{\rm affine}$
are well-defined, but the formulas \eqref{eq:affineT2} and \eqref{eq:affineP2}
are ill-posed because they are of the form $\frac{0}{0}$.
In this case we can use Lemma \ref{le:focus}.
Taking $r \to \infty$ is analogous to taking $b_0 \to 0$,
and for this reason the $r \to \infty$ asymptotics in Lemma \ref{le:affine}
match the leading order terms of \eqref{eq:focusP} and \eqref{eq:focusT}.

\section{Boundary equilibrium bifurcations in continuous systems}
\setcounter{equation}{0}
\setcounter{figure}{0}
\setcounter{table}{0}
\label{sec:pwsc}

Here we consider piecewise-smooth systems of the form
\begin{equation}
\begin{bmatrix} \dot{x} \\ \dot{y} \end{bmatrix} =
\begin{cases} F_L(x,y;\mu), & x \le 0, \\ F_R(x,y;\mu), & x \ge 0. \end{cases}
\label{eq:pwscODE}
\end{equation}
The non-strict inequalities mean that
$F_L(0,y;\mu) = F_R(0,y;\mu)$, for all values of $y$ and $\mu$.
That is \eqref{eq:pwscODE} is continuous on $x=0$ and does not exhibit sliding motion.

As the value of $\mu$ is varied, a BEB (boundary equilibrium bifurcation) occurs
when a regular equilibrium collides with $x=0$.
For clarity, let us suppose the bifurcation occurs at the origin when $\mu = 0$.
For the continuous system \eqref{eq:pwscODE},
at $\mu = 0$ the origin is a regular equilibrium
of both left and right half-systems.
Assuming certain genericity conditions are satisfied,
each half-system has a unique regular equilibrium locally.
Each equilibrium is admissible for either small $\mu < 0$ or small $\mu > 0$
(we say the equilibrium is admissible for `one sign of $\mu$').
Consequently there are two cases:
the equilibria may be admissible for different signs of $\mu$
(called {\em persistence} because a single equilibrium appears to persist)
or the same sign of $\mu$
(called a {\em nonsmooth-fold} as it resembles a saddle-node bifurcation) \cite{DiBu08,Si10}.
The distance of each equilibrium from $x=0$ is asymptotically proportional to $|\mu|$.

Other invariant sets can be created in BEBs \cite{Le06,LeNi04}.
These include limit cycles (as discussed below)
and, for systems of three or more dimensions, chaotic sets \cite{DiNo08,Si16c}.
The diameter of all bounded invariant sets (except equilibria)
is asymptotically proportional to $|\mu|$.

For two-dimensional systems, equilibria and limit cycles are the only possible bounded invariant sets
and there are ten topologically distinct BEBs (assuming genericity).
These can be grouped into the following five scenarios:
(i) persistence with the equilibria being of the same stability,
(ii) persistence with different stability,
(iii) persistence with different stability and a limit cycle,
(iv) nonsmooth-fold, and
(v) nonsmooth-fold with a limit cycle, see \cite{Si10,SiMe12}.
We view scenario (iii) as Hopf-like,
for which there are two topologically distinct cases.
Either both equilibria are foci, HLB 1,
or one is a focus and the other is a node, HLB 2.
In both cases one equilibrium is attracting and the other is repelling.
For HLB 1, the stability of the limit cycle is determined by the sign of
$\frac{\lambda_L}{\omega_L} + \frac{\lambda_R}{\omega_R}$,
where the eigenvalues associated with the equilibria are
$\lambda_L \pm \ri \omega_L$ and $\lambda_R \pm \ri \omega_R$.
For HLB 2, the stability of the limit cycle is the same as that of the node.
In both cases the stability of the limit cycle is the same as the stability of the origin when $\mu = 0$
(an important stability problem for many switched control systems \cite{CaHe03,DiCa05}).
	
In a neighbourhood of a BEB, the system is approximately piecewise-linear.
The {\em local} behaviour of the BEB is in part governed by
{\em global} properties of the piecewise-linear approximation.
As a result, in the limit $\mu \to 0$ the limit cycle for HLBs 1 and 2
is a rather irregular union of orbit segments of the left and right half-systems.
Also the formula for the period of the limit cycle is not simple and can only be stated implicitly, see \eqref{eq:periodpwscFocusFocus}.

The earliest rigorous analyses of HLBs appear to be for piecewise-linear systems.
In Chapter VIII of \cite{AnVi66}
a limit cycle is identified in a piecewise-linear model of a valve generator
by constructing a Poincar\'e map (there called a point transformation)
and using the auxiliary function $\varrho$ \eqref{eq:auxFunc}.
This model is actually Filippov; it is given below as an example in \S\ref{sub:degenerateBEB}
and	described also in \cite{Ye86}.

Motivated by circuit systems \cite{Ch94,ChDe86},
Lum and Chua \cite{LuCh91} performed an extensive analysis of two-dimensional piecewise-linear continuous ODE systems.
For certain scenarios they were able to prove the existence of an attracting annulus
and conjectured that within the annulus there exists a unique stable limit cycle.
This was subsequently proved by Friere and coworkers in \cite{FrPo98}
by reducing the number of parameters in the equations,
using the function $\varrho$,
and meticulously considering all possible cases for the nature of the equilibria. 
For piecewise-linear systems they established HLB 1 in \cite{FrPo97} and HLB 2 in \cite{FrPo98}.

If \eqref{eq:pwscODE} is not piecewise-linear,
then robust dynamics of its piecewise-linear approximation are
exhibited by \eqref{eq:pwscODE} near the BEB \cite{DiBu08,Si10}.
Intuitively, hyperbolic limit cycles are robust phenomena, but this needs to be proved.
This was achieved in \cite{SiMe07} by demonstrating that the limit cycle
can be expressed as a hyperbolic fixed point of a smooth Poincar\'e map,
except their proof lacks the necessary scaling to apply the IFT correctly
(this is fixed in the proof in Appendix \ref{app:beb}).

\subsection{The focus/focus case --- HLB 1}
\label{sub:focusFocusBEB}

For each $J \in \{ L,R \}$, we write
\begin{equation}
F_J(x,y;\mu) = \begin{bmatrix} f_J(x,y;\mu) \\ g_J(x,y;\mu) \end{bmatrix},
\label{eq:FJpwsc}
\end{equation}
and suppose
\begin{equation}
f_J(0,0;0) = g_J(0,0;0) = 0,
\label{eq:pwscEqCond}
\end{equation}
so that the origin is a boundary equilibrium when $\mu = 0$.
We suppose the origin is an unstable focus of the left half-system
and a stable focus of the right half-system, that is
\begin{equation}
\begin{split}
{\rm eig}(\rD F_L(0,0;0)) &= \lambda_L \pm \ri \omega_L \,, \quad
{\rm with~} \lambda_L > 0, \omega_L > 0, \\
{\rm eig}(\rD F_R(0,0;0)) &= \lambda_R \pm \ri \omega_R \,, \quad
{\rm with~} \lambda_R < 0, \omega_R > 0. \\
\end{split}
\label{eq:pwscEigCondFocusFocus}
\end{equation}
Locally each half-system has a unique equilibrium with an $x$-value
equal to $\frac{-\beta \mu}{\lambda_J^2 + \omega_J^2} + \cO \left( \mu^2 \right)$, where
\begin{equation}
\beta = \left( \frac{\partial f_L}{\partial \mu} \frac{\partial g_L}{\partial y}
- \frac{\partial g_L}{\partial \mu} \frac{\partial f_L}{\partial y} \right)
\bigg|_{x = y = \mu = 0}.
\label{eq:pwscTransCond}
\end{equation}
Here we have used the continuity of \eqref{eq:pwscODE}
(the derivatives of $F_L$ in \eqref{eq:pwscTransCond} are the same for $F_R$).
Consequently $\beta \ne 0$ is the transversality condition for HLB 1.
Below we assume $\beta > 0$ without loss of generality.
As mentioned above the stability of the limit cycle emanating from the origin
is determined by the sign of
\begin{equation}
\alpha = \frac{\lambda_L}{\omega_L} + \frac{\lambda_R}{\omega_R}.
\label{eq:pwscNondegCond}
\end{equation}
Recall $\Omega_L$ and $\Omega_R$ denote the left and right half-planes, see \eqref{eq:OmegaLR}.

\begin{theorem}[HLB 1]
Consider \eqref{eq:pwscODE} where $F_L$ and $F_R$ are $C^2$.
Suppose \eqref{eq:pwscEqCond} and \eqref{eq:pwscEigCondFocusFocus} are satisfied and $\beta > 0$.
In a neighbourhood of $(x,y;\mu) = (0,0;0)$,
\begin{enumerate}
\item
there exists a unique stationary solution:
a stable focus in $\Omega_R$ for $\mu < 0$, and an unstable focus in $\Omega_L$ for $\mu > 0$,
\item
if $\alpha < 0$ [$\alpha > 0$] there exists a unique stable [unstable] limit cycle
for $\mu > 0$ [$\mu < 0$], and no limit cycle for $\mu < 0$ [$\mu > 0$].
\end{enumerate}
The minimum and maximum $x$ and $y$-values of the limit cycle are asymptotically proportional to $|\mu|$,
and its period is $T = T_L + T_R + \cO(\mu)$ where $T_L$ and $T_R$ satisfy
\begin{equation}
\begin{split}
\frac{\omega_L G \left( \omega_L T_L; -\frac{\lambda_L}{\omega_L} \right)}
{\lambda_L^2 + \omega_L^2} +
\frac{\omega_R G \left( \omega_R T_R; \frac{\lambda_R}{\omega_R} \right)}
{\lambda_R^2 + \omega_R^2} &= 0, \\
\frac{\omega_L G \left( \omega_L T_L; \frac{\lambda_L}{\omega_L} \right)}
{\lambda_L^2 + \omega_L^2} +
\frac{\omega_R G \left( \omega_R T_R; -\frac{\lambda_R}{\omega_R} \right)}
{\lambda_R^2 + \omega_R^2} &= 0,
\end{split}
\label{eq:periodpwscFocusFocus}
\end{equation}
and $G(s;\nu) = \frac{\re^{-\nu s} \varrho(s;\nu)}{\sin(s)}$,
where $\varrho$ is the auxiliary function \eqref{eq:auxFunc}.
\label{th:pwscFocusFocus}
\end{theorem}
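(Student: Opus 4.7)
The plan is to realise the limit cycle as an isolated fixed point of a Poincar\'e map $P = P_L \circ P_R$ on the switching line $x=0$, located via the implicit function theorem. Because both half-systems admit a focus on $x=0$ when $\mu=0$, the putative limit cycle collapses to the origin at the bifurcation and $P$ degenerates as $r \to 0$; to remove this degeneracy I would first rescale via $x = \mu X$, $y = \mu Y$, treating $\mu > 0$ (the case $\mu < 0$ is symmetric after relabelling). As $\mu \to 0^+$ the rescaled system converges on compacta to the piecewise-linear system obtained by linearising $F_L$ and $F_R$ at the origin; by \eqref{eq:pwscTransCond} with $\beta > 0$, its left half has an admissible unstable focus at $X^\star = -\beta/(\lambda_L^2 + \omega_L^2) < 0$, while the right half has a virtual stable focus at the same point.

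To the rescaled PWL limit I would apply Lemma \ref{le:affine} on each side. The right half-system falls under Case \ref{it:b0neg} (virtual equilibrium, origin an invisible fold); the left half-system, after the involution $(X,Y) \mapsto (-X,-Y)$ that matches the orientation convention $a_2 > 0$ of the lemma, falls under Case \ref{it:b0poslambdapos} (admissible unstable focus). Parametrising each return map implicitly by its transit time via \eqref{eq:affineT2}--\eqref{eq:affineP2} and imposing the closure condition $P_L(P_R(r)) = r$ yields precisely the simultaneous system \eqref{eq:periodpwscFocusFocus} for $(T_L, T_R)$. Existence of a solution $(T_L^0, T_R^0)$ in $(\pi/\omega_L, 2\pi/\omega_L) \times (0, \pi/\omega_R)$ follows from the monotonicity and boundary behaviour of $\varrho$ recorded in \S\ref{sub:affine}; the Jacobian of the closure map, after simplification using \eqref{eq:auxFuncDeriv}, turns out to be nonsingular exactly when $\alpha \neq 0$, giving both uniqueness and the IFT hypothesis.

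With the PWL limit cycle in hand, the IFT lifts $(T_L^0, T_R^0)$ to a smooth fixed point of the full Poincar\'e map $P$ for all small $\mu > 0$ in the rescaled system; un-scaling yields a limit cycle of amplitude $\cO(\mu)$ and period $T_L^0 + T_R^0 + \cO(\mu)$ in the original coordinates. The same machinery applied to $\mu < 0$, where the admissible equilibrium is now the stable focus in $\Omega_R$, shows that no small-amplitude periodic orbit can exist in that case (both closure relations force negative transit times). Stability comes from differentiating $P$ at the fixed point: two applications of \eqref{eq:PaffineDeriv2} together with the chain rule give $P'(r^\star) = \re^{2(\lambda_L T_L^0 + \lambda_R T_R^0)}$, so the limit cycle is attracting iff $\lambda_L T_L^0 + \lambda_R T_R^0 < 0$.

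The hard part, which I expect to absorb most of the work, is relating the sign of $\lambda_L T_L^0 + \lambda_R T_R^0$ to that of $\alpha = \lambda_L/\omega_L + \lambda_R/\omega_R$. My approach would be a homotopy argument: continuously deform $(\lambda_L, \lambda_R)$ toward $0$ keeping $\omega_L, \omega_R$ fixed. At $\lambda_L = \lambda_R = 0$ the closure relations \eqref{eq:periodpwscFocusFocus} force $T_L^0 = \pi/\omega_L$ and $T_R^0 = \pi/\omega_R$, so that $\lambda_L T_L^0 + \lambda_R T_R^0 = \pi \alpha = 0$. Differentiating \eqref{eq:periodpwscFocusFocus} implicitly along this deformation, and exploiting the sign structure of $\partial \varrho / \partial s$ from \eqref{eq:auxFuncDeriv} together with the identity $\varrho(\hat s;\nu) = 0$ at the left-side return time, should show that $\lambda_L T_L^0 + \lambda_R T_R^0 - \pi \alpha$ cannot change sign along the homotopy. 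The IFT non-degeneracy and the stability criterion therefore collapse onto the single condition $\alpha \neq 0$, matching the statement.
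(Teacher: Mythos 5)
Your overall machinery (blow-up by $\mu$, Lemma \ref{le:affine} on each half, closure conditions \eqref{eq:periodpwscFocusFocus}, IFT to perturb to $\mu \ne 0$) is the same as what the paper uses — except that the paper proves the harder Filippov statement (Theorem \ref{th:FilippovDeg}, HLB 4) first and obtains HLB 1 as an immediate corollary by checking $\gamma = 0$, $\beta_L = \beta_R = \beta$. Your formula $P'(r^\star) = \re^{2(\lambda_L T_L^0 + \lambda_R T_R^0)}$ is correct in the continuous case (it follows from two applications of \eqref{eq:PaffineDeriv2} at a fixed point, using that the two folds coincide).

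The genuine gap is in existence and in the sign of $\mu$. You assert unconditionally that the closure system \eqref{eq:periodpwscFocusFocus} has a solution $(T_L^0,T_R^0)$ in the physical range, and you claim that for $\mu<0$ ``no small-amplitude periodic orbit can exist.'' Both are false when $\alpha>0$: in that case the rescaled Poincar\'e map for $\mu>0$ has \emph{no} fixed point at all (so the closure equations have no admissible solution), and the unique \emph{unstable} limit cycle lives at $\mu<0$, encircling the admissible stable focus. The decisive ingredient you are missing is the far-field behaviour $P(\tilde r;0)\sim \re^{\alpha\pi}\tilde r$ as $\tilde r\to\infty$ (Lemma \ref{le:affine}), combined with $P(0;0)>0$ and the monotonicity of the multiplier $\re^{2(\lambda_L T_L+\lambda_R T_R)}$ along fixed points. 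That argument (the Freire--Ponce--Torres scheme reproduced in the proof of Theorem \ref{th:FilippovDeg}) delivers simultaneously: a fixed point exists iff $\alpha<0$ (intermediate value theorem against the far-field growth rate), it is unique, and its multiplier is $<1$. This makes your speculative homotopy step unnecessary — and that step is also shaky on its own terms, since it presupposes the fixed point persists along the whole deformation and only controls the \emph{difference} $\lambda_L T_L^0+\lambda_R T_R^0-\pi\alpha$, which vanishes together with both quantities at $\lambda_L=\lambda_R=0$ and so does not pin down their common sign. The $\alpha>0$ branch of the theorem should instead be obtained, as the paper does, by the symmetry $(x,\mu,t)\mapsto(-x,-\mu,-t)$, which exchanges $\alpha\mapsto-\alpha$ and reduces it to the $\alpha<0$, $\mu>0$ case.
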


HLB 1 is exhibited by the McKean neuron model:
a piecewise-linear version of the Fitzhugh-Nagumo model for an excitable neuron \cite{Mc70}.
Following \cite{Co08} we write the model as
\begin{equation}
\begin{split}
\dot{v} &= q(v) - w + I, \\
\dot{w} &= b (v - c w),
\end{split}
\label{eq:McKean}
\end{equation}
where $v$ represents membrane voltage, $w$ is an effective gating variable, and
\begin{equation}
q(v) = \begin{cases}
-v, & v \le \frac{a}{2}, \\
v-a, & \frac{a}{2} \le v \le \frac{a+1}{2}, \\
1-v, & v \ge \frac{a+1}{2}.
\end{cases}
\label{eq:McKeanF}
\end{equation}
The function $q$ mimics the cubic caricature of the Fitzhugh-Nagumo model.
The McKean model is one of many piecewise-linear models of neuron dynamics.
As a general rule the dynamics of these models may not match experimentally observed dynamics
with as much quantitative accuracy as more sophisticated high-dimensional models,
but they often fit qualitatively and are amenable to an exact analysis \cite{DeGu16,RoCo12,ToGe03}.
This is particularly helpful for understanding neural networks \cite{Co08,CoLa18,CoTh16}.

\begin{figure*}
\begin{center}
\includegraphics[width=16.6cm]{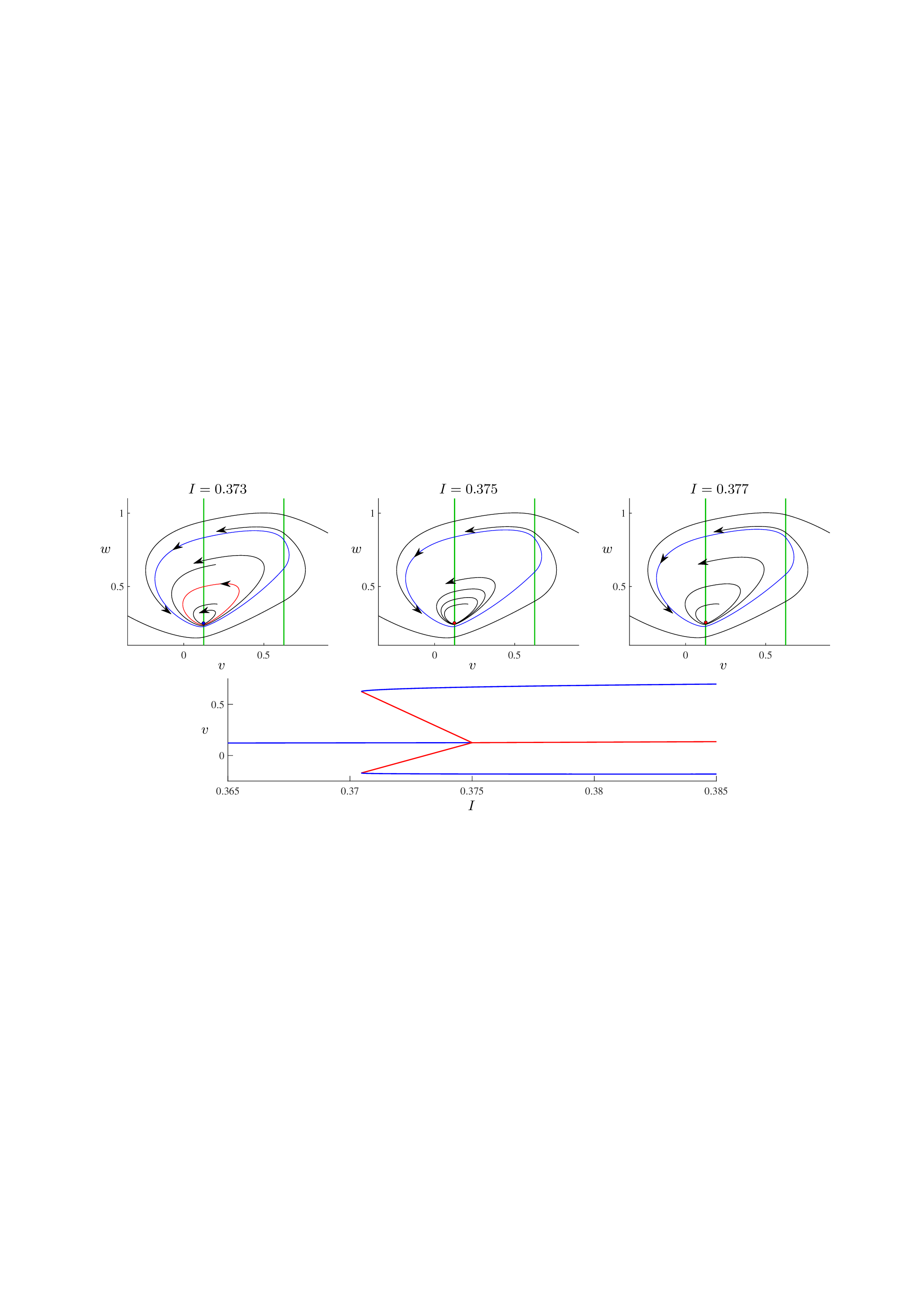}
\caption{
An illustration of HLB 1 for the McKean neuron model \eqref{eq:McKean} with \eqref{eq:McKeanParam}.
By decreasing the value of $I$
an unstable focus collides with the switching manifold $v = \frac{a}{2}$ when $I = 0.375$
where it turns into a stable focus and emits an unstable limit cycle.
The limit cycle subsequently undergoes a saddle-node bifurcation and becomes a stable relaxation oscillation.
\label{fig:allMcKean_5}
} 
\end{center}
\end{figure*}

Here we treat the external stimulus $I$ as a bifurcation parameter.
With the remaining parameter values fixed at
\begin{equation}
a = 0.25, \qquad
b = 0.5, \qquad
c = 0.5,
\label{eq:McKeanParam}
\end{equation}
a subcritical HLB 1 occurs at $I = 0.375$, see Fig.~\ref{fig:allMcKean_5}.
As the value of $I$ is decreased through $I = 0.375$
an unstable focus in $\frac{a}{2} < v < \frac{a+1}{2}$
collides with the switching manifold $v = \frac{a}{2}$ when $I = 0.375$
and transitions to a stable focus in $v < \frac{a}{2}$.
The eigenvalues associated with the foci are
$0.3750 \pm 0.3307 \ri$ and $-0.6250 \pm 0.5995 \ri$ (to four decimal places).
Thus $\alpha \approx 0.0913 > 0$ and so an unstable limit cycle is created. 
As the value of $I$ is decreased further,
the unstable limit cycle undergoes a grazing bifurcation with the
other switching manifold, $v = \frac{a+1}{2}$,
and is very shortly followed by a saddle-node bifurcation
at which the limit cycle collides and annihilates with a coexisting stable limit cycle. 
In summary, a subcritical HLB 1 and a saddle-node bifurcation of limit cycles
bound a small interval of bistability.
With smaller values of $b$ the McKean model exhibits HLB 2.

\subsection{The focus/node case --- HLB 2}
\label{sub:focusNodeBEB}

We now suppose that the origin is a stable node of the right half-system when $\mu = 0$.
We write the eigenvalues as
\begin{equation}
\begin{split}
{\rm eig}(\rD F_L(0,0;0)) &= \lambda_L \pm \ri \omega_L \,, \quad
{\rm with~} \lambda_L > 0, \omega_L > 0, \\
{\rm eig}(\rD F_R(0,0;0)) &= \lambda_R \pm \eta_R \,, \quad
{\rm with~} 0 < \eta_R < -\lambda_R \,.
\end{split}
\label{eq:pwscEigCondFocusNode}
\end{equation}

\begin{theorem}[HLB 2]
Consider \eqref{eq:pwscODE} where $F_L$ and $F_R$ are $C^2$.
Suppose \eqref{eq:pwscEqCond} and \eqref{eq:pwscEigCondFocusNode} are satisfied and $\beta > 0$.
In a neighbourhood of $(x,y;\mu) = (0,0;0)$,
\begin{enumerate}
\item
there exists a unique stationary solution:
a stable node in $\Omega_R$ for $\mu < 0$, and an unstable focus in $\Omega_L$ for $\mu > 0$,
\item
there exists a unique stable limit cycle for $\mu > 0$,
and no limit cycle for $\mu < 0$.
\end{enumerate}
The minimum and maximum $x$ and $y$-values of the limit cycle are asymptotically proportional to $\mu$,
and its period is $T = T_L + T_R + \cO(\mu)$ where $T_L$ and $T_R$ satisfy
\eqref{eq:periodpwscFocusFocus} using $\omega_R = \ri \eta_R$.
\label{th:pwscFocusNode}
\end{theorem}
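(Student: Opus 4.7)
The plan is to mirror the argument for HLB 1 (Theorem \ref{th:pwscFocusFocus}) as closely as possible, the sole essential adaptation being that the right half-system has a pair of real eigenvalues rather than a complex conjugate pair. As flagged at the end of \S\ref{sub:affine}, this is handled by the formal substitution $\omega_R = \ri\eta_R$ in the formulas of Lemma \ref{le:affine}: under this identification the auxiliary function $\varrho$ and the maps $P_{\rm affine}$ and $T_{\rm affine}$ become real-valued expressions in $\lambda_R$ and $\eta_R$, with the trigonometric functions of $\omega_R T_R$ turning into hyperbolic functions of $\eta_R T_R$ and the resulting imaginary factors cancelling against those of $\kappa$.

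First I would apply the IFT to $F_L(\cdot;\mu)=0$ and $F_R(\cdot;\mu)=0$ to produce smooth branches of equilibria $(x_J^*(\mu),y_J^*(\mu))$ emanating from the origin; a direct calculation using continuity on $x=0$ gives, to leading order, an $x$-value proportional to $-\beta\mu/\det(\rD F_J(0,0;0))$, which in view of \eqref{eq:pwscEigCondFocusNode} is positive for both $J\in\{L,R\}$ (for $J=R$ this uses $\eta_R<-\lambda_R$). Hence, with $\beta>0$, both equilibria lie in $\Omega_L$ for $\mu>0$ and in $\Omega_R$ for $\mu<0$, establishing persistence: the left focus is admissible and unstable for $\mu>0$, and the right node is admissible and stable for $\mu<0$. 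Next I would rescale $(x,y)=\mu(\tilde x,\tilde y)$ to blow up the bifurcation, reducing the system to an $\cO(\mu)$ perturbation of a piecewise-affine system whose half-systems are the linearisations of $F_L$ and $F_R$ about the origin together with a constant term of order $\beta$ in the second component. Applying Lemma \ref{le:affine} to each half-system (with a reflection such as $(x,y)\mapsto(-x,-y)$ needed to put the left side into the form required by the lemma, and with $\omega_R=\ri\eta_R$ for the right) yields return maps $P_L$ and $P_R$ on $\{x=0\}$, and hence the Poincar\'e map $P=P_L\circ P_R$. Limit cycles correspond to its nontrivial fixed points; dividing the displacement $P(r;\mu)-r$ by $r$, extending smoothly across $r=0$ to remove the trivial zero at the boundary equilibrium, and applying the IFT then extracts a unique branch $r^*(\mu)>0$ of zeros on a one-sided $\mu$-neighbourhood of $0$.

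The main obstacle I expect is the verification that, under the substitution $\omega_R=\ri\eta_R$, the return map $P_R$ is still a well-defined, smooth, strictly decreasing function of $r$ with an appropriate limiting behaviour as $r\to 0$: geometrically orbits in $\Omega_R$ are drawn toward the virtual node and bent back to the switching manifold, but translating this into the sign and monotonicity conditions needed for the IFT requires reinspecting \eqref{eq:affineT2}--\eqref{eq:PaffineDeriv2} with the trigonometric functions replaced by hyperbolic ones. Once this is in place the remaining steps are routine: stability of the limit cycle is obtained by checking $|P'(r^*(\mu))|<1$, which (in contrast to HLB 1) should come out unconditionally as a strict inequality because the contraction from the stable-node side dominates the expansion from the focus side over a full return, explaining why the theorem carries no non-degeneracy coefficient $\alpha$; nonexistence of a limit cycle for $\mu<0$ follows either from a sign analysis of the displacement function or directly from local asymptotic stability of the admissible node; and the period statement $T=T_L+T_R+\cO(\mu)$ is read off from \eqref{eq:affineT2} applied to each half-system at the limit cycle, with $\omega_R=\ri\eta_R$ inserted in the right-half equation so that \eqref{eq:periodpwscFocusFocus} holds in its stated form.
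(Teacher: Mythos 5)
Your overall architecture --- persistence of the two equilibria, blow-up by $\mu$, application of Lemma \ref{le:affine} with the formal substitution $\omega_R=\ri\eta_R$, and composition $P=P_L\circ P_R$ --- is exactly the paper's route (Appendix \ref{app:beb}), and your anticipation that no non-degeneracy coefficient $\alpha$ appears because the contraction at the node dominates over a full return is correct in spirit. However, the step by which you extract the limit cycle would fail. You propose to divide the displacement $P(r;\mu)-r$ by $r$, extend smoothly across $r=0$, and apply the IFT to obtain a branch $r^*(\mu)>0$. That device is the one the paper uses for HLBs 8, 9, 14 and 19, where $r=0$ is a fixed point of $P$ for all $\mu$ and the limit cycle genuinely bifurcates from it, so that $r^*(\mu)$ is governed by Taylor coefficients of $P$ at $r=0$. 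Here the situation is different: in the blown-up coordinates $\tilde r=r/\mu$ the Poincar\'e map of the limiting piecewise-affine system satisfies $\lim_{\tilde r\to 0^+}P(\tilde r;0)>0$ (because $P_L$ is of Type III), so there is no trivial zero at the origin to divide out, and the fixed point $\tilde r^*$ is an $O(1)$ quantity determined by the transcendental system \eqref{eq:periodpwscFocusFocus}, not by any local expansion. Equivalently, in unscaled coordinates the limit cycle sits at distance $O(\mu)$ from a boundary equilibrium around which $P$ is not a smooth perturbation of the identity, and a local IFT at $r=0$ cannot see it.

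What is actually needed at this point is the global analysis of the affine return maps: $T_R$ increasing and $P_R$ decreasing, obtained from the hyperbolic analogues of \eqref{eq:affineT2}--\eqref{eq:PaffineDeriv2} using $\varrho_{\rm node}(s;\nu)=1-\re^{\nu s}\left(\cosh(s)-\nu\sinh(s)\right)$ and $\left|\lambda_R/\eta_R\right|>1$; the finite limit of $P_R$ as $\tilde r\to\infty$ given by the slow eigenspace of the node; and the Type III behaviour of $P_L$. Together these show $P(\cdot\,;0)$ is increasing, bounded, and positive at $0^+$, so a fixed point exists by the intermediate value theorem, with uniqueness and stability following from the monotonicity argument adapted from the proof of Theorem \ref{th:FilippovDeg} (the $\re^{2h}$ factor in \eqref{eq:FilippovDegPDeriv}). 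Only after this is the IFT invoked, at the hyperbolic fixed point $\tilde r^*$, to continue it to small $\mu>0$. A second, smaller gap: for $\mu<0$, local asymptotic stability of the admissible node does not by itself exclude a small unstable limit cycle surrounding it; the paper rules this out by trapping forward orbits between the nullcline segment $\ell_1$ and the invariant slow eigenspace $\ell_2$ of the node, so that no orbit starting on the positive $\tilde y$-axis returns to $\tilde x=0$.
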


Notice how Theorem \ref{th:pwscFocusNode} contains no criticality constant $\alpha$.
The stability of the limit cycle is determined from
our assumption that the focus is unstable and the node is stable.
The case of a stable focus and an unstable node (giving an unstable limit cycle) can be treated via time-reversal.

To illustrate HLB 2 we consider the reduced ocean circulation model of \cite{RoSa17}:
\begin{equation}
\begin{split}
\dot{\bar{y}} &= \bar{\mu} - \bar{y} - A \bar{y} |\bar{y} - 1|, \\
\dot{\bar{\mu}} &= \delta_0 (\lambda - \bar{y}).
\end{split}
\label{eq:RoSa16}	
\end{equation}
This is equation (18) of \cite{RoSa17}
except bars have been added to the variables to avoid confusion with the present notation.
The variable $\bar{y}$ represents the difference in the salinity of the ocean
near the equator from that near the poles, and $\bar{\mu}$ is a forcing ratio.
The parameters $\lambda$, $A$, and $\delta_0$ each take positive real values.

The model is a reduced version of a box model of ocean circulation developed by Stommel \cite{St61}.
The circulation is assumed to occur between two regions (boxes): one equatorial and one polar.
The regions are assumed to be well-mixed,
so the dynamics in the regions is assumed to depend on the magnitude of the circulation, but not its direction.
This manifests as $|\bar{y}-1|$ in \eqref{eq:RoSa16} making the model piecewise-smooth.

\begin{figure*}
\begin{center}
\includegraphics[width=16.6cm]{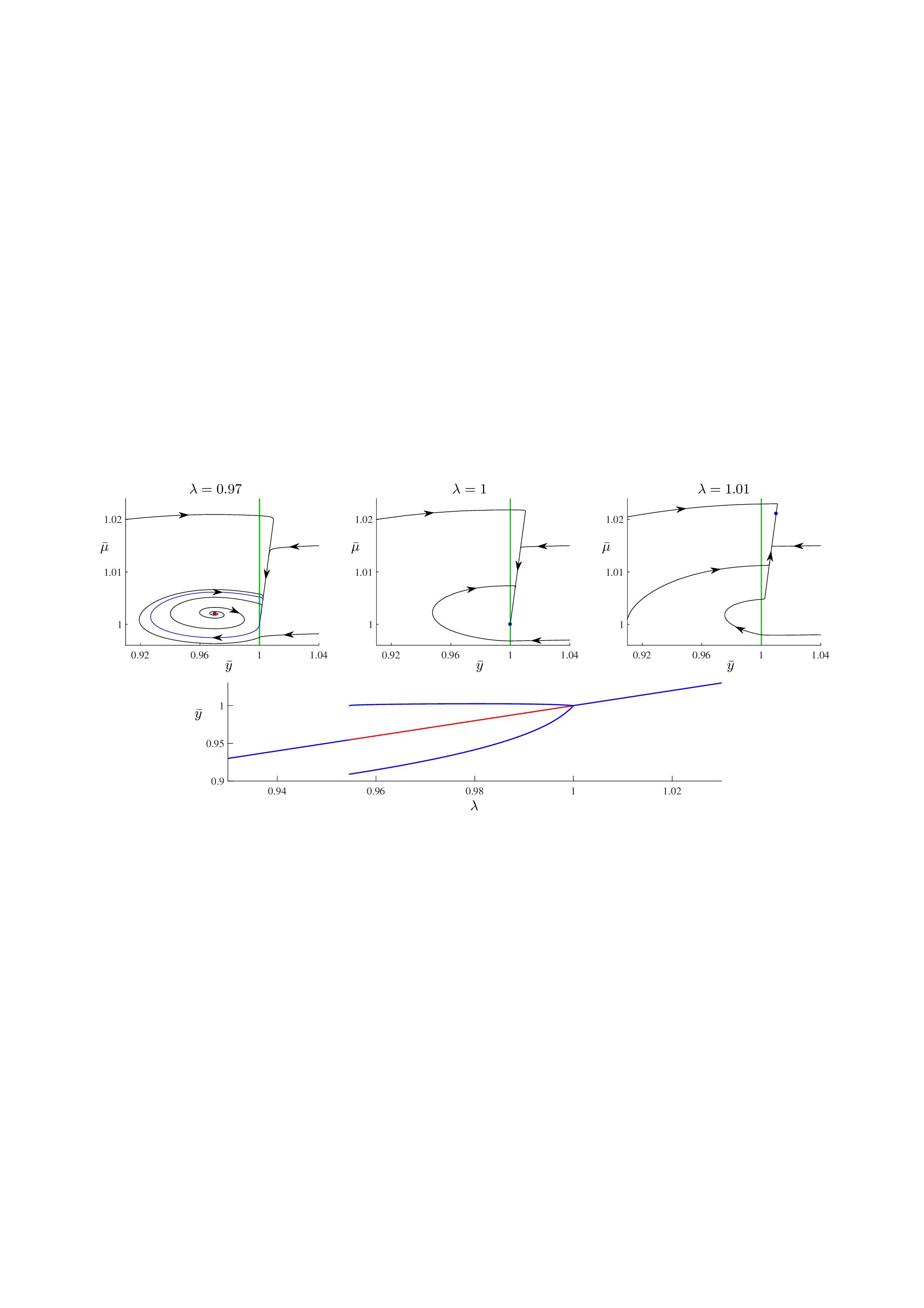}
\caption{
An illustration of HLB 2 for ocean circulation model 
\eqref{eq:RoSa16} with $A = 1.1$ and $\delta_0 = 0.01$.
By decreasing the value of $\lambda$ a stable node collides with the switching manifold $\bar{y} = 1$
when $\lambda = 1$ where it turns into an unstable focus and emits a stable limit cycle.
\label{fig:allRoSa16}
} 
\end{center}
\end{figure*}

The system exhibits a BEB when $\lambda = 1$, see Fig.~\ref{fig:allRoSa16}.
With $\lambda > 1$ there is a stable node in $\bar{y} > 1$,
and if $1 < A < 1 + 2 \sqrt{\delta_0}$, then with $\lambda < 1$ there is an unstable focus in $\bar{y} < 1$.
In this case an instance of HLB 2 occurs at $\lambda = 1$
and so a stable limit cycle is created.
For the parameter values of Fig.~\ref{fig:allRoSa16},
the focus subsequently changes stability at $\lambda = \frac{21}{22} \approx 0.9545$.
Here the limit cycle is destroyed in a degenerate fashion
(at $\lambda = \frac{21}{22}$ there exists an uncountable family of periodic orbits).
If the value of $A$ is increased past $1 + 2 \sqrt{\delta_0}$
the unstable focus changes to an unstable node and a large amplitude limit cycle is created
in the BEB at $\lambda = 1$.
Related to this, HLB 2 has been well-documented in PWL systems in the context of canards
\cite{DeFr13,FeDe16,PoRo15,SiKu11}.

\section{Boundary equilibrium bifurcations in Filippov systems}
\setcounter{equation}{0}
\setcounter{figure}{0}
\setcounter{table}{0}
\label{sec:beb}

Here we consider Filippov systems of the form
\begin{equation}
\begin{bmatrix} \dot{x} \\ \dot{y} \end{bmatrix} =
\begin{cases} F_L(x,y;\mu), & x < 0, \\ F_R(x,y;\mu), & x > 0, \end{cases}
\label{eq:FilippovBEB}
\end{equation}
and write
\begin{equation}
F_J(x,y;\mu) = \begin{bmatrix} f_J(x,y;\mu) \\ g_J(x,y;\mu) \end{bmatrix},
\label{eq:FJ2}
\end{equation}
for each $J \in \{ L,R \}$.

Let us suppose an equilibrium of the left half-system
collides with $x=0$ at the origin when $\mu = 0$.
Since $F_L$ and $F_R$ are essentially independent, we would expect that the origin
is not an equilibrium of the right half-system when $\mu = 0$.
In this way, generic BEBs of Filippov systems are different
to those of continuous systems (described in \S\ref{sec:pwsc}).
But as with continuous systems locally there are exactly two equilibria:
the regular equilibrium and a pseudo-equilibrium, and these coincide at the bifurcation.
The equilibria are either admissible for different signs of $\mu$ ({\em persistence})
or the same sign of $\mu$ ({\em nonsmooth-fold}) \cite{DiBu08,Si18d}.
The distance of each equilibrium from the origin,
and the diameter of any other bounded invariant set created in the bifurcation,
is asymptotically proportional to $|\mu|$.

For two-dimensional systems there are $12$ topologically distinct BEBs
(assuming genericity) \cite{Gl16d,HoHo16}.
Of these, only one is Hopf-like in that it corresponds to persistence and creates a local limit cycle.
This is HLB 3, called ${\rm BF}_3$ in \cite{KuRi03}, see \S\ref{sub:genericBEB}.
In \S\ref{sub:degenerateBEB} we consider the degenerate case
that the origin is also an equilibrium of the right half-system at $\mu = 0$ (HLB 4)
as this occurs in some applications.

\subsection{The generic case --- HLB 3}
\label{sub:genericBEB}

We suppose that the origin is an unstable focus of 
the left half-system of \eqref{eq:FilippovBEB} when $\mu = 0$, that is
\begin{equation}
f_L(0,0;0) = g_L(0,0;0) = 0,
\label{eq:FilippovEqCond}
\end{equation}
and
\begin{equation}
{\rm eig}(\rD F_L(0,0;0)) = \lambda_L \pm \ri \omega_L \,, \quad
{\rm with~} \lambda_L > 0, \omega_L > 0.
\label{eq:FilippovEigCondFocus}
\end{equation}
Locally the left half-system has a unique equilibrium
with an $x$-value of $\frac{-\beta \mu}{\lambda_L^2 + \omega_L^2} + \cO \left( \mu^2 \right)$, where
\begin{equation}
\beta = \left( \frac{\partial f_L}{\partial \mu} \frac{\partial g_L}{\partial y}
- \frac{\partial g_L}{\partial \mu} \frac{\partial f_L}{\partial y} \right)
\bigg|_{x = y = \mu = 0},
\label{eq:FilippovTransCond}
\end{equation}
which is identical to \eqref{eq:pwscTransCond}.

Since $\lambda_L > 0$, for a limit cycle to be created at $\mu = 0$ we require $a_{0R} < 0$ where
\begin{equation}
a_{0R} = f_R(0,0;0).
\end{equation}
Then, near the origin, the switching manifold
is the union of an attracting sliding region, a crossing region, and a fold
(or boundary focus when $\mu = 0$).
By evaluating the sliding vector field \eqref{eq:gslide2} when $\mu = 0$, we obtain
$g_{\rm slide}(y;0) = \frac{\gamma}{-a_{0R}} \,y + \cO \left( y^2 \right)$, where
\begin{equation}
\gamma = \left( \frac{\partial f_L}{\partial y} g_R
- \frac{\partial g_L}{\partial y} f_R \right)
\bigg|_{x = y = \mu = 0}.
\label{eq:FilippovStabilityCond}
\end{equation}
In the following theorem we assume $\gamma < 0$ so that the BEB corresponds to persistence.
If instead $\gamma > 0$ then the BEB is a nonsmooth-fold and a limit cycle may be created, this is ${\rm BF}_1$ in \cite{KuRi03}.
In some sense the case $\gamma > 0$ occurs more naturally:
if $\rD F_L(0,0;0)$ is in real Jordan form 
and orbits in $\Omega_R$ approach $x=0$ at right angles to $x=0$, then $\gamma > 0$.
To have $\gamma < 0$ the vector field $F_R$ needs to be directed so that it overcomes
the instability produced by the unstable focus.

\begin{theorem}[HLB 3]
Consider \eqref{eq:FilippovBEB} where $F_L$ and $F_R$ are $C^2$.
Suppose \eqref{eq:FilippovEqCond} and \eqref{eq:FilippovEigCondFocus} are satisfied,
$\beta > 0$, $f_R(0,0;0) < 0$, and $\gamma < 0$.
In a neighbourhood of $(x,y;\mu) = (0,0;0)$,
\begin{enumerate}
\item
there exists a unique stationary solution:
a stable pseudo-equilibrium for $\mu < 0$,
and an unstable focus in $\Omega_L$ for $\mu > 0$,
\item
there exists a unique stable limit cycle for $\mu > 0$, and no limit cycle for $\mu < 0$.
\end{enumerate}
The minimum $x$-value and minimum and maximum $y$-values
of the limit cycle are asymptotically proportional to $\mu$,
and its period is $T = T_L + T_{\rm slide} + \cO(\mu)$ where
\begin{equation}
\begin{split}
T_L &= \frac{1}{\omega_L} \,\hat{s} \left( \frac{\lambda_L}{\omega_L} \right), \\
T_{\rm slide} &= \frac{a_{0R}}{\gamma} \,\ln \left( 1 - \frac{\gamma}{a_{0R} \omega_L} \,\re^{\frac{\lambda_L}{\omega_L}
\,\hat{s} \left( \frac{\lambda_L}{\omega_L} \right)} \sin \left( \hat{s} \left( \frac{\lambda_L}{\omega_L} \right) \right) \right),
\end{split}
\label{eq:periodFilippovGen}
\end{equation}
and $\hat{s}$ is defined in \S\ref{sub:affine}.
\label{th:FilippovGeneric}
\end{theorem}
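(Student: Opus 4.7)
The plan is to locate both stationary solutions via the implicit function theorem (IFT) and then to build a Poincar\'e map whose unique fixed point yields the limit cycle. First I would track the regular equilibrium: the IFT applied to $F_L(x,y;\mu)=0$, using $\det \rD F_L(0,0;0) = \lambda_L^2 + \omega_L^2 \ne 0$, produces a unique smooth branch $(x_L^*(\mu),y_L^*(\mu))$ with $x_L^*(\mu) = -\frac{\beta \mu}{\lambda_L^2+\omega_L^2} + \cO(\mu^2)$, making it admissible precisely for $\mu>0$, with eigenvalues $\lambda_L \pm \ri\omega_L + \cO(\mu)$ (unstable focus). Similarly, $f_L(0,y;\mu)=0$ locates a smooth fold $y_{\mathrm{fold}}(\mu)$ vanishing at $\mu=0$, and since $f_R<0$ near the origin the switching manifold splits into an attracting sliding region on one side of the fold and a crossing region on the other. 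The IFT applied to $g_{\mathrm{slide}}(y;\mu)=0$ --- noting $\partial_y g_{\mathrm{slide}}(0;0) = \gamma/(-a_{0R}) < 0$ --- yields a smooth, locally stable pseudo-equilibrium $y_{\mathrm{pe}}(\mu)$, admissible exactly for $\mu<0$ by a sign comparison with $y_{\mathrm{fold}}$. This establishes conclusion (i).

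For part (ii) I would rescale $x=\mu X$, $y=\mu Y$ (leaving $t$ unscaled, consistent with $b=0$). To leading order in $\mu$ the left half-system becomes the affine system
\begin{equation*}
\begin{bmatrix} \dot X \\ \dot Y \end{bmatrix}
= \rD F_L(0,0;0)\begin{bmatrix} X \\ Y \end{bmatrix}
+ \frac{\partial F_L}{\partial \mu}(0,0;0),
\end{equation*}
with an $\cO(1)$ unstable focus in $\Omega_L$, while the sliding field on $X=0$ becomes a one-dimensional linear flow attracting toward the now-virtual pseudo-equilibrium. After the substitution $(x,y)\mapsto(-x,-y)$ casts the $\Omega_L$-dynamics into the form assumed in \S\ref{sub:affine}, Lemma \ref{le:affine} (Type III) delivers the unique first return of the orbit leaving the fold with return time $T_L = \hat s(\lambda_L/\omega_L)/\omega_L$; the return point lands strictly inside the sliding region, thanks to $\sin(\hat s(\nu)) < 0$ for $\hat s(\nu)\in(\pi,2\pi)$ combined with the sign conventions fixed by $\beta>0$ and $f_R(0,0;0)<0$.

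Because every orbit in the sliding region slides to the fold in finite time $T_{\mathrm{slide}}$ --- obtained by integrating the linear sliding field and evaluating to the logarithmic formula in \eqref{eq:periodFilippovGen} --- and sliding forgets its starting point, the first-return map $P(r;\mu)$ on the sliding cross-section is constant in $r$ to leading order. The rescaled displacement $\tilde D(R;\mu) = [P(\mu R;\mu)-\mu R]/\mu$ extends smoothly across $\mu=0$, has a simple zero, and satisfies $\partial_R \tilde D = -1 \ne 0$ there, so the IFT produces a unique smooth fixed point for each small $\mu>0$, corresponding to a stable limit cycle with amplitude $\sim k_1\mu$ and period $T = T_L + T_{\mathrm{slide}} + \cO(\mu)$. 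No limit cycle exists for $\mu<0$, since then the focus is virtual and the admissible pseudo-equilibrium attracts the local sliding region.

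The main technical obstacle is showing the displacement function extends smoothly through $\mu=0$: $P$ is the composition of a non-smooth sliding projection with a smooth flow, and one must verify that this structural composition (slide to fold, then focal arc) persists under the $\cO(\mu)$ perturbations that take the affine leading-order system to the full nonlinear one. The resolution is that Lemma \ref{le:affine} places the return point strictly inside the sliding region for the leading-order problem, so the combinatorial structure of $P$ is stable under perturbation, and a uniform-in-$\mu$ application of the IFT closes the argument.
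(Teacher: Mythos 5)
Your proposal is correct and follows essentially the same route as the paper: IFT for the regular equilibrium, fold and pseudo-equilibrium, the $\mu$-rescaling, Lemma \ref{le:affine} (Type \ref{it:b0poslambdapos}) for the focal return landing strictly inside the attracting sliding region, and integration of the sliding field for $T_{\rm slide}$. The only cosmetic difference is your closing IFT on the displacement function; since every orbit entering the sliding region slides to the fold, the return map is exactly constant there, so the paper instead concludes directly that all local orbits funnel in finite time onto the forward orbit of the fold, which is the unique (super-)stable limit cycle.
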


While HLB 3 has been well understood for some time through qualitative studies such as \cite{KuRi03},
the proof in Appendix \ref{app:beb}
is possibly the first quantitative analysis of HLB 3
that accommodates nonlinear terms in $F_L$ and $F_R$.
The limit cycle involves motion in $\Omega_L$ of duration $t_L(\mu) = T_L + \cO(\mu)$
and a segment of sliding motion of duration $t_{\rm slide}(\mu) = T_{\rm slide} + \cO(\mu)$.
If \eqref{eq:FilippovBEB} is piecewise-linear then $t_L(\mu)$ is independent of $\mu$
but $t_{\rm slide}(\mu)$ is, in general, not.
This is because the sliding vector field \eqref{eq:gslide2} is nonlinear.

To illustrate HLB 3, we consider the predator-prey model of \cite{YaTa13}:
\begin{equation}
\begin{split}
\dot{x} &= r x \left( 1 - \frac{x}{K} \right) - q(x) y, \\
\dot{y} &= \left( k q(x) - \delta \right) y,
\end{split}
\label{eq:YaTa13}
\end{equation}
where $x(t)$ and $y(t)$ denote the prey and predator populations respectively.
Also
\begin{equation}
q(x) = \begin{cases}
0, & x < R_c \,, \\
\frac{b x}{1 + b h x}, & x > R_c \,,
\end{cases}
\label{eq:YaTa13auxFunc}
\end{equation}
and all other quantities are positive constants.
In the limit $K \to \infty$ ($K$ is the {\em carrying capacity} of the prey)
the system reduces to the Gause predator-prey model \cite{GaSm36} for which
the prey is a yeast and the predator is a certain single-celled organism.
The Gause model is Lotka-Volterra with a Holling type II functional response \cite{BrCa01}.
It assumes that when the yeast population is below a threshold $R_c$
the yeast forms a sediment and cannot be consumed by the organisms.
For this reason the model is Filippov.
Interestingly its conception and initial study
predates the fundamental work of Filippov \cite{Fi60}.
For modern studies of the Gause model refer to \cite{Kr11,TaLi13}.

\begin{figure*}
\begin{center}
\includegraphics[width=16.6cm]{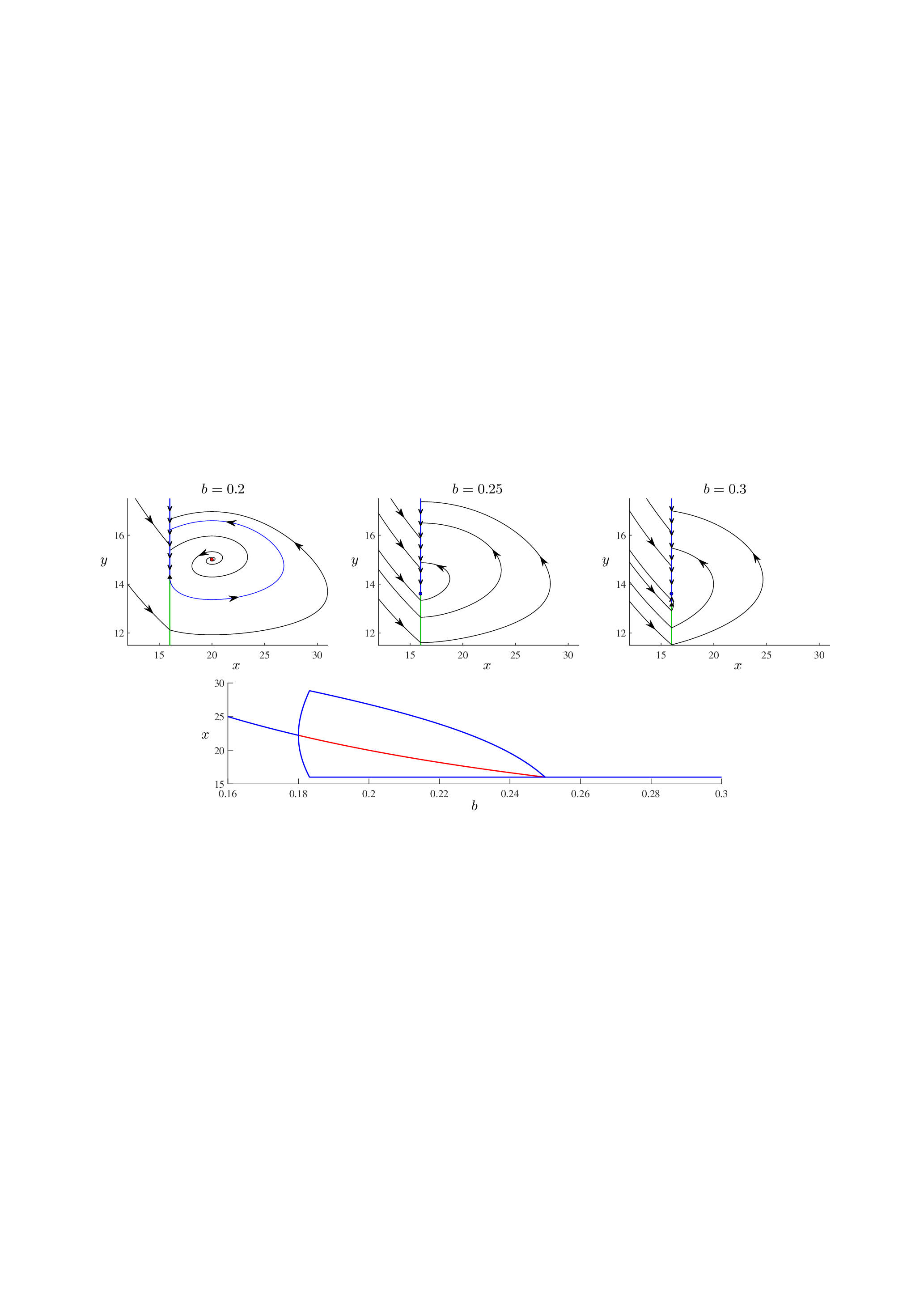}
\caption{
An illustration of HLB 3 for the Gause predator-prey model with finite prey carrying capacity,
\eqref{eq:YaTa13}--\eqref{eq:YaTa13Param}.
By decreasing the value of $b$ a stable pseudo-equilibrium
reaches a boundary of its sliding region when $b = 0.25$ where
it turns into an unstable focus and emits a stable limit cycle
involving one sliding segment.
\label{fig:allYaTa13}
} 
\end{center}
\end{figure*}

Fig.~\ref{fig:allYaTa13} illustrates the dynamics of \eqref{eq:YaTa13} using
\begin{equation}
\begin{aligned}
R_c &= 16, & r &= 1, & h &= 1, \\
k &= 0.45, & \delta &= 0.36, & K &= 50,
\end{aligned}
\label{eq:YaTa13Param}
\end{equation}
and $b$ as a bifurcation parameter.
As the value of $b$ is decreased,
a stable pseudo-equilibrium turns into an unstable focus at $b = 0.25$ (an instance of HLB 3).
The resulting limit cycle involves one segment of sliding motion
until a {\em grazing-sliding bifurcation} \cite{DiKo02,DiKo03,JeHo11} occurs at $b \approx 0.1831$.
The limit cycle is subsequently destroyed in a supercritical Hopf bifurcation at $b = 0.18$.

\subsection{A degenerate case --- HLB 4}
\label{sub:degenerateBEB}

For \eqref{eq:FilippovBEB} with $\mu = 0$, we now suppose the origin
is an unstable focus for the left half-system
and a stable focus for the right half-system, that is
\begin{equation}
f_L(0,0;0) = g_L(0,0;0) = f_R(0,0;0) = g_R(0,0;0) = 0,
\label{eq:FilippovDegEqCond}
\end{equation}
and
\begin{equation}
\begin{split}
{\rm eig}(\rD F_L(0,0;0)) &= \lambda_L \pm \ri \omega_L \,, \quad
{\rm with~} \lambda_L > 0, \omega_L > 0, \\
{\rm eig}(\rD F_R(0,0;0)) &= \lambda_R \pm \ri \omega_R \,, \quad
{\rm with~} \lambda_R < 0, \omega_R > 0.
\end{split}
\label{eq:FilippovDegEigCondFocusFocus}
\end{equation}
Locally, each half-system has a unique equilibrium (a focus)
and its direction of rotation is determined by the sign of
\begin{equation}
a_{2J} = \frac{\partial f_J}{\partial y}(0,0;0).
\label{eq:}
\end{equation}
If $a_{2J} > 0$, rotation is clockwise;
if $a_{2J} < 0$, rotation is anticlockwise.
Note that $a_{2J} \ne 0$ is a consequence of \eqref{eq:FilippovDegEigCondFocusFocus}
and has the opposite sign to $\frac{\partial g_J}{\partial x}(0,0;0)$
(which could instead be used to characterise the direction of rotation).
In order to have a Hopf-like bifurcation
we assume $a_{2L} a_{2R} > 0$ so that the foci have the same direction of rotation
(for HLB 1 this property follows automatically from the continuity of the system on the switching manifold).

As in \S\ref{sub:focusFocusBEB}, let
\begin{equation}
\alpha = \frac{\lambda_L}{\omega_L} + \frac{\lambda_R}{\omega_R}.
\label{eq:FilippovDegNondegCond}
\end{equation}
Here we show that if $\alpha < 0$ (so that the origin is stable when $\mu = 0$)
and, locally, there is no attracting sliding region when the unstable focus is admissible,
then a unique stable limit cycle is created.
We call this HLB 4; it generalises HLB 1 to Filippov systems.
The analogous creation of an unstable limit cycle can be understood by reversing time.
The assumption of no attracting sliding region
is sufficient, but not necessary, to ensure uniqueness of the limit cycle.
Without this assumption up to three limit cycles can be created
simultaneously \cite{BrMe13,FrPo14,HuYa12b}, see also \cite{HaZh10,HuYa13,HuYa14}.

The $x$-values of the foci are
$\frac{-\beta_J \mu}{\lambda_J^2 + \omega_J^2} + \cO \left( \mu^2 \right)$, where
\begin{equation}
\beta_J = \left( \frac{\partial f_J}{\partial \mu} \frac{\partial g_J}{\partial y}
- \frac{\partial g_J}{\partial \mu} \frac{\partial f_J}{\partial y} \right)
\bigg|_{x = y = \mu = 0}.
\label{eq:FilippovDegTransCond}
\end{equation}
Thus the condition $\beta_L \beta_R > 0$ ensures
the foci are admissible for different signs of $\mu$.
In view of the replacement $\mu \mapsto -\mu$, we assume $\beta_L > 0$ and $\beta_R > 0$.
Then the unstable focus is admissible (in $\Omega_L$) when $\mu > 0$.

Locally the left half-system has a unique visible fold when $\mu > 0$.
At this fold $f_R(0,y;\mu) = \frac{\gamma}{a_{2L}} \,\mu + \cO \left( \mu^2 \right)$, where
\begin{equation}
\gamma = \left( \frac{\partial f_L}{\partial y} \frac{\partial f_R}{\partial \mu}
- \frac{\partial f_L}{\partial \mu} \frac{\partial f_R}{\partial y} \right)
\bigg|_{x = y = \mu = 0}.
\label{eq:FilippovDegSlidingCond}
\end{equation}
Hence if $a_{2L} \gamma > 0$ then the fold bounds a crossing region and a repelling sliding region.
Thus $a_{2L} \gamma > 0$ produces the desired assumption for the absence of an attracting sliding region.
This condition also accommodates the case $\alpha > 0$ and below we also allow $\gamma = 0$.

\begin{theorem}[HLB 4]
Consider \eqref{eq:FilippovBEB} where $F_L$ and $F_R$ are $C^2$.
Suppose \eqref{eq:FilippovDegEqCond} and \eqref{eq:FilippovDegEigCondFocusFocus} are satisfied,
$a_{2L} a_{2R} > 0$,
$\beta_L > 0$, $\beta_R > 0$, and $a_{2L} \gamma \ge 0$.
In a neighbourhood of $(x,y;\mu) = (0,0;0)$,
\begin{enumerate}
\item
there exists a stable focus in $\Omega_R$ for $\mu < 0$,
and an unstable focus in $\Omega_L$ for $\mu > 0$,
\item
if $\alpha < 0$ [$\alpha > 0$] there exists a unique stable [unstable] limit cycle for $\mu > 0$ [$\mu < 0$],
and no limit cycle for $\mu < 0$ [$\mu > 0$].
\end{enumerate}
The minimum and maximum $x$ and $y$-values of the limit cycle are asymptotically proportional to $|\mu|$,
and its period is $T = T_L + T_R + \cO(\mu)$ where $T_L$ and $T_R$ satisfy
\begin{equation}
\begin{split}
\frac{\gamma}{a_{2L} a_{2R}} &= \frac{\beta_L \omega_L}{a_{2L} \left( \lambda_L^2 + \omega_L^2 \right)}
\,G \left( \omega_L T_L; -\frac{\lambda_L}{\omega_L} \right) \\
&\quad+
\frac{\beta_R \omega_R}{a_{2R} \left( \lambda_R^2 + \omega_R^2 \right)}
\,G \left( \omega_R T_R; \frac{\lambda_R}{\omega_R} \right), \\
\frac{-\gamma}{a_{2L} a_{2R}} &= \frac{\beta_L \omega_L}{a_{2L} \left( \lambda_L^2 + \omega_L^2 \right)}
\,G \left( \omega_L T_L; \frac{\lambda_L}{\omega_L} \right) \\
&\quad+
\frac{\beta_R \omega_R}{a_{2R} \left( \lambda_R^2 + \omega_R^2 \right)}
\,G \left( \omega_R T_R; -\frac{\lambda_R}{\omega_R} \right),
\end{split}
\label{eq:periodFilippovDeg}
\end{equation}
where $G(s;\nu) = \frac{\re^{-\nu s} \varrho(s;\nu)}{\sin(s)}$, and
$\varrho$ is the auxiliary function \eqref{eq:auxFunc}.
\label{th:FilippovDeg}
\end{theorem}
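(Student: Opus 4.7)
The plan is to apply a spatial rescaling that enlarges the $\cO(|\mu|)$-limit cycle to $\cO(1)$ amplitude, then construct a Poincar\'e map $P = P_L \circ P_R$ on the switching manifold and locate a hyperbolic fixed point via the implicit function theorem. I treat the case $\mu > 0$ with $\alpha < 0$; the case $\mu < 0$, $\alpha > 0$ follows by reversing time.

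First I would perform the blow-up $(x,y) = \mu(\tilde x, \tilde y)$ and rescale time by $\mu$, under which each half-system becomes affine to leading order in $\mu$, with linear part $\rD F_J(0,0;0)$ and constant term $\partial_\mu F_J(0,0;0)$. Consequently the admissible unstable focus and the virtual stable focus sit at fixed interior points of $\Omega_L$ (both having rescaled $x$-coordinate $-\beta_J/(\lambda_J^2+\omega_J^2) < 0$), independent of $\mu$. The hypothesis $a_{2L}\gamma \ge 0$ guarantees that near the origin the switching manifold contains no attracting sliding region, only a crossing region and possibly a repelling sliding piece; the hypothesis $a_{2L} a_{2R} > 0$ gives the two half-flows a common orientation. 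On an appropriate portion of the crossing region, both $P_R$ (sending $\tilde y = r > 0$ through $\Omega_R$ back to $\tilde x = 0$) and $P_L$ (sending its image through $\Omega_L$ back to $\tilde x = 0$) are well-defined and $C^2$ by Lemma~\ref{le:mapSmoothness}.

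Next, I would invoke Lemma~\ref{le:affine} to obtain, at $\mu = 0$ in the rescaled variables, closed-form representations of $P_L^{(0)}$ and $P_R^{(0)}$ via \eqref{eq:affineT2}--\eqref{eq:affineP2}, parametrised by the half-traversal times $T_L, T_R$. Setting $P_L^{(0)} \circ P_R^{(0)}(r) = r$ and expressing both entry points in terms of $(T_L, T_R)$ converts the fixed-point equation into exactly the pair \eqref{eq:periodFilippovDeg}, with $\gamma/(a_{2L} a_{2R})$ measuring the mismatch of the two linear vector fields on the switching manifold. By the chain rule and \eqref{eq:PaffineDeriv2}, the derivative of $P_L^{(0)} \circ P_R^{(0)}$ at any such fixed point equals $\re^{2(\lambda_L T_L + \lambda_R T_R)}$; since $T_J \to \pi/\omega_J$ as the limit cycle shrinks, hyperbolicity and stability of the fixed point are both controlled by the sign of $\alpha$.

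The principal obstacle is the uniqueness of solutions of \eqref{eq:periodFilippovDeg}. I would exploit the monotonicity of $s \mapsto G(s;\nu)$ on $(0,\pi)$, immediate from \eqref{eq:auxFuncDeriv} and the definition of $G$, to solve the first equation of \eqref{eq:periodFilippovDeg} for $T_R$ as a monotone $C^1$ function of $T_L$; substituting into the second then yields a scalar equation whose strict monotonicity in $T_L$ is forced by the sign of $\alpha$, ruling out multiple crossings. The exclusion of attracting sliding via $a_{2L}\gamma \ge 0$ prevents secondary return branches from introducing additional solutions. With existence, uniqueness, and hyperbolicity of the $\mu = 0$ fixed point of $P^{(0)} = P_L^{(0)} \circ P_R^{(0)}$ in hand, the implicit function theorem applied to $D(r;\mu) = P(r;\mu) - r$ persists this fixed point to all small $\mu > 0$, and the period formula \eqref{eq:periodFilippovDeg} together with linear amplitude scaling in $|\mu|$ follow by undoing the rescaling.
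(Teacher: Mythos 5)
Your overall architecture (blow-up by $\mu$, composition $P = P_L\circ P_R$ built from Lemma \ref{le:affine}, period equations from the fixed-point condition, IFT to perturb to $\mu>0$) is the same as the paper's, and your derivation of where \eqref{eq:periodFilippovDeg} comes from is right. However, the two steps that carry all the analytical weight --- uniqueness and stability of the fixed point of $P(\cdot\,;0)$ --- are not correct as sketched. First, the derivative of the composition at a fixed point is \emph{not} $\re^{2(\lambda_L T_L+\lambda_R T_R)}$: by \eqref{eq:PaffineDeriv2} each factor contributes $\frac{r}{P}\,\re^{2\lambda T}$, and since the intermediate point $P_R(\tilde r)$ does not coincide with $\tilde r$ (and the two folds are offset by $\tilde\gamma = \gamma/(a_{2L}a_{2R})$), the chain rule leaves an algebraic prefactor $\frac{\tilde r\,(\tilde\gamma-P_R)}{P_R\,(\tilde\gamma-\tilde r)}$ that is not $1$ whenever $\gamma\ne 0$. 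Moreover your appeal to ``$T_J\to\pi/\omega_J$ as the limit cycle shrinks'' conflates two different regimes: in the rescaled variables the limit cycle does \emph{not} shrink as $\mu\to 0$ (it has $\cO(1)$ size there), and $T_J\to\pi/\omega_J$ corresponds to $\tilde r\to\infty$, which is where the sign of $\alpha$ controls \emph{existence} (via $P(\tilde r;0)\sim \re^{\alpha\pi}\tilde r$), not the stability of the fixed point.

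Second, the uniqueness argument does not go through. You invoke monotonicity of $s\mapsto G(s;\nu)$ on $(0,\pi)$, but for the left half-system (Type III in Lemma \ref{le:affine}) the traversal satisfies $\omega_L T_L\in(\pi,2\pi)$, so that interval is irrelevant to half of \eqref{eq:periodFilippovDeg}; and the claim that strict monotonicity of the reduced scalar equation ``is forced by the sign of $\alpha$'' is asserted, not proved --- $\alpha$ controls only the asymptotic slope at infinity. The paper's proof closes this gap with a genuinely different device (adapted from Freire et al.): assuming a smallest fixed point $\tilde r^*$, one shows $\partial P/\partial\tilde r(\tilde r^*;0)<1$ by a second-derivative contradiction, and then shows that at \emph{any} fixed point $\partial P/\partial\tilde r = G(\tilde r)\,\re^{2h(\tilde r)}$ with both $G$ and $h$ decreasing (the latter because $\lambda_R T_R$ and $\lambda_L T_L\circ P_R$ are both decreasing in $\tilde r$), so the derivative at fixed points is decreasing and a second fixed point with derivative $\ge 1$ is impossible; this simultaneously forces $P(\tilde r;0)<\tilde r$ for all large $\tilde r$, hence $\alpha\le 0$, which yields nonexistence for $\alpha>0$ and existence by the intermediate value theorem for $\alpha<0$. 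You would need to supply an argument of this strength (or an equivalent one) before the IFT step; as written, the proposal establishes the form of the period equations but not that they have exactly one admissible solution, nor the stability of the resulting cycle.
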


\begin{figure*}
\begin{center}
\includegraphics[width=16.6cm]{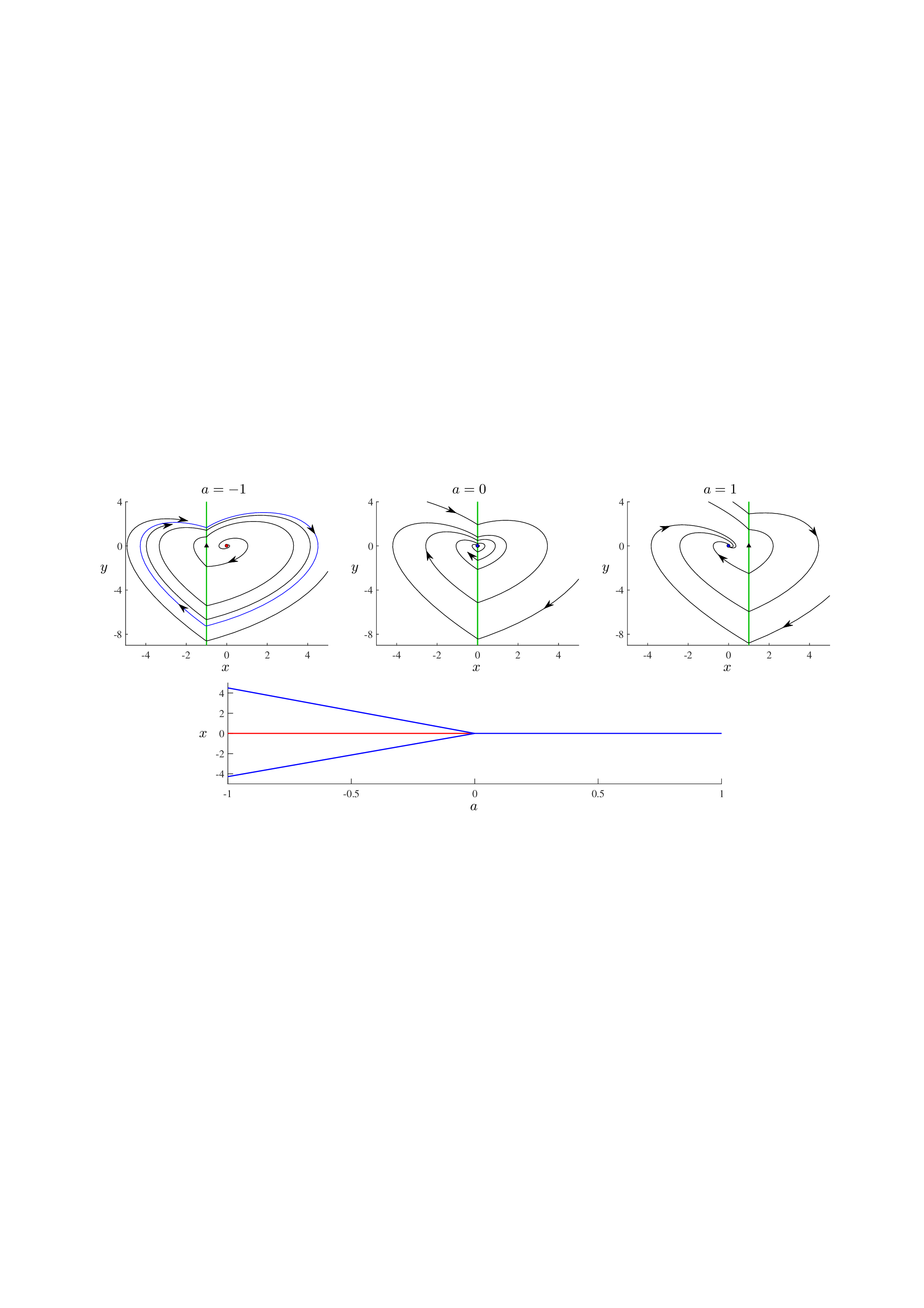}
\caption{
An illustration of HLB 4 for the valve generator model \eqref{eq:AnVi66} with $h_1 = 0.6$ and $h_2 = 0.3$.
By decreasing the value of $a$, a stable focus collides with the switching manifold $x = a$ when $a = 0$
where it turns into an unstable focus and emits a stable limit cycle.
\label{fig:allAnVi66}
} 
\end{center}
\end{figure*}

Theorem \ref{th:FilippovDeg} is proved for piecewise-linear systems in \cite{Si18f}
and in general in Appendix \ref{app:beb}.
As shown in \cite{Si18f},
there can exist pseudo-equilibria
but this only seems to occur over a relatively small fraction of parameter space.

The following equations model a valve generator
(a simple electrical circuit):
\begin{equation}
\begin{split}
\dot{x} &= y, \\
\dot{y} &= \begin{cases}
-x - 2 h_1 y, & x < a, \\
-x + 2 h_2 y, & x > a.
\end{cases}
\end{split}
\label{eq:AnVi66}
\end{equation}
With $a = -1$ this is equation (8.5) of \cite{AnVi66}
(a similar example is presented in Section 2.1 of \cite{MaLa12}).
The equations \eqref{eq:AnVi66} are non-dimensionalised,
where $x(t)$ represents voltage,
and $h_1$ and $h_2$ are combinations of certain circuit parameters.
In \cite{AnVi66} it is shown that \eqref{eq:AnVi66}
has a unique stable limit cycle when $a = -1$, $h_1 > h_2$, and $0 < h_2 < 1$.
Here we allow $a$ to vary so that we can observe the limit cycle
being created in an instance of HLB 4, see Fig.~\ref{fig:allAnVi66}.

Assuming $0 < h_1 < 1$ and $0 < h_2 < 1$ (so that the equilibria are foci of opposite stability),
\eqref{eq:AnVi66} has a BEB in accordance with Theorem \ref{th:FilippovDeg} when $a = 0$.
Here $\alpha = \frac{h_2}{\sqrt{1 - h_2^2}} - \frac{h_1}{\sqrt{1 - h_1^2}}$,
thus $h_1 > h_2$ implies $\alpha < 0$ and hence a stable limit cycle.
Also $\gamma = 0$ because \eqref{eq:AnVi66} has no sliding regions.

\section{Slipping foci and folds in Filippov systems}
\setcounter{equation}{0}
\setcounter{figure}{0}
\setcounter{table}{0}
\label{sec:slipping}

In this section we suppose that each half-system of \eqref{eq:FilippovBEB}
has either a focus or an invisible fold on $x=0$ for all values of $\mu$ in a neighbourhood of $0$
(see already Fig.~\ref{fig:schemSlippingFocusFocus2}).
So that a limit cycle may be created,
we suppose that orbits near these points involve the same direction of rotation.
We further suppose that these points `slip' along $x=0$ as $\mu$ is varied and collide when $\mu = 0$.
There are three cases:
both points are foci, HLB 5,
one point is a focus and the other is a fold, HLB 6,
and both points are folds, HLB 7.
The third case (called ${\rm II}_2$ in \cite{KuRi03})
is a generic codimension-one bifurcation for two-dimensional Filippov systems.
It occurs, for instance, with on/off PD control of an inverted pendulum \cite{Ko17} (see \S\ref{sub:slippingFolds}),
a similar automatic pilot model \cite{AnVi66}, 
and Welander's ocean convection model \cite{Le16}.

In each case, a sliding region shrinks to a point (at $\mu = 0$)
and changes from attracting to repelling.
The creation of a limit cycle in this fashion is called a {\em pseudo-Hopf bifurcation} in \cite{KuRi03}.
Transversality and non-degeneracy conditions for these bifurcations
were derived in \cite{CaLl17} for piecewise-linear systems.
Here we generalise these results to allow nonlinear terms in $F_L$ and $F_R$.
These terms only affect the qualitative behaviour of the bifurcation in the two-fold case.
For analogous bifurcations involving visible folds, see \cite{KuRi03,TaLi12}.

\subsection{The focus/focus case --- HLB 5}
\label{sub:slippingFoci}

\begin{figure}
\begin{center}
\includegraphics[width=4.2cm]{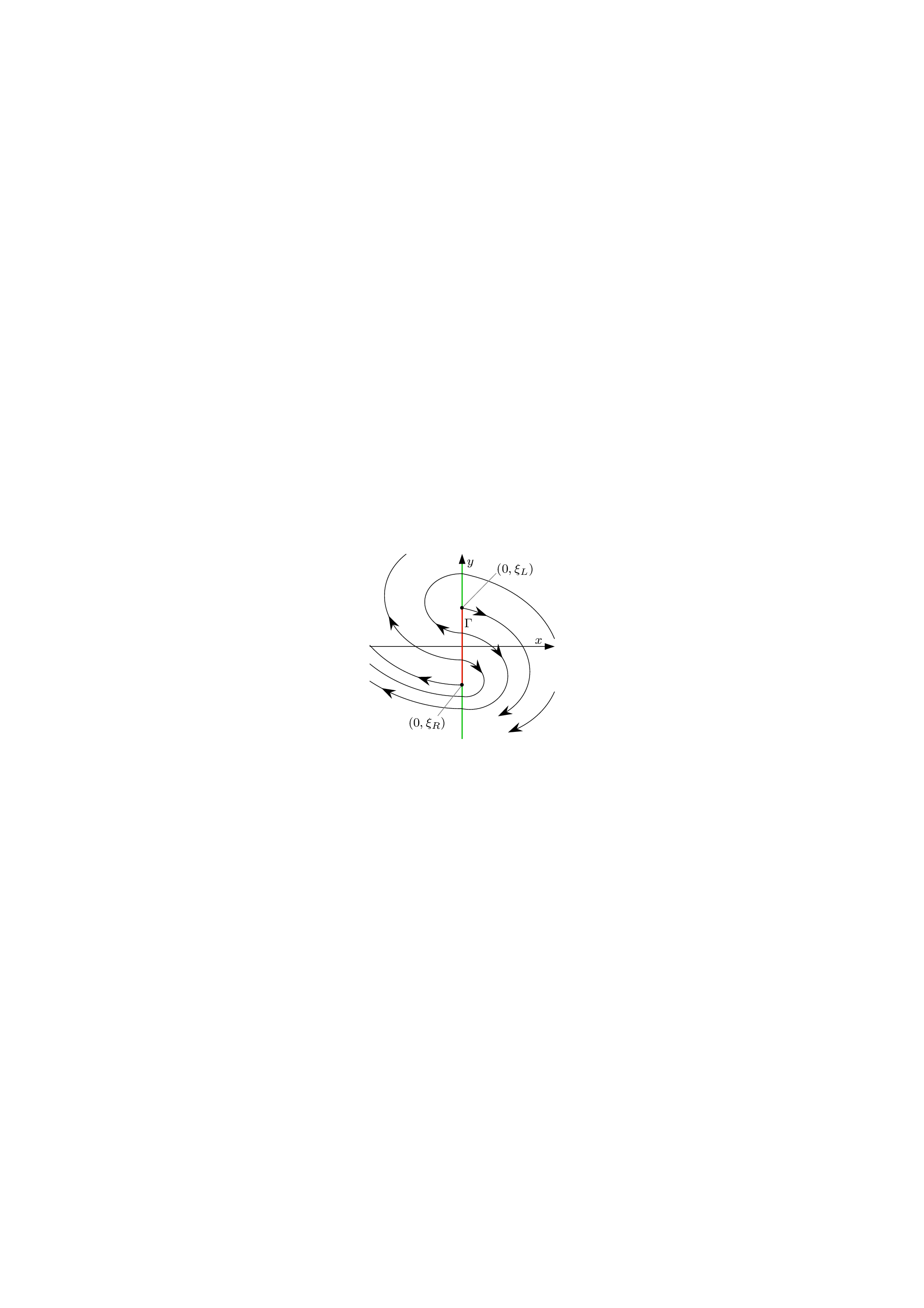}
\caption{
A typical phase portrait of the Filippov system \eqref{eq:FilippovBEB}
subject to the assumptions of Theorem \ref{th:slippingFocusFocus} with small $\mu > 0$.
The points $(0,\xi_L)$ and $(0,\xi_R)$ are boundary foci.
The direction of sliding motion on $\Gamma$ is governed by the sign of $\gamma$,
\eqref{eq:slippingFocusFocusgamma}.
\label{fig:schemSlippingFocusFocus2}
} 
\end{center}
\end{figure}

\begin{figure*}
\begin{center}
\includegraphics[width=16.6cm]{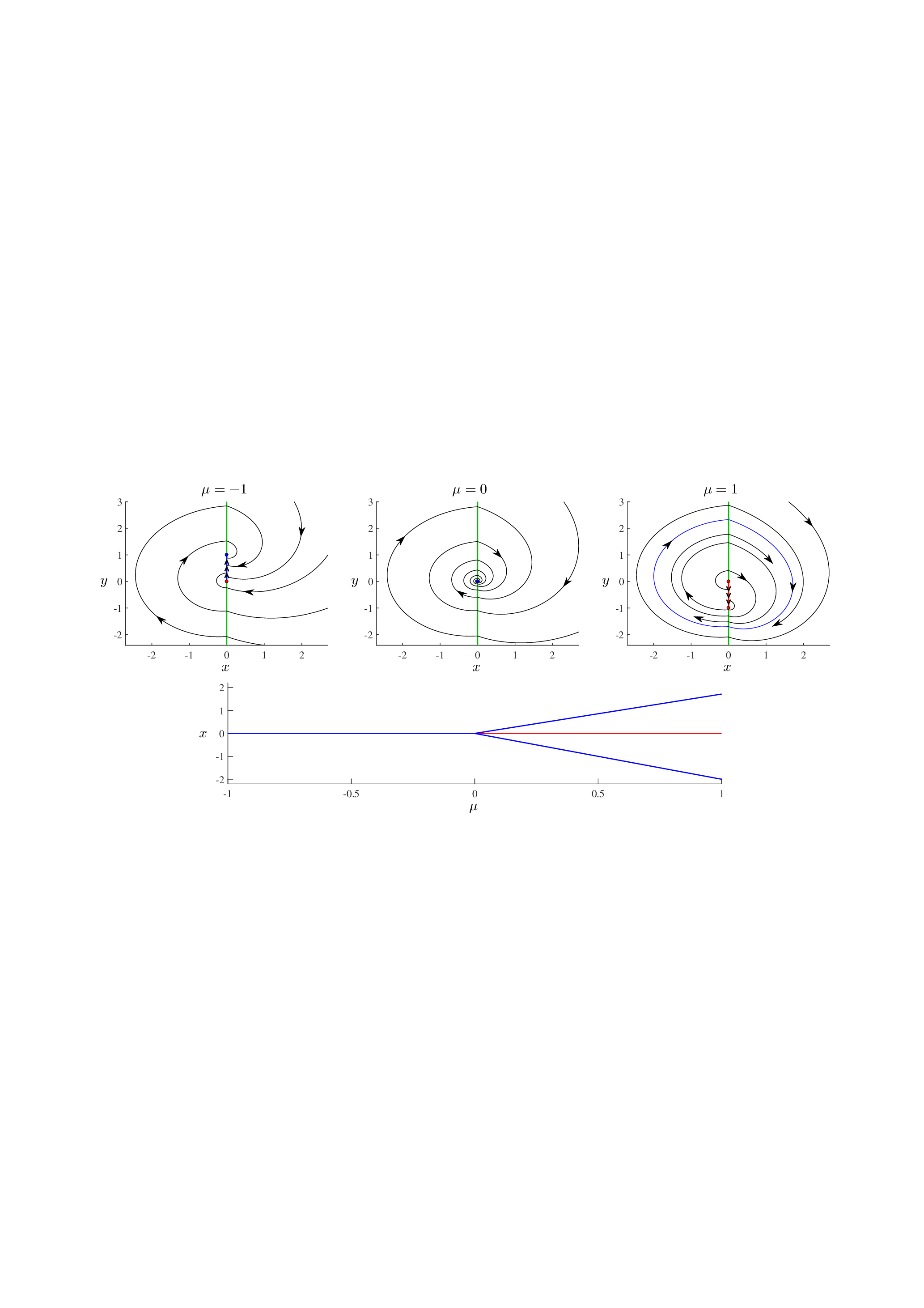}
\caption{
An illustration of HLB 5 for \eqref{eq:slippingFocusFocusExample} with $\lambda_L = 0.1$ and $\lambda_R = -0.5$.
By increasing the value of $\mu$ two boundary foci collide
causing an attracting sliding region to shrink to a point and reappear as a repelling sliding region
and producing a stable limit cycle.
\label{fig:allSlippingFocusFocus}
} 
\end{center}
\end{figure*}

Here we suppose that for all values of $\mu$ in a neighbourhood of $0$,
the left and right half-systems of \eqref{eq:FilippovBEB} have equilibria
at $(x,y) = (0,\xi_L(\mu))$ and $(x,y) = (0,\xi_R(\mu))$, respectively.
That is,
\begin{equation}
\begin{aligned}
f_L(0,\xi_L(\mu);\mu) &= 0, & f_R(0,\xi_R(\mu);\mu) &= 0, \\
g_L(0,\xi_L(\mu);\mu) &= 0, & g_R(0,\xi_R(\mu);\mu) &= 0,
\end{aligned}
\label{eq:slippingFocusFocusEqCond}
\end{equation}
for all sufficiently small $\mu \in \mathbb{R}$.
We suppose that the equilibria coincide at the origin when $\mu = 0$, that is
\begin{equation}
\xi_L(0) = \xi_R(0) = 0.
\label{eq:slippingFocusFocusPhiCond}
\end{equation}
We suppose $(0,\xi_L(\mu))$ is an unstable focus
and $(0,\xi_R(\mu))$ is a stable focus, that is
\begin{equation}
\begin{split}
{\rm eig}(\rD F_L(0,0;0)) &= \lambda_L \pm \ri \omega_L \,, \quad
{\rm with~} \lambda_L > 0, \omega_L > 0, \\
{\rm eig}(\rD F_R(0,0;0)) &= \lambda_R \pm \ri \omega_R \,, \quad
{\rm with~} \lambda_R < 0, \omega_R > 0,
\end{split}
\label{eq:slippingFocusFocusEigCond}
\end{equation}
and that the foci have the same direction of rotation.
To simplify the $\mu$-dependence
we assume (without loss of generality) that both foci have clockwise rotation,
Fig.~\ref{fig:schemSlippingFocusFocus2}.
That is, $\frac{\partial f_L}{\partial y}(0,0;0) > 0$ and $\frac{\partial f_R}{\partial y}(0,0;0) > 0$.
As with HLBs 1 and 4, the sign of
\begin{equation}
\alpha = \frac{\lambda_L}{\omega_L} + \frac{\lambda_R}{\omega_R},
\label{eq:slippingFocusFocusNondegCond}
\end{equation}
governs the stability of the origin when $\mu = 0$.

The distance between the foci is $|\beta \mu| + \cO \left( \mu^2 \right)$, where
\begin{equation}
\beta = \left( \frac{d \xi_L}{d \mu} - \frac{d \xi_R}{d \mu} \middle) \right|_{\mu = 0},
\label{eq:slippingFocusFocusTransCond}
\end{equation}
thus $\beta \ne 0$ is the transversality condition for HLB 5.
Let $\Gamma \subset \mathbb{R}^2$ be the subset of the switching manifold bounded by the foci,
see Fig.~\ref{fig:schemSlippingFocusFocus2}.
Notice $\Gamma$ is a sliding region on which motion is governed by \eqref{eq:gslide2}.
With the above assumptions the numerator of \eqref{eq:gslide2}
is $\gamma \left( y - \xi_L(\mu) \right) \left( y - \xi_R(\mu) \right)$,
plus higher order terms, where
\begin{equation}
\gamma = \left( \frac{\partial f_L}{\partial y} \frac{\partial g_R}{\partial y}
- \frac{\partial f_R}{\partial y} \frac{\partial g_L}{\partial y} \middle) \right|_{x = y = \mu = 0}.
\label{eq:slippingFocusFocusgamma}
\end{equation}

\begin{theorem}[HLB 5]
Consider \eqref{eq:FilippovBEB} where $F_L$ and $F_R$ are $C^1$.
Suppose \eqref{eq:slippingFocusFocusEqCond} and \eqref{eq:slippingFocusFocusPhiCond} are satisfied,
where $\xi_L$ and $\xi_R$ are $C^1$.
Suppose \eqref{eq:slippingFocusFocusEigCond} is satisfied,
$\frac{\partial f_L}{\partial y}(0,0;0) > 0$, $\frac{\partial f_R}{\partial y}(0,0;0) > 0$,
and $\beta > 0$.
In a neighbourhood of $(x,y;\mu) = (0,0;0)$,
\begin{enumerate}
\item
$\Gamma$ is an attracting sliding region for $\mu < 0$
(and if $\gamma < 0$ [$\gamma > 0$] then $(0,\xi_R(\mu))$ [$(0,\xi_L(\mu))$] is stable
while $(0,\xi_L(\mu))$ [$(0,\xi_R(\mu))$] is unstable)
and a repelling sliding region for $\mu > 0$,
\item
if $\alpha < 0$ [$\alpha > 0$] there exists a unique stable [unstable] limit cycle for $\mu > 0$ [$\mu < 0$],
and no limit cycle for $\mu < 0$ [$\mu > 0$].
\end{enumerate}
The minimum and maximum $x$ and $y$-values of the limit cycle are asymptotically proportional to $|\mu|$,
and the period limits to $\frac{\pi}{\omega_L} + \frac{\pi}{\omega_R}$ as $\mu \to 0$.
\label{th:slippingFocusFocus}
\end{theorem}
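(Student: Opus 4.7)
The plan is to construct a Poincar\'e map $P = P_L \circ P_R$ on the switching manifold, where $P_J$ is the one-sided return map through $\Omega_J$, and then extract a fixed point of $P$ via the implicit function theorem. The first step is to introduce focus-centred coordinates $\tilde y_J = y - \xi_J(\mu)$ for each $J \in \{L,R\}$; since $\xi_L, \xi_R$ are $C^1$ these translations are $C^1$ in $\mu$, and each focus sits at the origin of the corresponding $(x,\tilde y_J)$-plane. The structure of $x = 0$ described in the theorem is immediate from the signs of $f_L, f_R$ above and below the foci: using $\frac{\partial f_L}{\partial y}, \frac{\partial f_R}{\partial y} > 0$ at the origin together with $\xi_L(\mu) - \xi_R(\mu) = \beta \mu + \cO(\mu^2)$ and $\beta > 0$, one has $\xi_L(\mu) > \xi_R(\mu)$ for small $\mu > 0$, so the arc $\Gamma$ between the foci is a repelling sliding region, while the ordering reverses and $\Gamma$ becomes attracting for $\mu < 0$; the stability claim for the boundary foci in the attracting case follows by linearising $g_{\rm slide}$ from \eqref{eq:gslide2}, which returns the sign of $\gamma$.

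Because each boundary focus lies exactly on the switching manifold, Lemma~\ref{le:focus} applies to each half-system after the translation and gives, for $\tilde y_R > 0$ and $\tilde y_L < 0$ respectively,
\begin{equation*}
P_R(\tilde y_R;\mu) = -\re^{\lambda_R \pi/\omega_R}\,\tilde y_R + \co(\tilde y_R), \qquad P_L(\tilde y_L;\mu) = -\re^{\lambda_L \pi/\omega_L}\,\tilde y_L + \co(\tilde y_L),
\end{equation*}
with return times tending to $\pi/\omega_R$ and $\pi/\omega_L$; joint $C^1$ smoothness in $(\tilde y_J,\mu)$ follows from Lemma~\ref{le:flowSmoothness} applied to the $\mu$-augmented flow together with Lemma~\ref{le:mapSmoothness}. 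The conversion between focus frames is $\tilde y_L = \tilde y_R - \beta \mu + \cO(\mu^2)$, so composing $P_L \circ P_R$ and carefully tracking the constant contribution yields
\begin{equation*}
P(\tilde y_R;\mu) = \re^{\pi \alpha}\,\tilde y_R + \bigl(\re^{\lambda_L \pi/\omega_L} + 1\bigr) \beta \mu + \co(\tilde y_R) + \cO(\mu^2).
\end{equation*}

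I would then apply the IFT to the displacement $D(\tilde y_R;\mu) = P(\tilde y_R;\mu) - \tilde y_R$. One has $D(0;0) = 0$ and $\frac{\partial D}{\partial \tilde y_R}(0;0) = \re^{\pi \alpha} - 1$, which is nonzero whenever $\alpha \ne 0$, so the IFT furnishes a unique $C^1$ branch $\tilde y_R = \tilde y_R(\mu)$ through $(0;0)$ with slope
\begin{equation*}
\tilde y_R'(0) = \frac{\bigl(\re^{\lambda_L \pi/\omega_L} + 1\bigr)\beta}{1 - \re^{\pi \alpha}}.
\end{equation*}
If $\alpha < 0$ this slope is positive, so $\tilde y_R(\mu) > 0$ (the regime in which $P$ is defined by the above construction) precisely for $\mu > 0$; if $\alpha > 0$ the slope is negative and the admissible regime is $\mu < 0$. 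This produces the claimed limit cycle, its $x$- and $y$-extrema are $\cO(|\mu|)$ because $\tilde y_R = \cO(\mu)$ and $\xi_J(\mu) = \cO(\mu)$, and the asymptotic period $\pi/\omega_L + \pi/\omega_R$ is the sum of the two limiting return times. Stability is read off from $\frac{\partial P}{\partial \tilde y_R}(0;0) = \re^{\pi \alpha}$, which is less than $1$ iff $\alpha < 0$.

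The main obstacle I foresee is the bookkeeping of $\mu$-dependence in the composition: terms of order $\mu$ enter both through the offset $\beta \mu$ between the two focus frames and through the implicit $\mu$-dependence of $P_L$ and $P_R$ themselves, and one must verify that only the frame offset contributes at leading order. A subsidiary point is that the IFT produces a branch $\tilde y_R(\mu)$ that is smooth across $\mu = 0$, whereas only one sign of $\mu$ renders $\tilde y_R(\mu) > 0$; only in that regime does the fixed point correspond to a genuine orbit of the Filippov system matching Fig.~\ref{fig:schemSlippingFocusFocus2}, and the other sign is excluded on the geometric grounds that $P$ is not defined there, so no spurious limit cycle arises.
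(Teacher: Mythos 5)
Your proposal is correct and follows essentially the same route as the paper: translate each boundary focus to the origin, apply Lemma \ref{le:focus} to each half-system to obtain $P_R$ and $P_L$, compose (your composed map and fixed-point slope agree with the paper's \eqref{eq:slippingFocusFocusP} and \eqref{eq:slippingFocusFocusyStar} after the shift $\tilde y_R = r - \xi_R(\mu)$), and apply the IFT to a smoothly extended displacement, with admissibility of the fixed point decided by the sign of $\mu$. The one detail the paper makes explicit that you leave implicit is the check, in the case $\alpha>0$ with $\mu<0$, that the fixed point lies above $\check r(\mu)=P_R^{-1}(\xi_L(\mu);\mu)$ so the intermediate point genuinely crosses into $\Omega_L$ rather than landing on the attracting sliding region; this reduces to $\re^{\lambda_R\pi/\omega_R}>-1$ using your formula for $\tilde y_R(\mu)$, but it deserves a line.
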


As an example consider
\begin{equation}
\begin{bmatrix} \dot{x} \\ \dot{y} \end{bmatrix} =
\begin{cases}
\begin{bmatrix} \lambda_L x + y \\ -x + \lambda_L y \end{bmatrix}, & x < 0, \\
\begin{bmatrix} \lambda_R x + y + \mu \\ -x + \lambda_R (y + \mu) \end{bmatrix}, & x > 0.
\end{cases}
\label{eq:slippingFocusFocusExample}
\end{equation}
which satisfies the conditions of Theorem \ref{th:slippingFocusFocus} when $\lambda_L > 0$ and $\lambda_R < 0$.
Specifically we have $\xi_L(\mu) = 0$ and $\xi_R(\mu) = -\mu$, thus $\beta = 1$.
Also $\alpha = \lambda_L + \lambda_R$ and $\gamma = \lambda_R - \lambda_L < 0$.
Fig.~\ref{fig:allSlippingFocusFocus} shows a bifurcation diagram and
representative phase portraits using values of $\lambda_L$ and $\lambda_R$ for which $\alpha < 0$.

\subsection{The focus/fold case --- HLB 6}
\label{sub:slippingFocusFold}

\begin{figure*}
\begin{center}
\includegraphics[width=16.6cm]{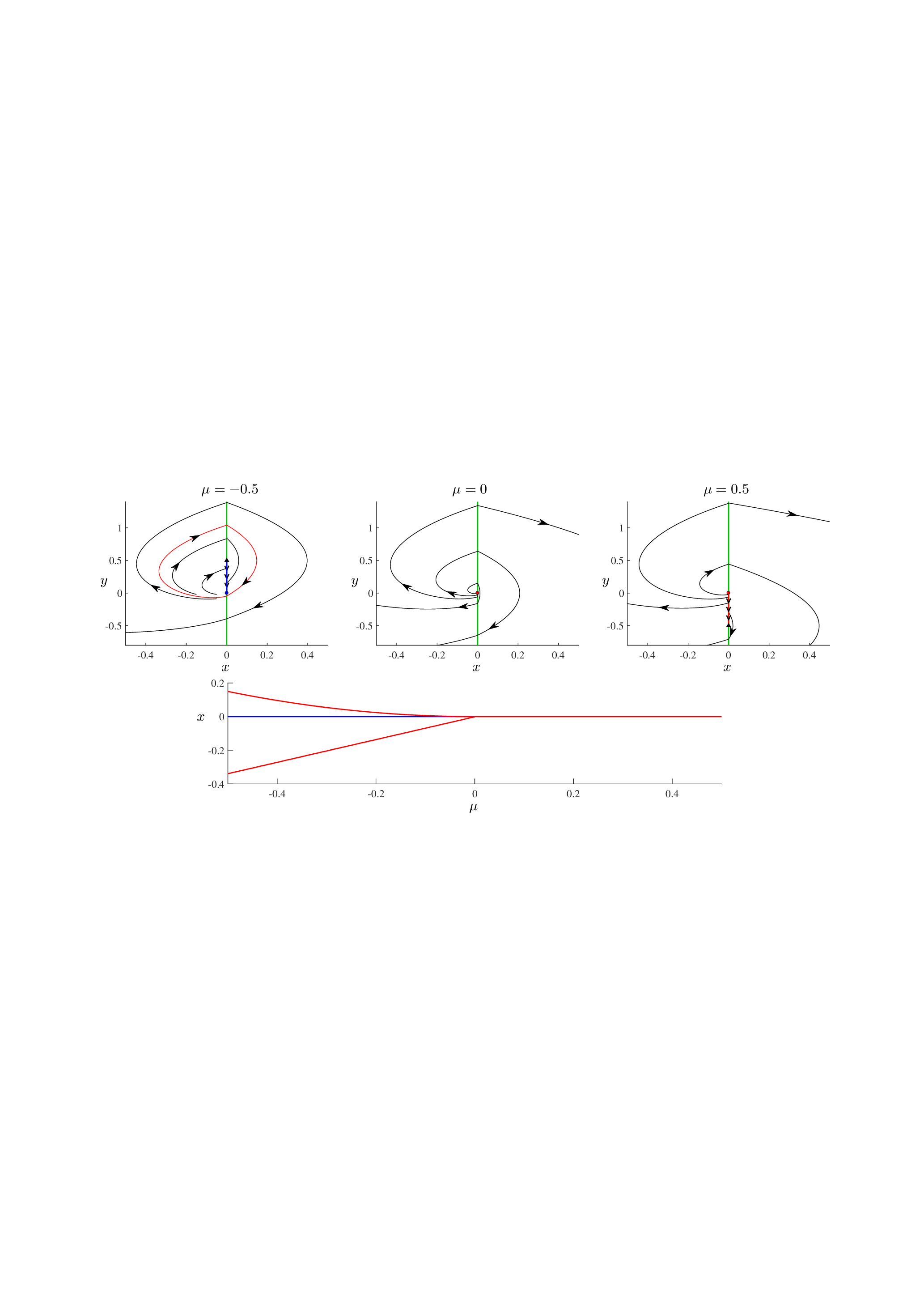}
\caption{
An illustration of HLB 6 for \eqref{eq:slippingFocusFoldExample} with $\lambda_L = 1$.
By decreasing the value of $\mu$ a boundary focus and invisible fold collide
causing an repelling sliding region to shrink to a point and reappear as an attracting sliding region
and producing an unstable limit cycle.
\label{fig:allSlippingFocusFold}
} 
\end{center}
\end{figure*}

Now suppose that for all values of $\mu$ in a neighbourhood of $0$
the left half-system has a focus at the origin.
That is,
\begin{align}
f_L(0,0;\mu) &= 0, & g_L(0,0;\mu) &= 0,
\label{eq:slippingFocusFoldEqCond}
\end{align}
and
\begin{equation}
{\rm eig}(\rD F_L(0,0;0)) = \lambda_L \pm \ri \omega_L \,, \quad
{\rm with~} \lambda_L \in \mathbb{R}, \omega_L > 0.
\label{eq:slippingFocusFoldEigCond}
\end{equation}
In order for the right half-system to have a fold at the origin when $\mu = 0$ we suppose
\begin{equation}
f_R(0,0;0) = 0.
\label{eq:slippingFocusFoldCond}
\end{equation}
As with HLB 5 we suppose $\frac{\partial f_L}{\partial y}(0,0;0) > 0$
and $\frac{\partial f_R}{\partial y}(0,0;0) > 0$
so that both half-systems involve clockwise rotation.
Then $g_R(0,0;0) < 0$ ensures the fold is invisible.
Locally the right half-system has a unique fold at $(x,y) = (0,\zeta_R(\mu))$, where
\begin{equation}
\frac{d \zeta_R}{d \mu}(0) = -\frac{\frac{\partial f_R}{\partial \mu}(0,0;0)}{\frac{\partial f_R}{\partial y}(0,0;0)}.
\label{eq:slippingFocusFoldxiR}
\end{equation}
It follows that the transversality condition for HLB 6 is $\beta \ne 0$, where
\begin{equation}
\beta = \frac{\partial f_R}{\partial \mu}(0,0;0).
\label{eq:slippingFocusFoldTransCond}
\end{equation}
Let $\Gamma \subset \mathbb{R}^2$ be the subset of the switching manifold
bounded by the focus $(0,0)$ and the fold $(0,\zeta_R(\mu))$.
When $\mu = 0$ the stability of the origin is governed by the sign of
\begin{equation}
\alpha = \lambda_L \,,
\label{eq:slippingFocusFoldNondegCond}
\end{equation}
as can be seen by composing the return maps \eqref{eq:focusP} and \eqref{eq:foldP}.

\begin{theorem}[HLB 6]
Consider \eqref{eq:FilippovBEB} where $F_L$ and $F_R$ are $C^1$.
Suppose \eqref{eq:slippingFocusFoldEqCond},
\eqref{eq:slippingFocusFoldEigCond}, and \eqref{eq:slippingFocusFoldCond} are satisfied,
$\frac{\partial f_L}{\partial y}(0,0;0) > 0$,
$\frac{\partial f_R}{\partial y}(0,0;0) > 0$,
$g_R(0,0;0) < 0$, and $\beta > 0$.
In a neighbourhood of $(x,y;\mu) = (0,0;0)$,
\begin{enumerate}
\item
the origin is the unique stationary solution
and is stable for $\mu < 0$ and unstable for $\mu > 0$
($\Gamma$ is an attracting sliding region for $\mu < 0$
and a repelling sliding region for $\mu > 0$),
\item
if $\alpha < 0$ [$\alpha > 0$] there exists a unique stable [unstable] limit cycle for $\mu > 0$ [$\mu < 0$],
and no limit cycle for $\mu < 0$ [$\mu > 0$].
\end{enumerate}
The maximum $x$-value of limit cycle is asymptotically proportional to $\mu^2$,
the minimum $x$-value and minimum and maximum $y$-values of the limit cycle are asymptotically proportional to $|\mu|$,
and the period limits to $\frac{\pi}{\omega_L}$ as $\mu \to 0$.
\label{th:slippingFocusFold}
\end{theorem}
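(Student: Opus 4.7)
The plan is to identify the limit cycle as a fixed point of a Poincar\'e return map on the switching manifold $x=0$, constructed as $P(r;\mu) = P_L(P_R(r;\mu);\mu)$, where $P_L$ captures the half-orbit of the left half-system about its focus at the origin and $P_R$ captures the half-orbit of the right half-system about its invisible fold at $(0,\zeta_R(\mu))$. I would first verify part (i) directly: \eqref{eq:slippingFocusFoldxiR} together with $\beta > 0$ and $\frac{\partial f_R}{\partial y}(0,0;0) > 0$ gives $\zeta_R'(0) < 0$, so the fold lies below [above] the focus for $\mu > 0$ [$\mu < 0$]; the sign assumptions on $\frac{\partial f_L}{\partial y}$, $\frac{\partial f_R}{\partial y}$, and $g_R$ then force $f_L$ and $f_R$ to have opposite signs on $\Gamma$ with opposite orientations on the two sides of $\mu=0$, giving attracting sliding for $\mu<0$ and repelling sliding for $\mu>0$.

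For $P_L$, Lemma \ref{le:focus} applies directly and yields $P_L(s;\mu) = -\re^{\lambda_L\pi/\omega_L} s + \cO(s^2)$, smoothly in $(s;\mu)$. For $P_R$, I would introduce the $\mu$-dependent shift $\tilde y = y - \zeta_R(\mu)$ placing the fold at the origin, apply Lemma \ref{le:fold} to the shifted flow, and unshift to obtain $P_R(r;\mu) = 2\zeta_R(\mu) - r + \cO((r - \zeta_R(\mu))^2)$; smoothness in $(r;\mu)$ follows from Lemmas \ref{le:flowSmoothness} and \ref{le:mapSmoothness}. The principal step is then the blow-up $r = |\mu|\rho$, which together with $\zeta_R(\mu) = -\zeta_0 \mu + \cO(\mu^2)$ (where $\zeta_0 = \beta / \frac{\partial f_R}{\partial y}(0,0;0) > 0$) converts the displacement $D(r;\mu) = P(r;\mu) - r$, after division by $|\mu|$, into a smooth function $\tilde D(\rho;\mu)$ whose restriction to $\mu = 0^\pm$ is affine in $\rho$ with nonzero slope $\re^{\lambda_L\pi/\omega_L} - 1$ and intercept $\pm 2 \re^{\lambda_L\pi/\omega_L} \zeta_0$. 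A positive root $\rho_0$ exists precisely when $\mu$ and $\alpha = \lambda_L$ have opposite signs, and the implicit function theorem delivers a unique $C^1$ branch $\rho(\mu)$, hence a unique local limit cycle. Stability is governed by $P'(r_\ast;\mu) = \re^{\lambda_L\pi/\omega_L} + \cO(\mu)$, which is below [above] unity iff $\lambda_L < 0$ [$\lambda_L > 0$].

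The scaling laws then follow routinely. Since $r_\ast \sim |\mu|$, the minimum and maximum $y$-values of the cycle on $x = 0$ are of order $|\mu|$; Lemma \ref{le:focus} gives a minimum $x$-value of the same order during the focus half-loop; the maximum $x$-value, attained during the fold half-loop, scales as the square of the offset $r_\ast - \zeta_R(\mu) \sim |\mu|$ and is thus proportional to $\mu^2$; and the total period is $T = T_{\rm focus}(|P_R(r_\ast)|) + T_{\rm fold}(r_\ast - \zeta_R(\mu)) = \frac{\pi}{\omega_L} + \cO(\mu)$ by the time expansions in Lemmas \ref{le:focus} and \ref{le:fold}. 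The main technical obstacle I anticipate is ensuring that $\tilde D$ extends smoothly across $\mu = 0$: this requires that the remainder in the shifted fold expansion inherit the $C^k$ dependence of the vector field on $\mu$ after the blow-up, which reduces to applying Lemma \ref{le:fold} to the parameterised family of shifted systems and tracking the smooth dependence of the resulting coefficients. A minor bookkeeping point is that the blow-up naturally splits into two branches according to the sign of $\mu$, so the two sub-cases of part (ii) are analysed separately and combined via the sign of $\alpha$.
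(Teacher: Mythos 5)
Your proposal is correct and follows essentially the same route as the paper's proof: both build $P = P_L \circ P_R$ with $P_L$ from Lemma \ref{le:focus} and $P_R$ from Lemma \ref{le:fold} after translating the fold $(0,\zeta_R(\mu))$ to the origin, compose to obtain $P(r;\mu) = \re^{\lambda_L \pi/\omega_L}\left(r + \tfrac{2\beta}{a_{2R}}\mu\right) + \co(|r|+|\mu|)$, and extract the unique fixed point $r^*(\mu) \propto \mu$ via the IFT (your blow-up $r = |\mu|\rho$ is just an explicit form of the paper's rescaled displacement argument), with identical conclusions for admissibility, stability, and the $\mu$, $\mu^2$, and period scalings. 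The one small addition you should make to part (i) is the computation of the sliding vector field \eqref{eq:gslide2}, which near the origin takes the form $g_{\rm slide}(y;\mu) = y\,h(y;\mu)/(f_L - f_R)$ with $h \to a_{2L} b_{0R} < 0$: this shows sliding orbits on $\Gamma$ are directed toward the origin when $\mu<0$, which is needed to conclude that the origin itself is stable, not merely that $\Gamma$ is an attracting sliding region.
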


As a minimal example consider
\begin{equation}
\begin{bmatrix} \dot{x} \\ \dot{y} \end{bmatrix} =
\begin{cases}
\begin{bmatrix} \lambda_L x + y \\ -x + \lambda_L y \end{bmatrix}, & x < 0, \\
\begin{bmatrix} y + \mu \\ -1 \end{bmatrix}, & x > 0,
\end{cases}
\label{eq:slippingFocusFoldExample}
\end{equation}
which satisfies the conditions of Theorem \ref{th:slippingFocusFold}.
Fig.~\ref{fig:allSlippingFocusFold} shows representative phase portraits and a bifurcation diagram using $\lambda_L = 1$
(here $\alpha > 0$ so an unstable limit cycle is created).

\subsection{The two-fold case --- HLB 7}
\label{sub:slippingFolds}

\begin{figure*}
\begin{center}
\includegraphics[width=16.6cm]{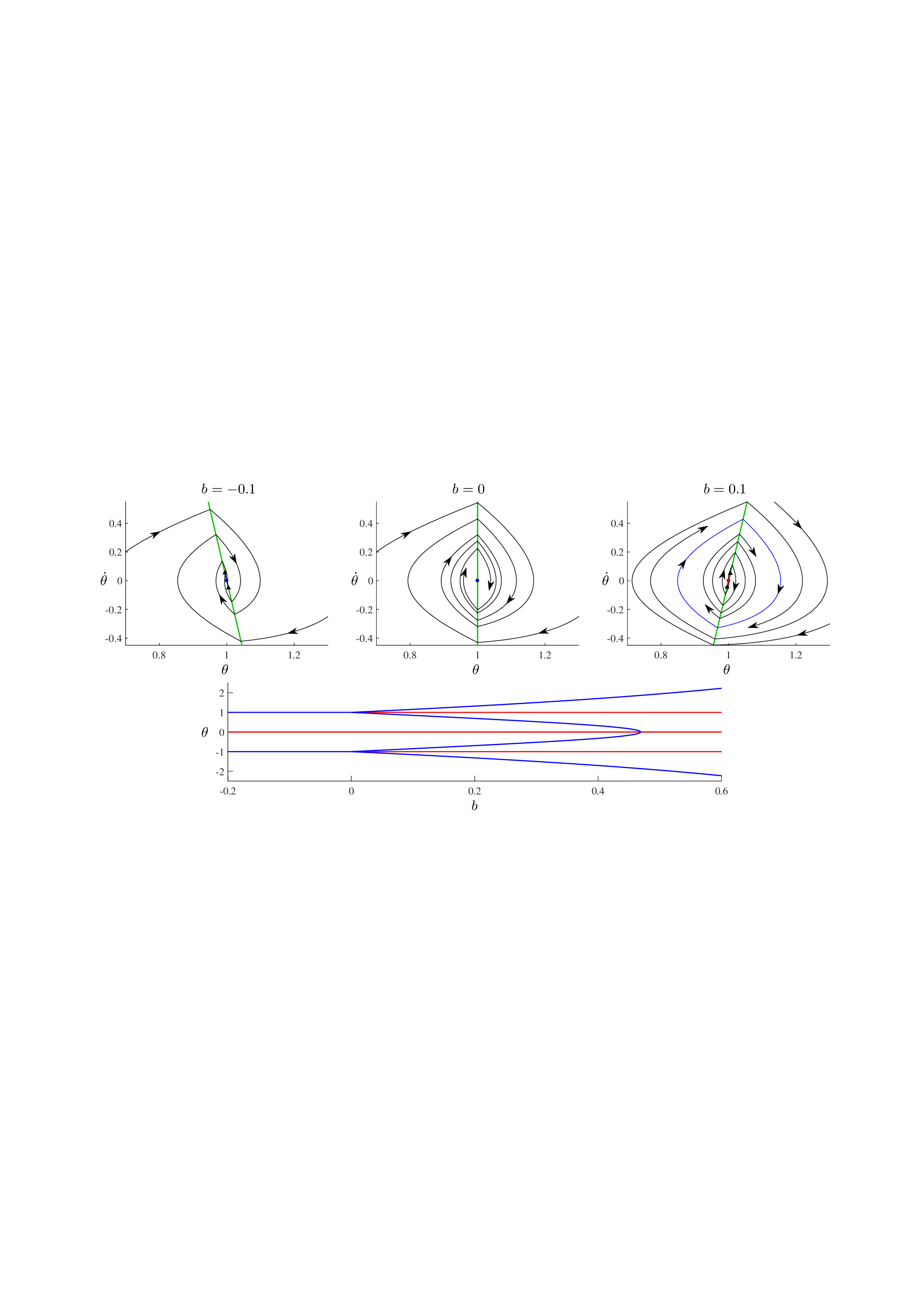}
\caption{
Phase portraits and a bifurcation diagram of the controlled pendulum model \eqref{eq:Ko17} with \eqref{eq:paramKo17}.
The system is symmetric and exhibits two instances of HLB 7 at $b = 0$.
For each instance, by increasing the value of $b$ two invisible folds collide
causing an attracting sliding region to shrink to a point and reappear as a repelling sliding region
and producing a stable limit cycle.
\label{fig:allKo17}
} 
\end{center}
\end{figure*}

Here we suppose both half systems of \eqref{eq:FilippovBEB} have a clockwise-rotating
invisible fold at the origin when $\mu = 0$.
That is,
\begin{equation}
f_L(0,0;0) = f_R(0,0;0) = 0,
\label{eq:slippingTwoFoldCond}
\end{equation}
and $\frac{\partial f_L}{\partial y}(0,0;0) > 0$,
$\frac{\partial f_R}{\partial y}(0,0;0) > 0$, $g_L(0,0;0) > 0$ and $g_R(0,0;0) < 0$.
The folds persist for small $\mu \ne 0$
and the distance between them is $|\beta \mu| + \cO \left( \mu^2 \right)$, where
\begin{equation}
\beta = \left( \frac{\frac{\partial f_R}{\partial \mu}}{\frac{\partial f_R}{\partial y}}
- \frac{\frac{\partial f_L}{\partial \mu}}{\frac{\partial f_L}{\partial y}} \right)
\bigg|_{x = y = \mu = 0}.
\label{eq:slippingTwoFoldTransCond}
\end{equation}
Let $\Gamma$ be the subset of $x=0$ bounded by the two folds.
Also let 
\begin{equation}
\alpha = \left( \sigma_{{\rm fold},L} - \sigma_{{\rm fold},R} \middle) \right|_{\mu = 0} \,,
\label{eq:slippingTwoFoldNondegCond}
\end{equation}
where \eqref{eq:foldsigma} is evaluated for both left and right half-systems as indicated
in \eqref{eq:slippingTwoFoldNondegCond} with subscripts.

\begin{theorem}[HLB 7]
Consider \eqref{eq:FilippovBEB} where $F_L$ and $F_R$ are $C^2$.
Suppose \eqref{eq:slippingTwoFoldCond} is satisfied,
$\frac{\partial f_L}{\partial y}(0,0;0) > 0$,
$\frac{\partial f_R}{\partial y}(0,0;0) > 0$, $g_L(0,0;0) > 0$, $g_R(0,0;0) < 0$, and $\beta > 0$.
In a neighbourhood of $(x,y;\mu) = (0,0;0)$,
\begin{enumerate}
\item
there exists a unique stationary solution:
a pseudo-equilibrium in $\Gamma$ that is stable for $\mu < 0$ and unstable for $\mu > 0$
($\Gamma$ is an attracting sliding region for $\mu < 0$ and a repelling sliding region for $\mu > 0$),
\item
if $\alpha < 0$ [$\alpha > 0$] there exists a unique stable [unstable] limit cycle for $\mu > 0$ [$\mu < 0$],
and no limit cycle for $\mu < 0$ [$\mu > 0$].
\end{enumerate}
The minimum and maximum $x$-values of the limit cycle are asymptotically proportional to $|\mu|$,
and the minimum and maximum $y$-values of the limit cycle are asymptotically proportional to $\sqrt{|\mu|}$,
and its period is
\begin{equation}
T = \left( \frac{2}{g_L(0,0;0)} - \frac{2}{g_R(0,0;0)} \right)
\sqrt{\frac{-3 \beta \mu}{\alpha}} + \co \left( \sqrt{\mu} \right).
\label{eq:slippingTwoFoldPeriod}
\end{equation}
\label{th:slippingTwoFold}
\end{theorem}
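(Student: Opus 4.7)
The plan is to build the Poincar\'e map $P=P_L\circ P_R$ on $x=0$, execute a spatial blow-up to resolve the tangential fold behavior, and apply the IFT to a rescaled displacement function. I begin with the geometric skeleton. For small $\mu$, the IFT applied to $f_J(0,y;\mu)=0$ produces $C^1$ fold loci $y_L(\mu)$ and $y_R(\mu)$ with $y_L(\mu)-y_R(\mu)=\beta\mu+O(\mu^2)$, and the signs of $f_L$ and $f_R$ classify the interval $\Gamma$ between them as attracting sliding for $\mu<0$ and repelling for $\mu>0$. Since $g_{\rm slide}(y_L;\mu)=g_L(0,y_L;\mu)>0$ and $g_{\rm slide}(y_R;\mu)=g_R(0,y_R;\mu)<0$, the intermediate value theorem produces a pseudo-equilibrium in $\Gamma$, with local uniqueness from a leading-order monotonicity computation and stability inherited from the sliding character; this handles part~(i).

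For the limit cycle I compose the two return maps supplied by Lemma~\ref{le:fold}. The right-fold map applies directly,
\begin{equation*}
P_R(s) \;=\; 2y_R(\mu) - s + \tfrac{2\sigma_{{\rm fold},R}}{3}\bigl(s-y_R(\mu)\bigr)^{2} + O\!\bigl((s-y_R(\mu))^3\bigr),
\end{equation*}
and for the left fold I invoke Lemma~\ref{le:fold} after the time-preserving change of variables $(x,y)\mapsto(-x,-y)$ based at the fold; a direct bookkeeping in the coefficients $(a_1,a_2,a_5,b_0,b_2)$ shows this change negates $\sigma_{\rm fold}$, so that in the original coordinates
\begin{equation*}
P_L(s') \;=\; 2y_L(\mu) - s' + \tfrac{2\sigma_{{\rm fold},L}}{3}\bigl(y_L(\mu)-s'\bigr)^{2} + O\!\bigl((y_L(\mu)-s')^3\bigr).
\end{equation*}
Writing $u=s-y_L(\mu)$ and $\delta=\beta\mu+O(\mu^2)$, the fixed-point condition $P_L(P_R(s))=s$ collapses, after cancelling matching linear parts, to
\begin{equation*}
D(u;\mu) \;=\; 2\delta + \tfrac{2\alpha}{3}\,u^2 + R(u;\mu) = 0,\qquad R=O\!\bigl(|u|^3+|\delta|\,|u|+\delta^2\bigr).
\end{equation*}

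The central difficulty, the one flagged in the introduction, is that $D(0;0)=\partial_u D(0;0)=0$ so the bare IFT has nothing to grip. I resolve this by setting $u=\sqrt{|\mu|}\,v$ and dividing by $|\mu|$: a term-by-term check on $R$ shows that each monomial acquires at least one extra factor of $\sqrt{|\mu|}$ under the rescaling, so
\begin{equation*}
\widetilde D(v;\mu) \;=\; \tfrac{1}{|\mu|}\,D\!\bigl(\sqrt{|\mu|}\,v;\mu\bigr) \;=\; 2\beta\,\mathrm{sgn}(\mu) + \tfrac{2\alpha}{3}v^2 + O\!\bigl(\sqrt{|\mu|}\bigr)
\end{equation*}
extends $C^1$ to $\mu=0$ in the variable pair $(v,\sqrt{|\mu|})$, treating each sign of $\mu$ as its own problem. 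The zero-$\mu$ limit has the simple positive root $v_\star=\sqrt{-3\beta\,\mathrm{sgn}(\mu)/\alpha}$ exactly when $\mathrm{sgn}(\mu)$ and $\alpha$ have opposite signs, and since $\partial_v\widetilde D(v_\star;0)=\tfrac{4\alpha}{3}v_\star\ne 0$ the IFT yields a unique $C^0$ branch $v(\mu)$, giving the unique fixed point $s(\mu)=y_L(\mu)+\sqrt{|\mu|}\,v(\mu)$ of $P$; when $\mathrm{sgn}(\mu)$ and $\alpha$ agree, $\widetilde D$ has fixed sign and no fixed point exists.

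Stability comes from $P'(s(\mu))=\bigl(-1-\tfrac{4\sigma_{{\rm fold},L}}{3}u\bigr)\bigl(-1+\tfrac{4\sigma_{{\rm fold},R}}{3}u\bigr)+o(u) = 1+\tfrac{4\alpha}{3}\sqrt{|\mu|}\,v_\star+o\!\bigl(\sqrt{|\mu|}\bigr)$, which lies inside the unit disk iff $\alpha<0$. The amplitude scalings are immediate from $u\sim\sqrt{|\mu|}$ and the parabolic geometry $x_{\max}\sim(s-y_J(\mu))^2\sim|\mu|$ at each fold. Finally the period formula~\eqref{eq:slippingTwoFoldPeriod} follows by summing $T_{\rm fold}(r)=-\tfrac{2}{b_0}r+O(r^2)$ from Lemma~\ref{le:fold} over the two arcs, using $b_0=g_R(0,0;0)$ for the right fold and $b_0=-g_L(0,0;0)$ for the transformed left one, then substituting $r\sim v_\star\sqrt{|\mu|}=\sqrt{-3\beta\mu/\alpha}$.
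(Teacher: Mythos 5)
Your proposal is correct and follows the same architecture as the paper's proof: fold loci $\zeta_L,\zeta_R$ via the IFT, the sign analysis of $g_{\rm slide}$ at the two folds for part (i), composition of the two second-order expansions from Lemma \ref{le:fold} (with the correct observation that the $180^\circ$ rotation negates $\sigma_{\rm fold}$), the quadratic displacement $2\beta\mu+\tfrac{2\alpha}{3}u^2+\cdots$, and the period from summing $T_{\rm fold}$ over the two arcs. The one step where you genuinely diverge is the resolution of the degenerate tangency $D(0;0)=\partial_u D(0;0)=0$: you blow up with $u=\sqrt{|\mu|}\,v$ and apply the IFT to the rescaled displacement in $v$, whereas the paper applies the IFT to $P(r;\mu)=r$ solving for $\mu$ as a function of $r$ (legitimate since $\partial_\mu P(0;0)=2\beta\neq 0$), obtaining $\mu=h(r)=-\tfrac{\alpha}{3\beta}r^2+\co(r^2)$ and then inverting. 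Both work; the paper's version is slightly more economical because it uses only the $C^2$ regularity of $P$ as given, while yours requires checking that $\widetilde D$ and $\partial_v\widetilde D$ extend continuously to $\mu=0$. On that point be a little careful: with $F_L,F_R$ only $C^2$ the remainder is $\co\left(\left(|u|+|\mu|\right)^2\right)$ rather than an explicit cubic, so after rescaling you get convergence that is uniform on compact sets in $v$ (which suffices for a parametrised IFT), not the literal $\cO\big(\sqrt{|\mu|}\big)$ bound you assert.
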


To illustrate HLB 7 we consider an inverted pendulum subject to on/off control.
There have been many studies of inverted pendulums with a control law that seeks to
maintain the pendulum in a roughly vertical position,
with applications to robotics \cite{BaMi10}
and human postural sway \cite{MiCa09}.
On/off control strategies are popular as they are simple,
easy to implement, and can be highly effective \cite{AsTa09,MiTo09,StIn06}.
Typically the time lag between when the controller makes measurements and is able to implement control
is an important factor, but this is not considered here, see \S\ref{sec:perturbations}.

Specifically we consider the model studied in \cite{Ko17}:
\begin{equation}
\ddot{\theta}  = \begin{cases}
a \theta, & |\theta - b \dot{\theta}| < \theta^*, \\
(a - K_p) \theta - K_d \dot{\theta}, & |\theta - b \dot{\theta}| > \theta^*.
\end{cases}
\label{eq:Ko17}
\end{equation}
Here $\theta(t)$ represents the angular displacement of the pendulum from vertical.
In the absence of control, $\theta(t)$ is assumed to vary according to $\ddot{\theta} = a \theta$
for some constant $a > 0$ (a reasonable assumption for small angles).
The applied control force is a linear combination of $\theta$ and $\dot{\theta}$ (PD control).
With $b = 0$, the control is applied when $|\theta|$ exceeds a threshold angle $\theta^*$,
and $b \ne 0$ incorporates dependency on $\dot{\theta}$.

Fig.~\ref{fig:allKo17} shows the dynamics near the switching manifold
$\theta = \theta^* + b \dot{\theta}$ using parameter values given in \cite{Ko17}:
\begin{align}
a &= 0.5, &
K_p &= 1, &
K_d &= 1, &
\theta^* &= 1.
\label{eq:paramKo17}
\end{align}
Two invisible folds collide at $(\theta,\dot{\theta}) = (\theta^*,0)$ when $b = 0$,
and a stable limit cycle exists for $b > 0$.
The system is symmetric and exhibits the same dynamics near the other switching manifold
$\theta = -\theta^* + b \dot{\theta}$.
Thus two stable limit cycles are created at $b = 0$ in symmetric instances of HLB 7.
At $b \approx 0.47$ the limit cycles become homoclinic to the origin
and merge into a single symmetric limit cycle.

Theorem \ref{th:slippingTwoFold} can be applied to this system via the coordinate change
\begin{equation}
\begin{split}
x &= \theta - (\theta^* + b \dot{\theta}), \\
y &= \dot{\theta}.
\end{split}
\nonumber
\end{equation}
It is a simple exercise to show that, with the parameter values \eqref{eq:paramKo17},
the conditions of Theorem \ref{th:slippingTwoFold} are met with $\alpha = \frac{-K_p}{(K_p - a) \theta^*} < 0$,
hence a stable limit cycle is created as described above.

\section{Fixed foci and folds in Filippov systems}
\setcounter{equation}{0}
\setcounter{figure}{0}
\setcounter{table}{0}
\label{sec:fixed}

In this section we suppose that each half-system of \eqref{eq:FilippovBEB}
has either a focus or an invisible fold at the origin for all values of $\mu$ in a neighbourhood of $0$.
We also suppose the direction of rotation is the same on both sides.
With these assumptions no sliding motion occurs locally.
The origin is a stationary solution and we suppose its stability changes at $\mu = 0$.
Generically a limit cycle is created and, as in the previous section, there are three cases:
(i) focus/focus, HLB 8,
(ii) focus/fold, HLB 9,
(iii) two-fold, HLB 10.

The focus/focus case occurs in a simple braking model for a car or bike
and called a {\em generalised Hopf bifurcation} in \cite{KuMo01,ZoKu06}, see also \cite{ChZo10,LlPo08}.
For physical reasons the nonlinear terms in this model are cubic;
consequently the amplitude of the limit cycle is asymptotically proportional to $\sqrt{|\mu|}$.
The absence of quadratic terms is a degeneracy in the context of general Filippov systems.
Below we show that non-degenerate quadratic terms produce asymptotically linear growth for the amplitude.
For the more general situation
that switching manifolds emanate from the origin at arbitrary angles
and the origin is a focus in each region bounded by these manifolds,
see \cite{AkKa17,AkAr09,ZoKu05}.

The two-fold case was described briefly by Filippov (see page 238 of \cite{Fi88}),
is analysed for piecewise-quadratic systems in \cite{LiHu14},
and occurs for a buck converter (a type of DC/DC power converter) with idealised switching \cite{FrPo12},
although here the model lacks nonlinear terms necessary for a limit cycle.

To analyse the HLB in each case we use a Poincar\'e map:
given $r > 0$ let $P(r)$ denote the $y$-value at which the forward orbit of $(x,y) = (0,r)$
next intersects the positive $y$-axis.
If there exists $n \ge 1$ such that
\begin{equation}
P(r) = r + V_n r^n + \cO \left( r^{n+1} \right),
\nonumber
\end{equation}
where $V_n \ne 0$, then $V_n$ is called the $n^{\rm th}$ {\em Lyapunov constant} \cite{Ku00}.
For the three scenarios considered here,
the bifurcation occurs when $V_n = 0$ (with $n=1$ for the focus/focus and focus/fold cases, and $n=2$ for the two-fold case).
The criticality of the bifurcation is determined by the sign of the next generically non-zero Lyapunov exponent $V_m$
(with $m=2$ for the focus/focus and focus/fold cases, and $m=4$ for the two-fold case).
For the focus/focus and focus/fold cases, Lyapunov constants were obtained in \cite{CoGa01}
by using differential equations with a single complex variable.
Higher order scenarios are described in \cite{GaTo03,LiHa09} (for the focus/focus case)
and in \cite{LiHa12} (for the focus/fold and two-fold cases);
see also \cite{YaHa11} for piecewise-Hamiltonian systems.

\subsection{The focus/focus case --- HLB 8}
\label{sub:fixedFoci}

Here we suppose
\begin{equation}
\begin{aligned}
f_L(0,0;\mu) &= 0, & f_R(0,0;\mu) &= 0, \\
g_L(0,0;\mu) &= 0, & g_R(0,0;\mu) &= 0,
\end{aligned}
\label{eq:fixedFocusFocusEqCond}
\end{equation}
for all values of $\mu$ in a neighbourhood of $0$, and
\begin{equation}
\begin{split}
{\rm eig}(\rD F_L(0,0;\mu)) &= \lambda_L(\mu) \pm \ri \omega_L(\mu), \\
&\quad~ {\rm with~} \lambda_L(\mu) > 0, \omega_L(\mu) > 0, \\
{\rm eig}(\rD F_R(0,0;\mu)) &= \lambda_R(\mu) \pm \ri \omega_R(\mu), \\
&\quad~ {\rm with~} \lambda_R(\mu) < 0, \omega_R(\mu) > 0.
\end{split}
\label{eq:fixedFocusFocusEigCond}
\end{equation}

Suppose the two foci involve the same direction of rotation
and let $P(r;\mu)$ denote the $y$-value of the next intersection of the forward orbit of $(x,y) = (0,r)$
with the positive $y$-axis.
By applying Lemma \ref{le:focus} to both left and right half-systems, we obtain
$P(r;\mu) = \re^{\Lambda(\mu) \pi} r + \cO \left( r^2 \right)$, where
\begin{equation}
\Lambda(\mu) = \frac{\lambda_L(\mu)}{\omega_L(\mu)} + \frac{\lambda_R(\mu)}{\omega_R(\mu)}.
\label{eq:fixedFocusFocusPsi}
\end{equation}
Thus the stability of the origin is determined by the sign of $\Lambda$.
For HLB 8 we suppose $\Lambda(0) = 0$ and $\beta \ne 0$ where
\begin{equation}
\beta = \frac{d \Lambda}{d \mu}(0).
\label{eq:fixedFocusFocusTransCond}
\end{equation}
The $r^2$-term in $P(r;0)$ governs the criticality of the bifurcation.
In Appendix \ref{app:fixed} we show that
\begin{equation}
\alpha = \left( \chi_{{\rm focus},L} - \chi_{{\rm focus},R} \middle) \right|_{\mu = 0} \,,
\label{eq:fixedFocusFocusNondegCond}
\end{equation}
is a factor in the coefficient of the $r^2$-term,
where \eqref{eq:focustau} is evaluated for both left and right half-systems
as indicated with subscripts.

\begin{figure*}
\begin{center}
\includegraphics[width=16.6cm]{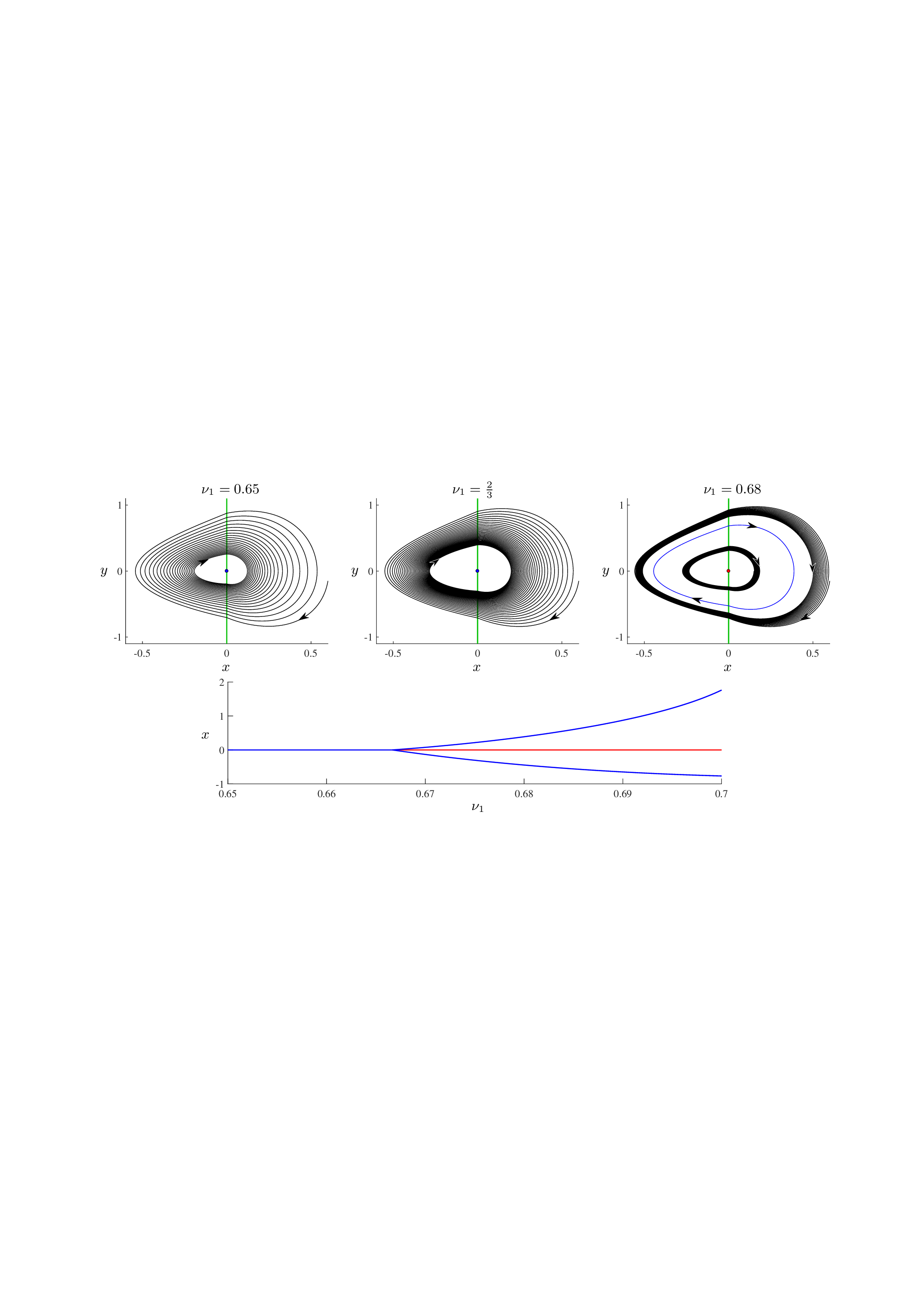}
\caption{
An illustration of HLB 8 for the bilinear oscillator
\eqref{eq:bilinearOsc} with \eqref{eq:bilinearOscFocusFocusParam},
$b = 0.5$, $\nu_2 = 1$, and $\hat{x} = 0$.
By increasing the value of $\nu_1$ the origin (a `merged focus') loses stability
when $\nu_1 = \frac{4 b}{3}$ and emits a stable limit cycle.
\label{fig:allFixedFocusFocus}
} 
\end{center}
\end{figure*}

\begin{theorem}[HLB 8]
Consider \eqref{eq:FilippovBEB} where $F_L$ and $F_R$ are $C^2$.
Suppose \eqref{eq:fixedFocusFocusEqCond} and
\eqref{eq:fixedFocusFocusEigCond} are satisfied,
$\frac{\partial f_L}{\partial y}(0,0;0) \frac{\partial f_R}{\partial y}(0,0;0) > 0$,
$\Lambda(0) = 0$, and $\beta > 0$.
In a neighbourhood of $(x,y;\mu) = (0,0;0)$,
\begin{enumerate}
\item
the origin is the unique stationary solution
and is stable for $\mu < 0$ and unstable for $\mu > 0$,
\item
if $\alpha < 0$ [$\alpha > 0$] there exists a unique stable [unstable] limit cycle for $\mu > 0$ [$\mu < 0$],
and no limit cycle for $\mu < 0$ [$\mu > 0$].
\end{enumerate}
The minimum and maximum $x$ and $y$-values of the limit cycle are asymptotically proportional to $|\mu|$,
and its period is
\begin{equation}
T = \frac{\pi}{\omega_L(0)} + \frac{\pi}{\omega_R(0)} + \cO(\mu).
\label{eq:fixedFocusFocusPeriod}
\end{equation}
\label{th:fixedFocusFocus}
\end{theorem}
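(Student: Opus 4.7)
The plan is to reduce the theorem to a fixed-point analysis for a Poincar\'e map constructed on the positive $y$-axis. Since $\frac{\partial f_L}{\partial y}(0,0;0)$ and $\frac{\partial f_R}{\partial y}(0,0;0)$ have the same sign, both foci rotate in the same direction; I would take this direction to be clockwise without loss of generality, so that the orbit of the right half-system through $(0,r)$ with small $r>0$ first crosses the $y$-axis at a negative point $P_R(r;\mu)<0$, and the orbit of the left half-system through that point then returns to the $y$-axis at a positive point $P(r;\mu)=P_L(P_R(r;\mu);\mu)$. I would apply Lemma~\ref{le:focus} directly to the right half-system and, after the change of coordinates $(x,y)\mapsto(-x,-y)$, to the left half-system. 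Composing the two expansions and using $\Lambda(0)=0$ yields
\begin{equation*}
P(r;\mu) = \re^{\Lambda(\mu)\pi}\,r + A(\mu)\,r^{2} + \co(r^{2}),
\end{equation*}
where the calculation shows $A(0)$ is a strictly positive multiple of $\alpha$ defined in \eqref{eq:fixedFocusFocusNondegCond}.

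The next step is to isolate the non-trivial fixed points of $P$ by factoring out the equilibrium at the origin. Since $P(0;\mu)=0$, I would define the displacement
\begin{equation*}
D(r;\mu) = \frac{P(r;\mu)-r}{r},
\end{equation*}
smoothly extended to $r=0$ by $D(0;\mu)=\re^{\Lambda(\mu)\pi}-1$. Lemmas~\ref{le:flowSmoothness} and~\ref{le:mapSmoothness} ensure $P$, and hence $D$, is $C^{2}$ in $(r,\mu)$ near the origin. The hypothesis $\Lambda(0)=0$ gives $D(0;0)=0$, while $\frac{\partial D}{\partial \mu}(0;0)=\pi\beta\ne 0$, so the IFT produces a unique $C^{1}$ branch of zeros $\mu=\mu^{*}(r)$ through $(0;0)$. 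Inverting this branch (possible because $\frac{d\mu^{*}}{dr}(0)=-A(0)/(\pi\beta)\ne 0$ whenever $\alpha\ne 0$) yields a unique curve $r=r^{*}(\mu)\sim -\pi\beta\mu/A(0)$ of non-trivial fixed points of $P$. This curve lies in $r>0$ precisely for $\mu>0$ when $\alpha<0$ and for $\mu<0$ when $\alpha>0$, giving the claimed existence, uniqueness, and side of the limit cycle. Stability follows from $\frac{\partial P}{\partial r}(r^{*}(\mu);\mu)=1+A(0)\,r^{*}(\mu)+\co(\mu)=1-\pi\beta\mu+\co(\mu)$, which is less than $1$ in modulus exactly when $\mu>0$, matching the supercritical/subcritical assignment. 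The amplitude scaling $\sim|\mu|$ is immediate from $r^{*}(\mu)=\cO(\mu)$, since the orbit arcs depend linearly on the initial data to leading order, and the period \eqref{eq:fixedFocusFocusPeriod} follows by adding the two half-period estimates \eqref{eq:focusT}.

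The main obstacle is verifying that the $r^{2}$-coefficient $A(0)$ produced by the composition is, up to a strictly positive factor, equal to $\alpha=\chi_{{\rm focus},L}|_{\mu=0}-\chi_{{\rm focus},R}|_{\mu=0}$. This requires careful bookkeeping under the involution $(x,y)\mapsto(-x,-y)$ used to bring the left half-system into the form required by Lemma~\ref{le:focus}: the involution fixes the linear Jacobian $\rD F_{L}(0,0;\mu)$ but flips the sign of every second-order partial derivative appearing in~\eqref{eq:focustau}, and one must check that the resulting quadratic contribution from $P_{L}$ enters with the sign opposite to that of $P_{R}$. The condition $\Lambda(0)=0$ implies $\re^{\lambda_{L}(0)\pi/\omega_{L}(0)}\re^{\lambda_{R}(0)\pi/\omega_{R}(0)}=1$, which collapses the various exponential prefactors coming from the two applications of Lemma~\ref{le:focus} into a single strictly positive constant, so that after cancellation only the difference $\chi_{{\rm focus},L}-\chi_{{\rm focus},R}$ survives in $A(0)$, as required.
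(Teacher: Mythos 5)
Your proposal is correct and follows essentially the same route as the paper: apply Lemma \ref{le:focus} to each half-system (via $(x,y)\mapsto(-x,-y)$ for the left), compose to get $P(r;\mu)=\re^{\Lambda(\mu)\pi}r+\alpha\big(\re^{\lambda_R\pi/\omega_R}+1\big)r^2+\co\big((|r|+|\mu|)^2\big)$, and apply the IFT to $D(r;\mu)=\frac{1}{r}(P(r;\mu)-r)$ to obtain the unique nontrivial fixed point $r^*(\mu)\sim-\pi\beta\mu/A(0)$ with multiplier $1-\pi\beta\mu+\co(\mu)$. The only cosmetic difference is that you solve $D=0$ for $\mu$ and invert, while the paper solves directly for $r$ using $\frac{\partial D}{\partial r}(0;0)\ne0$ when $\alpha\ne0$; your sign bookkeeping for the involution and the resulting coefficient $\chi_{{\rm focus},L}-\chi_{{\rm focus},R}$ matches the paper's computation.
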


As an example consider the bilinear oscillator:
\begin{equation}
\begin{split}
\dot{x} &= y, \\
\dot{y} &= \begin{cases}
-k_L x - b_L y + F_{\rm apply}(y), & x < 0, \\
-k_L x - k_R (x + \hat{x}) - (b_L + b_R) y + F_{\rm apply}(y), & x > 0.
\end{cases}
\end{split}
\label{eq:bilinearOsc}
\end{equation}
where
\begin{equation}
F_{\rm apply}(y) = \nu_1 y + \nu_2 y^2,
\label{eq:bilinearOscForcing}
\end{equation}
represents an applied force.
The variables $x(t)$ and $y(t)$ can be interpreted as the position and velocity of an oscillator
that undergoes compliant impacts at $x = 0$.
All parameters in \eqref{eq:bilinearOsc} are non-negative constants
(in particular $\hat{x} > 0$ is a prestress distance, see \cite{SiHo13}).
This model is motivated by experimental studies, particularly \cite{InPa08b,MaIn08},
but the parameter values used here are purely chosen to illustrate the HLBs.
For other studies of bilinear oscillators refer to \cite{DiBu08,PaYa08,ShHo83b}.

With $\hat{x} = 0$ the origin is an equilibrium of both half-systems.
By writing the associated eigenvalues in the form \eqref{eq:fixedFocusFocusEigCond} we obtain
\begin{equation}
\begin{aligned}
\lambda_L &= \frac{\nu_1 - b_L}{2}, \\
\lambda_R &= \frac{\nu_1 - b_L - b_R}{2}, \\
\omega_L &= \sqrt{k_L - \frac{(\nu_1 - b_L)^2}{4}}, \\
\omega_R &= \sqrt{k_L + k_R - \frac{(\nu_1 - b_L - b_R)^2}{4}}.
\end{aligned}
\label{eq:bilinearOsclambdaomega}
\end{equation}
Let us fix
\begin{equation}
\begin{aligned}
k_L &= 1, &
k_R &= 3, \\
b_L &= b, &
b_R &= b,
\end{aligned}
\label{eq:bilinearOscFocusFocusParam}
\end{equation}
for some $b > 0$.
Then $\Lambda$ is zero when $\nu_1 = \frac{4 b}{3}$,
thus to apply Theorem \ref{th:fixedFocusFocus} we define $\mu = \nu_1 - \frac{4 b}{3}$.
This gives
$\beta = \frac{d \Lambda}{d \nu_1} \big|_{\nu_1 = \frac{4 b}{3}}
= \frac{162}{\left( 36 - b^2 \right)^{\frac{3}{2}}}$
and $\alpha = \frac{-9 b \nu_2}{2 \left( 81 - 2 b^2 \right)}$.

Fig.~\ref{fig:allFixedFocusFocus} shows a bifurcation diagram using $b = 0.5$ and $\nu_2 = 1$.
Since $\beta > 0$ and $\alpha < 0$,
a stable limit cycle is created at $\nu_1 = \frac{4 b}{3}$.
The limit cycle grows quite quickly
because $|\alpha|$ is relatively small ($\alpha \approx -0.0280$)
and the amplitude of the limit cycle is inversely proportional to $|\alpha|$,
see \eqref{eq:fixedFocusFocusyStar}.
For the same reason the rate at which orbits converge to the limit cycle is relatively slow.

\begin{figure*}
\begin{center}
\includegraphics[width=16.6cm]{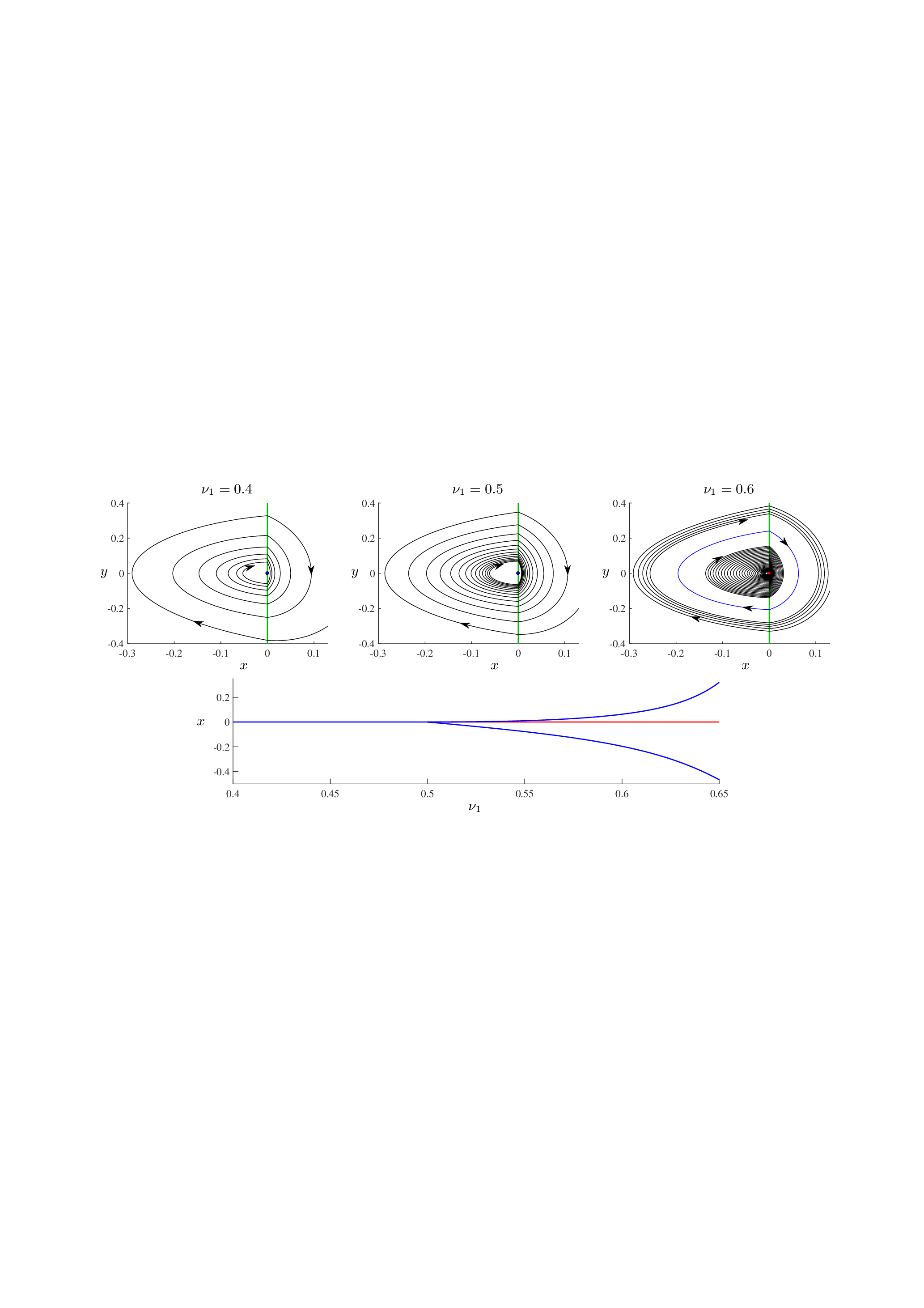}
\caption{
An illustration of HLB 9 for the bilinear oscillator
\eqref{eq:bilinearOsc} with \eqref{eq:bilinearOscFocusFocusParam},
$b = 0.5$, $\nu_2 = 1$, and $\hat{x} = 0.1$.
By increasing the value of $\nu_1$
the origin (a focus for the left half-system and an invisible fold for the right half-system)
loses stability when $\nu_1 = b$ and emits a limit cycle.
The limit cycle is stable because $\alpha = \frac{-b_R}{3 k_R \hat{x}} = -\frac{5}{9} < 0$.
\label{fig:allFixedFocusFold}
} 
\end{center}
\end{figure*}

\subsection{The focus/fold case --- HLB 9}
\label{sub:fixedFocusFold}

Here we suppose the left half-system has a focus at the origin, i.e.
\begin{align}
f_L(0,0;\mu) &= 0, & g_L(0,0;\mu) &= 0,
\label{eq:fixedFocusFoldEqCond}
\end{align}
and
\begin{equation}
\begin{split}
{\rm eig}(\rD F_L(0,0;\mu)) &= \lambda_L(\mu) \pm \ri \omega_L(\mu), \\
&\quad~ {\rm with~} \lambda_L(\mu) \in \mathbb{R}, \omega_L(\mu) > 0,
\end{split}
\label{eq:fixedFocusFoldEigCond}
\end{equation}
for all values of $\mu$ in a neighbourhood of $0$.
We suppose the right half-system has an invisible fold at the origin, thus
\begin{equation}
f_R(0,0;\mu) = 0,
\label{eq:fixedFocusFoldCond}
\end{equation}
and $\gamma < 0$ where
\begin{equation}
\gamma = \left( \frac{\partial f_R}{\partial y} \,g_R \middle) \right|_{x = y = \mu = 0}.
\label{eq:fixedFocusFoldInvisCond}
\end{equation}
Also suppose both half-systems involve the same direction of rotation.

In this situation the stability of the origin is simply determined by the sign of $\lambda_L(\mu)$.
Thus for HLB 9 we assume $\lambda_L(0) = 0$ and let
\begin{equation}
\beta = \frac{d \lambda_L}{d \mu}(0).
\label{eq:fixedFocusFoldTransCond}
\end{equation}
As shown below the criticality of the bifurcation is determined by the sign of
\begin{equation}
\alpha = \left( \chi_{{\rm focus},L} - \frac{\sigma_{{\rm fold},R}}{3}
\middle) \right|_{\mu = 0},
\label{eq:fixedFocusFoldNondegCond}
\end{equation}
where \eqref{eq:focustau} is evaluated for the left half-system
and \eqref{eq:foldsigma} is evaluated for the right half-system.

\begin{theorem}[HLB 9]
Consider \eqref{eq:FilippovBEB} where $F_L$ and $F_R$ are $C^2$.
Suppose \eqref{eq:fixedFocusFoldEqCond},
\eqref{eq:fixedFocusFoldEigCond}, and \eqref{eq:fixedFocusFoldCond} are satisfied,
$\frac{\partial f_L}{\partial y}(0,0;0) \frac{\partial f_R}{\partial y}(0,0;0) > 0$,
$\lambda_L(0) = 0$, $\beta > 0$, and $\gamma < 0$.
In a neighbourhood of $(x,y;\mu) = (0,0;0)$,
\begin{enumerate}
\item
the origin is the unique stationary solution and is stable for $\mu < 0$ and unstable for $\mu > 0$,
\item
if $\alpha < 0$ [$\alpha > 0$] there exists a unique stable [unstable] limit cycle for $\mu > 0$ [$\mu < 0$],
and no limit cycle for $\mu < 0$ [$\mu > 0$].
\end{enumerate}
The maximum $x$-value of the limit cycle is asymptotically proportional to $\mu^2$,
its minimum $x$-value and minimum and maximum $y$-values are asymptotically proportional to $|\mu|$,
and its period is
\begin{equation}
T = \frac{\pi}{\omega_L(0)} + \cO(\mu).
\label{eq:fixedFocusFoldPeriod}
\end{equation}
\label{th:fixedFocusFold}
\end{theorem}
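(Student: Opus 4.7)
The plan is to construct a local Poincar\'e map $P = P_L \circ P_R$ on the positive $y$-axis and locate its fixed points using the IFT, mirroring the strategy of earlier HLBs. For $r > 0$ the orbit of $(0,r)$ enters $\Omega_R$ (since $\frac{\partial f_R}{\partial y}(0,0;0) > 0$), executes a short clockwise excursion around the invisible fold at the origin, and returns to $(0,P_R(r;\mu))$ with $P_R(r;\mu) < 0$. From there it enters $\Omega_L$, circles the focus, and returns to the positive $y$-axis at $P(r;\mu) = P_L(P_R(r;\mu);\mu)$. Locally, nontrivial fixed points of $P$ are in bijection with the limit cycles we seek.

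First I would apply Lemma \ref{le:fold} to the right half-system to obtain
\[
P_R(r;\mu) = -r + \tfrac{2}{3}\sigma_{{\rm fold},R}(\mu)\, r^2 + o(r^2),
\]
and Lemma \ref{le:focus} to the left half-system (after the sign change that puts it into the form required by that lemma) to obtain
\[
P_L(\tilde r;\mu) = -\re^{\lambda_L(\mu)\pi/\omega_L(\mu)}\,\tilde r + \re^{\lambda_L\pi/\omega_L}\bigl(\re^{\lambda_L\pi/\omega_L}+1\bigr)\chi_{{\rm focus},L}(\mu)\,\tilde r^2 + o(\tilde r^2).
\]
Composing and using $\lambda_L(0) = 0$, so that $\re^{\lambda_L\pi/\omega_L}\bigl(\re^{\lambda_L\pi/\omega_L}+1\bigr) = 2$ at $\mu = 0$, the quadratic coefficient of $P(r;0) - r$ collapses to exactly $2\alpha$ with $\alpha$ as in \eqref{eq:fixedFocusFoldNondegCond}, while the linear coefficient is $\re^{\lambda_L(\mu)\pi/\omega_L(\mu)}$.

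Following the device flagged in the introduction for HLB 9, I would then work with the displacement
\[
D(r;\mu) = \frac{P(r;\mu) - r}{r},
\]
which eliminates the spurious zero at $r = 0$. Once $D$ is shown to extend smoothly to a neighborhood of $(r,\mu) = (0,0)$, expanding $\re^{\lambda_L\pi/\omega_L} - 1 = \pi\beta\mu/\omega_L(0) + O(\mu^2)$ gives
\[
D(r;\mu) = \frac{\pi\beta}{\omega_L(0)}\,\mu + 2\alpha\, r + o\bigl(|r|+|\mu|\bigr).
\]
Since $D(0;0) = 0$ and $\frac{\partial D}{\partial r}(0;0) = 2\alpha \ne 0$, the IFT yields a unique smooth zero branch $r(\mu) = -\frac{\pi\beta}{2\alpha\omega_L(0)}\,\mu + O(\mu^2)$, which lies in $r > 0$ exactly when $\alpha\mu < 0$, delivering the existence and side-of-bifurcation statements. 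Stability follows from $P'(r(\mu);\mu) = 1 + 4\alpha\, r(\mu) + o(\mu)$, which is $< 1$ iff $\alpha < 0$. The period is $T_R(r(\mu);\mu) + T_L(P_R(r(\mu);\mu);\mu) = O(\mu) + \pi/\omega_L(0) + O(\mu)$ by the time formulas in Lemmas \ref{le:fold} and \ref{le:focus}.

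The main obstacle is verifying that $D$ is genuinely $C^1$ at $(0,0)$, rather than just pointwise-controlled at fixed $\mu$. This requires the expansions of $P_R$ and $P_L$ in Lemmas \ref{le:fold} and \ref{le:focus} to be uniform in $\mu$ to sufficient order, which follows from Lemma \ref{le:flowSmoothness} together with transversality of the return intersections (and, for $P_R$, from the classical fact that the tangent return map at an invisible fold extends smoothly in both $r$ and parameters). Granting this, the scaling laws in the theorem statement are immediate: the $y$-extent equals $\max\{|r(\mu)|,|P_R(r(\mu);\mu)|\} \sim |\mu|$; the minimum $x$-value is attained on the focus side, where $|x| \sim r \sim |\mu|$; and the maximum $x$-value is attained during the fold excursion, where $x_{\max} \sim \frac{\partial_y f_R(0,0;0)}{2\,|g_R(0,0;0)|}\, r^2 \sim \mu^2$.
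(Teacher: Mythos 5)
Your proposal follows essentially the same route as the paper's proof in Appendix \ref{app:fixed}: compose the fold return map of Lemma \ref{le:fold} with the focus return map of Lemma \ref{le:focus}, obtain $P(r;\mu) = r + \frac{\beta\pi}{\omega_L(0)}\mu r + 2\alpha r^2 + \co\left(\left(|r|+|\mu|\right)^2\right)$, remove the trivial fixed point via $D(r;\mu)=\frac{1}{r}\left(P(r;\mu)-r\right)$, and apply the IFT, with the same treatment of the scaling laws and period. The only blemish is the derivative at the nontrivial fixed point, which should be $\frac{\partial P}{\partial r}\left(r^*(\mu);\mu\right) = 1 + \frac{\beta\pi}{\omega_L(0)}\mu + 4\alpha r^*(\mu) + \co(\mu) = 1 - \frac{\beta\pi}{\omega_L(0)}\mu + \co(\mu)$ rather than $1+4\alpha r^*(\mu)+\co(\mu)$; the omitted linear term does not change the sign conclusion, so the stability claim stands.
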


To illustrate Theorem \eqref{th:fixedFocusFold} we consider
\eqref{eq:bilinearOsc} with $\hat{x} > 0$.
Again $\lambda_L$ and $\omega_L$ are given by \eqref{eq:bilinearOsclambdaomega}.
The bifurcation occurs when $\lambda_L = 0$,
thus here we use $\mu = \nu_1 - b_L$ for which $\beta = \frac{1}{2}$.

To evaluate $\alpha$ we first observe that 
the $\ell_3 \frac{\partial^2 g_L}{\partial y^2}$-term
is only potential contribution to $\chi_{{\rm focus},L}$, see \eqref{eq:focustau},
but $\ell_3 = \frac{k_L(\nu_1 - b_L)}{2}$ is zero at the bifurcation hence $\chi_{{\rm focus},L} = 0$.
From \eqref{eq:foldsigma} we obtain
$\sigma_{{\rm fold},R} = \frac{\nu_1 - b_L - b_R}{-k_R \hat{x}}$
and so $\alpha = \frac{-b_R}{3 k_R \hat{x}}$.
Fig.~\ref{fig:allFixedFocusFold} illustrates the dynamics near the bifurcation
with typical parameter values.

\begin{figure*}
\begin{center}
\includegraphics[width=16.6cm]{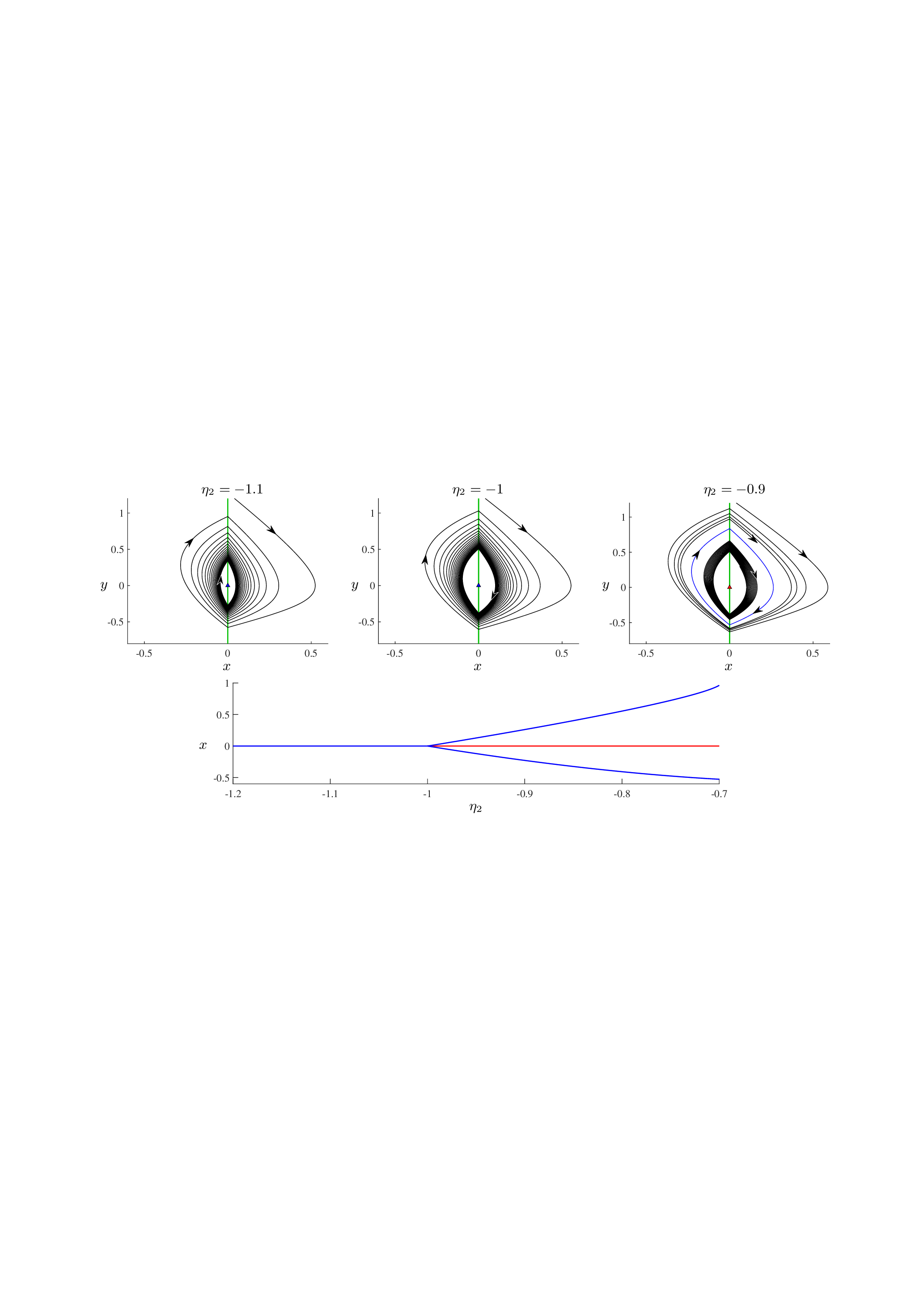}
\caption{
An illustration of HLB 10 for \eqref{eq:fixedTwoFoldODE} with $\eta_1 = 1$.
By increasing the value of $\eta_2$, the origin (an invisible-invisible two-fold) loses stability
when $\nu_2 = -1$ and emits a stable limit cycle.
Take care to note that the bifurcation diagram only
shows the asymptotically linear growth of the minimum and maximum
$x$-values of the limit cycle;
the amplitude (or diameter) of the limit cycle
is asymptotically proportional to $\sqrt{\eta_2 + 1}$ (see Theorem \ref{th:fixedTwoFold}).
\label{fig:allFixedTwoFold}
} 
\end{center}
\end{figure*}

\subsection{The two-fold case --- HLB 10}
\label{sub:fixedFolds}

We now suppose that for all values of $\mu$ in a neighbourhood of $0$,
the origin is an invisible fold for both half-systems of \eqref{eq:FilippovBEB}.
Thus
\begin{align}
f_L(0,0;\mu) &= 0, & f_R(0,0;\mu) &= 0,
\label{eq:fixedTwoFoldCond}
\end{align}
and $\gamma_L > 0$ and $\gamma_R < 0$ where
\begin{equation}
\gamma_J = \left( \frac{\partial f_J}{\partial y} \,g_J \middle) \right|_{x = y = \mu = 0},
\label{eq:fixedTwoFoldInvisCond}
\end{equation}
for each $J \in \{ L,R \}$.
Again suppose both half-systems involve the same direction of rotation.

Let $\sigma_{{\rm fold},J}(\mu)$ and $\chi_{{\rm fold},J}(\mu)$
be the result of evaluating \eqref{eq:foldsigma} and \eqref{eq:foldtau} with the vector field $F = F_J$.
As shown in Appendix \ref{app:fixed}, the stability of the origin is governed by the sign of
\begin{equation}
\Lambda(\mu) = \sigma_{{\rm fold},L}(\mu) - \sigma_{{\rm fold},R}(\mu).
\label{eq:fixedTwoFoldLambda}
\end{equation}
Thus for HLB 10 we assume $\Lambda(0) = 0$ and let
\begin{equation}
\beta = \frac{d \Lambda}{d \mu}(0).
\label{eq:fixedTwoFoldTransCond}
\end{equation}
We also let
\begin{equation}
\alpha = \chi_{{\rm fold},L}(0) - \chi_{{\rm fold},R}(0).
\label{eq:fixedTwoFoldNondegCond}
\end{equation}

\begin{theorem}[HLB 10]
Consider \eqref{eq:FilippovBEB} where $F_L$ and $F_R$ are $C^4$.
Suppose \eqref{eq:fixedTwoFoldCond} is satisfied,
$\frac{\partial f_L}{\partial y}(0,0;0) \frac{\partial f_R}{\partial y}(0,0;0) > 0$,
$\Lambda(0) = 0$,
$\beta > 0$, $\gamma_L > 0$, and $\gamma_R < 0$.
In a neighbourhood of $(x,y;\mu) = (0,0;0)$,
\begin{enumerate}
\item
the origin is the unique stationary solution and is stable for $\mu < 0$ and unstable for $\mu > 0$,
\item
if $\alpha < 0$ [$\alpha > 0$] there exists a unique stable [unstable] limit cycle for $\mu > 0$ [$\mu < 0$],
and no limit cycle for $\mu < 0$ [$\mu > 0$].
\end{enumerate}
The minimum and maximum $x$-values of the limit cycle are asymptotically proportional to $|\mu|$,
and the minimum and maximum $y$-values of the limit cycle are asymptotically proportional to $\sqrt{|\mu|}$,
and the period is
\begin{equation}
T = \left( \frac{2}{g_L(0,0;0)} - \frac{2}{g_R(0,0;0)} \right)
\sqrt{\frac{-5 \beta \mu}{\alpha}} + \co \left( \sqrt{|\mu|} \right).
\label{eq:fixedTwoFoldPeriod}
\end{equation}
\label{th:fixedTwoFold}
\end{theorem}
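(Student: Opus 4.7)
The plan is to locate the limit cycle as a nontrivial fixed point of a Poincar\'e map $P(r;\mu) = P_L(P_R(r;\mu);\mu)$ defined on the positive $y$-axis, where $P_R$ captures the return to $x=0$ of the orbit through $(0,r)$ in $\Omega_R$, and $P_L$ captures the return of the orbit through $(0,P_R(r;\mu))$ in $\Omega_L$. The hypotheses $\frac{\partial f_J}{\partial y}(0,0;0) > 0$, $g_L(0,0;0) > 0$, and $g_R(0,0;0) < 0$ make both half-orbits clockwise and the origin an invisible fold for each half-system, so $P_R$ is furnished directly by Lemma \ref{le:fold}, while $P_L$ is furnished by applying Lemma \ref{le:fold} to the reflected system $\tilde F_L(x,y) = -F_L(-x,-y)$, which again satisfies its hypotheses. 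No sliding region is present locally, so limit cycles correspond precisely to nontrivial fixed points of $P$.

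A direct calculation under the reflection shows $\tilde\sigma_{{\rm fold},L} = -\sigma_{{\rm fold},L}$ and $\tilde\chi_{{\rm fold},L} = -\chi_{{\rm fold},L}$. Substituting the order-four expansions of Lemma \ref{le:fold} for both $P_R$ and $P_L$ into the composition $P = P_L \circ P_R$ and expanding to order four produces, after cancellation,
\begin{equation}
P(r;\mu) - r = \tfrac{2 \Lambda(\mu)}{3}\,r^2 + \tfrac{4 \Lambda(\mu)^2}{9}\,r^3 + \left[ \tfrac{2\alpha}{15} + \cO(\Lambda(\mu)) + \cO(\mu) \right] r^4 + \co(r^4),
\label{eq:planExpansion}
\end{equation}
where $\Lambda(\mu) = \beta \mu + \cO(\mu^2)$. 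The crucial algebraic feature is that when $\sigma_{{\rm fold},L} = \sigma_{{\rm fold},R} = \sigma$, the $r^3$-coefficient in the composition of two maps of the form $-r + \tfrac{2\sigma}{3}r^2 - \tfrac{4\sigma^2}{9}r^3 + \cdots$ cancels identically, by virtue of the identity $2\bigl(\tfrac{2\sigma}{3}\bigr)^2 - 2 \cdot \tfrac{4\sigma^2}{9} = 0$. Hence at $\mu = 0$ both the $r^2$ and $r^3$ coefficients of the displacement vanish, and $\tfrac{2\alpha}{15}\,r^4$ emerges as the leading nontrivial term; this is precisely what forces amplitude of order $\sqrt{|\mu|}$ rather than $|\mu|$.

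To apply the implicit function theorem, introduce $r = \sqrt{|\mu|}\,\hat r$ and define
\begin{equation}
G(\hat r;\mu) = \frac{P\bigl(\sqrt{|\mu|}\,\hat r;\mu\bigr) - \sqrt{|\mu|}\,\hat r}{|\mu|^2 \,\hat r^2}.
\label{eq:planG}
\end{equation}
By \eqref{eq:planExpansion}, as $\mu \to 0^\pm$ one has $G(\hat r;\mu) \to \pm \tfrac{2\beta}{3} + \tfrac{2\alpha}{15}\,\hat r^2$ uniformly on bounded sets of $\hat r$, with an error of order $\sqrt{|\mu|}$. The limit equation has a unique positive root $\hat r^* = \sqrt{-5\beta\,\mathrm{sgn}(\mu)/\alpha}$ precisely when $\mu$ and $\alpha$ have opposite signs, at which the derivative $\tfrac{4\alpha}{15}\,\hat r^*$ is nonzero. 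The IFT then yields a unique $\hat r(\mu)$ close to $\hat r^*$, hence a unique limit cycle whose $y$-extent scales like $\sqrt{|\mu|}$ and whose $x$-extent scales like $|\mu|$ (the latter via the maximum-$x$ estimate of Lemma \ref{le:fold}). Applying the time estimate $T_{\rm fold}(r) = -\tfrac{2}{b_0}\,r + \cO(r^2)$ of Lemma \ref{le:fold} to each half-system and evaluating at $r = \hat r^*\sqrt{|\mu|}$ delivers the period formula \eqref{eq:fixedTwoFoldPeriod}; stability of the limit cycle follows from the sign of $\partial P/\partial r - 1$ at the fixed point, which at leading order reduces to a positive multiple of $\alpha$.

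The main obstacle is confirming the two cancellations in \eqref{eq:planExpansion}. The $r^2$-cancellation at $\mu = 0$ is immediate from $\Lambda(0) = 0$, but the $r^3$-cancellation depends on the exact numerical coefficients in Lemma \ref{le:fold}, and the reduction of the $r^4$-coefficient to $\tfrac{2}{15}(\chi_{{\rm fold},L} - \chi_{{\rm fold},R})$ at $\mu = 0$ requires careful tracking of the reflection-induced sign flips $\tilde\sigma_{{\rm fold},L} = -\sigma_{{\rm fold},L}$ and $\tilde\chi_{{\rm fold},L} = -\chi_{{\rm fold},L}$. A secondary technicality is the limited regularity of $G$ in $\mu$ near $\mu=0$ caused by the $\sqrt{|\mu|}$-rescaling; this is handled either by reparametrising in $\nu = \sqrt{|\mu|}$ or by observing that the $C^1$-dependence of $G$ on $\mu$ that remains near $(\hat r^*, 0)$ on each side of $\mu = 0$ is sufficient to invoke the IFT.
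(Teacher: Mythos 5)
Your proposal is correct and follows essentially the same route as the paper's proof: both use Lemma \ref{le:fold} for $P_R$ and (via the reflection $(x,y)\mapsto(-x,-y)$, with the sign flips $\sigma\mapsto-\sigma$, $\chi\mapsto-\chi$) for $P_L$, exploit the same cancellation of the $r^2$ and $r^3$ terms at $\mu=0$ so that $\tfrac{2\alpha}{15}r^4$ dominates, and extract the fixed point $r^*\sim\sqrt{-5\beta\mu/\alpha}$ by the IFT. The only cosmetic difference is the final step: the paper divides the displacement by $r^2$ and solves $D(r;\mu)=0$ for $\mu$ as a $C^2$ function of $r$ before inverting, which sidesteps the $\sqrt{|\mu|}$-regularity issue you flag with your rescaled function $G$.
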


As an abstract example consider
\begin{equation}
\begin{bmatrix} \dot{x} \\ \dot{y} \end{bmatrix} =
\begin{cases}
\begin{bmatrix} x + y \\ 1 \end{bmatrix}, & x < 0, \\
\begin{bmatrix} y \\ -1 + \eta_1 x + \eta_2 y \end{bmatrix}, & x > 0,
\end{cases}
\label{eq:fixedTwoFoldODE}
\end{equation}
where $\eta_1, \eta_2 \in \mathbb{R}$ are parameters.
Here $\sigma_{{\rm fold},L} = 1$ and $\sigma_{{\rm fold},R} = -\eta_2$,
so in Theorem \ref{th:fixedTwoFold} we use $\mu = \eta_2 + 1$, for which $\beta = 1$.
Also $\chi_{{\rm fold},L} = \frac{22}{9}$
and $\chi_{{\rm fold},R} = \frac{22}{9} + \eta_1$, thus $\alpha = -\eta_1$.
Fig.~\ref{fig:allFixedTwoFold} shows phase portraits and a bifurcation diagram using $\eta_1 = 1$.

\section{Hybrid systems}
\setcounter{equation}{0}
\setcounter{figure}{0}
\setcounter{table}{0}
\label{sec:hybrid}

State variables of hybrid systems experience jumps between periods of continuous evolution.
The continuous evolution is governed by differential equations,
the jumps are defined by maps,
and the maps are applied when certain time-dependent or state-dependent conditions are met.
Hybrid systems naturally model mechanical systems with hard impacts,
ecological systems with periodic culling or addition of population,
and many control systems.
A smooth dynamical system becomes hybrid
when it is combined with control law that gives an occasional kick to the state of the system.
Such control laws are often both highly effective and efficient \cite{ChEl05}.

Hybrid systems are broad and this is reflected in
the theory that has been developed for them \cite{HaCh06,LeNi10,Pl10,VaSc00}.
Limit cycles involving jumps are often large amplitude,
and multi-jump limit cycles are often best analysed via graph-theoretic techniques \cite{MaSa00}.
This paper concerns small-amplitude limit cycles for which a local analysis is appropriate.

In \S\ref{sub:impacting} we study BEBs in hybrid systems involving a single map that leaves one variable unchanged.
Such systems arise when impacts are modelled as instantaneous events
(velocity is reversed but position is unchanged).
In \S\ref{sub:impulsive} we study hybrid systems with a single map
(most naturally interpreted as an impulse)
where the domain and range of the map intersect at a regular equilibrium.
Proofs are provided in Appendices \ref{app:impact} and \ref{app:impulse}.

\subsection{Impacting systems --- HLBs 11--13}
\label{sub:impacting}

Here we study systems of the form
\begin{equation}
\begin{split}
\begin{bmatrix} \dot{x} \\ \dot{y} \end{bmatrix} &= F(x,y;\mu), {\rm ~for~} x < 0, \\
y &\mapsto \phi(y;\mu), {\rm ~when~} x = 0,
\end{split}
\label{eq:impactingODE}
\end{equation}
and write
\begin{equation}
F(x,y;\mu) = \begin{bmatrix} f(x,y;\mu) \\ g(x,y;\mu) \end{bmatrix},
\nonumber
\end{equation}
where $f$, $g$, and $\phi$ are smooth functions.
Orbits evolve in $\Omega_L = \left\{ (x,y) \,\middle|\, x < 0, y \in \mathbb{R} \right\}$
following the vector field $F$ until reaching $x=0$
at which time the map $\phi$ is applied.
The variables $x(t)$ and $y(t)$ may be interpreted as the position and velocity of an object that undergoes
instantaneous impacts at $x=0$, where $\phi$ is the impact law.
Define
\begin{equation}
{\rm sgn}(a) = \begin{cases}
-1, & a < 0, \\
0, & a = 0, \\
1, & a > 0.
\end{cases}
\label{eq:sgn}
\end{equation}
We assume
\begin{equation}
{\rm sgn} \left( f(0,y;\mu) \right) = {\rm sgn}(y), \quad \text{for all}~ y, \mu \in \mathbb{R},
\label{eq:FLimpactingCond}
\end{equation}
so that orbits following $F$ depart the negative $y$-axis and arrive at the positive $y$-axis,
see Fig.~\ref{fig:schemPoinImpact}.
We also assume
\begin{equation}
{\rm sgn} \left( \phi(y;\mu) \right) = -{\rm sgn}(y),
\quad \text{for all}~ y \ge 0,\, \text{and all}~ \mu \in \mathbb{R}.
\label{eq:phiimpactingCond}
\end{equation}
This ensures that for any positive impact velocity $y$,
the rebound velocity $\phi(y;\mu)$ is negative and that impacts impart no change when $y = 0$.

\begin{figure}[b!]
\begin{center}
\includegraphics[width=4.2cm]{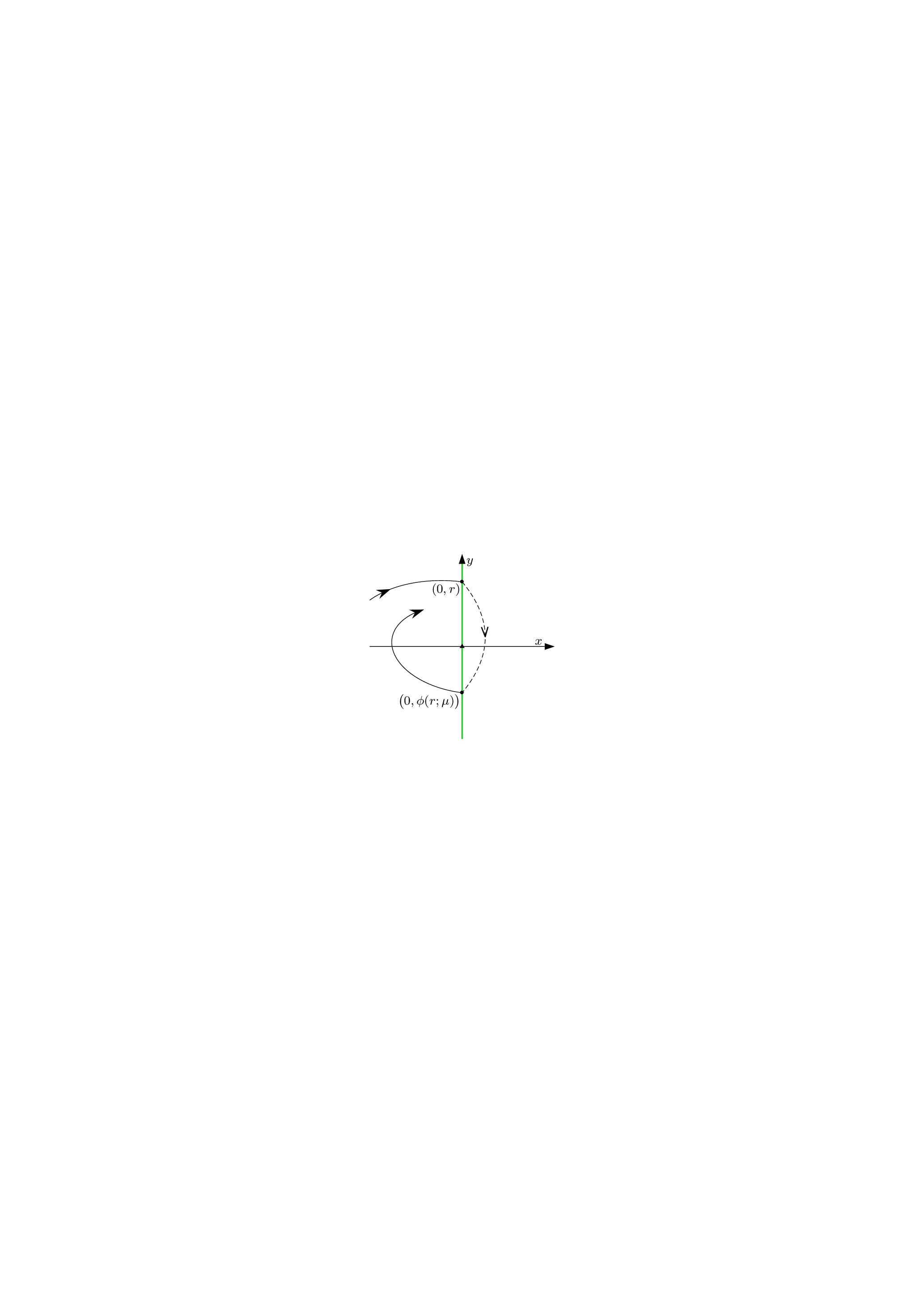}
\caption{
Part of a typical orbit of the impacting system \eqref{eq:impactingODE}.
\label{fig:schemPoinImpact}
} 
\end{center}
\end{figure}

The system \eqref{eq:impactingODE} experiences a BEB when a regular equilibrium collides with $x=0$.
At the bifurcation the equilibrium is necessarily located at $(x,y) = (0,0)$ by \eqref{eq:FLimpactingCond}. 
As detailed in Section 5.1.3 of \cite{DiNo08},
there are four generic topologically distinct scenarios
in which a local limit cycle is created in this type of BEB.
If the equilibrium is a focus,
a limit cycle may exist when the equilibrium is admissible (HLB 11)
or when the equilibrium is virtual (HLB 12).
If the equilibrium is a node,
a limit cycle may exist when the node is virtual (HLB 13).
If the equilibrium is a saddle, a limit cycle may be created
but the BEB is of nonsmooth-fold type (thus here it is not treated as Hopf-like).
As with HLBs 1--4, in generic situations
the local dynamics are determined by linear terms \cite{MaWa18}.

In \cite{DiNo08} this class of BEBs is analysed
by introducing a flow in $x > 0$ that simulates the action of the map $\phi$
and then applying known results for piecewise-smooth continuous systems
(such as Theorems \ref{th:pwscFocusFocus}--\ref{th:pwscFocusNode}).
This approach involves little work although corrections are needed near $x=0$
because the vector field cannot be made to be continuous here for all values of $\mu$.
The theorems presented below are instead proved by
directly analysing a Poincar\'e map
because the computationally intensive calculations are already covered by Lemma \ref{le:affine}.

We suppose the BEB occurs when $\mu = 0$, thus
\begin{equation}
g(0,0;0) = 0.
\label{eq:impactingEqCond}
\end{equation}
Notice $f(0,0;0) = 0$ is already implied by \eqref{eq:FLimpactingCond}.
If the Jacobian matrix $\rD F(0,0;0)$ is invertible
then, locally, \eqref{eq:impactingODE} has a unique regular equilibrium with an $x$-value
of $\frac{-\beta \mu}{\det \left( \rD F(0,0;0) \right)} + \co(\mu)$,
where
\begin{equation}
\beta = -\left( \frac{\partial g}{\partial \mu} \frac{\partial f}{\partial y}
\middle) \right|_{x = y = \mu = 0}.
\label{eq:impactingTransCond}
\end{equation}

Folds, sliding motion, and pseudo-equilibria defined above for Filippov systems
admit analogous definitions for impacting systems.
For details the reader is referred to \cite{DiBu08,DiNo08}.
Since $f(0,0;\mu) = 0$ by \eqref{eq:FLimpactingCond},
if $g(0,0;\mu) \ne 0$ then the origin is a visible fold
if $\beta \mu > 0$ and an invisible fold if $\beta \mu < 0$ (compare Definition \ref{df:fold}).
If the fold is visible, the orbit through the fold simply follows the tangent trajectory.
If the fold is invisible, it is a stationary solution that may be viewed as a pseudo-equilibrium.
If it is stable nearby orbits typically arrive at the pseudo-equilibrium
in {\em finite} time via an {\em infinite} sequence of impacts --- this is the Zeno phenomenon \cite{JoEg99,ZhJo01}.
If it is unstable this occurs in backwards time \cite{NoDa11}.

The following lemma (proved in Appendix \ref{app:impact}) clarifies the existence and stability
of the regular equilibrium and the pseudo-equilibrium (which is the origin when $\beta \mu < 0$).
Here we assume $\det(\rD F(0,0;0)) > 0$ so that these equilibria
are admissible on different sides of the BEB (i.e.~the bifurcation corresponds to persistence not a nonsmooth-fold).
This assumption also implies that the regular equilibrium is not a saddle.
Also we let
\begin{equation}
\gamma = -\frac{\partial \phi}{\partial y}(0;0),
\label{eq:impactingc}
\end{equation}
and notice $\gamma \ge 0$ by \eqref{eq:phiimpactingCond}.

\begin{lemma}
Consider \eqref{eq:impactingODE} where $F$ and $\phi$ are $C^1$.
Suppose \eqref{eq:FLimpactingCond},
\eqref{eq:phiimpactingCond}, and \eqref{eq:impactingEqCond} are satisfied,
$\beta > 0$, and $\det(\rD F(0,0;0)) > 0$.
Let $\lambda = \frac{1}{2} \,{\rm trace}(\rD F(0,0;0))$.
In a neighbourhood of $(x,y;\mu) = (0,0;0)$ there exists a unique stationary solution:
\begin{enumerate}
\item
a stable [unstable] regular equilibrium in $\Omega_L$
if $\lambda < 0$ [$\lambda > 0$] for $\mu > 0$,
\item
and a stable [unstable] pseudo-equilibrium if $\gamma < 1$ [$\gamma > 1$]
at the origin for $\mu < 0$.
\end{enumerate}
\label{le:impacting}
\end{lemma}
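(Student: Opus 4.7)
The plan is to split the argument into the regular equilibrium branch ($\mu > 0$) and the pseudo-equilibrium branch ($\mu < 0$), using the implicit function theorem for the former and a Poincar\'e map contraction argument for the latter.

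For $\mu > 0$, I would apply the IFT to $F(\cdot,\cdot;\mu) = 0$, which is admissible because $\det(\rD F(0,0;0)) > 0 \ne 0$, to obtain a unique smooth branch of zeros $(x^*(\mu), y^*(\mu))$ through the origin. Differentiating under the identity $f(0,0;\mu) \equiv 0$ forced by \eqref{eq:FLimpactingCond} gives
\begin{equation*}
x^*(\mu) = \frac{-\beta \mu}{\det(\rD F(0,0;0))} + \co(\mu),
\end{equation*}
so the equilibrium lies in $\Omega_L$ precisely for small $\mu > 0$. Its linear stability is then read off from $\rD F(x^*,y^*;\mu)$, whose eigenvalues have product $\det(\rD F(0,0;0)) + \cO(\mu) > 0$ and sum $2\lambda + \cO(\mu)$; hence both eigenvalues share the real-part sign of $\lambda$, giving asymptotic stability for $\lambda < 0$ and instability for $\lambda > 0$.

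For $\mu < 0$, I first note that \eqref{eq:impactingTransCond} together with $\tfrac{\partial f}{\partial y}(0,0;0) > 0$ forces $\tfrac{\partial g}{\partial \mu}(0,0;0) < 0$, so $g(0,0;\mu) > 0$ for small $\mu < 0$; hence the origin is an invisible fold of $F$ restricted to $\Omega_L$ and, by definition, a pseudo-equilibrium of the impacting system. I would construct a Poincar\'e map $P(y;\mu) = P_{\rm flow}(\phi(y;\mu);\mu)$ on a small interval of the positive $y$-axis, where $P_{\rm flow}(\cdot;\mu)$ takes $(0,y_-)$ with $y_- < 0$ to the next intersection of its forward orbit with $x = 0$ under the flow of $F$ in $\Omega_L$. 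For each fixed small $\mu < 0$ a reflected version of Lemma \ref{le:fold} (via $(x,y) \mapsto (-x,-y)$, which flips the sign of $g$ so that the lemma's hypothesis $g(0,0) < 0$ is met) yields $P_{\rm flow}(y_-;\mu) = -y_- + \co(y_-)$; composing with $\phi(y;\mu) = -\gamma y + \co(y) + \cO(\mu y)$ produces a fixed point of $P(\cdot;\mu)$ at $y = 0$ with multiplier $\gamma + \cO(\mu)$. Standard hyperbolic fixed-point arguments then give geometric Zeno convergence (stability) for $\gamma < 1$ and escape (instability) for $\gamma > 1$. Uniqueness is immediate: \eqref{eq:FLimpactingCond} forces the only fold of $F$ on $x=0$ to be the origin, and $x^*(\mu) > 0$ for $\mu < 0$ so no admissible regular equilibrium coexists.

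The main technical obstacle is that the full expansion of $P_{\rm fold}$ in Lemma \ref{le:fold} involves coefficients such as $\sigma_{\rm fold} = a_1/b_0 + \cdots$ that diverge as $\mu \to 0^-$ because $g(0,0;\mu) \to 0$. My strategy avoids this by using only the universal leading multiplier $-1$, which is independent of $\mu$ and reflects the local symmetry of an invisible fold to first order, together with a uniform $\co(y_-)$ remainder valid for each fixed small $\mu$. This suffices to decide stability of $P$ at the origin whenever $|\mu|$ is small and $\gamma \ne 1$, and no finer information about $P_{\rm flow}$ is required for the lemma.
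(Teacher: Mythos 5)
Your proposal is correct and follows essentially the same route as the paper: for $\mu>0$ the regular equilibrium is located via the IFT and its stability read off from the trace $2\lambda$ and the positive determinant, while for $\mu<0$ the pseudo-equilibrium's stability is obtained by composing the impact map (multiplier $-\gamma$) with the invisible-fold return map of Lemma \ref{le:fold} (leading multiplier $-1$), giving a Poincar\'e-map multiplier of $\gamma$ at the origin. Your explicit handling of the reflection needed to invoke Lemma \ref{le:fold} in $\Omega_L$, and your observation that only the $\mu$-independent leading term $-r$ may be used (since $\sigma_{\rm fold}$ degenerates as $g(0,0;\mu)\to 0$), are refinements of detail rather than a different argument.
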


Now suppose
\begin{equation}
{\rm eig}(\rD F(0,0;0)) = \lambda \pm \ri \omega, \quad
{\rm with~} \lambda \in \mathbb{R}, \omega > 0,
\label{eq:impactingEigCond}
\end{equation}
so that the regular equilibrium is a focus.
Consider the forward orbit of a point $(0,r)$, with $r > 0$, when $\mu = 0$.
This point is mapped to
$\left( 0,-\gamma r + \co(r) \right)$,
then revolves to $\left( 0, \gamma \re^{\frac{\lambda \pi}{\omega}} r + \co(r) \right)$
by Lemma \ref{le:focus}.
For this reason the sign of
\begin{equation}
\alpha = \ln(\gamma) + \frac{\lambda \pi}{\omega},
\label{eq:impactingNondegCond}
\end{equation}
determines the stability of the origin when $\mu = 0$ (assuming $\gamma > 0$).

In order for a limit cycle to be created at $\mu = 0$
there needs to be competing actions of attraction and repulsion.
Specifically we require $\lambda \ln(\gamma) < 0$ so that (by Lemma \ref{le:impacting}) the regular equilibrium
and pseudo-equilibrium are of opposite stability.
There are two cases:
If $\lambda \alpha < 0$ then the stability of the regular equilibrium
differs from the stability of the origin when $\mu = 0$ (HLB 11).
Here a limit cycle exists when the regular equilibrium is admissible.
Notice the limit cycle completes more than half a revolution about the equilibrium
so its period $T$ satisfies $\frac{\pi}{\omega} < T < \frac{2 \pi}{\omega}$.
If $\lambda \alpha > 0$ then the stability of the regular equilibrium
is the same as the stability of the origin when $\mu = 0$ (HLB 12).
Here a limit cycle exists when the regular equilibrium is virtual
(and so $0 < T < \frac{\pi}{\omega}$).
In both cases the stability of the limit cycle
is the same as the stability of the origin when $\mu = 0$.

\begin{theorem}[HLB 11]
Consider \eqref{eq:impactingODE} where $F$ and $\phi$ are $C^2$.
Suppose \eqref{eq:FLimpactingCond},
\eqref{eq:phiimpactingCond}, \eqref{eq:impactingEqCond}, \eqref{eq:impactingEigCond} are satisfied,
$\beta > 0$,
$\lambda \ln(\gamma) < 0$,
and $\lambda \alpha < 0$.
In a neighbourhood of $(x,y;\mu) = (0,0;0)$,
if $\alpha < 0$ [$\alpha > 0$]
then there exists a unique stable [unstable] limit cycle
for $\mu > 0$, and no limit cycle for $\mu < 0$.
The minimum $x$-value and the minimum and maximum $y$-values
of the limit cycle are asymptotically proportional to $\mu$,
and its period is $T = T_0 + \cO(\mu)$, where $T_0$ satisfies
\begin{equation}
\gamma \re^{2 \lambda T_0} = \frac{\varrho \left( \omega T_0; \frac{\lambda}{\omega} \right)}
{\varrho \left( \omega T_0; -\frac{\lambda}{\omega} \right)},
\label{eq:periodImpactingA}
\end{equation}
and $\varrho$ is defined by \eqref{eq:auxFunc}.
\label{th:impactingA}
\end{theorem}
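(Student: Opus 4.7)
I will construct a Poincaré map $P(r;\mu)$ on the positive $y$-axis and apply the implicit function theorem to its displacement after a blowup scaling. For $r > 0$, the impact map sends $(0,r)$ to $(0,\phi(r;\mu))$ with $\phi(r;\mu) < 0$ by \eqref{eq:phiimpactingCond}, and the flow in $\Omega_L$ then carries this point back to the positive $y$-axis by \eqref{eq:FLimpactingCond}; the returned $y$-value defines $P(r;\mu)$, and fixed points of $P$ correspond to limit cycles. Observe that \eqref{eq:FLimpactingCond} forces $f(0,0;\mu) \equiv 0$ and \eqref{eq:phiimpactingCond} forces $\phi(0;\mu) \equiv 0$, so $P(0;\mu) = 0$ is the trivial fixed point corresponding to the degenerate stationary solution at the origin.

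Because the limit cycle shrinks to the origin as $\mu \to 0$, I rescale via $(x,y) = \mu(\tilde x, \tilde y)$. Using $f(0,0;\mu) \equiv 0$, $g(0,0;0) = 0$, and $\phi(0;\mu) \equiv 0$, the rescaled vector field $F(\mu \tilde x, \mu \tilde y;\mu)/\mu$ tends, as $\mu \to 0$, to an affine field with linear part $\rD F(0,0;0)$ and constant term $b_0 = \frac{\partial g}{\partial \mu}(0,0;0)$ in the $\dot{\tilde y}$ equation, while $\phi(\mu \tilde r;\mu)/\mu \to -\gamma \tilde r$. The transversality identity $\beta = -b_0 \,\frac{\partial f}{\partial y}(0,0;0) > 0$, together with $\frac{\partial f}{\partial y}(0,0;0) > 0$ (forced by \eqref{eq:FLimpactingCond} and \eqref{eq:impactingEigCond}), gives $b_0 < 0$; hence the affine equilibrium is admissible in $\tilde x < 0$. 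After the reflection $(u,v) = -(\tilde x, \tilde y)$, which converts the problem to one in $u > 0$ with constant term $-b_0 > 0$, Lemma \ref{le:affine} supplies the return map of the affine flow (Type II or Type III, depending on the sign of $\lambda$). Composing with the impact factor $-\gamma$ yields a rescaled Poincaré map $\tilde P(\tilde r;\mu) = P(\mu \tilde r;\mu)/\mu$ that extends smoothly to $\mu = 0$ by Lemma \ref{le:mapSmoothness}.

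The limiting equation $\tilde P(\tilde r; 0) = \tilde r$, unpacked via \eqref{eq:affineT2}--\eqref{eq:affineP2}, reduces to the requirement that the flow time $T_0$ satisfies \eqref{eq:periodImpactingA}, with $\tilde r_0$ then recovered from either formula. At $T_0 = \pi/\omega$ both sides of \eqref{eq:periodImpactingA} equal $\re^{\lambda \pi/\omega}$ precisely when $\alpha = 0$; a monotonicity argument for $\varrho$ on $(\pi,2\pi)$ shows that under $\lambda \alpha < 0$ there is a unique admissible solution $T_0 \in (\pi/\omega, 2\pi/\omega)$, consistent with the orbit revolving more than half a turn about the focus between successive impacts. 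Using \eqref{eq:PaffineDeriv2} and the chain rule, the multiplier of $\tilde P(\cdot\,;0)$ at $\tilde r_0$ equals $\gamma \re^{2\lambda T_0}$, which differs from $1$ whenever $\alpha \ne 0$ and whose position relative to $1$ gives the claimed stability.

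Hyperbolicity permits the implicit function theorem to be applied to $\tilde D(\tilde r;\mu) = \tilde P(\tilde r;\mu) - \tilde r$ at $(\tilde r_0, 0)$, yielding a unique $C^1$ branch $\tilde r(\mu)$ for small $\mu > 0$, hence a unique limit cycle with amplitude $\cO(\mu)$ and period $T_0 + \cO(\mu)$. For $\mu < 0$ the regular equilibrium is virtual and Lemma \ref{le:impacting} identifies the pseudo-equilibrium at the origin as the unique nearby stationary solution; since $\tilde D(\cdot\,;0)$ has its only positive zero at $\tilde r_0$, continuity of $\tilde D$ precludes a positive zero for small $\mu < 0$. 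The main technical obstacle is justifying that $\tilde P$ extends $C^1$-smoothly to $\mu = 0$ uniformly near $\tilde r_0$: this requires solving the implicit time-of-flight equation via the IFT at the limiting time $T_0$, which by Lemmas \ref{le:flowSmoothness}--\ref{le:mapSmoothness} reduces to the transversality condition $f \ne 0$ at the intersection points on the $y$-axis, and this holds by \eqref{eq:FLimpactingCond} together with $\tilde r_0 > 0$.
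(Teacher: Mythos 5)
Your construction for $\mu>0$ --- blow up by $\mu$, pass to the limiting affine system, apply Lemma \ref{le:affine} after a reflection, and read off \eqref{eq:periodImpactingA} from \eqref{eq:affineT2}--\eqref{eq:affineP2} --- is essentially the paper's argument and is sound in outline. But there are two genuine problems. First, your treatment of $\mu<0$ fails. The scaled map $\tilde P(\tilde r;\mu)=P(\mu\tilde r;\mu)/\mu$ is only meaningful for $\tilde r>0$ when $\mu>0$ (for $\mu<0$ it evaluates $P$ on the negative $y$-axis, outside the Poincar\'e section), and the assertion that ``continuity of $\tilde D$ precludes a positive zero for small $\mu<0$'' is backwards: since $\tilde r_0$ is a hyperbolic zero of $\tilde D(\cdot\,;0)$, the IFT would produce a nearby zero for $\mu$ of \emph{either} sign if $\tilde D$ extended continuously. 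The real point is that the correct rescaled limit for $\mu\to0^-$ is a \emph{different} affine problem --- the constant term changes sign relative to the flow region (cf.\ \eqref{eq:impactingScalingNeg}--\eqref{eq:impactingfgScaledNeg}) --- whose return map is of Type I in Lemma \ref{le:affine}, behaving like $\gamma\tilde r$ near $\tilde r=0$ and like $\re^{\alpha}\tilde r$ at infinity. Ruling out fixed points of that map when $\alpha<0$ (hence no limit cycle for $\mu<0$) requires its own monotonicity argument; it does not follow from the $\mu>0$ analysis.

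Second, the stability step is wrong as written: by \eqref{eq:PaffineDeriv2} and the chain rule through the impact factor $-\gamma$, the multiplier at a fixed point is $\gamma^2\re^{2\lambda T_0}$, not $\gamma\re^{2\lambda T_0}$ (see \eqref{eq:impactingPDeriv2}), and the position of either quantity relative to $1$ is not immediately equivalent to the sign of $\alpha=\ln(\gamma)+\lambda\pi/\omega$ without first locating $T_0$ relative to $\pi/\omega$. The paper circumvents this with a graph-crossing argument: $P(\cdot\,;0)$ is positive at $\tilde r=0$, so the derivative at the smallest fixed point is at most $1$; a second-derivative computation excludes equality; monotonicity of the flight time then yields uniqueness and forces $\alpha\le0$; and existence for $\alpha<0$ follows from the intermediate value theorem using $P(\tilde r;0)\sim\re^{\alpha}\tilde r$ as $\tilde r\to\infty$. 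Your appeal to ``a monotonicity argument for $\varrho$ on $(\pi,2\pi)$'' for uniqueness of $T_0$ is plausible and would be an acceptable substitute, but it is precisely the nontrivial step, and as written both the existence of $T_0$ and the criticality conclusion are asserted rather than derived. You should also note the paper's time-reversal reduction to $\lambda>0$, which removes the Type II branch (where the return map is undefined near the origin) that your sketch would otherwise have to handle separately.
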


\begin{theorem}[HLB 12]
Consider \eqref{eq:impactingODE} where $F$ and $\phi$ are $C^2$.
Suppose \eqref{eq:FLimpactingCond},
\eqref{eq:phiimpactingCond}, \eqref{eq:impactingEqCond}, \eqref{eq:impactingEigCond} are satisfied, $\beta > 0$,
$\lambda \ln(\gamma) < 0$, and $\lambda \alpha > 0$.
In a neighbourhood of $(x,y;\mu) = (0,0;0)$,
if $\alpha < 0$ [$\alpha > 0$]
then there exists a unique stable [unstable] limit cycle
for $\mu < 0$, and no limit cycle for $\mu > 0$.
The minimum $x$-value and the minimum and maximum $y$-values
of the limit cycle are asymptotically proportional to $\mu$,
and its period is $T = T_0 + \cO(\mu)$, where $T_0$ satisfies \eqref{eq:periodImpactingA}.
\label{th:impactingB}
\end{theorem}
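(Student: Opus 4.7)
The plan is to realise the limit cycle as a non-trivial fixed point of a Poincar\'e map on the positive $y$-axis, isolated by applying the IFT after a linear blow-up $r = R|\mu|$. I would first define $P(r;\mu)$ as the result of applying the impact law $\phi(\cdot;\mu)$ to $(0,r)$ (with $r > 0$) and then following the flow of $F$ in $\Omega_L$ until the orbit next hits $x=0$; transversality of both crossings follows from \eqref{eq:FLimpactingCond}, so Lemma \ref{le:mapSmoothness} together with smoothness of $\phi$ yields $P \in C^2$ where defined. The point $r=0$ is already a fixed point (corresponding to the pseudo-equilibrium described by Lemma \ref{le:impacting}), so I would work with the displacement $D(r;\mu) = P(r;\mu) - r$ and isolate its non-trivial zeros.

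Next I would reduce to the affine model. Near the origin $\phi(y;\mu) = -\gamma y + \cO(|y|^2 + |\mu y|)$, while the flow in $\Omega_L$ is approximated by the affine system with Jacobian $\rD F(0,0;0)$ and constant drift $b_0(\mu) = g(0,0;\mu) = g_\mu \mu + \cO(\mu^2)$. After the reflection $(x,y) \mapsto (-x,-y)$, which turns $\Omega_L$ into $\Omega_R$ and sends $b_0 \mapsto -b_0$, Lemma \ref{le:affine} furnishes closed-form expressions in terms of $\varrho$ for the flow return map. The linear scaling of $b_0$ in $\mu$ and the natural homogeneity of Lemma \ref{le:affine} in $(b_0,r)$ make the rescaled map $|\mu|^{-1} P(R|\mu|;\mu)$ extend smoothly to $\mu = 0$. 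In that limit the fixed-point equation splits into two implicit relations: one fixes the asymptotic period $T_0$ as a root of \eqref{eq:periodImpactingA}, the other determines $R = R_* > 0$.

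The hard part is the case analysis, and in particular showing that the relevant root $T_0$ of \eqref{eq:periodImpactingA} lies in $(0,\pi/\omega)$ under the present hypothesis $\lambda \alpha > 0$, in contrast to the HLB 11 regime $\lambda \alpha < 0$ where the root lies in $(\pi/\omega, 2\pi/\omega)$. The key ingredient is a monotonicity analysis of $\re^{2\lambda T}\varrho(\omega T;-\lambda/\omega)/\varrho(\omega T;\lambda/\omega)$ on each of these intervals, combined with the observation that the standing assumption $\lambda \ln \gamma < 0$ excludes a tangential root at $T_0 = 0$ or $T_0 = \pi/\omega$. Once $T_0$ is located, $\alpha \ne 0$ translates, via \eqref{eq:PaffineDeriv2}, into non-vanishing of the derivative of the rescaled displacement at $(R_*,0)$, so the IFT produces a unique $C^1$ branch of non-trivial fixed points $R(\mu)$; the signs of $\beta > 0$ and $g_\mu$ together with the case structure in Lemma \ref{le:affine} force this branch to exist only for $\mu < 0$ (the side on which the regular equilibrium is virtual), giving amplitude $\cO(\mu)$ and period $T = T_0 + \cO(\mu)$. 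Finally, the linearised stability multiplier of $P$ at the fixed point equals $\gamma \re^{2\lambda T_0}$ to leading order by \eqref{eq:PaffineDeriv2}, which is less than $1$ exactly when $\alpha < 0$, yielding the stated criticality.
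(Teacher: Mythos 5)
Your overall architecture --- blow up by $|\mu|$, pass to the affine/linear limit, invoke Lemma \ref{le:affine}, characterise fixed points through \eqref{eq:periodImpactingA}, and recover $\mu\ne 0$ by the IFT --- is the same as the paper's. But two steps that carry the real weight are not sound as written. First, the stability claim at the end is wrong: by the chain rule and \eqref{eq:PaffineDeriv2}, the multiplier of $P=P_L\circ\tilde\phi$ at the nontrivial fixed point is $\gamma^2\re^{2\lambda T_0}$ (one factor of $-\gamma$ from the impact law and one from $P_L'$), not $\gamma\,\re^{2\lambda T_0}$; and neither quantity compares with $1$ in a way equivalent to the sign of $\alpha=\ln\gamma+\lambda\pi/\omega$, since the exponent involves $T_0\in(0,\pi/\omega)$ rather than $\pi/\omega$. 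The stability cannot be read off pointwise from $\alpha$; in the paper it is obtained globally, from the direction in which the graph of $P(\cdot\,;0)$ crosses the diagonal: for $\mu<0$ the map is of Type \ref{it:b0neg}, $P(\tilde r;0)\to\gamma\tilde r<\tilde r$ as $\tilde r\to 0$ and $P(\tilde r;0)\sim\re^{\alpha}\tilde r>\tilde r$ as $\tilde r\to\infty$, so the first crossing has multiplier $\ge 1$ and the cycle is unstable (when $\lambda>0$, $\alpha>0$; the $\alpha<0$ case follows by time reversal, a reduction you do not mention but need).

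Second, everything else --- uniqueness of the fixed point, nonexistence of any limit cycle for $\mu>0$, and even the location of $T_0$ in $(0,\pi/\omega)$ --- is made to rest on an asserted ``monotonicity analysis'' of $\re^{2\lambda T}\varrho(\omega T;-\lambda/\omega)/\varrho(\omega T;\lambda/\omega)$ that you never establish, and which is not obvious (near $T=0$ the relevant ratio tends to $1$ with indeterminate first-order behaviour). The paper does not prove uniqueness this way: it shows that at any fixed point $\partial P/\partial\tilde r=\gamma^2\re^{2\lambda T_L}$ with $T_L$ monotone in $\tilde r$ (Lemma \ref{le:affine}), so once the derivative at a fixed point is on one side of $1$ it stays there, forcing at most one fixed point; nonexistence for $\mu>0$ then follows because a fixed point of the Type \ref{it:b0poslambdapos} map would force $\re^{\alpha}\le 1$, contradicting $\alpha>0$. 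You would need to supply an argument of this strength; the IFT at $(R_*,0)$ only gives local uniqueness of the branch for $\mu<0$ and says nothing about other fixed points or about $\mu>0$ (where the scaled system is a different one, with $b_0$ of opposite sign). A smaller point: for $\mu>0$ the rescaled map satisfies $P(0;0)=P_L(0;0)>0$, so $r=0$ is not a fixed point there and the ``divide out the trivial fixed point'' framing does not apply to this family of HLBs.
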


Now suppose the regular equilibrium is a node and write
\begin{equation}
{\rm eig}(\rD F(0,0;0)) = \lambda \pm \eta, \quad
{\rm with~} 0 < \eta < |\lambda|.
\label{eq:impactingEigCond2}
\end{equation}
In this case a local limit cycle cannot exist when the regular equilibrium is admissible.

\begin{theorem}[HLB 13]
Consider \eqref{eq:impactingODE} where $F$ and $\phi$ are $C^2$.
Suppose \eqref{eq:FLimpactingCond},
\eqref{eq:phiimpactingCond}, \eqref{eq:impactingEqCond}, \eqref{eq:impactingEigCond2} are satisfied, $\beta > 0$,
and $\lambda \ln(\gamma) < 0$.
In a neighbourhood of $(x,y;\mu) = (0,0;0)$,
if $\lambda < 0$ [$\lambda > 0$]
there exists an asymptotically stable [unstable] limit cycle
for small $\mu < 0$, and no limit cycle for small $\mu > 0$.
The minimum $x$-value and the minimum and maximum $y$-values
of the limit cycle are asymptotically proportional to $\mu$,
and its period is $T = T_0 + \cO(\mu)$, where $T_0$ satisfies \eqref{eq:periodImpactingA}
with $\omega = \ri \eta$.
\label{th:impactingC}
\end{theorem}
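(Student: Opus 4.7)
The proof plan closely parallels those of Theorems \ref{th:impactingA} and \ref{th:impactingB} for HLBs 11 and 12, the key difference being that the focus is replaced by a node with real eigenvalues $\lambda \pm \eta$. As in those results, I would construct a Poincar\'e map on the $y$-axis and locate a unique fixed point via the implicit function theorem after a spatial rescaling.

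First I would define a Poincar\'e map $P(r;\mu)$ as follows. Given $r > 0$, let $\Pi_F(r;\mu)$ be the positive $y$-value at which the forward orbit of $(0,-r)$ under the vector field $F$ first re-intersects $x = 0$; set $P(r;\mu) = -\phi(\Pi_F(r;\mu);\mu) > 0$, so that $P(r;\mu) = r$ is equivalent to a local limit cycle. Although a node has no intrinsic rotation, the sign condition \eqref{eq:FLimpactingCond} together with the $\mu$-dependent transport of the equilibrium (governed by $\beta > 0$) and the impact map ensure that trajectories in $\Omega_L$ undergo a finite excursion and return to $x = 0$ once the rescaling below is in force.

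Next I would invoke Lemma \ref{le:affine} with the formal substitution $\omega \mapsto \ri \eta$; because the eigenvalues are real, the right-hand sides of \eqref{eq:affineT2} and \eqref{eq:affineP2} remain real-valued, since $\sin(\omega T) = \ri \sinh(\eta T)$ and the imaginary parts of $\varrho(\ri \eta T; \pm \lambda/(\ri \eta))$ and $\kappa$ combine to give a real expression. After an invertible linear coordinate change putting the linearisation into the form \eqref{eq:affineODE}, the spatial scaling $r = |\mu| \hat r$ reduces $P$ at leading order to the affine, impact-augmented return map of the bifurcation system. The displacement function $D(\hat r;\mu) = \frac{1}{|\mu|}\bigl(P(|\mu|\hat r;\mu) - |\mu|\hat r\bigr)$ is smooth in $(\hat r;\mu)$ in a neighbourhood of $(\hat r_0; 0)$, where $\hat r_0 > 0$ corresponds to an excursion time $T_0$ satisfying the analogue of \eqref{eq:periodImpactingA} with $\omega = \ri \eta$. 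Once $\hat r_0$ is exhibited as a simple root of $D(\cdot; 0)$, the IFT extracts a unique smooth branch $\hat r(\mu)$ of fixed points.

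The principal obstacle is establishing existence and uniqueness of the positive fixed-point time $T_0$: the auxiliary function $\varrho$ of Lemma \ref{le:affine} must be re-analysed in its hyperbolic guise, and the trichotomy of cases I--III must be recast because $\kappa$ becomes purely imaginary when $\omega = \ri \eta$. The monotonicity of $\gamma \re^{2 \lambda T}$ together with the hyperbolic ratio arising from $\varrho$ should yield exactly one positive solution, with the sign-balance hypothesis $\lambda \ln(\gamma) < 0$ guaranteeing that the two sides of the equation cross. The side of $\mu = 0$ on which the fixed point is admissible is determined by the sign of $b_0 = g_\mu(0,0;0)\mu + \cO(\mu^2)$, and a direct check using $\beta > 0$ yields admissibility for $\mu < 0$. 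Finally, stability follows from the multiplier $P'(r^*(\mu);\mu) = \gamma \re^{2 \lambda T_0} + \cO(\mu)$: the condition $\lambda \ln(\gamma) < 0$ ensures this modulus lies below unity precisely when $\lambda < 0$, giving an asymptotically stable limit cycle in that case and an unstable one when $\lambda > 0$. The asserted scaling laws then follow directly from $r = |\mu| \hat r_0 + \cO(\mu^2)$, yielding amplitude proportional to $|\mu|$ and period $T_0 + \cO(\mu)$.
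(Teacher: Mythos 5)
Your overall strategy --- a Poincar\'e map built from the impact law and the flow's return map, Lemma \ref{le:affine} with $\omega = \ri\eta$ (correctly observed to remain real-valued, cf.~$\varrho_{\rm node}$ in \eqref{eq:auxFuncNode}), a spatial blow-up by $|\mu|$, and the IFT for persistence --- is exactly the paper's. But two of your three key steps have genuine gaps. First, existence of the fixed point for $\mu<0$: you say the monotonicity of $\gamma\re^{2\lambda T}$ plus the hypothesis $\lambda\ln(\gamma)<0$ ``should'' force the two sides of \eqref{eq:periodImpactingA} to cross, but that hypothesis only controls one end (it gives $\gamma<1$ when $\lambda>0$, hence $P(\tilde r)<\tilde r$ near $\tilde r=0$ since $P(0)=0$ and $P'(0^+)=\gamma$). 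The other end is where the node case differs essentially from the focus cases: for a focus the crossing at large $\tilde r$ requires the sign condition on $\alpha$, whereas for a node there is no such condition in the theorem precisely because the scaled return map \emph{diverges at a finite radius} --- $P(\tilde r;0)\to\infty$ as $\tilde r\to -b_3/(\lambda+\eta)$, the orbit asymptotic to the slow eigendirection of the virtual node. Without identifying that divergence, the intermediate-value argument does not close, and your proposal gives no reason why existence is unconditional here.

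Second, your stability computation is wrong in both formula and logic. By \eqref{eq:PaffineDeriv2} and the chain rule through the impact map, the multiplier at a fixed point is $\gamma^{2}\re^{2\lambda T_0}$, not $\gamma\,\re^{2\lambda T_0}$ (the latter is the left-hand side of the fixed-point relation \eqref{eq:periodImpactingA}, a different quantity). More importantly, neither expression's position relative to $1$ follows from $\lambda\ln(\gamma)<0$ alone: $\ln\gamma$ and $\lambda T_0$ have opposite signs under that hypothesis, so the sign of $\ln\gamma+\lambda T_0$ is indeterminate without further input. The paper instead argues geometrically: since $P(0)=0$ and $P'(0^+)=\gamma<1$ (for $\lambda>0$), the graph of $P$ crosses the diagonal from below at its smallest positive fixed point, forcing multiplier $\ge 1$, and strict inequality plus uniqueness follow from the monotonicity of $T_L$ in \eqref{eq:impactingPDeriv2}. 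Finally, you never rule out a limit cycle for $\mu>0$ (node admissible); ``admissibility of the fixed point'' does not do this --- one must show the return map has no fixed point at all there, which the paper does via the node's slow invariant subspace trapping orbits as in the proof of Theorem \ref{th:pwscFocusNode}.
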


\begin{figure*}
\begin{center}
\includegraphics[width=16.6cm]{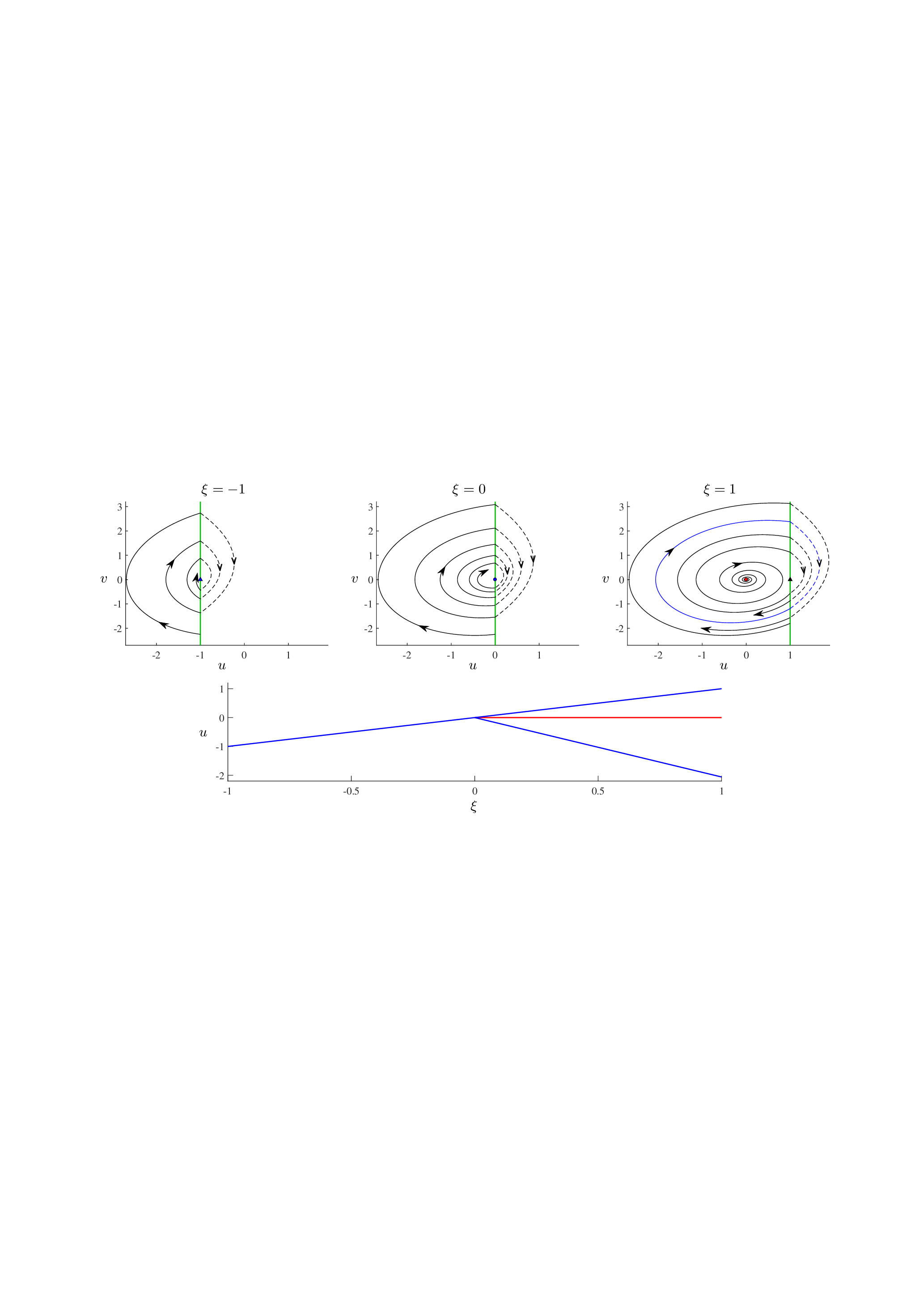}
\caption{
An illustration of HLB 11 for the impacting system
\eqref{eq:impactingExample} with $(\tau,\delta,r) = (0.2,1,0.5)$.
By increasing the value of $\xi$ a virtual unstable focus
becomes admissible and a stable limit cycle is created.
\label{fig:allImpactingA}
} 
\end{center}
\end{figure*}

\begin{figure*}
\begin{center}
\includegraphics[width=16.6cm]{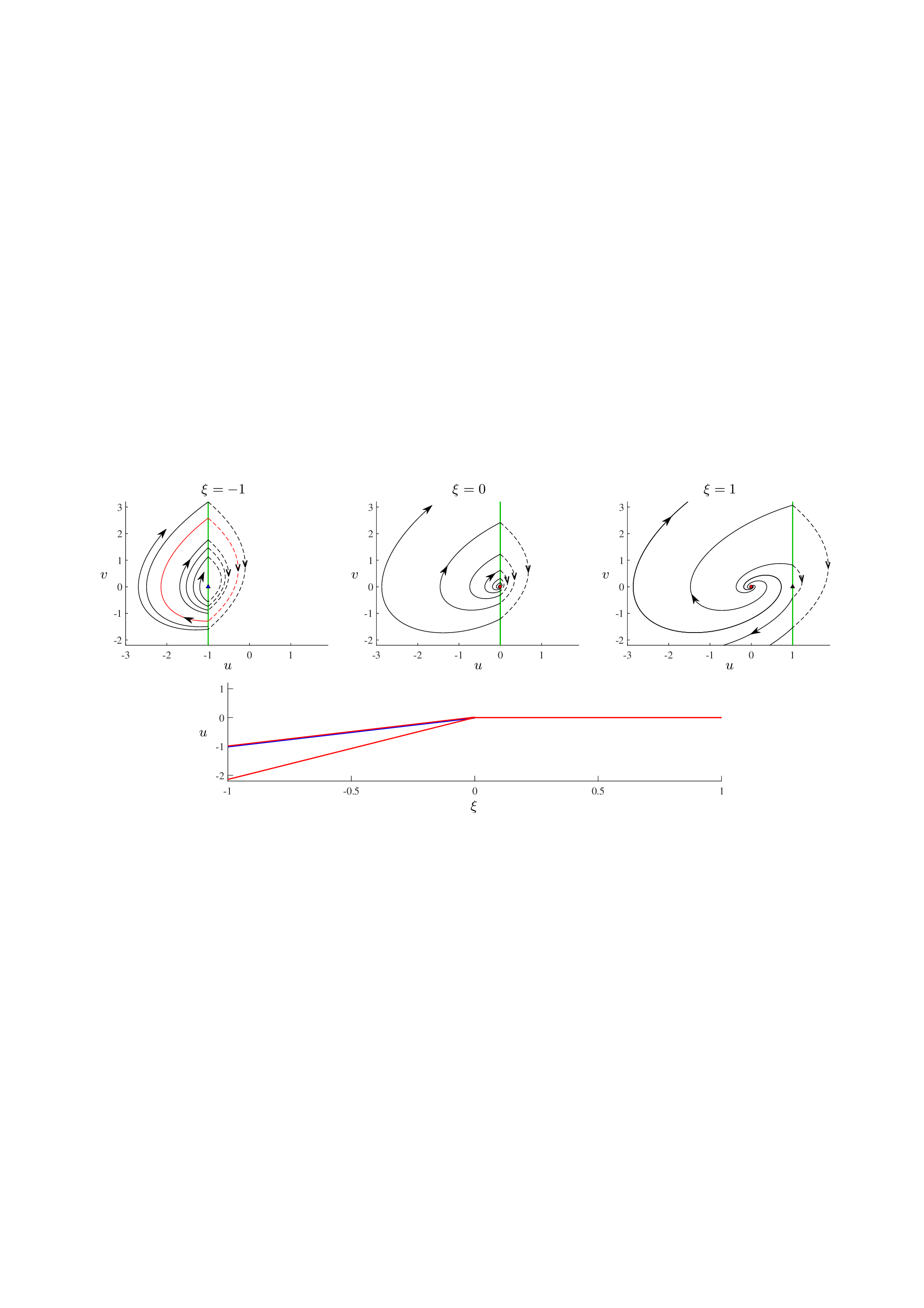}
\caption{
An illustration of HLB 12 for the impacting system
\eqref{eq:impactingExample} with $(\tau,\delta,r) = (0.8,1,0.5)$.
By decreasing the value of $\xi$
an admissible unstable focus undergoes a BEB with the impacting surface and an unstable limit cycle is created.
\label{fig:allImpactingB}
} 
\end{center}
\end{figure*}

\begin{figure*}
\begin{center}
\includegraphics[width=16.6cm]{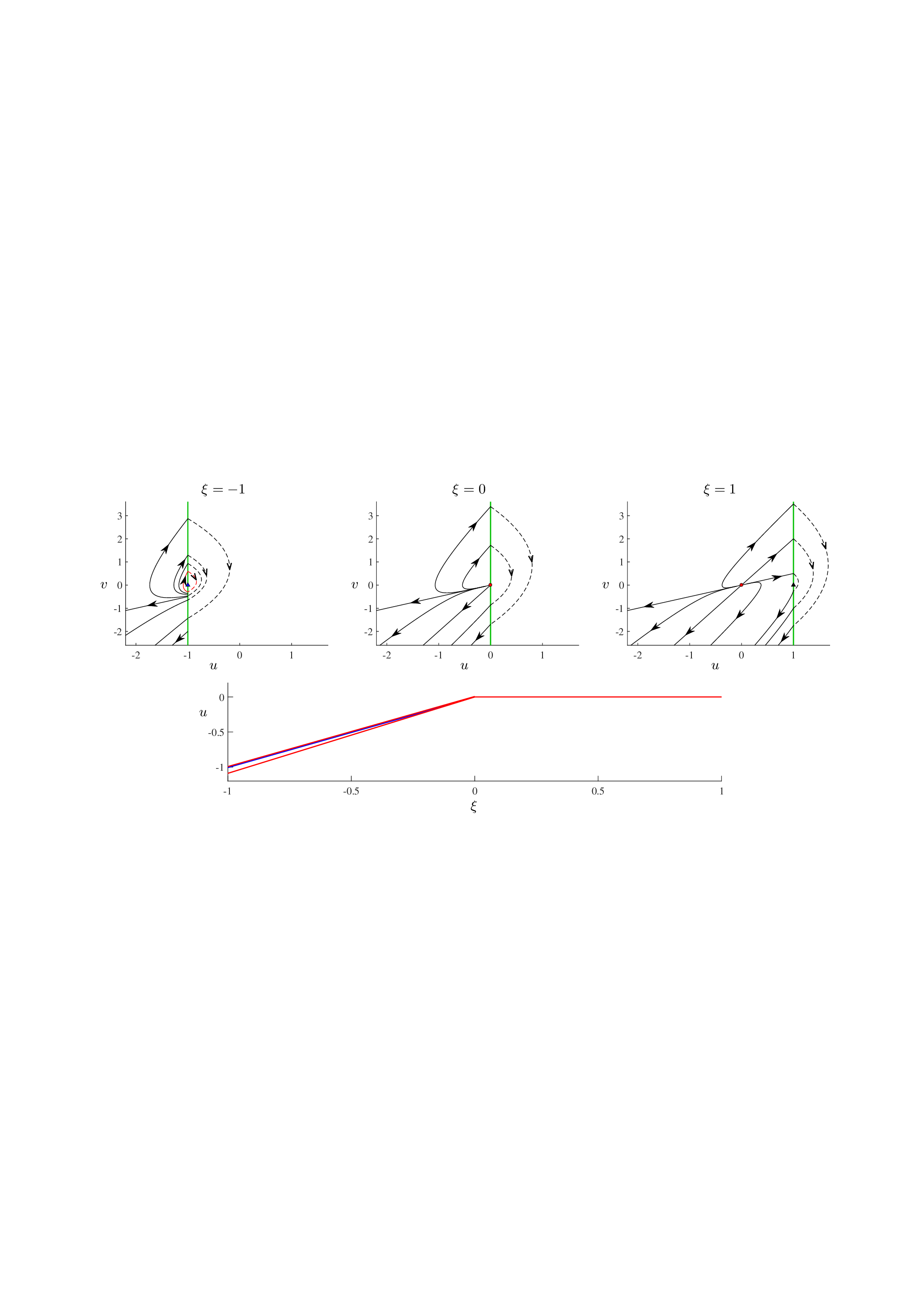}
\caption{
An illustration of HLB 13 for the impacting system
\eqref{eq:impactingExample} with $(\tau,\delta,r) = (2.5,1,0.5)$.
By decreasing the value of $\xi$
an admissible unstable node undergoes a BEB with the impacting surface and an unstable limit cycle is created.
\label{fig:allImpactingC}
} 
\end{center}
\end{figure*}

As an example we consider the linear impact oscillator
\begin{equation}
\begin{split}
\dot{u} &= v, \\
\dot{v} &= -\delta u + \tau v, \\
v &\mapsto -r v, {\rm ~when~} u = \xi.
\end{split}
\label{eq:impactingExample}
\end{equation}
where $\tau,\delta,\xi \in \mathbb{R}$ and $r \ge 0$ are constants.
Here $u(t)$ represents the displacement of the oscillator from its equilibrium position.
The oscillator undergoes instantaneous impacts with restitution coefficient $r$ whenever $u(t) = \xi$.

We treat $\xi$ as the main bifurcation parameter and apply 
Theorems \ref{th:impactingA}--\ref{th:impactingC} via the substitution $(x,y;\mu) = (u-\xi,v;\xi)$.
Notice $\tau$ and $\delta$ are the trace and determinant of $\rD F(0,0;0)$.
Also
\begin{align}
\alpha &= \ln{r} + \frac{\pi \tau}{\sqrt{4 \delta - \tau^2}}, &
\beta &= \delta, &
\gamma &= r.
\nonumber
\end{align}

Here we briefly consider parameter values
that illustrate HLBs 11--13.
Specifically we fix
\begin{align}
\delta &= 1, & r = 0.5,
\nonumber
\end{align}
and consider different values of $\tau$.
We study the change in the dynamics as the value of $\xi$ changes sign,
but \eqref{eq:impactingExample} is invariant when $u$, $v$, and $\xi$ are scaled by a positive constant,
hence it suffices to consider $\xi \in \{ -1, 0, 1 \}$.
With $\tau = 0.2$ the regular equilibrium is an unstable focus and $\alpha < 0$.
In accordance with Theorem \ref{th:impactingA}, a stable limit cycle
exists when the focus is admissible, Fig.~\ref{fig:allImpactingA}.
With $\tau = 0.8$, now $\alpha > 0$ and by Theorem \ref{th:impactingB} an unstable limit cycle 
exists when the focus is virtual, Fig.~\ref{fig:allImpactingB}.
Finally with $\tau = 2.5$ the regular equilibrium is an unstable node.
Again $\alpha > 0$ and so by Theorem \ref{th:impactingC} an unstable limit cycle 
exists when the node is virtual, Fig.~\ref{fig:allImpactingC}.

\subsection{Impulsive systems --- HLB 14}
\label{sub:impulsive}

\begin{figure}[b!]
\begin{center}
\includegraphics[width=5.6cm]{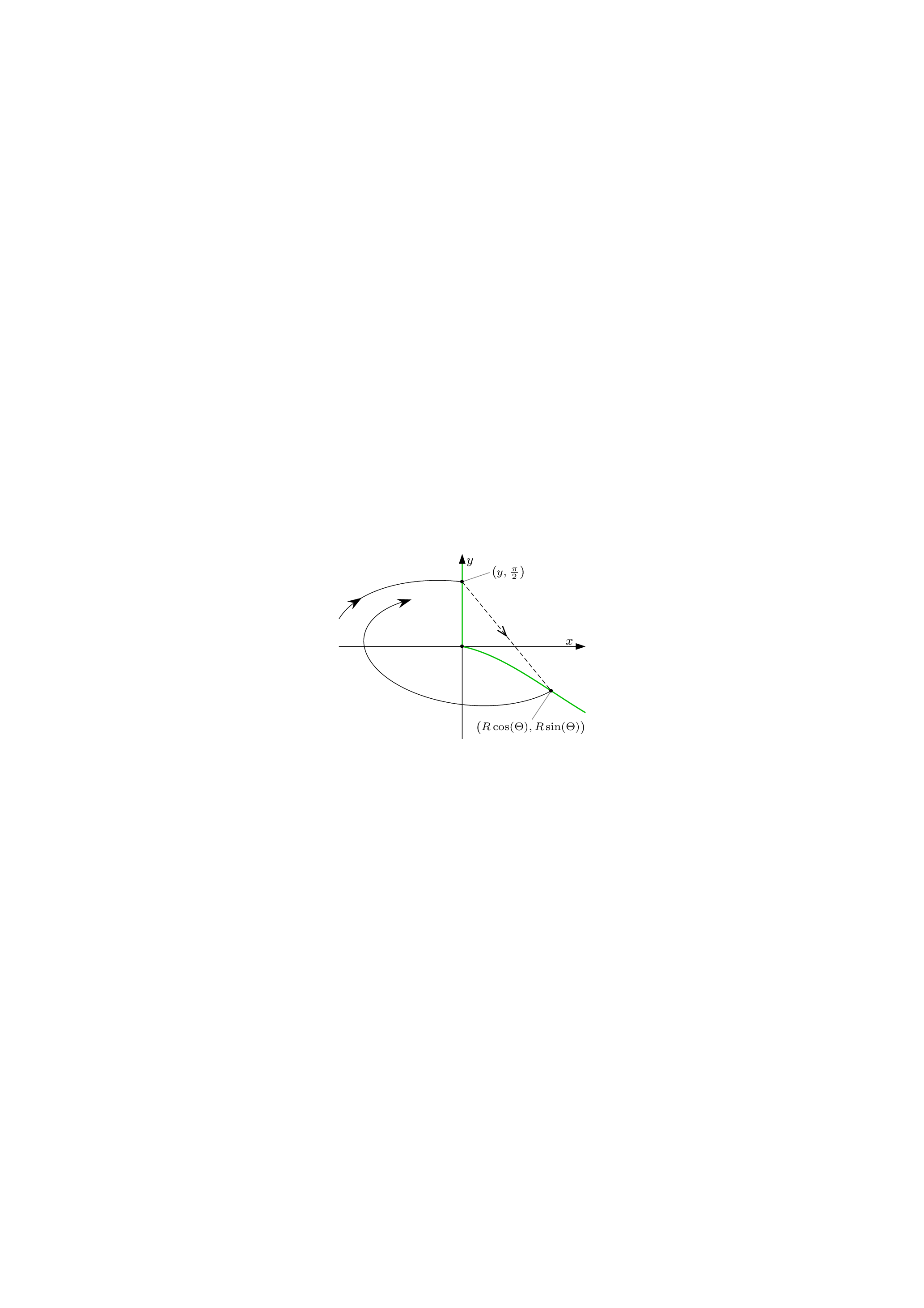}
\caption{
Part of a typical orbit of the impulsive system \eqref{eq:impulsiveODE} with \eqref{eq:impulsiveLaw}.
\label{fig:schemPoinImpulse}
} 
\end{center}
\end{figure}

Here we consider ODEs combined with an impulse law (defined below) that
is applied when orbits reach the positive $y$-axis,
see Fig.~\ref{fig:schemPoinImpulse}.
We suppose the origin is a focus equilibrium of the ODEs and that here the impulse is zero.
The origin is a stationary solution that
may change stability and emit a limit cycle under parameter variation; this is HLB 14.
The bifurcation can be generalised to allow several independent vector fields and impulse laws each acting 
in regions that share a vertex at the origin, \cite{AkKa17,Ak05,HuYa12}.

If the impulse law maps points to the negative $y$-axis we recover the impacting scenario of \S\ref{sub:impacting},
although here the regular equilibrium is fixed at the origin
whereas for HLBs 11--13 the bifurcation is triggered by the collision of a regular equilibrium with $x=0$ (i.e.~a BEB).
HLB 14 was described in \cite{HeMo17} for an abstract impacting system
although there the nonlinearity in the ODEs is cubic, not quadratic,
and consequently the amplitude of the limit cycle is asymptotically proportional to
square-root of the parameter change.

Let us clarify the class of impulsive systems under consideration.
We write the ODEs as
\begin{equation}
\begin{bmatrix} \dot{x} \\ \dot{y} \end{bmatrix} = F(x,y;\mu), {\rm ~until~} x=0,\, y>0,
\label{eq:impulsiveODE}
\end{equation}
with
\begin{equation}
F(x,y;\mu) = \begin{bmatrix} f(x,y;\mu) \\ g(x,y;\mu) \end{bmatrix},
\nonumber
\end{equation}
where $f$ and $g$ are smooth functions.
We assume
\begin{equation}
f(0,0;\mu) = g(0,0;\mu),
\label{eq:impulsiveCond}
\end{equation}
for all values of $\mu$ in a neighbourhood $0$
so that \eqref{eq:impulsiveODE} has a regular equilibrium fixed at the origin.
We assume this equilibrium is a focus and that its associated
Jacobian matrix is in real-Jordan form:
\begin{equation}
\rD F(0,0;\mu) = \begin{bmatrix} \lambda(\mu) & \omega(\mu) \\
-\omega(\mu) & \lambda(\mu) \end{bmatrix},
\label{eq:impulsiveDFCond}
\end{equation}
where $\lambda(\mu) \in \mathbb{R}$ and $\omega(\mu) > 0$.
This assumption is included so that Theorem \ref{th:impulsive}, given below, is reasonably succinct.
As with other results in this paper,
in order to apply Theorem \ref{th:impulsive} to a particular model
one would use a change of variables to satisfy \eqref{eq:impulsiveDFCond}.

To state Theorem \ref{th:impulsive} we use polar coordinates:
$x = r \cos(\theta)$, $y = r \sin(\theta)$.
The positive $y$-axis corresponds to $\theta = \frac{\pi}{2}$
and we assume $\theta \in \left( -\frac{3 \pi}{2}, \frac{\pi}{2} \right]$.
Since $\omega > 0$, orbits following the vector field $F$ rotate clockwise (at least near the origin).
Thus the value of $\theta$ decreases with time until $\theta = -\frac{3 \pi}{2}$
when we reset it to $\theta = \frac{\pi}{2}$ and apply the impulse law.
We write the impulse law as
\begin{equation}
\left( y, \frac{\pi}{2} \right) \mapsto \left( R(y;\mu), \Theta(y;\mu) \right).
\label{eq:impulsiveLaw}
\end{equation}
where $R$ and $\Theta$ are smooth functions.
We assume $R(0;\mu) = 0$, for all $\mu$,
so that the impulse is zero at the origin, and let
\begin{equation}
\gamma(\mu) = \frac{\partial R}{\partial y}(0;\mu).
\label{eq:impulsivegamma}
\end{equation}
We also assume $\Theta(y;\mu) \in \left( \frac{-3 \pi}{2}, \frac{\pi}{2} \right)$, and let
\begin{equation}
\phi(\mu) = \Theta(0;\mu).
\label{eq:impulsivephi}
\end{equation}
Thus the impulse law can be written as
\begin{equation}
\left( y, \frac{\pi}{2} \right) \mapsto \left( \gamma y + \cO \left( y^2 \right), \phi + \cO(y) \right).
\end{equation}
Upon subsequent evolution via \eqref{eq:impulsiveODE}, the orbit returns to $\theta = \frac{\pi}{2}$
at $r = \gamma y \re^{\lambda t} + \cO \left( y^2 \right)$,
where the evolution time is $t = \frac{1}{\omega} \left( \phi + \frac{3 \pi}{2} \right) + \cO(y)$.
Thus one revolution corresponds to $y \mapsto \re^\Lambda y + \cO \left( y^2 \right)$, where
\begin{equation}
\Lambda(\mu) = \ln(\gamma(\mu)) + \frac{\lambda(\mu)}{\omega(\mu)}
\left( \phi(\mu) + \frac{3 \pi}{2} \right).
\label{eq:impulsivexi}
\end{equation}
This shows that the stability of the origin is governed by the sign of $\Lambda$.
For HLB 14 we assume $\Lambda(0) = 0$ and $\beta \ne 0$ where
\begin{equation}
\beta = \frac{d \Lambda}{d \mu}(0).
\label{eq:impulsiveTransCond}
\end{equation}

Some work is required to state the non-degeneracy coefficient $\alpha$.
First we write
\begin{equation}
\begin{split}
f(x,y;\mu) &= \lambda(\mu) x + \omega(\mu) y \\
&\quad+ a_1 x^2 + a_2 x y + a_3 y^2 + \co \left( \left( |x| + |y| + |\mu| \right)^2 \right), \\
g(x,y;\mu) &= -\omega(\mu) x + \lambda(\mu) y \\
&\quad+ b_1 x^2 + b_2 x y + b_3 y^2 + \co \left( \left( |x| + |y| + |\mu| \right)^2 \right),
\end{split}
\label{eq:impulsivefg}
\end{equation}
for constants $a_1,\ldots,b_3 \in \mathbb{R}$.
In polar coordinates \eqref{eq:impulsiveODE} may be written as
\begin{equation}
\begin{split}
\dot{r} &= \lambda(\mu) r + r^2 \cF(\theta) + \co \left( \left( r + |\mu| \right)^2 \right), \\
\dot{\theta} &= -\omega(\mu) + r \cG(\theta) + \co \left( r + |\mu| \right),
\end{split}
\label{eq:impulsiveODEpolar}
\end{equation}
where
\begin{equation}
\begin{split}
\cF(\theta) &= a_1 \cos^3(\theta) + (a_2 + b_1) \cos^2(\theta) \sin(\theta) \\
&\quad+ (a_3 + b_2) \cos(\theta) \sin^2(\theta) + b_3 \sin^3(\theta), \\
\cG(\theta) &= b_1 \cos^3(\theta) + (b_2 - a_1) \cos^2(\theta) \sin(\theta) \\
&\quad+ (b_3 - a_2) \cos(\theta) \sin^2(\theta) - a_3 \sin^3(\theta).
\end{split}
\label{eq:impulsivecFcG}
\end{equation}
Then
\begin{align}
\alpha &= \frac{\frac{\partial^2 R}{\partial y^2}(0;0)}{2 \gamma(0)}
+ \frac{\lambda}{\omega} \,\frac{\partial \Theta}{\partial y}(0;0) \nonumber \\
&\quad+ \frac{1}{\omega} \,\re^{\frac{-3 \pi \lambda}{2 \omega}}
\int_{\frac{-3 \pi}{2}}^{\phi(0)}
\re^{\frac{-\lambda \theta}{\omega}} \left( \cF(\theta) + \frac{\lambda}{\omega} \,\cG(\theta) \right) \,d\theta.
\label{eq:impulsiveNondegCond}
\end{align}
The integral in \eqref{eq:impulsiveNondegCond}
can be evaluated explicitly but we have left it in integral form for brevity.

\begin{theorem}[HLB 14]
Consider \eqref{eq:impulsiveODE} with \eqref{eq:impulsiveLaw},
where $F$ is $C^2$, $R$ is $C^2$, and $\Theta$ is $C^1$.
Suppose \eqref{eq:impulsiveCond} and \eqref{eq:impulsiveDFCond} are satisfied,
$\Lambda(0) = 0$, and $\beta > 0$.
In a neighbourhood of $(x,y;\mu) = (0,0;0)$,
\begin{enumerate}
\item
the origin is the unique stationary solution
and is stable for $\mu < 0$ and unstable for $\mu > 0$,
\item
if $\alpha < 0$ [$\alpha > 0$] there exists a unique stable [unstable] limit cycle for $\mu > 0$ [$\mu < 0$],
and no limit cycle for $\mu < 0$ [$\mu > 0$].
\end{enumerate}
The amplitude of the limit cycle is asymptotically proportional to $|\mu|$,
and its period is
\begin{equation}
T = \frac{1}{\omega(0)} \left( \phi(0) + \frac{3 \pi}{2} \right) + \cO(\mu).
\label{eq:impulsivePeriod}
\end{equation}
\label{th:impulsive}
\end{theorem}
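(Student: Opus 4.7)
The plan is to identify limit cycles with isolated positive fixed points of a Poincar\'e map $P(r;\mu)$ defined on the positive $y$-axis by composing the impulse law \eqref{eq:impulsiveLaw} with the subsequent return of the flow to $\theta = \frac{\pi}{2}$, and then to locate such a fixed point via the implicit function theorem. Since $\omega(\mu) > 0$, for all sufficiently small $r > 0$ and $\mu$ the orbit rotates clockwise from the post-impulse angle $\Theta(r;\mu) \approx \phi(0) \in \left( -\frac{3 \pi}{2}, \frac{\pi}{2} \right)$ back to $\frac{\pi}{2}$ in finite time, so $P$ is well-defined and $C^2$ by Lemmas \ref{le:flowSmoothness} and \ref{le:mapSmoothness}.

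First I would compute $P$ to second order in $r$. Since $\dot{\theta} = -\omega(\mu) + \cO(r)$ does not vanish near the origin, I reparametrize the flow \eqref{eq:impulsiveODEpolar} by $\theta$ and work with
\begin{equation*}
\frac{dr}{d\theta} = -\frac{\lambda(\mu)}{\omega(\mu)} \,r - \frac{1}{\omega(\mu)} \left( \cF(\theta) + \frac{\lambda(\mu)}{\omega(\mu)} \,\cG(\theta) \right) r^2 + \co \left( r^2 \right).
\end{equation*}
At leading order this is linear in $r$ with integrating factor $\re^{(\lambda/\omega) \theta}$, and at second order the forcing is integrated by variation of parameters from the post-impulse angle down to $-\frac{3 \pi}{2}$. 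Expanding $R(y;\mu) = \gamma y + \tfrac{1}{2} \frac{\partial^2 R}{\partial y^2}(0;0) \,y^2 + \co(y^2)$ and $\Theta(y;\mu) = \phi + \frac{\partial \Theta}{\partial y}(0;0) \,y + \co(y)$, and Taylor-expanding the flow solution about the initial angle $\phi(0)$, I obtain $P(r;\mu) = \re^{\Lambda(\mu)} r + C(\mu) r^2 + \co(r^2)$. Three contributions add to $C(0)$: the quadratic part of the impulse magnitude, the first-order shift of the post-impulse angle weighted by the linear decay rate $\frac{\lambda}{\omega}$, and the integral of the quadratic nonlinearities along the return arc. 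These combine to produce exactly the expression \eqref{eq:impulsiveNondegCond} for $\alpha$.

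With this expansion I introduce the displacement $D(r;\mu) = \frac{P(r;\mu) - r}{r}$, which extends smoothly to $r = 0$ with $D(0;\mu) = \re^{\Lambda(\mu)} - 1$. At $(r;\mu) = (0;0)$ we have $D = 0$, $\frac{\partial D}{\partial \mu} = \beta > 0$, and $\frac{\partial D}{\partial r} = \alpha$. Because $\alpha \ne 0$, the IFT applied with $r$ as the dependent variable yields a unique smooth branch $r = r^*(\mu) = -\frac{\beta \mu}{\alpha} + \co(\mu)$ of fixed points in a neighbourhood of the origin. Since $\beta > 0$, this branch satisfies $r^* > 0$ precisely when $\mu > 0$ if $\alpha < 0$ and when $\mu < 0$ if $\alpha > 0$, establishing existence and uniqueness of the stated limit cycle and the amplitude scaling $r^* \sim |\mu|$.

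For stability, evaluating $\frac{\partial P}{\partial r}$ at $r^*(\mu)$ gives
\begin{equation*}
\frac{\partial P}{\partial r}(r^*;\mu) = \re^{\Lambda(\mu)} + 2 \alpha r^*(\mu) + \co(r^*) = 1 - \beta \mu + \co(\mu),
\end{equation*}
so the fixed point is attracting precisely when $\mu > 0$, which matches the stated criticality in both cases of $\alpha$. The period formula \eqref{eq:impulsivePeriod} follows from the leading-order return time $\frac{1}{\omega(0)} \left( \phi(0) + \frac{3 \pi}{2} \right)$ since the impulse is instantaneous. The main obstacle is the second-order bookkeeping: the three separate quadratic contributions to $C(0)$ must be tracked carefully so as to recover \eqref{eq:impulsiveNondegCond} exactly, with the angular-shift contribution requiring a Taylor expansion of the leading-order flow $r(\theta) \sim r_0 \re^{-(\lambda/\omega)(\theta - \theta_0)}$ about $\theta_0 = \phi(0)$ before the variation-of-parameters integral is assembled.
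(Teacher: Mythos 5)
Your proposal is correct and follows essentially the same route as the paper's proof: a Poincar\'e map built by composing the impulse law with the return of the flow to $\theta = \frac{\pi}{2}$, expanded to second order so that the linear coefficient is $\re^{\Lambda(\mu)}$ and the quadratic coefficient at $\mu = 0$ is $\alpha$, followed by the IFT applied to the divided displacement $D(r;\mu) = \frac{1}{r}\left( P(r;\mu) - r \right)$ and the stability read off from $\frac{\partial P}{\partial r}$. The only difference is computational: you reparametrize the return arc by $\theta$ and integrate $\frac{dr}{d\theta}$ directly, whereas the paper keeps time as the independent variable, blows up via $\tilde{r} = r/y$, and converts the resulting variation-of-parameters integrals to $\theta$-integrals at the end --- the three contributions to $\alpha$ (the quadratic part of $R$, the linear shift of $\Theta$ weighted by $\frac{\lambda}{\omega}$, and the integral of $\cF + \frac{\lambda}{\omega}\,\cG$) come out identically.
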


\begin{figure*}
\begin{center}
\includegraphics[width=16.6cm]{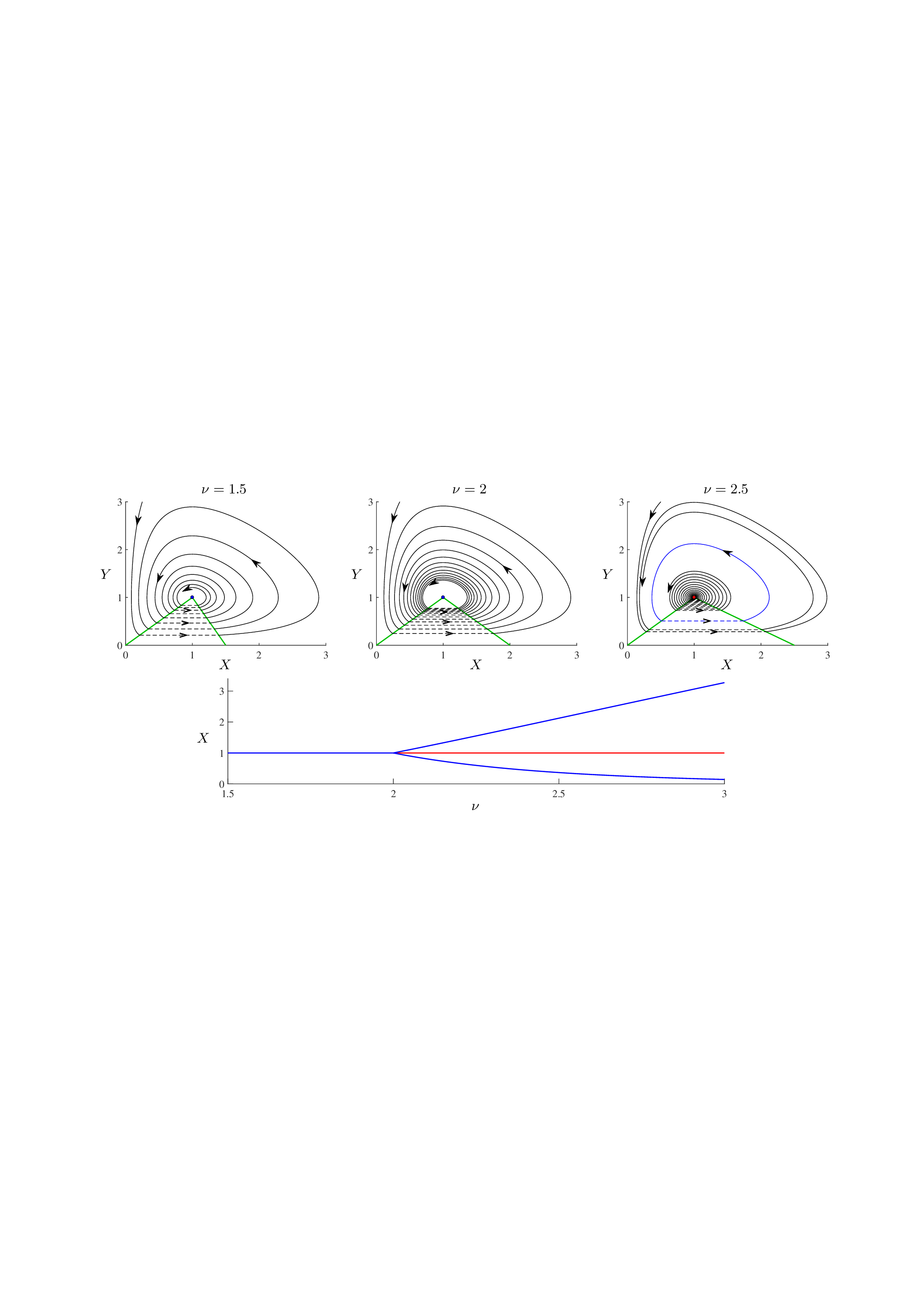}
\caption{
An illustration of HLB 14 for the Lotka-Volterra system \eqref{eq:LotkaVolterra}
with impulse \eqref{eq:LotkaVolterraImpulseLaw} using the parameter values \eqref{eq:paramLotkaVolterra}.
By increasing the value of $\nu$
the equilibrium $(X^*,Y^*) = \left( \frac{c}{d}, \frac{a}{b} \right)$ loses stability when $\nu = 2$
and a stable limit cycle is created.
\label{fig:allLotkaVolterra}
} 
\end{center}
\end{figure*}

To illustrate Theorem \ref{th:impulsive} we study a simple Lotka-Volterra system with impulse.
Such systems model the populations of competing species where impulses correspond to
poison drops, insect outbreaks, or other abrupt events such as the artificial addition
of species members for control purposes \cite{LiZh05,LiCh03,LiRo98,NiPe09}.

Consider
\begin{equation}
\begin{split}
\dot{X} &= a X - b X Y, \\
\dot{Y} &= -c Y + d X Y,
\end{split}
\label{eq:LotkaVolterra}
\end{equation}
where $X$ and $Y$ represent prey and predator populations respectively,
and $a, b, c, d > 0$.
This system has a saddle equilibrium at the origin,
a centre equilibrium at
$(X^*,Y^*) = \left( \frac{c}{d}, \frac{a}{b} \right)$,
and an uncountable family of periodic orbits encircling $(X^*,Y^*)$ throughout the first quadrant, $X, Y > 0$.

Now suppose that whenever a forward orbit of \eqref{eq:LotkaVolterra}
intersects the line segment connecting the two equilibria,
the value $\nu \left( X^* - X \right)$ is added to the prey population, where $\nu > 0$ is a constant.
That is,
\begin{equation}
(X,Y) \mapsto \left( X + \nu \left( \frac{c}{d} - X \right), Y \right), 
\label{eq:LotkaVolterraImpulseLaw}
\end{equation}
where $X \in \left( 0, \frac{c}{d} \right)$,
and $Y = \frac{a d}{b c} X$.
This particular impulse law provides a succinct example of HLB 14.
Fig.~\ref{fig:allLotkaVolterra} shows phase portraits and a bifurcation diagram using
\begin{equation}
a = b = c = d = 1,
\label{eq:paramLotkaVolterra}
\end{equation}
for which HLB 14 occurs at $\nu = 2$.

To apply Theorem \ref{th:impulsive} using the particular parameter values \eqref{eq:paramLotkaVolterra},
we perform the coordinate change
\begin{equation}
\begin{split}
x &= X - Y, \\
y &= -X - Y + 2.
\end{split}
\nonumber
\end{equation}
Then \eqref{eq:LotkaVolterra} with \eqref{eq:paramLotkaVolterra} becomes
\begin{equation}
\begin{split}
\dot{x} &= y + \frac{x^2 - y^2}{2}, \\
\dot{y} &= -x,
\end{split}
\nonumber
\end{equation}
and in polar coordinates the impulse law is
\begin{equation}
\begin{split}
R(y) &= y \sqrt{1 - \nu + \frac{\nu^2}{2}}, \\
\Theta(y) &= \tan^{-1} \left( \frac{2}{\nu} - 1 \right).
\end{split}
\nonumber
\end{equation}
Here $\Lambda = \frac{1}{2} \ln \left( 1 - \nu + \frac{\nu^2}{2} \right)$
and so $\Lambda = 0$ when $\nu = 2$. 
The transversality condition is satisfied because
$\beta = \frac{d \Lambda}{d \nu}(2) = \frac{1}{2}$.
To evaluate $\alpha$ observe that the first two terms in \eqref{eq:impulsiveNondegCond} are zero.
In the integral we have $\lambda = 0$, $\omega = 1$, and
\begin{equation}
\begin{aligned}
a_1 &= \frac{1}{2}, &
a_2 &= 0, &
a_3 &= -\frac{1}{2}, \\
b_1 &= 0, &
b_2 &= 0, &
b_3 &= 0,
\end{aligned}
\nonumber
\end{equation}
and so
\begin{align}
\alpha = \frac{1}{2} \int_{\frac{-3 \pi}{2}}^0 \cos^3(\theta) - \cos(\theta) \sin^2(\theta) \,d\theta = -\frac{1}{6}.
\nonumber
\end{align}
Thus Theorem \ref{th:impulsive} tells us that a stable limit cycle
is created at $\nu = 2$ and exists for $\nu > 2$, as seen in Fig.~\ref{fig:allLotkaVolterra}.

\section{Switching with hysteresis and time delay}
\setcounter{equation}{0}
\setcounter{figure}{0}
\setcounter{table}{0}
\label{sec:perturbations}

Here we suppose the Filippov system
\begin{equation}
\begin{bmatrix} \dot{x} \\ \dot{y} \end{bmatrix} =
\begin{cases}
F_L(x,y), & x < 0, \\
F_R(x,y), & x > 0, \end{cases}
\label{eq:FilippovPerturbation}
\end{equation}
has a stable pseudo-equilibrium
or invisible-invisible two-fold at the origin.
We analyse the limit cycle created by adding hysteresis or time delay to the switching condition
(called {\em border-splitting} in \cite{MaLa12}).
The hysteretic system is
\begin{equation}
\begin{bmatrix} \dot{x} \\ \dot{y} \end{bmatrix} =
\begin{cases} F_L(x,y), & {\rm until~} x = \mu, \\
F_R(x,y), & {\rm until~} x = -\mu,
\end{cases}
\label{eq:hysteresis}
\end{equation}
and the delayed system is
\begin{equation}
\begin{bmatrix} \dot{x}(t) \\ \dot{y}(t) \end{bmatrix} =
\begin{cases} F_L(x(t),y(t)), & x(t-\mu) < 0, \\
F_R(x(t),y(t)), & x(t-\mu) > 0,
\end{cases}
\label{eq:timeDelay}
\end{equation}
where in each case $\mu \ge 0$ is assumed to be small in the analysis below.
As usual we write
\begin{equation}
F_J(x,y) = \begin{bmatrix} f_J(x,y) \\ g_J(x,y) \end{bmatrix},
\label{eq:FJhysteresis}
\end{equation}
for each $J \in \{ L,R \}$.

When the origin is a two-fold it must be stable for a limit cycle to be created.
This is because hysteresis and delay produce an instability.
Similarly when the origin is a pseudo-equilibrium it must belong to an attracting sliding region for a limit cycle to be created.
In this case the stability of the limit cycle (with $\mu > 0$) matches that of the pseudo-equilibrium (with $\mu = 0$).

Such limit cycles are ubiquitous in relay control systems
that use switches to affect control tasks \cite{Ts84}.
{\em Sliding mode control} employs rapid switching to maintain
the system state near a desired manifold \cite{PeBa02,UtGu09,Ut92}.
The rapid switching motion tends to sliding motion
as the switching frequency tends to infinity.
In reality the switching frequency is finite
and the system state `chatters' about the switching manifold.
Chattering can be periodic forming a limit cycle of a size that directly correlates to
discrepancies of the system from its idealised (infinite frequency) limit \cite{Bo09,Ja93}.
Note that chattering typically causes undue strain and wear on mechanical components
so control algorithms that minimise chattering are usually preferred.

A stable pseudo-equilibrium is usually the desired state of a first-order sliding mode control system.
Bang-bang controllers use hysteresis (most simply with no backlash or deadzone)
resulting in a limit cycle \cite{MaHa17}
(see \cite{LiYu08,LoMi88} in the case of delay).
Invisible-invisible two-folds correspond to second-order sliding mode control \cite{BaPi03}.
Calculations of the resulting limit cycle can be found in \cite{Ma17} for hysteresis
and in \cite{Ko17,LePu06,LiYu13} for delay
(see also \cite{Si06} for large delay).
Below we treat a pseudo-equilibrium in \S\ref{sub:psEqPerturbation}
and a two-fold in \S\ref{sub:twoFoldPerturbation}.

\subsection{Perturbing a pseudo-equilibrium --- HLBs 15--16}
\label{sub:psEqPerturbation}

For a system of the form \eqref{eq:FilippovPerturbation}, let
\begin{align}
a_{0L} &= f_L(0,0), &
a_{0R} &= f_R(0,0).
\end{align}
We suppose $a_{0R} < 0 < a_{0L}$ so that the origin belongs to an attracting sliding region.
We also suppose that the origin is a pseudo-equilibrium, hence
\begin{equation}
\left( f_L g_R - f_R g_L \middle) \right|_{x = y = 0} = 0.
\label{eq:hysteresisPseudoEq}
\end{equation}
The stability of the origin is governed by the sign of
\begin{equation}
\alpha = \frac{\partial}{\partial y}
\left( f_L g_R - f_R g_L \middle) \right|_{x = y = 0} \,.
\label{eq:hysteresisNondeg}
\end{equation}
The following theorems describe a local limit cycle
for the hysteretic and delayed systems \eqref{eq:hysteresis}--\eqref{eq:timeDelay}.
As with other results in this paper, the results are proved by analysing a Poincar\'e map.
This is sufficient to completely describe the local dynamics of the hysteretic system.
The delayed system, however, is infinite dimensional \cite{Er09,HaLu93}.
The limit cycle obtained is only unique among orbits for which the time between
any two switches is no less than the delay $\mu$.
For Glass networks with delay, Edwards {\em et.~al.} \cite{EdVa07}
prove the limit cycle is unique under the assumption
that the initial condition history includes no switches.

\begin{theorem}[HLB 15]
Consider the hysteretic system \eqref{eq:hysteresis} where $F_L$ and $F_R$ are $C^2$.
Suppose $a_{0R} < 0 < a_{0L}$ and \eqref{eq:hysteresisPseudoEq} is satisfied.
In a neighbourhood of $(x,y;\mu) = (0,0;0)$,
if $\alpha < 0$ [$\alpha > 0$] there exists a unique stable [unstable] limit cycle for $\mu > 0$.
The minimum and maximum $x$-values of the limit cycle are $\pm \mu$
and its period is
\begin{equation}
T = \left( \frac{2}{a_{0L}} - \frac{2}{a_{0R}} \right) \mu + \cO \left( \mu^2 \right).
\label{eq:hysteresisPseudoEqPeriod}
\end{equation}
\label{th:hysteresisPseudoEq}
\end{theorem}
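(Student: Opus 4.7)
The plan is to set up a Poincar\'e return map on $\Sigma_+ = \{(\mu, y)\}$ and locate its fixed points via the IFT. Because $a_{0R} < 0 < a_{0L}$, transversality is inherited from the $\mu = 0$ limit: for small $\mu \ge 0$ the orbit of $F_R$ from any point $(\mu, y_0)$ close to the origin reaches $\Sigma_- = \{(-\mu, y)\}$ in time $t_R = -\frac{2\mu}{a_{0R}} + \cO(\mu^2)$, and the orbit of $F_L$ subsequently returns from $\Sigma_-$ to $\Sigma_+$ in time $t_L = \frac{2\mu}{a_{0L}} + \cO(\mu^2)$. By Lemmas \ref{le:flowSmoothness} and \ref{le:mapSmoothness} the composed return map $P(y_0;\mu)$ is $C^2$ in a neighbourhood of $(y_0, \mu) = (0, 0)$.

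The main step is to express the displacement conveniently. Since $\dot x$ does not vanish on either segment, I parameterise by $x$: writing $y_R(x)$ and $y_L(x)$ for the $y$-values of the right and left trajectory segments, and letting $h_J = g_J / f_J$, I obtain
\begin{equation*}
P(y_0; \mu) - y_0 = \int_{-\mu}^{\mu} \left[ h_L(x, y_L(x)) - h_R(x, y_R(x)) \right] dx.
\end{equation*}
Introducing the scaling $y_0 = \mu v_0$ (so that the expected limit cycle corresponds to $v_0 = \cO(1)$), I Taylor-expand $y_R$ and $y_L$ about their starting endpoints $x = \pm \mu$. The pseudo-equilibrium condition \eqref{eq:hysteresisPseudoEq} implies $h_L(0,0) = h_R(0,0)$, which kills the $\cO(\mu)$ contribution to the integral. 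A short algebraic manipulation, using $\alpha = \partial_y(f_L g_R - f_R g_L)|_{0}$ together with the identity $a_{0L}\, g_R(0,0) = a_{0R}\, g_L(0,0)$, then yields
\begin{equation*}
\left. \left( \partial_y h_L - \partial_y h_R \right) \right|_{x = y = 0} = -\frac{\alpha}{a_{0L}\, a_{0R}}.
\end{equation*}
Hence the rescaled displacement $D(v_0;\mu) = \mu^{-2}\!\left( P(\mu v_0;\mu) - \mu v_0 \right)$ extends smoothly to $\mu = 0$ as an affine function $D(v_0;0) = -\frac{2 \alpha}{a_{0L} a_{0R}}\, v_0 + c$, with non-zero slope because $\alpha \ne 0$. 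The IFT then produces a unique $v^*(\mu)$ solving $D = 0$ near $\mu = 0$, corresponding to the unique fixed point $y^*(\mu) = \mu v^*(\mu)$ of $P$, and hence to the limit cycle.

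To finish, stability follows from
\begin{equation*}
\left. \frac{\partial P}{\partial y_0} \right|_{y^*(\mu)} = 1 - \frac{2 \mu \alpha}{a_{0L} a_{0R}} + \cO(\mu^2),
\end{equation*}
which, since $a_{0L} a_{0R} < 0$, lies in $(-1, 1)$ for small $\mu > 0$ iff $\alpha < 0$, giving the stated criticality. The minimum and maximum $x$-values of the limit cycle are $\pm \mu$ by construction, and the period $T = t_R + t_L$ agrees with \eqref{eq:hysteresisPseudoEqPeriod} at leading order. The principal technical obstacle is the algebraic reduction identifying the $v_0$-coefficient of $D(v_0;0)$ with $-2\alpha/(a_{0L} a_{0R})$: one must carefully expand $h_L - h_R$ about the origin and exploit the pseudo-equilibrium identity, without which the non-degeneracy coefficient $\alpha$ defined by \eqref{eq:hysteresisNondeg} would not emerge in a clean form.
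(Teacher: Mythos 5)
Your proposal is correct, and its central computation takes a genuinely different route from the paper's. The paper builds $P = P_L \circ P_R$ from two half-return maps, each obtained by brute-force asymptotic matching of the time-parameterised flow (expand $\varphi_t^J,\psi_t^J$ in $t$, $r$, $\mu$, solve for the crossing time, substitute); the coefficient $\frac{2\alpha}{a_{0L}a_{0R}}$ then emerges only after a "careful" algebraic recombination of the two pieces using \eqref{eq:hysteresisSameSlopes}. You instead exploit that $f_R<0<f_L$ near the origin to parameterise both arcs by $x$ and write the whole displacement as the single integral $\int_{-\mu}^{\mu}\left[h_L - h_R\right]dx$ with $h_J = g_J/f_J$; the pseudo-equilibrium condition \eqref{eq:hysteresisPseudoEq} kills the constant term, and the identity $\left(\partial_y h_L - \partial_y h_R\right)|_0 = -\alpha/(a_{0L}a_{0R})$ (which I verified: it follows from $a_{0L}b_{0R}=a_{0R}b_{0L}$) makes transparent why the non-degeneracy coefficient is exactly the $y$-derivative of the numerator $f_Lg_R - f_Rg_L$ of the sliding vector field. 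This buys conceptual clarity and less bookkeeping; the paper's approach buys the explicit time expansions \eqref{eq:hysteresisEqTR} and \eqref{eq:hysteresisEqTL}, which it needs anyway for the period formula \eqref{eq:hysteresisPseudoEqPeriod} (you recover the period from $t_R+t_L$ equally easily). Your rescaling $y_0=\mu v_0$ with $D=\mu^{-2}(P-\mu v_0)$ versus the paper's $D=\mu^{-1}(P-r)$ is an immaterial difference; both feed the IFT. Two small remarks: (i) your derivative $\partial P/\partial y_0 = 1 - \tfrac{2\mu\alpha}{a_{0L}a_{0R}} + \cO(\mu^2)$ is self-consistent and checks out on the example \eqref{eq:hysteresisExampleEq}, whereas the paper's intermediate formula \eqref{eq:hysteresisEqP2} carries the opposite sign on the $\mu r$ term before reverting to the correct sign in its final stability statement, so your version is actually the cleaner one; (ii) the smooth extension of $D$ to $\mu=0$ deserves one more line (it needs $P(\mu v_0;\mu)-\mu v_0=\cO(\mu^2)$ with $C^1$ remainder, which follows from $P$ being $C^2$, $P(r;0)=r$, and $\partial_\mu(P-r)|_{\mu=0}=\cO(r)$), but this is the same standard step the paper also glosses over.
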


\begin{theorem}[HLB 16]
Consider the delayed system \eqref{eq:timeDelay} where $F_L$ and $F_R$ are $C^2$.
Suppose $a_{0R} < 0 < a_{0L}$ and \eqref{eq:hysteresisPseudoEq} is satisfied.
In a neighbourhood of $(x,y;\mu) = (0,0;0)$,
treating only orbits for which times between consecutive switches is not less than $\mu$,
if $\alpha < 0$ [$\alpha > 0$] there exists a
unique stable [unstable] limit cycle for $\mu > 0$.
The minimum and maximum $x$-values of the limit cycle are asymptotically proportional to $\mu$,
and its period is
\begin{equation}
T = \left( 2 - \frac{a_{0L}}{a_{0R}} - \frac{a_{0R}}{a_{0L}} \right) \mu + \cO \left( \mu^2 \right).
\label{eq:timeDelayPseudoEqPeriod}
\end{equation}
\label{th:timeDelayPseudoEq}
\end{theorem}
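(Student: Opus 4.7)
The plan is to reduce the problem to a one-dimensional Poincar\'e map on the section $\Sigma = \{x = 0\}$ and apply the IFT after an appropriate rescaling in $\mu$. The hypothesis that consecutive switches are spaced more than $\mu$ apart means that, along any relevant orbit, the delayed state at any moment is determined by the present position together with the identity of the currently-active half-system; so the infinite-dimensional delay problem reduces to a one-dimensional fixed-point problem. I would parametrise by $r = y(t)$ at a downward crossing of $x = 0$ (where $F_R$ is the active field, since $f_R(0,0) < 0 < f_L(0,0)$), and define $P(r;\mu)$ to be the $y$-coordinate at the next downward crossing.

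Next I would dissect a period into four sub-phases: (i) $F_R$ continues for time $\mu$ after the downward crossing, carrying the orbit into $\Omega_L$; (ii) $F_L$ takes over and brings the orbit back up to $x = 0$; (iii) $F_L$ continues for a further time $\mu$ into $\Omega_R$; (iv) $F_R$ takes over and returns the orbit to $x = 0$ at the next downward crossing. Smoothness of each switch or crossing time as a function of $(r,\mu)$ follows by applying the IFT to $x(t) = 0$, using that $\dot x \ne 0$ at these events. Integrating the Taylor expansions of $F_L$ and $F_R$ through each phase yields
\begin{equation}
D(r;\mu) := P(r;\mu) - r = A\mu^2 + B\mu r + o(\mu^2 + \mu|r|),
\nonumber
\end{equation}
for some constant $A$ and $B = -(a_{0L}-a_{0R})\alpha/(a_{0L}a_{0R})$, which has the same sign as $\alpha$. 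The decisive simplification is that the ``naive'' $O(\mu)$ coefficient of $D$, namely $b_{0L} + b_{0R} - a_{0R}b_{0L}/a_{0L} - a_{0L}b_{0R}/a_{0R}$, factors as $(a_{0L}-a_{0R})(a_{0L}b_{0R} - a_{0R}b_{0L})/(a_{0L}a_{0R})$ and vanishes exactly by the pseudo-equilibrium condition \eqref{eq:hysteresisPseudoEq}; the remaining $\mu r$-term then picks up precisely the $y$-derivative of $a_L b_R - a_R b_L$, which is $\alpha$.

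Finally I would set $r = \mu R$ and smoothly extend $\hat D(R,\mu) := D(\mu R;\mu)/\mu^2$ across $\mu = 0$, so that $\hat D(R,0) = A + BR$. Since $\alpha \ne 0$ forces $B \ne 0$, this has a unique simple root $R_0 = -A/B$, and the IFT produces a unique $R^*(\mu)$, hence a unique fixed point $r^*(\mu) = \mu R^*(\mu)$ of $P$, for small $\mu > 0$. Stability follows from $\partial_r P(r^*(\mu);\mu) = 1 + \mu B + O(\mu^2)$, which gives a stable (resp.\ unstable) limit cycle when $\alpha < 0$ (resp.\ $\alpha > 0$). The period $T = \mu(2 - a_{0R}/a_{0L} - a_{0L}/a_{0R}) + O(\mu^2)$ is the sum of the four phase durations $\mu$, $-a_{0R}\mu/a_{0L}$, $\mu$, $-a_{0L}\mu/a_{0R}$, and the minimum and maximum $x$-values $a_{0R}\mu + O(\mu^2)$ and $a_{0L}\mu + O(\mu^2)$ arise as the $x$-positions at the two switch events. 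The main obstacle is the careful bookkeeping in the Taylor expansion: enough terms must be retained through each phase to capture both $A$ and $B$, while confirming that all $O(\mu)$ contributions cancel cleanly via the pseudo-equilibrium condition; a secondary subtlety is verifying that the one-dimensional reduction captures every periodic orbit in the admissible regime, which reduces to uniform transversality and switch spacing near the leading-order cycle.
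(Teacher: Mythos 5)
Your proposal follows essentially the same route as the paper's proof: the same Poincar\'e section on $x=0$, the same decomposition of one period into the two delay segments of length $\mu$ plus the two return segments (which the paper packages as $P = P_L \circ P_R$ with $P_R$ built from $G_L, H_L$ and the $F_R$-flow), the same observation that the $\cO(\mu)$ term cancels by the pseudo-equilibrium condition leaving $P(r;\mu) - r = \kappa\alpha\,\mu r + \cO(\mu^2)$, and the same IFT argument after dividing out $\mu$, with identical conclusions for the fixed point, its stability, the switch-point $x$-values, and the period. Your coefficient $B$ agrees with the paper's $\kappa\alpha$, and your rescaling $r = \mu R$ versus the paper's displacement function $\frac{1}{\mu}(P(r;\mu)-r)$ is only a cosmetic difference.
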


As an example we use the observer canonical form for a two-dimensional relay control system \cite{Bu01}:
\begin{equation}
\begin{bmatrix} \dot{x} \\ \dot{y} \end{bmatrix} =
\begin{bmatrix} \tau & 1 \\ -\delta & 0 \end{bmatrix}
\begin{bmatrix} x \\ y \end{bmatrix} - {\rm sgn}(x)
\begin{bmatrix} b_1 \\ b_2 \end{bmatrix}.
\label{eq:hysteresisExampleEq}
\end{equation}
This system is of the form \eqref{eq:FilippovPerturbation}.
If $b_1 > 0$, then the interval $-b_1 < y < b_1$ is an attracting sliding region
and the origin is a pseudo-equilibrium.
If also $b_2 > 0$, then the origin is stable.
In this case the inclusion of hysteresis or delay generates a stable limit cycle
in accordance with Theorems \ref{th:hysteresisPseudoEq} and \ref{th:timeDelayPseudoEq}
as shown in Figs.~\ref{fig:allHysteresisEq} and \ref{fig:allTimeDelayEq} for the parameter values
\begin{align}
\tau &= -0.5, & 
\delta &= 1, & 
b_1 &= 1, &
b_2 &= 1.
\label{eq:paramHysteresisEq}
\end{align}

\begin{figure*}
\begin{center}
\includegraphics[width=10.9cm]{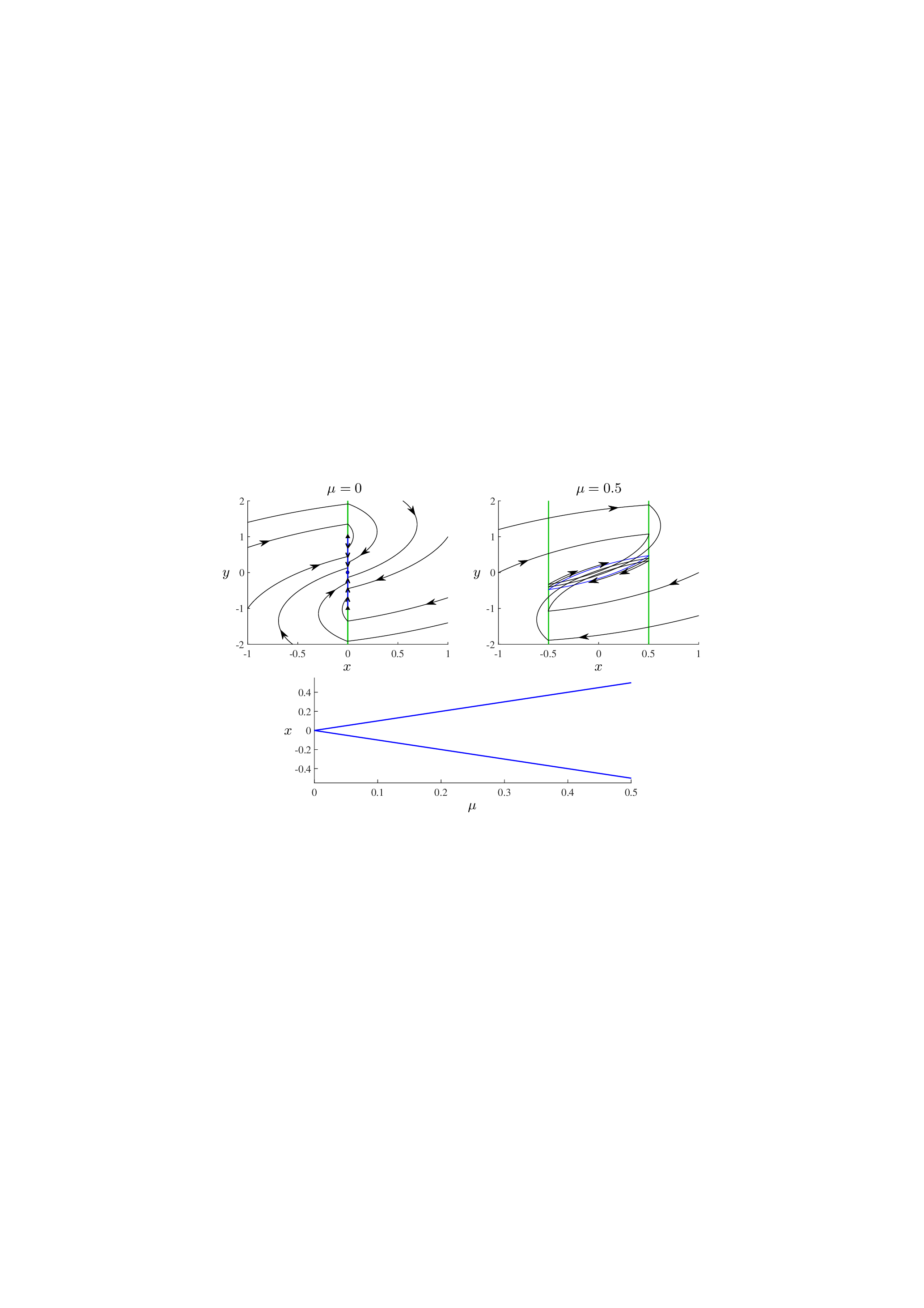}
\caption{
An illustration of HLB 15 for the canonical relay control system \eqref{eq:hysteresisExampleEq}
using hysteresis of the form \eqref{eq:hysteresis} and the parameter values \eqref{eq:paramHysteresisEq}.
By increasing the value of $\mu$ from $0$,
a stable pseudo-equilibrium is replaced by a limit cycle with period asymptotically proportional to $\mu$.
\label{fig:allHysteresisEq}
} 
\end{center}
\end{figure*}

\begin{figure*}
\begin{center}
\includegraphics[width=10.9cm]{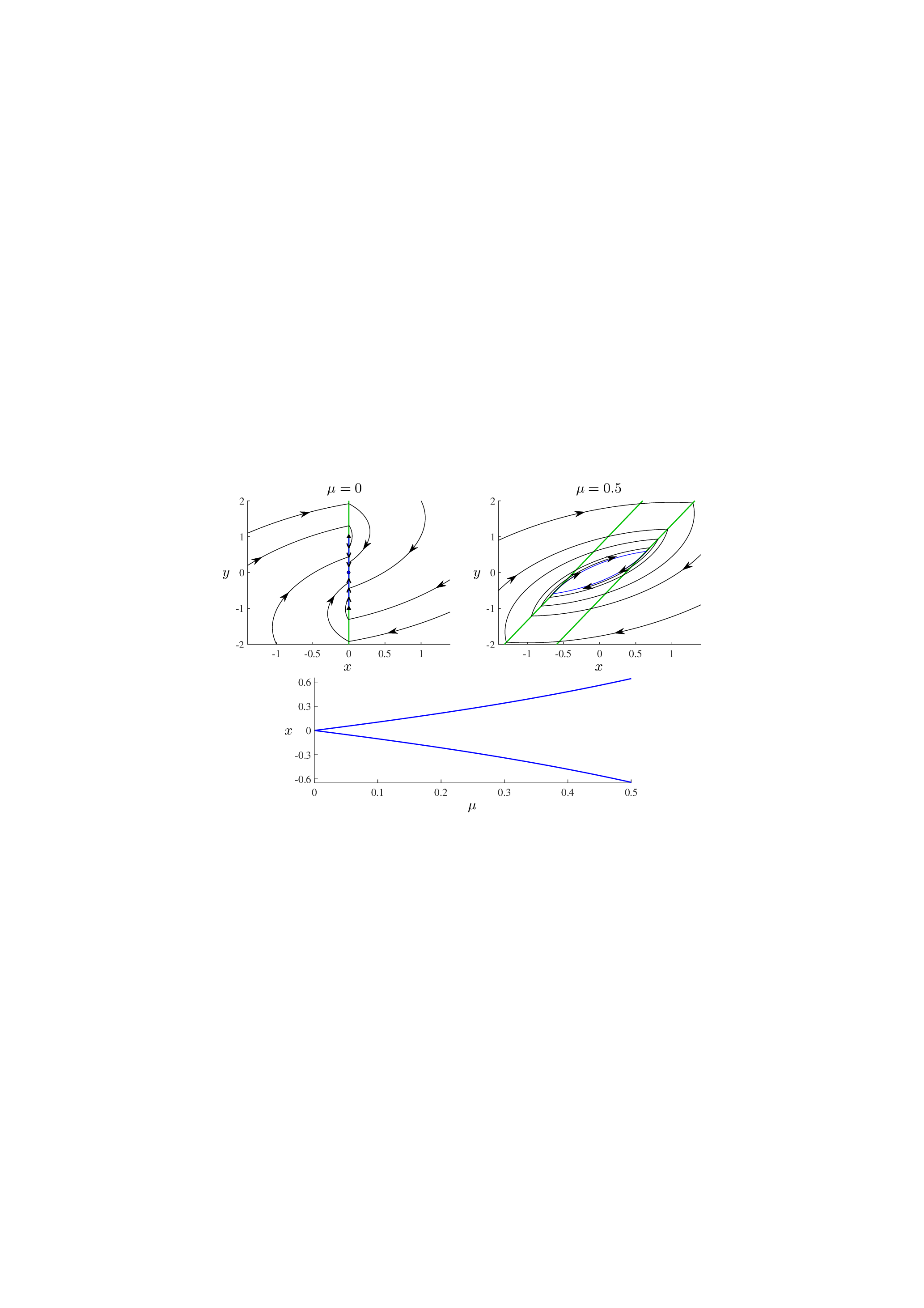}
\caption{
An illustration of HLB 16 for the canonical relay control system \eqref{eq:hysteresisExampleEq}
using delay of the form \eqref{eq:timeDelay} and the parameter values \eqref{eq:paramHysteresisEq}.
The green lines indicate where switches occur.
\label{fig:allTimeDelayEq}
} 
\end{center}
\end{figure*}

\subsection{Perturbing a two-fold --- HLBs 17--18}
\label{sub:twoFoldPerturbation}

Here we let
\begin{align}
a_{2J} &= \frac{\partial f_J}{\partial y}(0,0), &
b_{0J} &= g_J(0,0),
\end{align}
and
\begin{equation}
\gamma_J = a_{2J} b_{0J},
\label{eq:hysteresisTwoFoldInvisCond}
\end{equation}
for each $J \in \{ L, R \}$.
We also define
\begin{equation}
\kappa = \frac{b_{0L}}{a_{2L}} - \frac{b_{0R}}{a_{2R}}.
\label{eq:hysteresisTwoFoldkappa}
\end{equation}
We suppose the origin is an invisible fold for both the
left and right half-systems of \eqref{eq:FilippovPerturbation}.
That is
\begin{equation}
f_L(0,0) = f_R(0,0) = 0,
\label{eq:hysteresisTwoFoldCond}
\end{equation}
and $\gamma_L > 0$ and $\gamma_R < 0$.
We assume $a_{2L} a_{2R} > 0$, so that orbits
of both half-systems involve the same direction of rotation.
Then, as with HLB 7, the stability of the two-fold is governed by the sign of
\begin{equation}
\alpha = \sigma_{{\rm fold},L} - \sigma_{{\rm fold},R} \,,
\label{eq:hysteresisTwoFoldNondegCond}
\end{equation}
using the formula \eqref{eq:foldsigma}.

\begin{theorem}[HLB 17]
Consider the hysteretic system \eqref{eq:hysteresis} where $F_L$ and $F_R$ are $C^3$.
Suppose \eqref{eq:hysteresisTwoFoldCond} is satisfied,
$\gamma_L > 0$, $\gamma_R < 0$, and $a_{2L} a_{2R} > 0$.
In a neighbourhood of $(x,y;\mu) = (0,0;0)$,
if $\alpha < 0$ [$\alpha > 0$] there exists a unique stable limit cycle [no limit cycle] for $\mu > 0$.
The minimum and maximum $x$-values of the limit cycle are asymptotically proportional to $\mu^{\frac{2}{3}}$,
the minimum and maximum $y$-values are asymptotically proportional to $\mu^{\frac{1}{3}}$,
and its period is
\begin{equation}
T = \left| \frac{2}{b_{0L}} - \frac{2}{b_{0R}} \right|
\left( \frac{3 \kappa \mu}{|\alpha|} \right)^{\frac{1}{3}} + \co \left( \mu^{\frac{1}{3}} \right).
\label{eq:hysteresisTwoFoldPeriod}
\end{equation}
\label{th:hysteresisTwoFold}
\end{theorem}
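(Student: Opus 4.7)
The plan is to construct a Poincar\'e map $P$ using the half-line $\{(\mu,y) : y > 0\}$ as Poincar\'e section, and then locate its fixed point via a rescaling followed by the implicit function theorem. Write $P = P_L \circ P_R$, where $P_R : y_+ \mapsto y_-$ tracks the orbit of $F_R$ from $(\mu, y_+)$ to its next crossing of $x = -\mu$, and $P_L : y_- \mapsto y_+'$ tracks the orbit of $F_L$ from $(-\mu, y_-)$ back to its next crossing of $x = \mu$. The relevant orbits stay close to the two-fold, and since $a_{2L}a_{2R} > 0$ implies $\dot x \ne 0$ at both crossings for $y_\pm$ bounded away from zero, Lemmas \ref{le:flowSmoothness} and \ref{le:mapSmoothness} guarantee that $P_R$ and $P_L$ are $C^3$ on the relevant open set.

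The main computation is to expand $P(y_+,\mu) - y_+$ to sufficient order. To do this, I would decompose each half-map by inserting the intermediate section $x = 0$: under $F_R$ the orbit from $(\mu, y_+)$ first crosses $x = 0$ at some $(0, r)$ with $r > 0$, next returns to $(0, P_{{\rm fold},R}(r))$ by Lemma \ref{le:fold}, and finally reaches $(-\mu, y_-)$. Direct Taylor expansion of the flow yields the short-time transit relation
\begin{equation}
r^2 \;=\; y_+^2 \,+\, \frac{2\,|b_{0R}|}{a_{2R}}\,\mu \,+\, \cO\bigl(\mu^2 + \mu\, r^2 + r^4\bigr),
\label{eq:proof17transit}
\end{equation}
and an analogous expression for $y_-$ in terms of $P_{{\rm fold},R}(r)$. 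The corresponding relations for $F_L$ are obtained by the involution $(x,y) \mapsto (-x,-y)$, which brings $F_L$ into the normal form of Lemma \ref{le:fold}. Composing all pieces and using $P_{{\rm fold},J}(s) = -s + \tfrac{2\,\sigma_{{\rm fold},J}}{3}s^2 + \cO(s^3)$, one obtains
\begin{equation}
P(y_+,\mu) - y_+ \;=\; \frac{2\,\kappa\,\mu}{y_+} \,+\, \frac{2\,\alpha}{3}\,y_+^2 \,+\, R(y_+,\mu),
\label{eq:proof17expansion}
\end{equation}
where $R$ collects the higher-order remainders. The $y_+^2$ term is the familiar two-fold contribution as in HLB 10, whereas the $\mu/y_+$ term records the extra $y$-displacement caused by the shift of the switch points to $\pm \mu$.

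Because the two leading terms in \eqref{eq:proof17expansion} balance when $y_+^3 \sim \mu$, rescale by setting $y_+ = \mu^{1/3} Y$ and introduce
\begin{equation}
D(Y,\mu) \;=\; \mu^{-2/3}\bigl(P(\mu^{1/3} Y,\mu) - \mu^{1/3} Y\bigr).
\label{eq:proof17D}
\end{equation}
The expansion \eqref{eq:proof17expansion} shows $D$ extends continuously to $\mu = 0$ as $D(Y,0) = 2\kappa/Y + \tfrac{2\alpha}{3}\,Y^2$. Under the hypotheses one has $\kappa > 0$ (it is the sum of the two positive ratios $b_{0L}/a_{2L}$ and $-b_{0R}/a_{2R}$), so $D(\cdot,0)$ has the unique positive root $Y_0 = (3\kappa/|\alpha|)^{1/3}$ precisely when $\alpha < 0$, and $\partial_Y D(Y_0,0) = -6\kappa/Y_0^2 \ne 0$. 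The IFT then yields a unique branch of fixed points $Y^*(\mu)$ near $Y_0$, hence a unique limit cycle with $y_+^*(\mu) = \mu^{1/3} Y^*(\mu) \sim \mu^{1/3}$; the $y$- and $x$-amplitude scalings and the period formula \eqref{eq:hysteresisTwoFoldPeriod} then follow from the leading-order orbit geometry, the period being the sum of the two transit times $2 y_+^*/|b_{0R}| + 2|y_-^*|/b_{0L}$. Stability is obtained by evaluating $\partial_{y_+}P$ at the fixed point: using the fixed-point relation $2\kappa\mu/y_+^* = -\tfrac{2\alpha}{3}(y_+^*)^2$, it simplifies to $1 + 2\alpha\,y_+^* + \co(\mu^{1/3})$, which lies in $(0,1)$ for small $\mu > 0$. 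If instead $\alpha > 0$ then $D(Y,0) > 0$ for every $Y > 0$, so no fixed point of $P$, and hence no limit cycle, exists.

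The hard part will be verifying that $D(Y,\mu)$ is actually $C^1$ on a neighbourhood of $(Y_0, 0)$, not merely continuous, so that the IFT applies in its standard form. The candidate remainder terms in $R(y_+,\mu)$ include $\cO(y_+^3)$, $\cO(\mu\, y_+)$, and $\cO(\mu^2/y_+^3)$, each of which becomes $\cO(\mu^{1/3})$ after division by $\mu^{2/3}$ and substitution $y_+ = \mu^{1/3} Y$; one needs the same uniform bound to hold for their first derivatives in $Y$. Establishing these uniform estimates is the delicate step of the proof. A cleaner alternative, sidestepping this difficulty, is to observe that for each fixed small $\mu > 0$ the displacement $y_+ \mapsto P(y_+,\mu) - y_+$ is strictly decreasing on the relevant interval (with values of opposite sign at its endpoints), so the intermediate value theorem gives a unique zero, as suggested in the introduction.
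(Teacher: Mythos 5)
Your route is viable and all the leading-order quantities (the balance $y_+^3\sim\mu$, the fixed point $y_+^*=(3\kappa/|\alpha|)^{1/3}\mu^{1/3}$, the multiplier $1+2\alpha y_+^*$, the period) agree with the paper's, but it is not the paper's route: it is essentially the approach of \cite{Ma17}, which the paper explicitly discusses and deliberately avoids. Your expansion $P(y_+;\mu)-y_+ = \tfrac{2\kappa\mu}{y_+}+\tfrac{2\alpha}{3}y_+^2+R$ is exactly the form the paper attributes to \cite{Ma17}, obtained there only under the a priori assumption $y_+=\cO\big(\mu^{1/3}\big)$ with $y_+$ not too small. The paper instead derives the uniformly valid square-root form \eqref{eq:hysteresisTwoFoldP0}, $P(r;\mu)=\sqrt{r^2+4\kappa\mu+\tfrac{4\alpha}{3}r^3+E}$, and then applies the IFT not to a rescaled displacement but to $D(r;\mu)=P^2-r^2=4\kappa\mu+\tfrac{4\alpha}{3}r^3+\co\big(\big(|\mu^{1/3}|+|r|\big)^3\big)$, solving for $\mu$ as a function of $r$ (legitimate since $\partial D/\partial\mu=4\kappa\neq 0$) and inverting. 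Squaring kills the square-root singularity, so no singular rescaling and no uniform derivative estimates on a $\mu/y_+$ term are ever needed.

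The gap you flag yourself is real and is precisely what this trick circumvents. Your $D(Y,\mu)=\mu^{-2/3}\big(P(\mu^{1/3}Y;\mu)-\mu^{1/3}Y\big)$ converges to $2\kappa/Y+\tfrac{2\alpha}{3}Y^2$ only at rate $\cO\big(\mu^{1/3}\big)$, so it is continuous but not $C^1$ in $\mu$ at $\mu=0$; to apply even the parameter-continuous version of the IFT you must prove that $\partial_Y D$ is continuous up to $\mu=0$, i.e.\ uniform $\mu^{1/3}$-bounds on the $Y$-derivatives of every remainder. Your proposed fallback (strict monotonicity of the displacement plus the intermediate value theorem) does not sidestep this: monotonicity on the whole relevant interval is exactly the same uniform derivative estimate in disguise. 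There is also a second, earlier difficulty your sketch glosses over: the transit relation from $(\mu,y_+)$ to $x=0$ cannot be obtained by solving $\varphi^R_t=0$ for $t$ with the IFT, because the flow has no $\mathcal{O}(t)$ term there; the paper handles this by substituting $t=-\tfrac{2}{b_{0R}}r+S$ and solving for $\mu$ as a function of $(r,S)$ instead (the ``non-constructive derivation'' advertised in the introduction). Either adopt the $P^2-r^2$ device, or supply the uniform $C^1$ estimates; as written the argument stops one step short of a proof.
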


\begin{figure}[b!]
\begin{center}
\includegraphics[width=5.6cm]{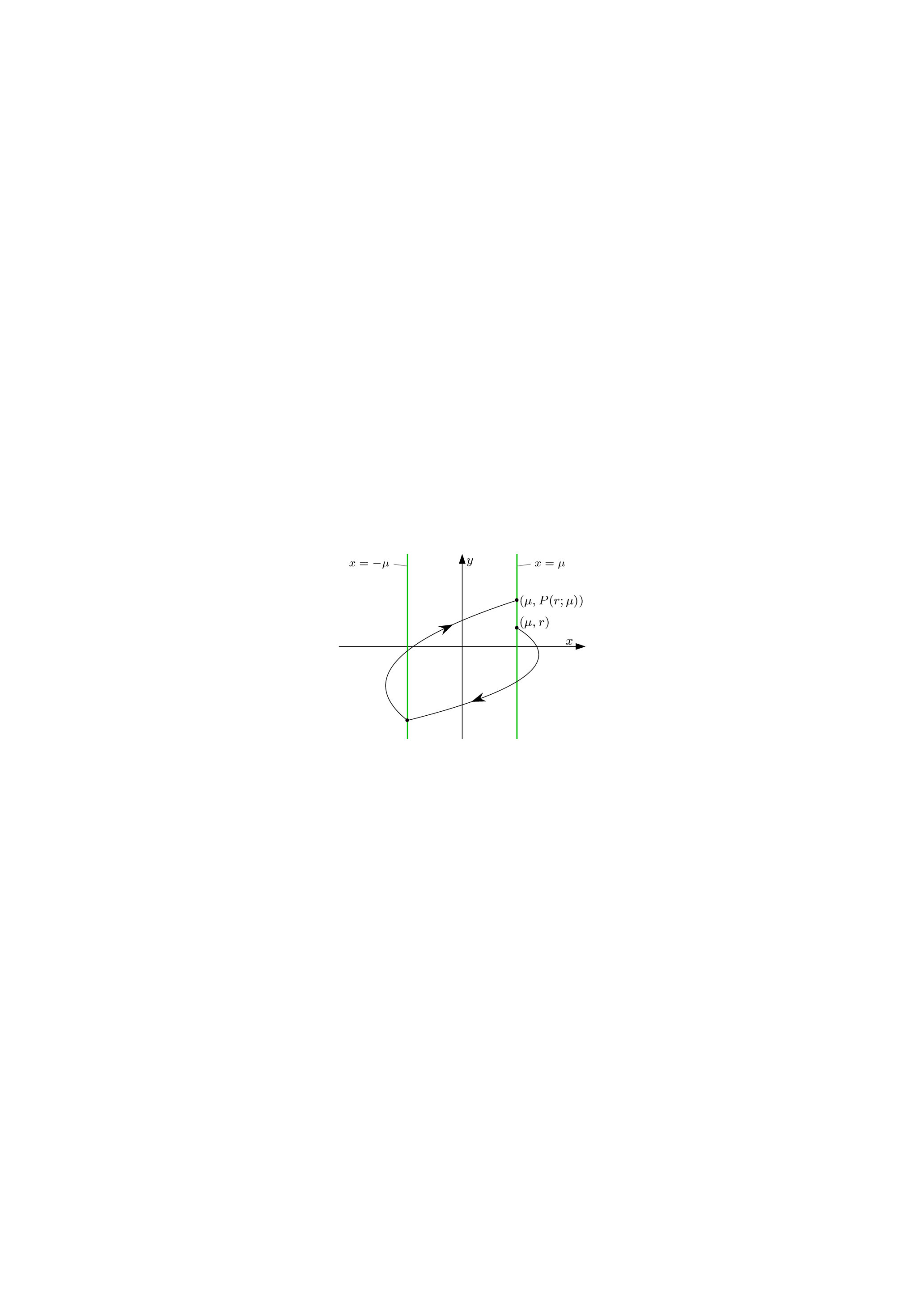}
\caption{
An illustration of the Poincar\'e map $P$ associated with HLB 17.
\label{fig:schemPoinHyst}
} 
\end{center}
\end{figure}

The interesting cube-root asymptotics are explained by a Poincar\'e map.
Given $r \in \mathbb{R}$, suppose the forward orbit of
$(x,y) = (\mu,r)$ undergoes a switch on $x = -\mu$, then a switch on $x = \mu$,
as in Fig.~\ref{fig:schemPoinHyst}.
Let $P(r;\mu)$ denote the $y$-value of the second switch.
In Appendix \ref{app:hysteresis} we show that
\begin{equation}
P(r;\mu) = \sqrt{r^2 + 4 \kappa \mu + \frac{4 \alpha}{3} \,r^3 + E},
\label{eq:hysteresisTwoFoldP0}
\end{equation}
where $E$ is an error term.
By solving the fixed point equation $P(r;\mu) = r$ we obtain
$r = \sqrt{\frac{3 \kappa}{|\alpha|}} \,\mu^{\frac{1}{3}}$, to leading order.
To deal with the unique technical difficulties
that arise in deriving \eqref{eq:hysteresisTwoFoldP0},
this equation is obtained in Appendix \ref{app:hysteresis} by non-constructive means.
A different form for $P(r;\mu)$ is obtained in \cite{Ma17}
by assuming {\em a priori} that the value of $r$ is not too small relative to $\mu$.
This form can be obtained from \eqref{eq:hysteresisTwoFoldP0} as follows.
Assume $r > 0$ and $r = \cO \big( \mu^{\frac{1}{3}} \big)$ (this sufficient to obtain the limit cycle).
Then a linear approximation to the square-root produces
\begin{equation}
P(r;\mu) = r + \frac{2 \kappa \mu}{r} + \frac{2 \alpha}{3} r^2
+ \co \left( r^2 \right),
\nonumber
\end{equation}
which is the form obtained in \cite{Ma17}.

\begin{theorem}[HLB 18]
Consider the delayed system \eqref{eq:timeDelay} where $F_L$ and $F_R$ are $C^3$.
Suppose \eqref{eq:hysteresisTwoFoldCond},
$\gamma_L > 0$, $\gamma_R < 0$, and $a_{2L} a_{2R} > 0$.
In a neighbourhood of $(x,y;\mu) = (0,0;0)$,
treating only orbits for which times between consecutive switches is not less than $\mu$,
if $\alpha < 0$ [$\alpha > 0$] there exists a unique stable limit cycle [no limit cycle] for $\mu > 0$.
The minimum and maximum $x$-values of the limit cycle are asymptotically proportional to $\mu$,
the minimum and maximum $y$-values are asymptotically proportional to $\sqrt{\mu}$,
and its period is
\begin{equation}
T = \left| \frac{2}{b_{0L}} - \frac{2}{b_{0R}} \right|
\sqrt{\frac{3 \left( a_{2L} + a_{2R} \right) \kappa}{2 |\alpha|}}
\,\sqrt{\mu} + \co \left( \sqrt{\mu} \right).
\label{eq:timeDelayTwoFoldPeriod}
\end{equation}
\label{th:timeDelayTwoFold}
\end{theorem}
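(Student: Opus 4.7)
The plan is to construct a Poincar\'e map $P(r;\mu)$ on $\{x=0\}$ and locate its fixed point via the IFT, mimicking the strategy used for HLB 10 with the delay entering as an additive perturbation. Without loss of generality I assume $a_{2L},a_{2R}>0$ (obtained via $y\mapsto -y$ if necessary), so that $b_{0L}>0$ and $b_{0R}<0$ by the invisibility conditions and orbits near the origin rotate clockwise.

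Starting at $(0,r)$ with $r>0$, I track the orbit through four phases: (i) follow $F_L$ for time $\mu$, reaching $(x_1,y_1)$ with $x_1>0$ (overshoot into $\Omega_R$); (ii) follow $F_R$ until the next intersection $(0,y_2)$ with $x=0$; (iii) follow $F_R$ for time $\mu$, reaching $(x_3,y_3)$ with $x_3<0$ (overshoot into $\Omega_L$); (iv) follow $F_L$ back to $(0,P(r;\mu))$. Direct Taylor expansion of the flow gives $x_1 = a_{2L}r\mu + O\!\bigl((r+\mu)^2\mu\bigr)$ and $y_1 = r + b_{0L}\mu + O(r\mu)$ in phase (i), with analogous formulas in phase (iii). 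To handle phase (ii), I reinterpret $(x_1,y_1)$ as an interior point of the $F_R$-orbit that passes through a virtual $(0,\tilde r)$ at an earlier time $\tau$; equating $(a_{2R}\tilde r\tau,\, \tilde r + b_{0R}\tau)$ with $(x_1,y_1)$ and solving yields $\tau\approx(a_{2L}/a_{2R})\mu$ and $\tilde r = r + a_{2L}\kappa\mu + O(\mu^2)$ with $\kappa$ as in \eqref{eq:hysteresisTwoFoldkappa}, so $y_2 = P_{\mathrm{fold},R}(\tilde r)$ by Lemma \ref{le:fold}. An analogous backtracing in phase (iv)---using Lemma \ref{le:fold} after the reflection $(x,y)\mapsto(-x,-y)$, under which $\sigma_{\mathrm{fold}}$ changes sign---gives $\tilde y_2 = y_2 - a_{2R}\kappa\mu + O(\mu^2)$ and $P(r;\mu) = P_{\mathrm{fold},L}(\tilde y_2)$. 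The two $\mu$-shifts add constructively, and expanding to quadratic order in $r$ yields
\begin{equation}
P(r;\mu) - r = (a_{2L}+a_{2R})\kappa\mu + \tfrac{2\alpha}{3}\,r^2 + o(\mu + r^2).
\nonumber
\end{equation}

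With this expansion in hand I rescale $r = \sqrt{\mu}\,\rho$ and introduce $D(\rho;\mu) = \mu^{-1}\bigl(P(\sqrt{\mu}\rho;\mu)-\sqrt{\mu}\rho\bigr)$, which extends smoothly to $\mu=0$ with $D(\rho;0) = (a_{2L}+a_{2R})\kappa + \tfrac{2\alpha}{3}\rho^2$. When $\alpha<0$ this has a unique positive zero at $\rho_0 = \sqrt{3(a_{2L}+a_{2R})\kappa/(2|\alpha|)}$ and $\partial D/\partial\rho(\rho_0;0) = \tfrac{4\alpha}{3}\rho_0\ne 0$, so the IFT supplies a unique $C^1$ branch $\rho(\mu)$ emanating from $\rho_0$. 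This gives the limit cycle: its $y$-amplitude scales as $\sqrt{\mu}$, its $x$-amplitude (dominated by the fold excursion $\sim\tilde r^2$) scales as $\mu$, and stability follows from $\partial P/\partial r\,\big|_{r_*} = 1 + \tfrac{4\alpha}{3}r_* < 1$. The period decomposes as $T = T_L + T_R + 2\mu$; Lemma \ref{le:fold} gives $T_R\approx -2\tilde r/b_{0R}$ and $T_L\approx -2\tilde y_2/b_{0L}$, and since $2\mu = O(\mu) = o(\sqrt{\mu})$ we obtain $T\sim 2r_*\bigl(1/b_{0L}-1/b_{0R}\bigr)$; substituting $r_*$ recovers \eqref{eq:timeDelayTwoFoldPeriod}. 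When $\alpha>0$, $D(\rho;0)=0$ has no real solution and no limit cycle bifurcates.

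The main obstacle is uniform control of remainders so that $D(\rho;\mu)$ is genuinely $C^1$ at $\mu=0$ under the rescaling: the backtracing time $\tau$ and the virtual crossings $\tilde r,\tilde y_2$ must be expanded to second order so that the $O(r\mu)$ and $O(\mu^2)$ corrections, after being passed through $P_{\mathrm{fold},J}$, contribute only at order $o(\mu + r^2)$. A further issue specific to the delayed system is that \eqref{eq:timeDelay} is infinite-dimensional and its Poincar\'e map is well-defined only on initial histories whose consecutive switches are separated by at least $\mu$; I must verify this admissibility condition for the constructed cycle, which follows from $r_*\sim\sqrt{\mu}\gg\mu$ together with the leading estimates of $T_L,T_R$, and this also underwrites the uniqueness statement within the stated class.
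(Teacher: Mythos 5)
Your proposal is correct and lands on the same second-order Poincar\'e map expansion, $P(r;\mu)-r=(a_{2L}+a_{2R})\kappa\mu+\tfrac{2\alpha}{3}r^2+\co\bigl((|r|+|\mu|)^2\bigr)$, that drives the paper's proof, so the substance agrees; the differences are in execution and are worth recording. First, where you backtrace the post-delay point to a virtual crossing $(0,\tilde r)$ of the $F_R$-orbit and then reuse Lemma \ref{le:fold}, the paper Taylor-expands the flow of $F_R$ directly from the overshoot point $\bigl(G_L(r;\mu),H_L(r;\mu)\bigr)$ and solves for the crossing time by brute force; your shifts $\tilde r=r+a_{2L}\kappa\mu$ and $\tilde y_2=y_2-a_{2R}\kappa\mu$ reproduce exactly the constants $\mp a_{2J}\kappa\mu$ appearing in the paper's $P_R$ and $P_L$, and recycling Lemma \ref{le:fold} is tidier at the small cost of justifying the backward extension (which is legitimate since the crossing is transversal, $\dot x=a_{2R}y_1\neq0$). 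Second, for the fixed point the paper applies the IFT to $P(r;\mu)=r$ solved for $\mu$ (using $\partial P/\partial\mu(0;0)=(a_{2L}+a_{2R})\kappa\neq0$) and inverts $\mu=h(r)=-\tfrac{2\alpha}{3(a_{2L}+a_{2R})\kappa}\,r^2+\co\left(r^2\right)$, whereas you rescale $r=\sqrt{\mu}\,\rho$. The paper's route buys two things your route leaves slightly open: $h$ is genuinely $C^2$, so no smoothness issue arises, while your $D(\rho;\mu)$ inherits a term $\ell_1\sqrt{\mu}\,\rho$ from the cross term $\ell_1\mu r$ and is therefore continuous but not $C^1$ in $\mu$ at $\mu=0$ (repairable by substituting $\mu=\nu^2$, or by an IFT requiring only continuity in the parameter --- this is the one genuine soft spot, which you correctly flag); and since every local fixed point must satisfy $\mu=h(r)$, nonexistence for $\alpha>0$ and uniqueness for $\alpha<0$ hold over a full neighbourhood rather than only in the regime $r=\cO(\sqrt{\mu})$ captured by bounded $\rho$ (to close your version one adds the one-line dominant-balance check that $P(r;\mu)\neq r$ when $r\gg\sqrt{\mu}$ or $r\ll\sqrt{\mu}$). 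Everything else --- the stability estimate $\partial P/\partial r\,\big|_{r^*}=1+\tfrac{4\alpha}{3}r^*+\cO(\mu)$, the amplitude scalings, the period $T=2\mu+T_R+T_L\sim 2r^*\bigl(\tfrac{1}{b_{0L}}-\tfrac{1}{b_{0R}}\bigr)$, and the delay-admissibility check $T_L,T_R\sim\sqrt{\mu}\gg\mu$ --- matches the paper's argument.
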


To illustrate Theorems \ref{th:hysteresisTwoFold} and \ref{th:timeDelayTwoFold}
we consider a linear oscillator with discontinuous forcing:
\begin{equation}
m \ddot{x} + b \dot{x} + k x = F_{\rm apply} \,.
\label{eq:hysteresisExample}
\end{equation}
Let $F > 0$ be a constant and suppose the applied force is given by
$F_{\rm apply} = F$ until $x = \mu$, and $F_{\rm apply} = -F$ until $x = -\mu$.
By writing the system as
\begin{equation}
\begin{split}
\dot{x} &= y, \\
\dot{y} &= \begin{cases}
-\frac{k}{m} \,x - \frac{b}{m} \,y + \frac{F}{m}, & {\rm until~} x = \mu, \\
-\frac{k}{m} \,x - \frac{b}{m} \,y - \frac{F}{m}, & {\rm until~} x = -\mu,
\end{cases}
\end{split}
\label{eq:hysteresisExample2}
\end{equation}
we can apply Theorem \ref{th:hysteresisTwoFold} directly.
Here the non-degeneracy coefficient is $\alpha = \frac{-2 b}{F}$,
so a positive damping coefficient $b$ guarantees the creation of a stable limit cycle.
Fig.~\ref{fig:allHysteresis} shows a bifurcation diagram and phase portraits using
\begin{align}
m &= 1, &
b &= 0.5, &
k &= 1, &
F &= 1.
\label{eq:paramHysteresis}
\end{align}
Fig.~\ref{fig:allTimeDelay} shows a bifurcation diagram and phase portraits for the analogous delayed system
\begin{equation}
\begin{split}
\dot{x}(t) &= y(t), \\
\dot{y}(t) &= \begin{cases}
-\frac{k}{m} \,x(t) - \frac{b}{m} \,y(t) + \frac{F}{m}, & x(t-\mu) < 0, \\
-\frac{k}{m} \,x(t) - \frac{b}{m} \,y(t) - \frac{F}{m}, & x(t-\mu) > 0.
\end{cases}
\end{split}
\label{eq:timeDelayExample}
\end{equation}

\begin{figure*}
\begin{center}
\includegraphics[width=10.9cm]{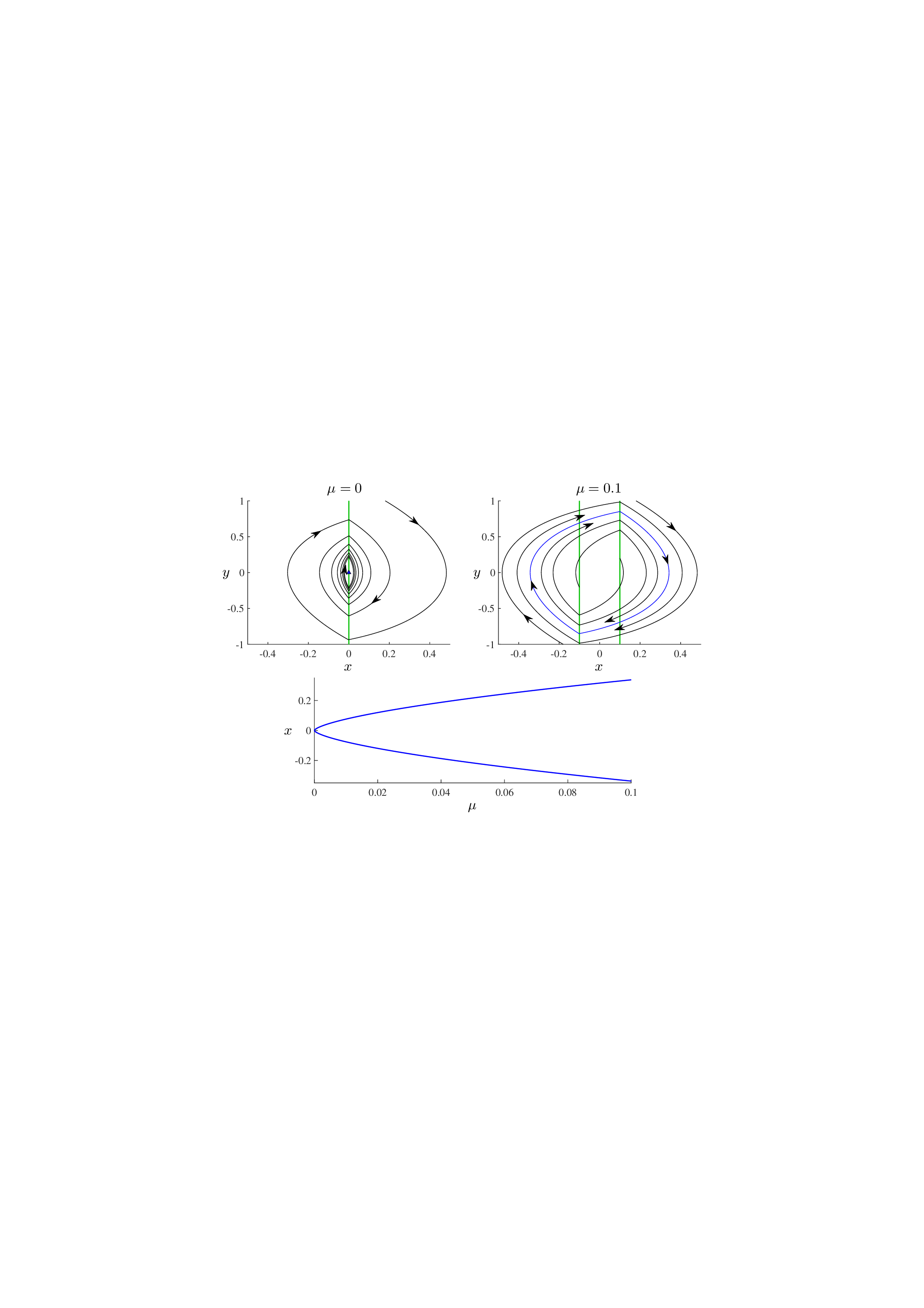}
\caption{
An illustration of HLB 17 for a linear oscillator with hysteretic discontinuous forcing,
\eqref{eq:hysteresisExample2} with \eqref{eq:paramHysteresis}.
By increasing the value of $\mu$ from $0$,
a stable invisible-invisible two-fold is replaced by a limit cycle
with period asymptotically proportional to $\mu^\frac{1}{3}$.
\label{fig:allHysteresis}
} 
\end{center}
\end{figure*}

\begin{figure*}
\begin{center}
\includegraphics[width=10.9cm]{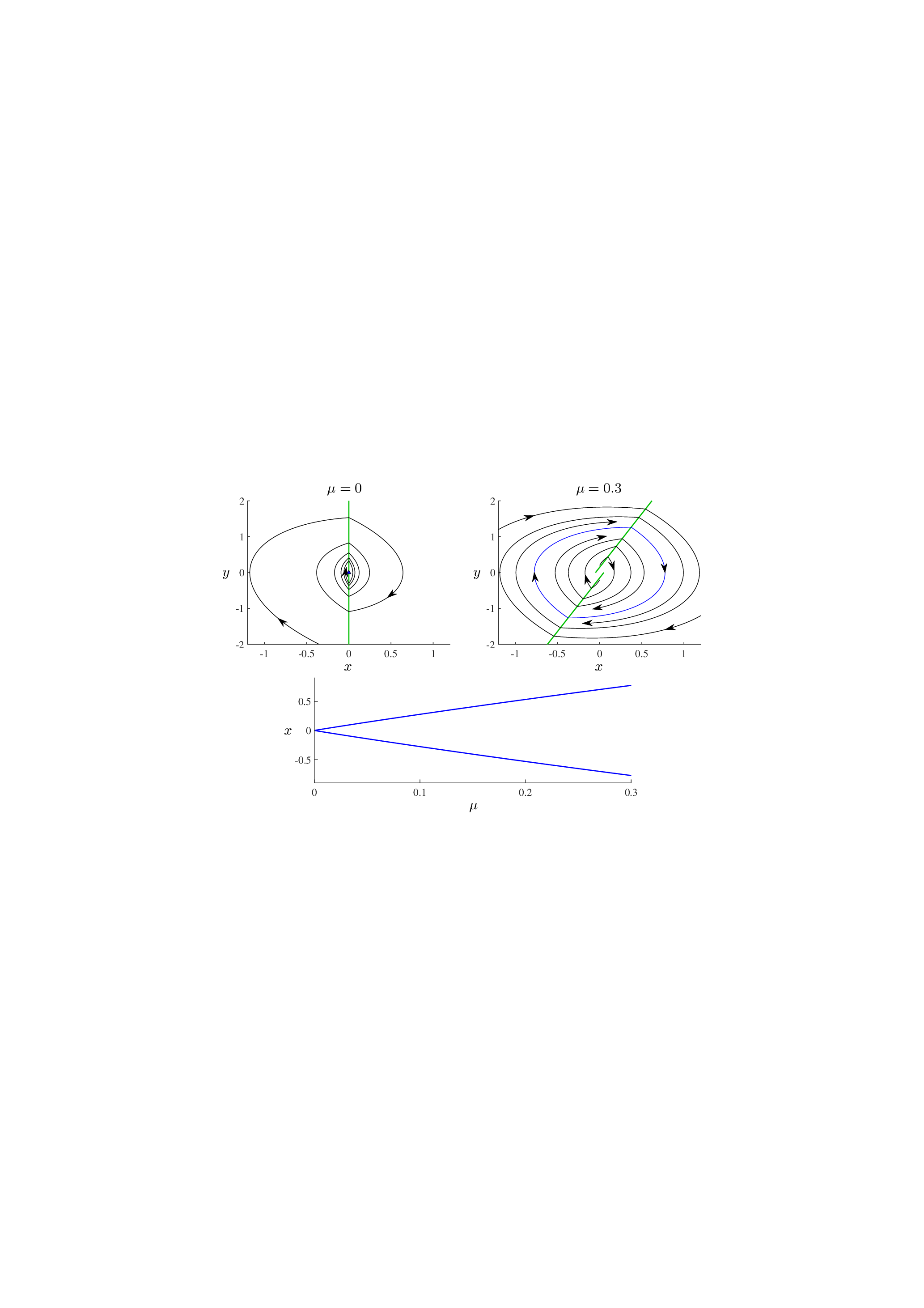}
\caption{
An illustration of HLB 18 for a linear oscillator with delayed discontinuous forcing,
\eqref{eq:timeDelayExample} with \eqref{eq:paramHysteresis}.
\label{fig:allTimeDelay}
} 
\end{center}
\end{figure*}

\section{Other mechanisms for limit cycle creation}
\setcounter{equation}{0}
\setcounter{figure}{0}
\setcounter{table}{0}
\label{sec:other}

\subsection{Intersecting switching manifolds --- HLB 19}
\label{sub:intersecting}

Many systems involve multiple switches;
examples include gene networks \cite{EdGl14,GoSa02} and relay control systems \cite{Ut92}.
In phase space, two switching manifolds generically intersect on a codimension-two surface.
Determining an appropriate sliding vector field on such a surface is surprisingly difficult in a general setting,
see \cite{DiDi17,Je14c,JeKa18} for recent developments,
however here we are only concerned with two-dimensional systems
for which two switching manifolds can be expected to intersect at a point.
The dynamics near two transversally intersecting switching manifolds,
depends on the direction of the vector field in each of the four regions
bounded by the switching manifolds \cite{GuHa15,Je16f}.
Here we suppose orbits spiral around the intersection point, as in Fig.~\ref{fig:schemPoinIntersecting}.
Then the intersection point is a stationary solution
and it may emit a limit cycle when its stability changes under parameter variation.

\begin{figure}[b!]
\begin{center}
\includegraphics[width=4.2cm]{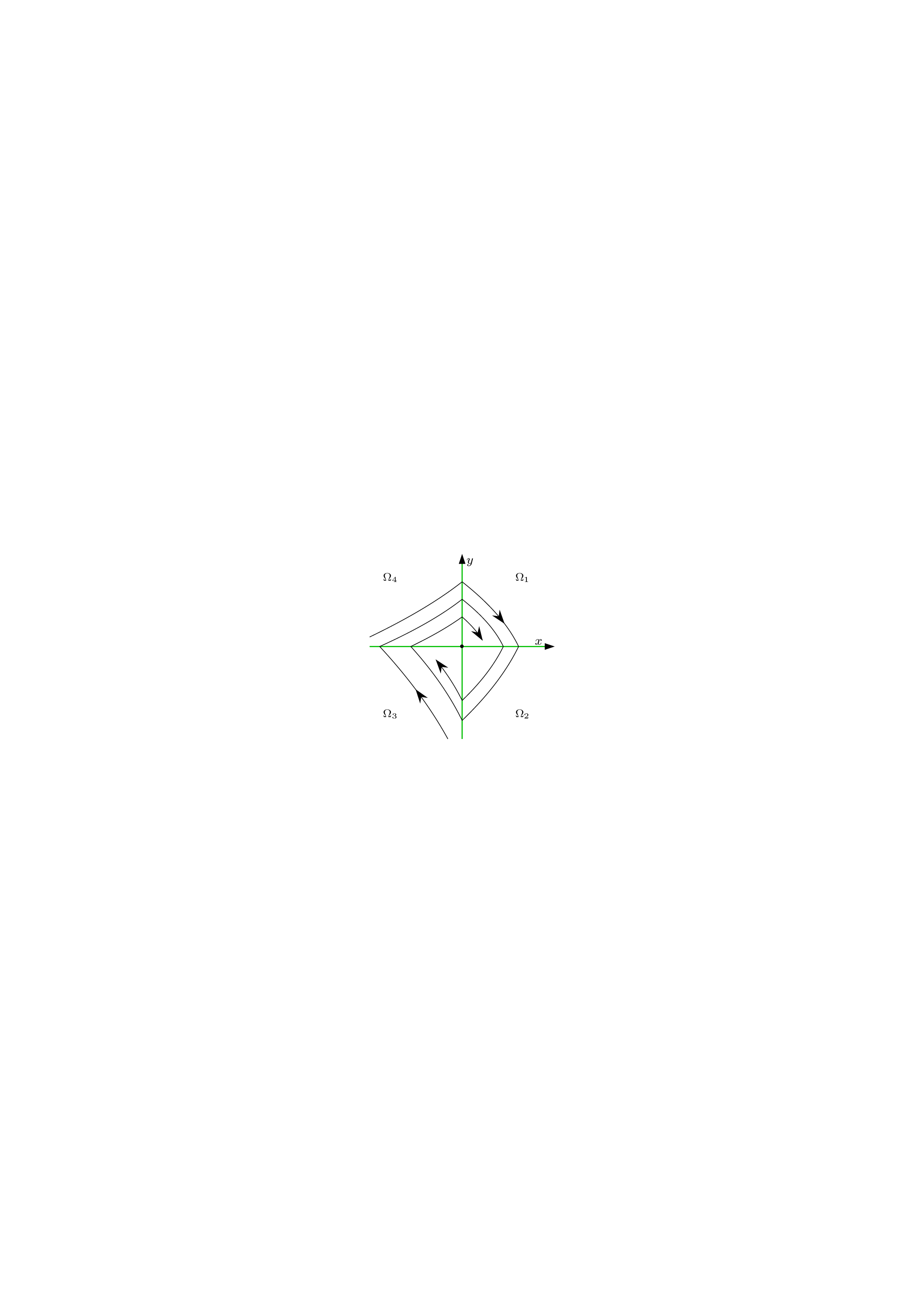}
\caption{
Parts of typical orbits of \eqref{eq:fourPieceODE} subject to \eqref{eq:fourPieceSpiralCond}.
\label{fig:schemPoinIntersecting}
} 
\end{center}
\end{figure}

We assume the switching manifolds coincide with the coordinate axes, and define
\begin{equation}
\begin{split}
\Omega_1 &= \{ (x,y) \,|\, x>0,\, y>0 \}, \\
\Omega_2 &= \{ (x,y) \,|\, x>0,\, y<0 \}, \\
\Omega_3 &= \{ (x,y) \,|\, x<0,\, y<0 \}, \\
\Omega_4 &= \{ (x,y) \,|\, x<0,\, y>0 \}.
\end{split}
\nonumber
\end{equation}
We write the system as
\begin{equation}
\begin{bmatrix} \dot{x} \\ \dot{y} \end{bmatrix} =
\begin{cases}
F_1(x,y;\mu), & (x,y) \in \Omega_1 \,, \\[-1.3mm]
\hspace{8mm} \vdots & \hspace{9.5mm} \vdots \\[-1.3mm]
F_4(x,y;\mu), & (x,y) \in \Omega_4 \,,
\end{cases}
\label{eq:fourPieceODE}
\end{equation}
where
\begin{equation}
F_j(x,y;\mu) = \begin{bmatrix} f_j(x,y;\mu) \\ g_j(x,y;\mu) \end{bmatrix},
\nonumber
\end{equation}
is smooth for each $j = 1,\ldots,4$.
We suppose
\begin{equation}
\begin{aligned}
f_1(0,0;0) &> 0, &
g_1(0,0;0) &< 0, \\
f_2(0,0;0) &< 0, &
g_2(0,0;0) &< 0, \\
f_3(0,0;0) &< 0, &
g_3(0,0;0) &> 0, \\
f_4(0,0;0) &> 0, &
g_4(0,0;0) &> 0,
\end{aligned}
\label{eq:fourPieceSpiralCond}
\end{equation}
so that nearby orbits spiral clockwise around the origin.

Suppose the forward orbit of $(x,y) = (0,r)$, with $r > 0$, next intersects
the positive $y$-axis at some point $(0,P(r;\mu))$.
It is a simple exercise to show that $P(r;\mu) = \Lambda(\mu) r + \cO \left( r^2 \right)$, where
\begin{equation}
\Lambda(\mu) = \frac{m_2(\mu) m_4(\mu)}{m_1(\mu) m_3(\mu)},
\label{eq:fourPieceLambda}
\end{equation}
and
\begin{equation}
m_j(\mu) = \frac{g_j(0,0;\mu)}{f_j(0,0;\mu)},
\label{eq:mj}
\end{equation}
represents the slope $\frac{d y}{d x}$ of $F_j$ evaluated at the origin.
Thus the stability of the origin is governed by the sign of $\ln(\Lambda)$.
In order for HLB 19 to occur at $\mu = 0$ we assume $\Lambda(0) = 1$.
The transversality condition is $\beta \ne 0$, where
\begin{equation}
\beta = \frac{d \Lambda}{d \mu}(0).
\label{eq:fourPieceTransCond}
\end{equation}
Also let
\begin{equation}
\xi_j(\mu) = \frac{1}{f_j} \left( \frac{g_j}{f_j} \frac{\partial f_j}{\partial y}
- \left( \frac{\partial f_j}{\partial x} + \frac{\partial g_j}{\partial y} \right)
+ \frac{f_j}{g_j} \frac{\partial g_j}{\partial x} \middle) \right|_{x = y = 0}, \\
\label{eq:fourPieceNondegCoeffs}
\end{equation}
for each $j$, and
\begin{equation}
\alpha = \left( \frac{\xi_1 - \xi_2}{m_1} + \frac{\xi_3-\xi_4}{m_4} \middle) \right|_{\mu = 0} \,.
\label{eq:fourPieceNondegCond}
\end{equation}

\begin{theorem}[HLB 19]
Consider \eqref{eq:fourPieceODE} where $F_j$ is $C^2$ for each $j = 1,\ldots,4$.
Suppose \eqref{eq:fourPieceSpiralCond} is satisfied, $\Lambda(0) = 1$, and $\beta > 0$.
In a neighbourhood of $(x,y;\mu) = (0,0;0)$,
\begin{enumerate}
\item
the origin is the unique stationary solution and is stable for $\mu < 0$ and unstable for $\mu > 0$,
\item
if $\alpha < 0$ [$\alpha > 0$] there exists a unique stable [unstable] limit cycle
for $\mu > 0$ [$\mu < 0$], and no limit cycle for $\mu < 0$ [$\mu > 0$].
\end{enumerate}
The minimum and maximum $x$ and $y$-values of the limit cycle are asymptotically proportional to $|\mu|$,
and its period is
\begin{equation}
T = \frac{2 \beta}{|\alpha|}
\left( \frac{-1}{g_1} + \frac{1}{f_2 m_1} - \frac{1}{f_3 m_4} + \frac{1}{g_4} \middle) \right|_{x = y = \mu = 0} |\mu|
+ \co(\mu).
\label{eq:periodFourPiece}
\end{equation}
\label{th:fourPiece}
\end{theorem}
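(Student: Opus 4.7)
The plan is to construct a one-dimensional Poincar\'e map on the positive $y$-axis and analyse its fixed points via the IFT. For small $r > 0$ and $\mu$ near $0$, let $P(r;\mu)$ denote the $y$-coordinate of the next return of the forward orbit of $(0,r)$ to the positive $y$-axis after one clockwise circuit through $\Omega_1, \Omega_2, \Omega_3, \Omega_4$. Under the sign conditions \eqref{eq:fourPieceSpiralCond} each coordinate half-axis is a regular transversal to the relevant half-system, so Lemma \ref{le:mapSmoothness} yields $P = P_4 \circ P_3 \circ P_2 \circ P_1$ as a composition of $C^2$ half-maps, where $P_j$ advances orbits of $F_j$ from one coordinate half-axis to the next.

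For each $P_j$ I would derive a quadratic Taylor expansion at the origin. Taking $P_1$ as representative: parameterise by the starting value $s > 0$ on the positive $y$-axis, write the flow as $\phi_t(0,s) = (\varphi_t,\psi_t)$, expand both components as Taylor series in $t$, and apply the IFT to $\psi_{t_1}(0,s) = 0$ to obtain a smooth flight time $t_1(s;\mu) = -s/g_1(0,0;\mu) + \cO(s^2)$. Substituting back gives $P_1(s;\mu) = -s/m_1(\mu) + k_1(\mu)\, s^2 + \cO(s^3)$, with an explicit coefficient $k_1$ built from the first derivatives of $f_1$ and $g_1$ at the origin. Repeating for $P_2, P_3, P_4$ and composing gives
\[
P(r;\mu) = \Lambda(\mu)\, r + c(\mu)\, r^2 + \cO\!\left(r^3\right),
\]
where direct algebra identifies $c(0)$ with a positive multiple of $\alpha$; the cumulative slope factors $1/m_j$ that appear when each quadratic correction is transported through the subsequent linear leading parts are exactly what package the local coefficients $\xi_1,\dots,\xi_4$ into the combination \eqref{eq:fourPieceNondegCond}.

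With $P$ in hand, I would remove the trivial fixed point $r = 0$ by studying the displacement
\[
D(r;\mu) = \frac{P(r;\mu) - r}{r},
\]
which extends smoothly to $r = 0$ with $D(0;\mu) = \Lambda(\mu) - 1$, so that $D(0;0) = 0$, $\partial_\mu D(0;0) = \beta > 0$, and $\partial_r D(0;0) = c(0)$. If $\alpha \ne 0$ the IFT produces a unique $C^1$ branch $r = r^*(\mu)$ of zeros of $D$ with $r^*(\mu) = -(\beta/c(0))\,\mu + \co(\mu)$, positive on exactly one side of $\mu = 0$; this is the limit cycle. Stability is read off from $\frac{\partial P}{\partial r}(r^*(\mu);\mu) - 1$, which inherits the sign of $-\beta \mu$ to leading order, and claim (i) follows because $P$ is a local diffeomorphism with no other fixed points in a neighbourhood of the origin. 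The period estimate \eqref{eq:periodFourPiece} then follows by summing the four leading-order flight times $t_j$ evaluated at amplitude $r^*(\mu)$.

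The principal obstacle is the book-keeping in step two: verifying that the quadratic coefficient $c(0)$ emerging from the fourfold composition really does reduce to a positive multiple of $\alpha$. Quadratic corrections from each $P_j$ accumulate through multiplication by the slope of each subsequent linear leading part, and careful sign tracking across the four transitions is needed to regroup the resulting expression as $(\xi_1 - \xi_2)/m_1 + (\xi_3 - \xi_4)/m_4$. Once this identification is carried out, the remainder is a routine IFT application on the scalar equation $D(r;\mu) = 0$.
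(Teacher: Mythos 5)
Your proposal follows essentially the same route as the paper's proof: decompose the return map as $P = P_4 \circ P_3 \circ P_2 \circ P_1$ with each $P_j$ a quarter-turn transition map expanded to quadratic order (flight times obtained by the IFT), compose to get $P(r;\mu) = \Lambda(\mu) r + c(\mu) r^2 + \cO(r^3)$ with $c(0) = \alpha/2$ after transporting the quadratic corrections through the slopes $1/m_j$, and then apply the IFT to the displacement $\frac{1}{r}(P(r;\mu)-r)$ to isolate the nontrivial fixed point $r^*(\mu) = -\frac{2\beta}{\alpha}\mu + \co(\mu)$, with stability and period read off from $\frac{\partial P}{\partial r}$ and the composed flight times. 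The only detail worth making explicit is that part (i) uses $\frac{\partial P}{\partial r}(0;\mu) = \Lambda(\mu) = 1 + \beta\mu + \co(\mu)$ for the stability of the origin, but this is already implicit in your setup.
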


HLB 19 occurs in a version of the Wilson-Cowan model
for the aggregated dynamics of a large network of excitatory and inhibitory neurons:
\begin{equation}
\begin{split}
\dot{u} &= -u + H(u - a v - b), \\
\tau \dot{v} &= -v + H(u - c v - d).
\end{split}
\label{eq:HaEr15}
\end{equation}
Here $u(t)$ represents the average activity (e.g.~voltage) of the neurons,
$v(t)$ is a recovery variable, and $a,b,c,d,\tau > 0$ are parameters.
As in \cite{HaEr15}, we take the firing rate function $H$ to be the Heaviside function.
This system has two switching manifolds, $u = a v + b$ and $u = c v + d$,
and for typical parameter values orbits spiral around their unique point of intersection.
With $\tau$ as the primary bifurcation parameter,
Fig.~\ref{fig:allHaEr15} shows a bifurcation diagram and representative phase portraits using
\begin{align}
a &= 2, &
b &= 0.05, &
c &= 0.25, &
d &= 0.3,
\label{eq:paramHaEr15}
\end{align}
as given in \cite{HaEr15}.

\begin{figure*}
\begin{center}
\includegraphics[width=16.6cm]{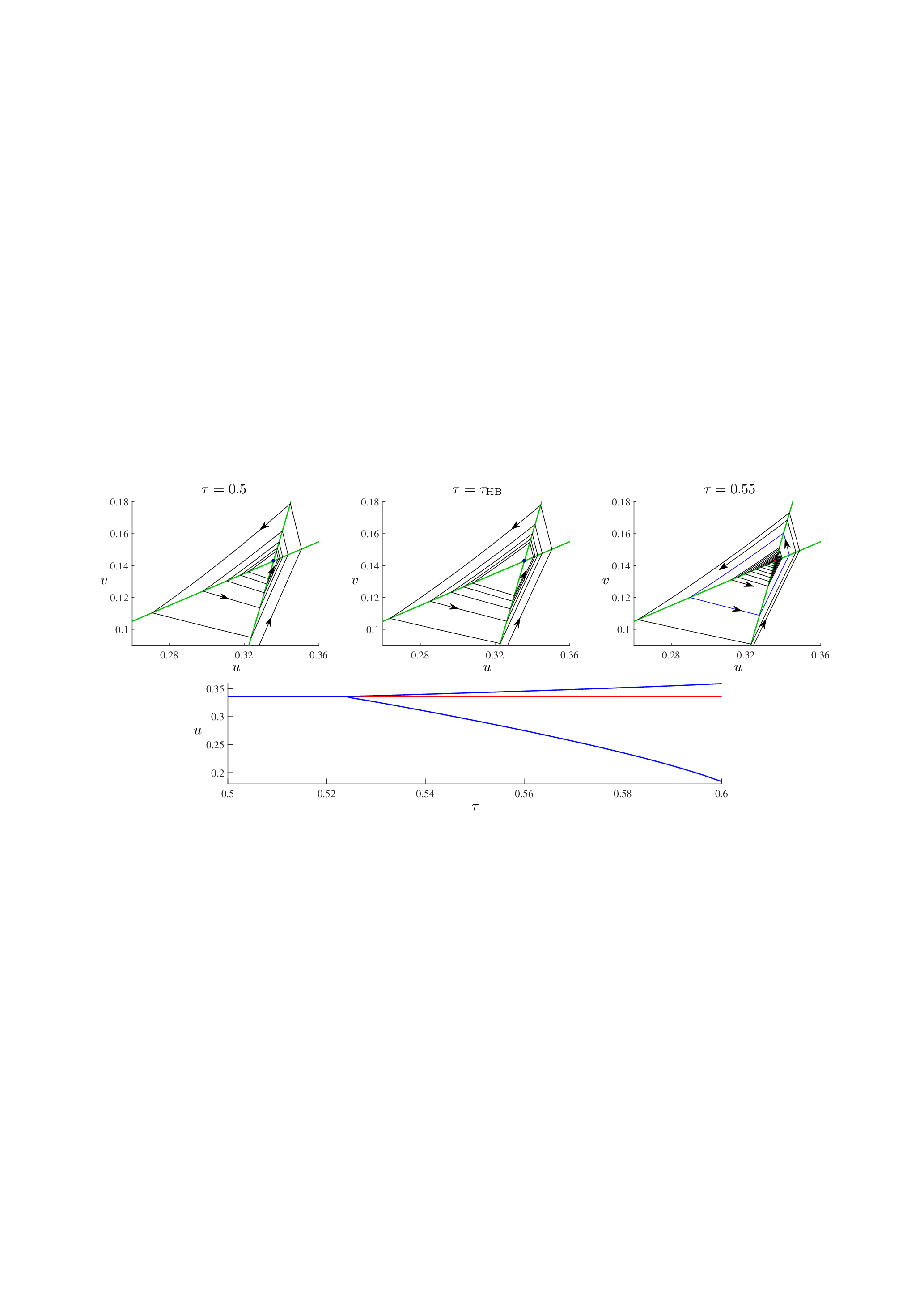}
\caption{
An illustration of HLB 19 for the Wilson-Cowan model \eqref{eq:HaEr15} with \eqref{eq:paramHaEr15}.
By increasing the value of $\tau$,
the stable stationary solution located at the intersection of the two switching manifolds
loses stability when $\tau = \tau_{\rm HB} \approx 0.5240$ and emits a stable limit cycle.
\label{fig:allHaEr15}
} 
\end{center}
\end{figure*}

In \cite{HaEr15} the authors use a Poincar\'e map to determine when HLB 19 occurs.
Here use Theorem \ref{th:fourPiece} to obtain the same result
and show that the limit cycle is stable by numerically evaluating \eqref{eq:fourPieceNondegCond}.
To do this we employ the change of variables 
\begin{equation}
\begin{split}
x &= u - c v - d, \\
y &= u - a v - b.
\end{split}
\label{eq:coordChangeHaEr15}
\end{equation}
and, assuming $a > c$, scale time by
\begin{equation}
t \mapsto \frac{t}{(a-c) \tau},
\nonumber
\end{equation}
to achieve additional simplification.
The system is then given by \eqref{eq:fourPieceODE} with
\begin{equation}
\begin{split}
F_1(x,y) &= F_3(x,y) + (a-c) \begin{bmatrix} \tau - c \\ \tau - a \end{bmatrix}, \\
F_2(x,y) &= F_3(x,y) - (a-c) \begin{bmatrix} c \\ a \end{bmatrix}, \\
F_3(x,y) &= \begin{bmatrix}
-(a \tau - c) & -c (1 - \tau) \\
a (1 - \tau) & -(a - c \tau)
\end{bmatrix} \begin{bmatrix} x \\ y \end{bmatrix} \\
&\quad- \begin{bmatrix} c (b - d) + (a d - b c) \tau \\ a (b - d) + (a d - b c) \tau \end{bmatrix}, \\
F_4(x,y) &= F_3(x,y) + (a-c) \begin{bmatrix} \tau \\ \tau \end{bmatrix}.
\end{split}
\label{eq:transformedHaEr15}
\end{equation}
By numerically evaluating \eqref{eq:fourPieceLambda}
we find that $\Lambda = 1$ when $\tau = \tau_{\rm HB} \approx 0.5240$ (to four decimal places).
Also $\alpha \approx -8.47$ and $\beta \approx 9.53$,
using $\mu = \tau - \tau_{\rm HB}$.
Since $\alpha < 0$ the limit cycle is stable (as shown in Fig.~\ref{fig:allHaEr15}).

\subsection{A square-root singularity --- HLB 20}
\label{sub:sqrt}

The leading-order dynamics near some {\em grazing bifurcations}
(where a limit cycle attains a tangential intersection with a switching manifold)
are described by piecewise-smooth Poincar\'e maps involving a square-root singularity \cite{ChOt94,No91}.
In this section we study the analogous class of ODEs.
Specifically we consider
\begin{equation}
\begin{bmatrix} \dot{x} \\ \dot{y} \end{bmatrix} =
F(x,y;S(x);\mu),
\label{eq:sqrtODE}
\end{equation}
where $F$ is a smooth function of its four components, and
\begin{equation}
S(x) = \begin{cases}
0, & x \le 0, \\
\sqrt{x}, & x \ge 0.
\end{cases}
\label{eq:sqrtS}
\end{equation}
This is motivated by the work \cite{NiCa16}
where an ODE system with a square-root singularity is introduced
to model the averaged behaviour of a large number of coupled neurons.

We suppose \eqref{eq:sqrtODE} exhibits a BEB at the origin when $\mu = 0$.
Thus
\begin{equation}
f(0,0;0;0) = g(0,0;0;0) = 0,
\label{eq:sqrtEqCond}
\end{equation}
where
\begin{equation}
F(x,y;z;\mu) = \begin{bmatrix} f(x,y;z;\mu) \\ g(x,y;z;\mu) \end{bmatrix}.
\label{eq:sqrtF}
\end{equation}
Since \eqref{eq:sqrtODE} is continuous on the switching manifold $x = 0$,
both half-systems generically have a unique equilibrium (located at the origin when $\mu = 0$)
that is admissible for one sign of $\mu$.
We let $(x_L^*(\mu),y_L^*(\mu))$ and $(x_R^*(\mu),y_R^*(\mu))$ denote these equilibria.
As in the usual continuous scenario (HLBs 1--2) a local limit cycle can only encircle an admissible focus.
But due to the square-root singularity, in generic situations $(x_R^*(\mu),y_R^*(\mu))$ is always a node.
Since this was stated incorrectly in \cite{Si18c}, let us pause to verify this.
We write
\begin{equation}
\begin{split}
f(x,y;z;\mu) &= a_1 x + a_2 y + a_3 \mu + a_4 z \\
&\quad+ \cO \left( \left( |x| + |y| + |z| + |\mu| \right)^2 \right), \\
g(x,y;z;\mu) &= b_1 x + b_2 y + b_3 \mu + b_4 z \\
&\quad+ \cO \left( \left( |x| + |y| + |z| + |\mu| \right)^2 \right),
\end{split}
\label{eq:sqrtfg}
\end{equation}
where $a_1,\ldots,b_4 \in \mathbb{R}$.
The Jacobian matrix of the right half-system is
\begin{align}
&\begin{bmatrix}
\frac{\partial}{\partial x} \,f(x,y;\sqrt{x};\mu) &
\frac{\partial}{\partial y} \,f(x,y;\sqrt{x};\mu) \\
\frac{\partial}{\partial x} \,g(x,y;\sqrt{x};\mu) &
\frac{\partial}{\partial y} \,g(x,y;\sqrt{x};\mu)
\end{bmatrix} \nonumber \\
&= \begin{bmatrix}
\frac{a_4}{2 \sqrt{x}} + \cdots &
a_2 + \cdots \\
\frac{b_4}{2 \sqrt{x}} + \cdots &
b_2 + \cdots
\end{bmatrix},
\label{eq:sqrtJacobianR}
\end{align}
where we have only retained the leading order terms.
Let $\tau$ and $\delta$ denote the trace and determinant of \eqref{eq:sqrtJacobianR}, respectively.
Then $\delta - \frac{\tau^2}{4} = -\frac{a_4^2}{16 x} + \cdots$ is negative,
assuming $a_4 \ne 0$ and $x > 0$ is sufficiently small,
in which case the eigenvalues of \eqref{eq:sqrtJacobianR} are real-valued.

It follows that to have a local limit cycle $(x_L^*(\mu),y_L^*(\mu))$ must be a focus.
For the system in \cite{NiCa16}, both equilibria are nodes
and for some parameter values a large amplitude limit cycle
is created in a BEB due to global features of the system.
A complete catagorisation of BEBs of \eqref{eq:sqrtODE},
analogous to \cite{FrPo98,SiMe12} in the usual continuous setting, remains for future work.

For clarity we assume $(x_L^*(0),y_L^*(0))$ is an unstable focus, i.e.
\begin{equation}
{\rm eig}(\rD F(0,0;0;0)) = \lambda \pm \ri \omega, \quad
{\rm with~} \lambda > 0, \omega > 0,
\label{eq:sqrtEigCond}
\end{equation}
and that $(x_R^*(0),y_R^*(0))$ is stable, thus $a_4 < 0$.
Let
\begin{align}
\beta &= a_3 b_2 - a_2 b_3 \,, \label{eq:sqrtbeta} \\
\gamma &= a_4 b_2 - a_2 b_4 \,. \label{eq:sqrtgamma}
\end{align}
As with HLBs 1--2, we have $x_L^*(\mu) = \frac{-\beta \mu}{\lambda^2 + \omega^2} + \cO \left( \mu^2 \right)$,
so $\beta \ne 0$ is the transversality condition for the BEB.
Also $\sqrt{x_R^*(\mu)} = -\frac{\beta \mu}{\gamma} + \cO \left( \mu^2 \right)$,
assuming $\frac{\beta \mu}{\gamma} \le 0$.
In Theorem \ref{th:sqrt} we assume $\beta > 0$ (as usual)
and $\gamma > 0$ so that $(x_L^*(\mu),y_L^*(\mu))$ and $(x_R^*(\mu),y_R^*(\mu))$
are admissible for different signs of $\mu$.

\begin{theorem}[HLB 20]
Consider \eqref{eq:sqrtODE} where $F$ is $C^2$.
Suppose \eqref{eq:sqrtEqCond} and \eqref{eq:sqrtEigCond} are satisfied,
$a_4 < 0$, $\beta > 0$, and $\gamma > 0$.
In a neighbourhood of $(x,y;\mu) = (0,0;0)$,
\begin{enumerate}
\item
there exists a unique stationary solution:
a stable node in $\Omega_R$ for $\mu < 0$, and an unstable focus in $\Omega_L$ for $\mu > 0$,
\item
there exists a unique stable limit cycle for $\mu > 0$, and no limit cycle for $\mu < 0$.
\end{enumerate}
The maximum $x$-value of the limit cycle is asymptotically proportional to $\mu^2$,
the minimum $x$-value and minimum and maximum $y$-values are asymptotically proportional to $\mu$,
and its period is $T = T_L + T_R + \cO(\mu)$ where
\begin{equation}
\begin{split}
T_L &= \frac{1}{\omega} \,\hat{s} \left( \frac{\lambda}{\omega} \right), \\
T_R &= \kappa \ln \left( 1 + \frac{1}{\kappa \omega} \,\re^{\frac{\lambda}{\omega}
\,\hat{s} \left( \frac{\lambda}{\omega} \right)} \sin \left( \hat{s} \left( \frac{\lambda}{\omega} \right) \right) \right),
\end{split}
\label{eq:periodsqrt}
\end{equation}
where $\kappa = \frac{|a_4|}{\gamma}$
and $\hat{s}$ is defined in \S\ref{sub:affine}.
\label{th:sqrt}
\end{theorem}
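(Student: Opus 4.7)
The plan is to construct a Poincar\'e map $P = P_L \circ P_R$ on the switching manifold $x=0$ and to identify a unique fixed point via the IFT, following the template used for HLBs 1--4. Parts of the statement about equilibria are immediate: the IFT applied to $F(x,y;0;\mu)=0$ and $F(x,y;\sqrt{x};\mu)=0$ locates the two candidate equilibria, the transversality conditions $\beta>0$ and $\gamma>0$ imply admissibility for opposite signs of $\mu$, and the leading-order Jacobian \eqref{eq:sqrtJacobianR} confirms that whenever the right-hand equilibrium is admissible it is a stable node.

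The real work is $P_R$, for which the claimed scalings $\max x \sim \mu^2$, $\max|y| \sim \mu$ dictate a two-scale blow-up. Substituting $x = \mu^2 X$ and $y = \mu Y$ into the right half-system and using \eqref{eq:sqrtfg} gives
\begin{equation*}
\mu \dot X = a_2 Y + a_3 + a_4 \sqrt{X} + \cO(\mu), \qquad
\dot Y = b_2 Y + b_3 + b_4 \sqrt{X} + \cO(\mu),
\end{equation*}
a slow--fast system with attracting slow manifold $\cM = \{ \sqrt{X} = (a_2 Y + a_3)/|a_4| \}$, normally hyperbolic wherever $a_2 Y + a_3 > 0$. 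An orbit entering $\Omega_R$ at $(0, y_{\mathrm{in}})$ undergoes a fast collapse onto $\cM$ (with $Y$ essentially frozen) and then drifts along $\cM$ according to the reduced linear equation
\begin{equation*}
\dot Y = -\frac{\gamma}{|a_4|} \bigl( Y - Y^* \bigr) + \cO(\mu), \qquad
Y^* = \frac{a_3 b_4 - a_4 b_3}{\gamma},
\end{equation*}
until it exits at $Y = -a_3/a_2$, where $\cM$ meets $\{ X = 0 \}$. Integrating gives an evolution time of the form $\kappa \ln(\cdots)$ matching $T_R$ of \eqref{eq:periodsqrt}, and $P_R(y_{\mathrm{in}}; \mu) = -\mu a_3 / a_2 + \cO(\mu^2)$ essentially independent of $y_{\mathrm{in}}$; i.e.\ $P_R$ is strongly contracting.

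The map $P_L$ comes from Lemma \ref{le:affine}: the rescaling $(x,y) = (\mu \tilde x, \mu \tilde y)$ renders the left half-system affine to leading order with an unstable focus in $x < 0$, so after the reflection $x \mapsto -x$ it falls in the Type III case, yielding $P_L$ explicitly and an evolution time $\hat s(\lambda/\omega)/\omega = T_L$. Composing, the rescaled displacement $\tilde D(R;\mu) = [ P(\mu R;\mu) - \mu R ]/\mu$ extends smoothly to $\mu = 0$ and has a unique simple root $R_0 \ne 0$ determined essentially by $P_L$; the IFT then produces the limit cycle for all small $\mu > 0$. Stability follows from $|dP/dr| < 1$ at the fixed point (a consequence of the $\cO(\mu)$ contractivity of $P_R$), and the absence of a fixed point for $\mu < 0$ follows because the unstable focus is then virtual, so $P_L \circ P_R$ maps into the basin of the admissible node without returning.

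The main obstacle is making the fast--slow analysis of $P_R$ rigorous near the exit, where $\cM$ loses normal hyperbolicity as $a_2 Y + a_3 \to 0^+$ and standard Fenichel theory does not apply directly. I plan to resolve this by a further local rescaling in a neighbourhood of the exit point, analogous to the blow-up treatment of turning points in geometric singular perturbation theory, and to match the resulting inner solution to the slow-drift phase and to the subsequent re-entry into $\Omega_L$. A parallel but easier matching argument handles the initial fast layer, showing that it contributes only $\cO(\mu)$ to the period and shifts the exit $y$-value at only $\cO(\mu^2)$, so that the leading-order amplitude scalings and the period formula \eqref{eq:periodsqrt} emerge as claimed.
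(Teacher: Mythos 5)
Your proposal follows essentially the same route as the paper's proof: the two-scale blow-up $x\sim\mu^2$, $y\sim\mu$ turning the right half-system into a slow--fast problem whose reduced flow on the square-root critical manifold yields the logarithmic $T_R$ and the super-contracting $P_R$, Lemma \ref{le:affine} (Type III) for $P_L$, the IFT on the rescaled displacement for the fixed point, and the slow-manifold/node argument for the absence of a limit cycle when $\mu<0$. If anything you are more explicit than the paper about the degeneracy of the critical manifold at the exit point, which the paper dispatches with a one-line appeal to a multiple-time-scales expansion to justify that $P_R$ is $C^1$; your proposed local rescaling there is a reasonable way to make that step rigorous (note the transverse eigenvalue actually blows up rather than vanishes at the exit, so the matching is of the benign, strongly attracting kind).
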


HLB 20 resembles HLB 2 (as it involves a focus and a node).
However the maximum $x$-value of the limit cycle is much smaller than its amplitude,
so it also resembles HLB 3 where the limit cycle involves a segment of sliding motion.
Indeed \eqref{eq:sqrtODE} may be viewed as a {\em regularisation} of a Filippov system \cite{BuDa06,Je18b,SoTe96}.
HLB 20 has attributes of both HLB 2 and HLB 3
and so is perhaps best viewed as an intermediary of these two bifurcations.

As an example consider
\begin{equation}
\begin{bmatrix} \dot{x} \\ \dot{y} \end{bmatrix} =
\begin{bmatrix} \lambda x + y + \eta S(x) \\ -x + \lambda y - \mu + \nu S(x) \end{bmatrix}.
\label{eq:sqrtExample}
\end{equation}
Here $\beta = 1$,
$\gamma = \lambda \eta - \nu$,
and $a_4 = \eta$.
Fig.~\ref{fig:allSqrt} shows a bifurcation diagram and phase portraits using
\begin{align}
\lambda &= 0.5, &
\eta &= -1, &
\nu &= -1.
\label{eq:paramSqrt}
\end{align}

\begin{figure*}
\begin{center}
\includegraphics[width=16.6cm]{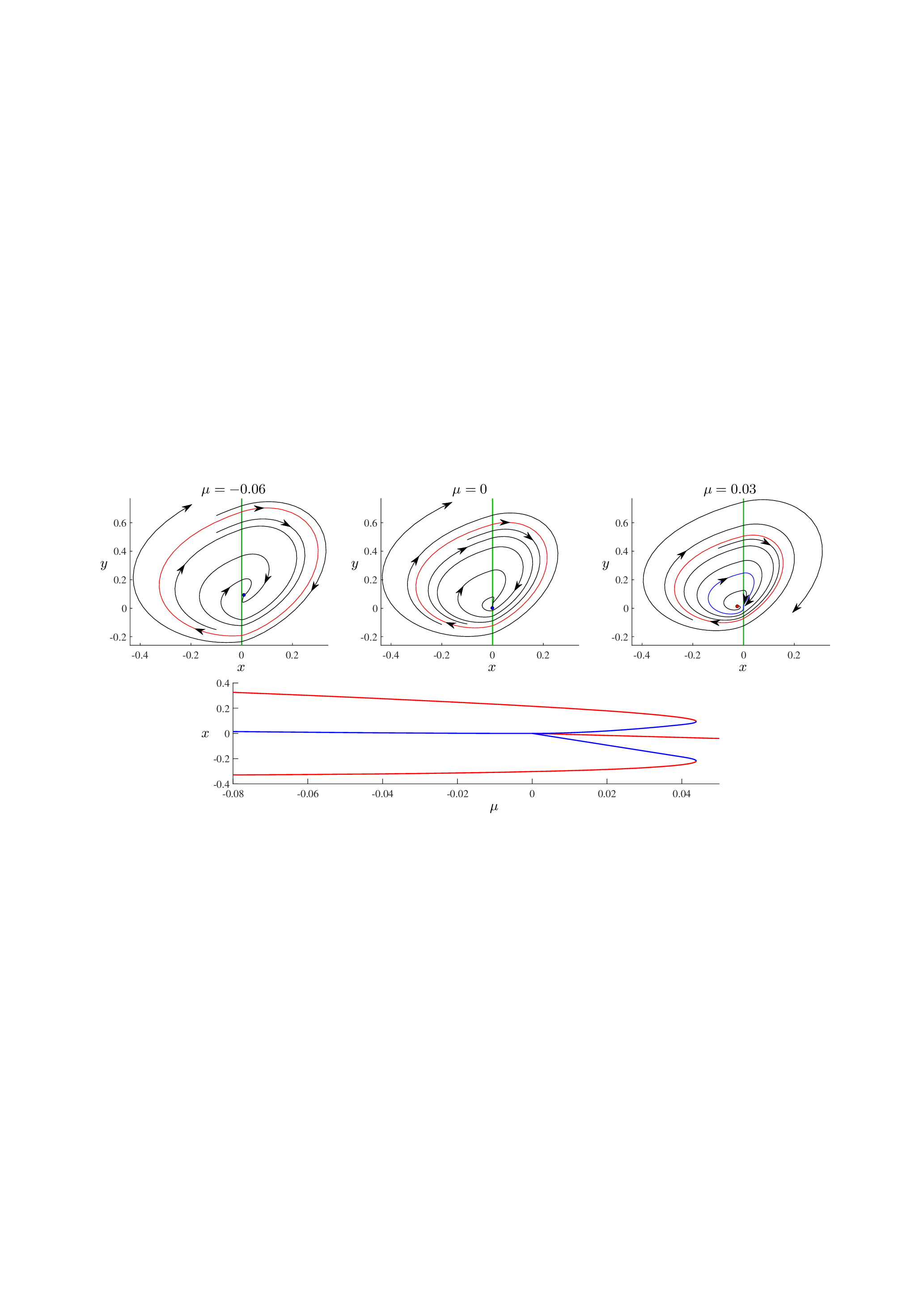}
\caption{
An illustration of HLB 20 for \eqref{eq:sqrtExample} with \eqref{eq:paramSqrt}.
By increasing the value of $\mu$ a stable node collides with the switching manifold when $\mu = 0$
at which it turns into an unstable focus and emits a stable limit cycle.
The limit cycle subsequently undergoes a saddle-node bifurcation and becomes unstable.
\label{fig:allSqrt}
} 
\end{center}
\end{figure*}

\section{Discussion}
\setcounter{equation}{0}
\setcounter{figure}{0}
\setcounter{table}{0}
\label{sec:conc}

In this paper we have detailed $20$ HLBs by which
a steady-state solution can transition to sustained small amplitude oscillations
in systems for which the equations of motion are piecewise-smooth.
In most cases an isolated steady-state solution changes stability at $\mu = 0$
and we identified a scalar quantity $\beta$
so that the steady-state solution is stable if $\beta \mu < 0$ and unstable if $\beta \mu > 0$.
We identified a second scalar quantity $\alpha$
so that when $\mu = 0$ the steady-state solution is stable if $\alpha < 0$ and unstable if $\alpha > 0$.
If $\alpha < 0$ and $\beta \mu > 0$,
then the competing effects of attraction and repulsion produce a stable limit cycle.
With these inequalities reversed the limit cycle is unstable.

For each HLB we have provided formulas for $\beta$ and $\alpha$
in terms of the coefficients of a (sometimes piecewise) power series of the ODEs
centred at $(x,y) = (0,0)$.
For a Hopf bifurcation, $\beta$ is determined by linear terms in the series
while $\alpha$ is determined by higher-order terms.
The same is true for some HLBs,
but for HLBs 1--6, 11--13, 15, 16 and 19, linear terms fully determine $\beta$ and $\alpha$.
It is not surprising then that these HLBs produce a limit cycle whose amplitude
is proportional to $|\mu|$, to leading order, see Table \ref{tb:bifs}.
In order to have square-root growth (like a Hopf bifurcation)
the bifurcation must either involve a two-fold or have degenerate higher-order terms (as in \cite{ZoKu06}, for example).

Nonlinear terms influence the value of $\alpha$ for each of the other HLBs,
although it is only for HLBs 8 and 19 that nonlinear terms are {\em necessary}
to construct an instance of the bifurcation that is generic.
For example, quartic terms affect the value of $\alpha$ for HLB 10,
yet the piecewise-linear system \eqref{eq:fixedTwoFoldODE} exhibits HLB 10 in a generic fashion.

Naturally we would like to extend the results to higher dimensions
(i.e.~systems with more than two variables).
This is possible for the Hopf bifurcation theorem
due to the presence of a two-dimensional centre manifold on which all the relevant dynamics occurs \cite{Ku04}.
The majority of the HLBs presented here will not generate a centre manifold.
For BEBs and other discontinuity induced bifurcations of piecewise-smooth systems
the possible complexity of the local dynamics increases rapidly with dimension \cite{GlJe15}.
This suggests that statements regarding limit cycles in higher dimensions
will need to be weaker than those presented here.
Attempts to make strong statements may lead to a proliferation of cases \cite{Gl17b} (if $20$ HLBs are not enough already!).
But certainly the basic geometric mechanisms described here for the creation of a limit cycle
(e.g.~BEBs, loss of stability, inclusion of delay)
should be identifiable for higher dimensional systems, if only qualitatively.

\appendix
\section{Derivations of the three return maps}
\setcounter{equation}{0}
\setcounter{figure}{0}
\setcounter{table}{0}
\label{app:lemmas}

\begin{proof}[Proof of Lemma \ref{le:focus}]
Since $f$ and $g$ are $C^2$ they may be Taylor expanded to second order; we write
\begin{equation}
\begin{split}
f(x,y) &= a_1 x + a_2 y + a_3 x^2 + a_4 x y + a_5 y^2 + \co \left( \left( |x| + |y| \right)^2 \right), \\
g(x,y) &= b_1 x + b_2 y + b_3 x^2 + b_4 x y + b_5 y^2 + \co \left( \left( |x| + |y| \right)^2 \right),
\end{split}
\nonumber
\end{equation}
which is consistent with \eqref{eq:focusDF}.
In order to expand asymptotically in $r$, we `blow up' a neighbourhood of the origin via
\begin{align}
\tilde{x} &= \frac{x}{r}, &
\tilde{y} &= \frac{y}{r},
\nonumber
\end{align}
where $r > 0$.
In these coordinates the system is given by
\begin{equation}
\begin{split}
\dot{\tilde{x}} &= a_1 \tilde{x} + a_2 \tilde{y} +
r \left( a_3 \tilde{x}^2 + a_4 \tilde{x} \tilde{y} + a_5 \tilde{y}^2 \right) + \co(r), \\
\dot{\tilde{y}} &= b_1 \tilde{x} + b_2 \tilde{y} +
r \left( b_3 \tilde{x}^2 + b_4 \tilde{x} \tilde{y} + b_5 \tilde{y}^2 \right) + \co(r).
\end{split}
\label{eq:focusscaledODE}
\end{equation}
We denote the $\tilde{x}$ and $\tilde{y}$ components of the flow of \eqref{eq:focusscaledODE}
by $\varphi_t(\tilde{x},\tilde{y})$ and $\psi_t(\tilde{x},\tilde{y})$, respectively.
The trajectory associated with $P_{\rm focus}$ is the forward orbit of $(\tilde{x},\tilde{y}) = (0,1)$.
Thus $P_{\rm focus}$ and $T_{\rm focus}$ satisfy
\begin{align}
\varphi_{T_{\rm focus}(r)}(0,1) &= 0,
\label{eq:focusTDefn} \\
\psi_{T_{\rm focus}(r)}(0,1) &= \frac{1}{r} \,P_{\rm focus}(r).
\label{eq:focusPDefn}
\end{align}
Notice \eqref{eq:focusscaledODE} is $C^1$ in $r$,
thus $T_{\rm focus}(r)$ and $\frac{1}{r} P_{\rm focus}(r)$ are $C^1$ by Lemma \ref{le:mapSmoothness},
which justifies the error terms in \eqref{eq:focusP}--\eqref{eq:focusT}.

The form \eqref{eq:focusscaledODE} is a regular expansion in powers of $r$,
thus the proof can be completed through direct asymptotic expansions in powers of $r$.
We write
\begin{equation}
\begin{split}
\varphi_t(0,1) &= \Phi^{(0)}_t(r) + r \Phi^{(1)}_t(r) + \co(r), \\
\psi_t(0,1) &= \Psi^{(0)}_t(r) + r \Psi^{(1)}_t(r) + \co(r),
\end{split}
\nonumber
\end{equation}
and by substituting these into \eqref{eq:focusscaledODE} we obtain
\begin{widetext}
\begin{equation}
\begin{split}
\dot{\Phi}^{(0)}_t + r \dot{\Phi}^{(1)}_t &=
a_1 \left( \Phi^{(0)}_t + r \Phi^{(1)}_t \right) +
a_2 \left( \Psi^{(0)}_t + r \Psi^{(1)}_t \right) +
r \left( a_3 \Phi^{(0)^{\scriptstyle 2}}_t + a_4 \Phi^{(0)}_t \Psi^{(0)}_t + a_5 \Psi^{(0)^{\scriptstyle 2}}_t \right) + \co(r), \\
\dot{\Psi}^{(0)}_t + r \dot{\Psi}^{(1)}_t &=
b_1 \left( \Phi^{(0)}_t + r \Phi^{(1)}_t \right) +
b_2 \left( \Psi^{(0)}_t + r \Psi^{(1)}_t \right) +
r \left( b_3 \Phi^{(0)^{\scriptstyle 2}}_t + b_4 \Phi^{(0)}_t \Psi^{(0)}_t + b_5 \Psi^{(0)^{\scriptstyle 2}}_t \right) + \co(r),
\end{split}
\label{eq:focusPhiPsiODE}
\end{equation}
\end{widetext}
with initial condition
\begin{equation}
\begin{split}
\Phi^{(0)}_0 + r \Phi^{(1)}_0 + \co(r) &= 0, \\
\Psi^{(0)}_0 + r \Psi^{(1)}_0 + \co(r) &= 1.
\end{split}
\label{eq:focusPhiPsiIC}
\end{equation}
To leading order
\begin{align}
\begin{bmatrix} \dot{\Phi}^{(0)}_t \\ \dot{\Psi}^{(0)}_t \end{bmatrix} &=
A \begin{bmatrix} \Phi^{(0)}_t \\ \Psi^{(0)}_t \end{bmatrix}, &
\begin{bmatrix} \Phi^{(0)}_0 \\ \Psi^{(0)}_0 \end{bmatrix} &=
\begin{bmatrix} 0 \\ 1 \end{bmatrix},
\nonumber
\end{align}
where $A$ is the Jacobian matrix \eqref{eq:focusDF}.
This has the solution
\begin{equation}
\begin{bmatrix} \Phi^{(0)}_t \\ \Psi^{(0)}_t \end{bmatrix} =
\re^{t A} \begin{bmatrix} 0 \\ 1 \end{bmatrix}.
\label{eq:focusPhiPsi0}
\end{equation}
The $r$-terms in \eqref{eq:focusPhiPsiODE}--\eqref{eq:focusPhiPsiIC} give
\begin{align}
\begin{bmatrix} \dot{\Phi}^{(1)}_t \\ \dot{\Psi}^{(1)}_t \end{bmatrix} &=
A \begin{bmatrix} \Phi^{(1)}_t \\ \Psi^{(1)}_t \end{bmatrix} + h(t), &
\begin{bmatrix} \Phi^{(1)}_0 \\ \Psi^{(1)}_0 \end{bmatrix} &=
\begin{bmatrix} 0 \\ 0 \end{bmatrix},
\nonumber
\end{align}
where
\begin{equation}
h(t) = \begin{bmatrix}
a_3 \Phi^{(0)^{\scriptstyle 2}}_t + a_4 \Phi^{(0)}_t \Psi^{(0)}_t + a_5 \Psi^{(0)^{\scriptstyle 2}}_t \\
b_3 \Phi^{(0)^{\scriptstyle 2}}_t + b_4 \Phi^{(0)}_t \Psi^{(0)}_t + b_5 \Psi^{(0)^{\scriptstyle 2}}_t
\end{bmatrix}.
\label{eq:focush}
\end{equation}
This has the solution
\begin{equation}
\begin{bmatrix} \Phi^{(1)}_t \\ \Psi^{(1)}_t \end{bmatrix} =
\re^{t A} \int_0^t \re^{-s A} \,h(s) \,ds.
\label{eq:focusPhiPsi1}
\end{equation}

We now calculate $T_{\rm focus}$.
To this end we write
\begin{equation}
T_{\rm focus}(r) = T^{(0)} + r T^{(1)} + \co(r),
\nonumber
\end{equation}
By \eqref{eq:focusPhiPsi0} we have
$\Phi^{(0)}_t = \frac{a_2}{\omega} \,\re^{\lambda t} \sin(\omega t)$,
where $\lambda \pm \ri \omega$ are the eigenvalues of $A$ \eqref{eq:focusEigCond}, and so
\begin{equation}
T^{(0)} = \frac{\pi}{\omega}.
\label{eq:focusT0}
\end{equation}
This is sufficient to verify \eqref{eq:focusT},
but to verify \eqref{eq:focusP} we require $T^{(1)}$.
Equation \eqref{eq:focusT0} implies 
$\Phi^{(0)}_{T_{\rm focus}(r)} = -a_2 \re^{\frac{\lambda \pi}{\omega}} T^{(1)} r + \co(r)$,
and so from the $r$-terms in \eqref{eq:focusTDefn} we obtain
\begin{equation}
T^{(1)} = \frac{1}{a_2} \,\re^{\frac{-\lambda \pi}{\omega}} \Phi^{(1)}_{\frac{\pi}{\omega}}.
\label{eq:focusT1}
\end{equation}

Next we calculate $P_{\rm focus}$ which is determined by \eqref{eq:focusPDefn}.
We write
\begin{equation}
\frac{1}{r} \,P_{\rm focus}(r) = P^{(0)} + r P^{(1)} + \co(r),
\label{eq:focusPproof}
\end{equation}
with which \eqref{eq:focusPDefn} is
\begin{equation}
\Psi^{(0)}_{T_{\rm focus}(r)} + r \Psi^{(1)}_{T_{\rm focus}(r)} = P^{(0)} + r P^{(1)} + \co(r).
\label{eq:focusPDefn2}
\end{equation}
By \eqref{eq:focusPhiPsi0} we have
$\Psi^{(0)}_t = \re^{\lambda t}
\left( \cos(\omega t) - \frac{a_1 - b_2}{2 \omega} \,\sin(\omega t) \right)$,
and so by \eqref{eq:focusT0} we obtain
\begin{equation}
P^{(0)} = -\re^{\frac{\lambda \pi}{\omega}}.
\label{eq:focusP0}
\end{equation}
From the $r$-terms in \eqref{eq:focusPDefn2} we obtain
(after simplification using $\lambda = \frac{a_1 + b_2}{2}$)
\begin{equation}
P^{(1)} = -\frac{b_2}{a_2} \,\Phi^{(1)}_{\frac{\pi}{\omega}} + \Psi^{(1)}_{\frac{\pi}{\omega}}.
\label{eq:focusP1}
\end{equation}

To complete the proof it remains for us to
evaluate \eqref{eq:focusPhiPsi1} at $t = \frac{\pi}{\omega}$.
Observe $\re^{\frac{\pi A}{\omega}} = -\re^{\frac{\lambda \pi}{\omega}} I$, thus
\begin{equation}
\begin{bmatrix} \Phi^{(1)}_{\frac{\pi}{\omega}} \\ \Psi^{(1)}_{\frac{\pi}{\omega}} \end{bmatrix} =
-\re^{\frac{\lambda \pi}{\omega}} \int_0^{\frac{\pi}{\omega}} \re^{-s A} \,h(s) \,ds.
\label{eq:focusPhiPsi12}
\end{equation}
By the laborious task of expanding and grouping terms using
\eqref{eq:focusPhiPsi0} and \eqref{eq:focush}, we obtain
\begin{equation}
\re^{-s A} \,h(s) = \re^{\lambda s}
\begin{bmatrix}
\sum\limits_{n=0}^3 \frac{c_n}{\omega^n} \,\cos^{3-n}(\omega s) \sin^n(\omega s) \\
\sum\limits_{n=0}^3 \frac{d_n}{\omega^n} \,\cos^{3-n}(\omega s) \sin^n(\omega s)
\end{bmatrix},
\label{eq:focush2}
\end{equation}
where
\begin{equation}
\begin{split}
c_0 &= a_5 \,, \\
c_1 &= a_2 (a_4 - b_5) - \frac{3 (a_1 - b_2) a_5}{2}, \\
c_2 &= a_2^2 (a_3 - b_4) - a_2 (a_1 - b_2) (a_4 - b_5) + \frac{3 a_5 (a_1 - b_2)^2}{4}, \\
c_3 &= -a_2^3 b_3 - \frac{a_2^2 (a_1 - b_2) (a_3 - b_4)}{2} \\
&\quad+ \frac{a_2 (a_1 - b_2)^2 (a_4 - b_5)}{4} - \frac{a_5 (a_1 - b_2)^3}{8}, \\
d_0 &= b_5 \,, \\
d_1 &= a_2 b_4 - a_5 b_1 - \frac{(a_1 - b_2) b_5}{2}, \\
d_2 &= a_2^2 b_3 - a_2 a_4 b_1 + (a_1 - b_2) a_5 b_1 - \frac{(a_1 - b_2)^2 b_5}{4}, \\
d_3 &= -a_2^2 a_3 b_1 + \frac{a_2^2 (a_1 - b_2) b_3}{2}
+ \frac{a_2 (a_1 - b_2) a_4 b_1}{2} \\
&\quad- \frac{a_2 (a_1 - b_2)^2 b_4}{4} - \frac{(a_1 - b_2)^2 a_5 b_1}{4} + \frac{(a_1 - b_2)^3 b_5}{8}.
\end{split}
\label{eq:focusc0tod3}
\end{equation}
Direct evaluation of
\begin{equation}
I_n = \int_0^{\frac{\pi}{\omega}} \re^{\lambda s} \cos^{3-n}(\omega s) \sin^n(\omega s) \,ds,
\label{eq:focusIn}
\end{equation}
where $n = 0,\ldots,3$, yields
\begin{equation}
I_n = \frac{\left( \re^{\frac{\lambda \pi}{\omega}} + 1 \right) q_n}
{\left( \lambda^2 + \omega^2 \right) \left( \lambda^2 + 9 \omega^2 \right)},
\label{eq:focusIn2}
\end{equation}
where
\begin{equation}
\begin{split}
q_0 &= -\lambda \left( \lambda^2 + 7 \omega^2 \right), \\
q_1 &= \omega \left( \lambda^2 + 3 \omega^2 \right), \\
q_2 &= -2 \lambda \omega^2, \\
q_3 &= 6 \omega^3.
\end{split}
\label{eq:focusw0tow3}
\end{equation}
Combining \eqref{eq:focusP1}, \eqref{eq:focusPhiPsi12}, \eqref{eq:focush2}, \eqref{eq:focusIn},
and \eqref{eq:focusIn2} produces
\begin{equation}
P^{(1)} = \frac{\re^{\frac{\lambda \pi}{\omega}} \left( \re^{\frac{\lambda \pi}{\omega}} + 1 \right)}
{\left( \lambda^2 + \omega^2 \right) \left( \lambda^2 + 9 \omega^2 \right)}
\sum_{n=0}^3 \left( \frac{b_2 c_n}{a_2} - d_n \right) \frac{q_n}{\omega^n}.
\nonumber
\end{equation}
Finally from \eqref{eq:focusc0tod3} and \eqref{eq:focusw0tow3} we obtain
\begin{align}
\sum_{n=0}^3 \left( \frac{b_2 c_n}{a_2} - d_n \right) \frac{q_n}{\omega^n} &=
2 a_3 k_1 + a_4 k_2 + 2 a_5 k_3 \nonumber \\
&\quad+ 2 b_3 \ell_1 + b_4 \ell_2 + 2 b_5 \ell_3 \,,
\nonumber
\end{align}
hence $P^{(1)} = \re^{\frac{\lambda \pi}{\omega}}
\left( \re^{\frac{\lambda \pi}{\omega}} + 1 \right) \chi_{\rm focus}$, as required.
\end{proof}

\begin{proof}[Proof of Lemma \ref{le:fold}]
Here we perform direct asymptotic expansions, but, for brevity, only include terms sufficient to obtain
$P_{\rm fold}$ and $T_{\rm fold}$ to second order.
One simply includes more terms to obtain \eqref{eq:foldP} to fourth order.

We denote the $x$ and $y$ components of the flow by $\varphi_t(x,y)$ and $\psi_t(x,y)$, respectively.
By substituting Taylor expansions for $\varphi_t(0,r)$ and $\psi_t(0,r)$
into the ODEs and matching terms using \eqref{eq:foldfg} we obtain
\begin{align}
\varphi_t(0,r) &= a_2 r t + \frac{a_2 b_0}{2} \,t^2 + a_5 r^2 t \nonumber \\
&\quad+ \left( \frac{a_1 a_2}{2} + \frac{a_2 b_2}{2} + a_5 b_0 \right) r t^2 \nonumber \\
&\quad+ \left( \frac{a_1 a_2 b_0}{6} + \frac{a_2 b_0 b_2}{6} + \frac{a_5 b_0^2}{3} \right) t^3 + \cO \left( (r+t)^4 \right),
\nonumber \\
\psi_t(0,r) &= r + b_0 t + b_2 r t + \frac{b_0 b_2}{2} \,t^2 + \cO \left( (r+t)^3 \right).
\nonumber
\end{align}
We then solve $\varphi_t(0,r) = 0$ for $t = T_{\rm fold}(r)$.
Since $t = 0$ is a solution to $\varphi_t(0,r) = 0$,
we use the IFT to solve $\frac{1}{t} \,\varphi_t(0,r) = 0$.
This is possible because $a_2 b_0 \ne 0$ and implies $T_{\rm fold}(r)$ is $C^4$ at $r = 0$.
By matching terms we obtain
\begin{equation}
T_{\rm fold}(r) = \frac{-2}{b_0} \,r + \frac{2 \sigma_{\rm fold}}{3 b_0} \,r^2 + \cO \left( r^3 \right),
\nonumber
\end{equation}
where $\sigma_{\rm fold}$ is given by \eqref{eq:foldsigma}.
Then
\begin{align}
P_{\rm fold}(r) &= \psi_{T_{\rm fold}}(0,r) \nonumber \\
&= -r + \frac{2 \sigma_{\rm fold}}{3} \,r^2 + \cO \left( r^3 \right),
\nonumber
\end{align}
and has the same degree of differentiability at $r = 0$ as $T_{\rm fold}$.
\end{proof}

\begin{proof}[Proof of Lemma \ref{le:affine}]
First we derive \eqref{eq:affineT2}--\eqref{eq:affineP2}.
Since \eqref{eq:affineODE} is affine, it admits the analytical solution
\begin{equation}
\begin{bmatrix} \varphi_t(x,y) \\ \psi_t(x,y) \end{bmatrix} =
\re^{t A} \left( \begin{bmatrix} x \\ y \end{bmatrix}
- \begin{bmatrix} x^* \\ y^* \end{bmatrix} \right)
+ \begin{bmatrix} x^* \\ y^* \end{bmatrix},
\label{eq:affinephipsi}
\end{equation}
where
\begin{equation}
\re^{t A} = \re^{\lambda t} \begin{bmatrix}
\cos(\omega t) + \frac{a_1-b_2}{2 \omega} \,\sin(\omega t) &
\frac{a_2}{\omega} \,\sin(\omega t) \\
\frac{b_1}{\omega} \,\sin(\omega t) &
\cos(\omega t) - \frac{a_1-b_2}{2 \omega} \,\sin(\omega t) \end{bmatrix},
\label{eq:affineexptA}
\end{equation}
and the equilibrium $(x^*,y^*)$ is given by \eqref{eq:affineEq}.
Also
\begin{align}
\lambda &= \frac{a_1 + b_2}{2}, &
\omega &= \sqrt{-a_2 b_1 - \frac{(a_1-b_2)^2}{4}}.
\nonumber
\end{align}
Upon substituting $(x,y) = (0,r)$ into \eqref{eq:affinephipsi},
we obtain, after much simplification,
\begin{align}
\varphi_t(0,r) &=
\frac{a_2}{\omega} \,\re^{\lambda t} \sin(\omega t) r
+ \frac{a_2 \kappa}{\omega} \,\varrho \left( \omega t; \tfrac{\lambda}{\omega} \right),
\label{eq:affinephi2} \\
\psi_t(0,r) &=
\left( \cos(\omega t) - \frac{a_1-b_2}{2 \omega} \,\sin(\omega t) \right)
\left( \re^{\lambda t} r + \frac{\kappa \varrho
\left( \omega t; \frac{\lambda}{\omega} \right)}{\sin(\omega t)} \right) \nonumber \\
&\quad+ \frac{\kappa \re^{\lambda t} \varrho \left( \omega t; -\frac{\lambda}{\omega} \right)}{\sin(\omega t)},
\label{eq:affinepsi2}
\end{align}
where $\kappa = \frac{b_0 \omega}{\lambda^2 + \omega^2}$, see \eqref{eq:affinekappa},
and $\varrho$ is the auxiliary function \eqref{eq:auxFunc}.
The evolution time $T$ is defined by $\varphi_T(0,r) = 0$,
and through \eqref{eq:affinephi2} we obtain \eqref{eq:affineT2}.
To evaluate $P = \psi_T(0,r)$,
we substitute \eqref{eq:affineT2} into \eqref{eq:affinepsi2},
with which the first part of \eqref{eq:affinepsi2} vanishes leaving \eqref{eq:affineP2}.

Next we analyse the evolution time $T$.
If $b_0 < 0$, then $x^* < 0$,
and so as the orbit travels from $(0,r)$ to $(0,P)$ it completes less than half a revolution around $(x^*,y^*)$,
Fig.~\ref{fig:schemPoinAffine}.
The time taken to complete half a revolution is $\frac{\pi}{\omega}$,
thus $\omega T \in (0,\pi)$ and $\sin(\omega T) > 0$.
If instead $b_0 > 0$,
then $\omega T \in (\pi,2 \pi)$ and $\sin(\omega T) < 0$.
By using the identity
\begin{equation}
\frac{\partial}{\partial s} \frac{\re^{-\nu s} \varrho(s;\nu)}{\sin(s)}
= \frac{\varrho(s;-\nu)}{\sin^2(s)}
\label{eq:auxFuncFuncDeriv}
\end{equation}
to differentiate \eqref{eq:affineT2}, we obtain
\begin{equation}
\frac{d T}{d r} = -\frac{\re^{\lambda T} \sin(\omega T)}{\omega P}.
\label{eq:affinedTdr}
\end{equation}
Since $\omega > 0$ and $P < 0$,
we conclude that $\frac{d T}{d r} > 0$ if $b_0 < 0$ (here $\sin(\omega T) > 0$)
and $\frac{d T}{d r} < 0$ if $b_0 > 0$ (here $\sin(\omega T) < 0$).
Moreover by \eqref{eq:affineT2} we have
$\lim_{r \to \infty} T = \frac{\pi}{\omega}$ (here $\sin(\omega T) \to 0$).

Now we analyse $P$.
By \eqref{eq:affineT2}--\eqref{eq:affineP2},
$\frac{P}{r} = \frac{-\re^{2 \lambda T} \varrho \left( \omega T; -\frac{\lambda}{\omega} \right)}
{\varrho \left( \omega T; \frac{\lambda}{\omega} \right)}$.
Substituting $T = \frac{\pi}{\omega}$ gives $\frac{P}{r} = -\re^{\frac{\lambda \pi}{\omega}}$.
Thus $P \sim -r \,\re^{\frac{\lambda \pi}{\omega}}$ as $r \to \infty$.
By applying \eqref{eq:auxFuncFuncDeriv} to \eqref{eq:affineT2}--\eqref{eq:affineP2} we obtain
\begin{equation}
\frac{d P}{d r} = -\frac{\varrho \left( \omega T; \frac{\lambda}{\omega} \right)}
{\varrho \left( \omega T; -\frac{\lambda}{\omega} \right)},
\label{eq:PaffineDeriv1}
\end{equation}
and a further application of \eqref{eq:affineT2}--\eqref{eq:affineP2} yields \eqref{eq:PaffineDeriv2}.
This implies $\frac{d P}{d r} < 0$ (because $r > 0$ and $P < 0$).

Finally we treat Types \ref{it:b0neg}--\ref{it:b0poslambdapos} in order. 
\begin{enumerate}[label=\Roman{*})]
\item
By \eqref{eq:affineT2}, as $r \to 0$, $\varrho \left( \omega T; \frac{\lambda}{\omega} \right) \to 0$.
With $b_0 < 0$, $\omega T \in (0,\pi)$,
so as $r \to 0$ we must have $T \to 0$ (see Fig.~\ref{fig:schemAuxFunc}).
Similarly $P \to 0$ by \eqref{eq:affineP2}.
\item
If $b_0 > 0$ and $\lambda < 0$, then $\omega T \in (\pi,2 \pi)$ and so
by \eqref{eq:affineP2} and the definition of $\hat{s}$, see \eqref{eq:shat},
we have $P = 0$ when $T = \frac{\hat{s}(\nu)}{\omega}$,
where $\nu = -\frac{\lambda}{\omega} > 0$.
To show that this occurs when $r = \hat{r}$,
we substitute $T = \frac{\hat{s}(\nu)}{\omega}$ into \eqref{eq:affineT2} and
use the identity
\begin{equation}
\varrho(\hat{s}(\nu);-\nu) = \left( 1 + \nu^2 \right) \sin^2(\hat{s}(\nu)),
\label{eq:auxFunchatsMinusnu}
\end{equation}
to obtain $r = -\kappa (1 + \nu^2) \,\re^{\nu \hat{s}(\nu)} \sin(\hat{s}) = \hat{r}$.
Since $P \in (-\infty,0)$ is a decreasing function of $r$,
$P$ is undefined for all $r < \hat{r}$.
\item
If $b_0 > 0$ and $\lambda > 0$, again $\omega T \in (\pi,2 \pi)$ and so
by \eqref{eq:affineT2} we have $r = 0$ when $T = \frac{\hat{s}(\nu)}{\omega}$,
where $\nu = \frac{\lambda}{\omega} > 0$.
Here $P = \frac{b_0}{\omega} \,\re^{\nu \hat{s}(\nu)} \sin(\hat{s}(\nu))$
by \eqref{eq:auxFunchatsMinusnu}.
\end{enumerate}
\end{proof}

\section{Proofs for boundary equilibrium bifurcations}
\setcounter{equation}{0}
\setcounter{figure}{0}
\setcounter{table}{0}
\label{app:beb}

We start by proving the theorem for HLB 4.
The theorem for HLB 1 is then a simple corollary.
The proof for HLB 4 involves a blow-up of phase space about the origin
so that the limit $\mu \to 0^+$ produces a piecewise-linear system
for which a Poincar\'e map can be described using Lemma \ref{le:affine}.
The Poincar\'e map is shown to have a unique fixed point by
generalising analytical arguments developed in \cite{FrPo98}.
After this we prove the theorems for HLBs 2, 3 and 20.

\begin{proof}[Proof of Theorem \ref{th:FilippovDeg} (HLB 4)]
Write
\begin{equation}
\begin{split}
f_J(x,y;\mu) &= a_{1J} x + a_{2J} y + a_{3J} \mu + \cO \left( \left( |x| + |y| + |\mu| \right)^2 \right), \\
g_J(x,y;\mu) &= b_{1J} x + b_{2J} y + b_{3J} \mu + \cO \left( \left( |x| + |y| + |\mu| \right)^2 \right),
\end{split}
\label{eq:FilippovDegfJgJ}
\end{equation}
for each $J \in \{ L, R \}$.
Then
\begin{equation}
\beta_J = a_{3J} b_{2J} - a_{2J} b_{3J} \,,
\label{eq:FilippovDegbetaJ}
\end{equation}
and
\begin{equation}
\gamma = a_{2L} a_{3R} - a_{3L} a_{2R} \,.
\label{eq:FilippovDeggamma}
\end{equation}
In view of the replacement $y \mapsto -y$ it suffices to assume
orbits rotate clockwise, that is $a_{2L} > 0$ and $a_{2R} > 0$.

Locally, the left half-system has a unique equilibrium:
an unstable focus with $x$-value
$\frac{-\beta_L \mu}{\lambda_L^2 + \omega_L^2} + \cO \left( \mu^2 \right)$.
Since $\beta_L > 0$, this equilibrium is admissible (in $\Omega_L$) for $\mu > 0$.
Similarly, locally, the right half-system has a unique equilibrium:
a stable focus with $x$-value
$\frac{-\beta_R \mu}{\lambda_R^2 + \omega_R^2} + \cO \left( \mu^2 \right)$.
Since $\beta_R > 0$, this equilibrium is admissible (in $\Omega_R$) for $\mu < 0$.

For each $J \in \{ L, R \}$,
$f_J$ is $C^2$ and $a_{2J} \ne 0$
thus, by the IFT, $f_J(0,\zeta_J(\mu);\mu) = 0$
for a unique $C^2$ function
\begin{equation}
\zeta_J(\mu) = -\frac{a_{3J}}{a_{2J}} \,\mu + \cO \left( \mu^2 \right),
\label{eq:FilippovDegzetaJ}
\end{equation}
defined in a neighbourhood of $\mu = 0$.
The points $(0,\zeta_L(\mu))$ and $(0,\zeta_R(\mu))$ are folds, and observe
\begin{equation}
\zeta_L(\mu) - \zeta_R(\mu) = \frac{\gamma}{a_{2L} a_{2R}} \,\mu + \cO \left( \mu^2 \right).
\label{eq:FilippovDegzetaLRdiff}
\end{equation}
The folds may be classified by Definition \ref{df:fold}
(recall $\beta_L, \beta_R > 0$).
At $(0,\zeta_L(\mu))$,
$\frac{\partial f_L}{\partial y} g_L = -\beta_L \mu + \cO \left( \mu^2 \right)$,
thus $(0,\zeta_L(\mu))$ is visible for $\mu > 0$ [invisible for $\mu < 0$].
Similarly at $(0,\zeta_R(\mu))$,
$\frac{\partial f_R}{\partial y} g_R = -\beta_R \mu + \cO \left( \mu^2 \right)$,
thus $(0,\zeta_R(\mu))$ is invisible for $\mu > 0$ [visible for $\mu < 0$].

For the remainder of the proof we consider $\mu \ge 0$.
This is because the result for $\mu < 0$ follows from the following symmetry property:
flipping the signs of $x$ and $\mu$ and reversing the direction of time
transforms \eqref{eq:FilippovBEB} to another system of this form
satisfying the conditions of Theorem \ref{th:FilippovDeg}.

In order to accommodate the nonlinear terms in \eqref{eq:FilippovDegfJgJ},
we work in scaled coordinates
\begin{align}
\tilde{x} &= \frac{x}{\mu}, &
\tilde{y} &= \frac{y - \zeta_R(\mu)}{\mu}.
\label{eq:FilippovDegScaling}
\end{align}
This blow-up shifts the vertical coordinate to put the right fold at the origin.
In these coordinates the right half-system is
\begin{equation}
\begin{split}
\dot{\tilde{x}} &= a_{1R} \tilde{x} + a_{2R} \tilde{y} + \cO(\mu), \\
\dot{\tilde{y}} &= -\frac{\beta_R}{a_{2R}} + b_{1R} \tilde{x} + b_{2R} \tilde{y} + \cO(\mu),
\end{split}
\label{eq:FilippovDegfRgRScaled}
\end{equation}
and the left half-system is
\begin{equation}
\begin{split}
\dot{\tilde{x}} &= -\frac{\gamma}{a_{2R}} + a_{1L} \tilde{x} + a_{2L} \tilde{y} + \cO(\mu), \\
\dot{\tilde{y}} &= -\frac{a_{2R} b_{3L} - a_{3R} b_{2L}}{a_{2R}} +
b_{1L} \tilde{x} + b_{2L} \tilde{y} + \cO(\mu).
\end{split}
\label{eq:FilippovDegfLgLScaled}
\end{equation}
Fig.~\ref{fig:schemFilippovDeg} shows a phase portrait.
Since $\gamma \ge 0$ by assumption,
$\zeta_L(\mu) \ge \zeta_R(\mu)$ by \eqref{eq:FilippovDegzetaLRdiff}.

\begin{figure}[b!]
\begin{center}
\includegraphics[width=8.4cm]{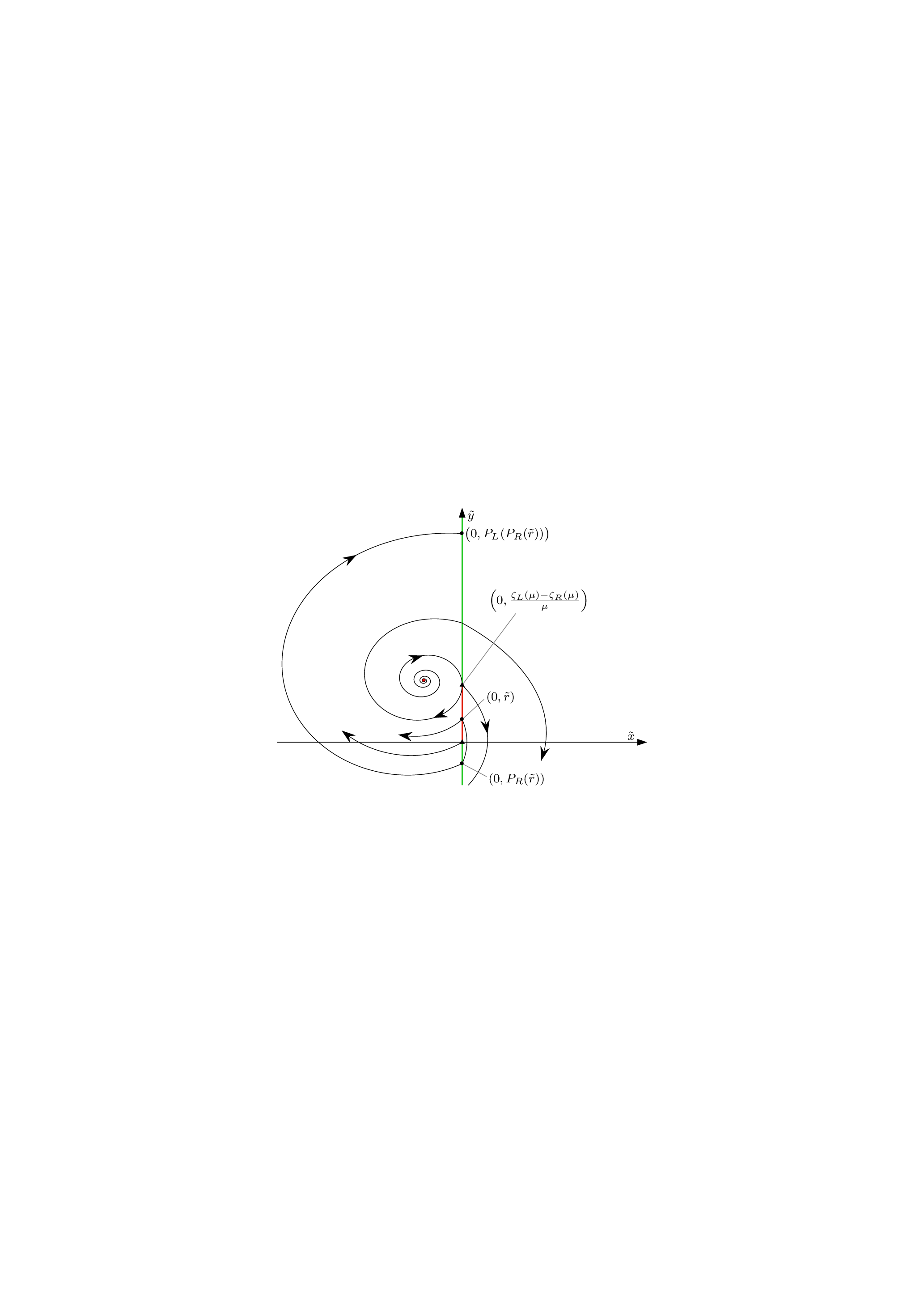}
\caption{
For HLB 4, a sketch of the local dynamics with $\mu > 0$ and using the coordinates \eqref{eq:FilippovDegScaling}.
There exists an admissible unstable focus in $\Omega_L$;
a visible fold of the left half-system
and an invisible fold of the right half-system bound a repelling sliding region.
On this region the direction of sliding motion is not indicated because
there may be pseudo-equilibria, see \cite{Si18f}.
\label{fig:schemFilippovDeg}
} 
\end{center}
\end{figure}

Given $\tilde{r} > 0$, consider the forward orbit of $(\tilde{x},\tilde{y}) = (0,\tilde{r})$
that immediately enters $\tilde{x} > 0$.
Let $P_R(\tilde{r};\mu)$ be the $\tilde{y}$-value of the next intersection of this orbit with $\tilde{x} = 0$
(leave $P_R$ undefined if this orbit does not return to $\tilde{x} = 0$)
and let $T_R(\tilde{r};\mu)$ be the corresponding evolution time.
Similarly for any $\tilde{r} < 0$,
let $P_L(\tilde{r};\mu)$ be the $\tilde{y}$-value of the next intersection of the forward orbit
of $(0,\tilde{r})$ with $\tilde{x} = 0$
(leave $P_L$ undefined if this orbit does not return to $\tilde{x} = 0$)
and let $T_L(\tilde{r};\mu)$ be the corresponding evolution time.
Let
\begin{equation}
P(\tilde{r};\mu) = P_L \left( P_R(\tilde{r};\mu); \mu \right),
\label{eq:FilippovDegP}
\end{equation}
be the induced Poincar\'{e} map.
This corresponds to an evolution time of
\begin{equation}
T(\tilde{r};\mu) = T_R(\tilde{r};\mu) + T_L \left( P_R(\tilde{r};\mu); \mu \right).
\label{eq:FilippovDegT}
\end{equation}

Fixed points of $P$ correspond to periodic orbits of
\eqref{eq:FilippovDegfRgRScaled}--\eqref{eq:FilippovDegfLgLScaled}
and thus also of the original system \eqref{eq:FilippovBEB}.
Locally, periodic orbits of \eqref{eq:FilippovBEB} must evolve on both
sides of $x = 0$, thus the fixed points of $P$ generate all local periodic orbits of \eqref{eq:FilippovBEB}.

Although the scaled system \eqref{eq:FilippovDegfRgRScaled}--\eqref{eq:FilippovDegfLgLScaled}
was constructed under the assumption $\mu > 0$,
in \eqref{eq:FilippovDegfRgRScaled}--\eqref{eq:FilippovDegfLgLScaled} we may set $\mu = 0$.
Next we analyse $P$ and $T$ with $\mu = 0$ and accommodate $\mu > 0$ at the end.

With $\mu = 0$, \eqref{eq:FilippovDegfRgRScaled} is affine
and we can apply Lemma \ref{le:affine} with $b_0 = -\frac{\beta_R}{a_{2R}}$.
Specifically,
\begin{equation}
\begin{split}
P_R(\tilde{r};0) &= P_{\rm affine} \left( \tilde{r}; \lambda_R, \omega_R, -\tfrac{\beta_R}{a_{2R}} \right), \\
T_R(\tilde{r};0) &= T_{\rm affine} \left( \tilde{r}; \lambda_R, \omega_R, -\tfrac{\beta_R}{a_{2R}} \right),
\end{split}
\end{equation}
which are of Type \ref{it:b0neg} because $b_0 < 0$.
In order to apply Lemma \ref{le:affine} to \eqref{eq:FilippovDegfRgRScaled},
we rotate coordinates by $180^\circ$ and translate so that the fold is at the origin.
Specifically, we let
\begin{equation}
\begin{split}
\tilde{\tilde{x}} &= -\tilde{x}, \\
\tilde{\tilde{y}} &= -\left( \tilde{y} - \tilde{\gamma} \right),
\end{split}
\label{eq:FilippovDegScaling2}
\end{equation}
where
\begin{equation}
\tilde{\gamma} = \frac{\gamma}{a_{2L} a_{2R}},
\label{eq:FilippovDegFoldDifference}
\end{equation}
with which \eqref{eq:FilippovDegfLgLScaled} with $\mu = 0$ becomes
\begin{equation}
\begin{split}
\dot{\tilde{\tilde{x}}} &= a_{1L} \tilde{\tilde{x}} + a_{2L} \tilde{\tilde{y}}, \\
\dot{\tilde{\tilde{y}}} &= \frac{\beta_L}{a_{2L}} + b_{1L} \tilde{\tilde{x}} + b_{2L} \tilde{\tilde{y}}.
\end{split}
\label{eq:FilippovDegfLgLScaled2}
\end{equation}
Then Lemma \ref{le:affine} with $b_0 = \frac{\beta_L}{a_{2L}}$ gives
\begin{equation}
\begin{split}
P_L(\tilde{r};0) &= \tilde{\gamma} -
P_{\rm affine} \left( \tilde{\gamma} - \tilde{r};
\lambda_L, \omega_L, \frac{\beta_L}{a_{2L}} \right), \\
T_L(\tilde{r};0) &= T_{\rm affine} \left( \tilde{\gamma} - \tilde{r};
\lambda_L, \omega_L, \frac{\beta_L}{a_{2L}} \right),
\end{split}
\end{equation}
which are of Type \ref{it:b0poslambdapos} because $b_0 > 0$ and $\lambda_L > 0$.
Notice $P(\tilde{r};0)$ is well-defined for all $\tilde{r} > 0$.

By Lemma \ref{le:affine},
$P_R(\tilde{r};0) \sim -\tilde{r} \,\re^{\frac{\lambda_R \pi}{\omega_R}}$ as $\tilde{r} \to \infty$, and
$P_L(\tilde{r};0) \sim -\tilde{r} \,\re^{\frac{\lambda_L \pi}{\omega_L}}$ as $\tilde{r} \to -\infty$.
Thus
\begin{equation}
P(\tilde{r};0) \sim \tilde{r} \,\re^{\alpha \pi},
\label{eq:FilippovDegPlimit}
\end{equation}
as $\tilde{r} \to \infty$,
where $\alpha = \frac{\lambda_L}{\omega_L} + \frac{\lambda_R}{\omega_R}$, \eqref{eq:FilippovDegNondegCond}.
By \eqref{eq:PaffineDeriv2} we obtain
\begin{equation}
\frac{\partial P}{\partial \tilde{r}}(\tilde{r};0) =
\frac{\tilde{r} \left( \tilde{\gamma} - P_R \right)}
{P_R \left( \tilde{\gamma} - P \right)} \,\re^{2 h(\tilde{r})},
\label{eq:FilippovDegPDeriv}
\end{equation}
where
\begin{equation}
h(\tilde{r}) = \lambda_R T_R(\tilde{r};0) + \lambda_L T_L \left( P_R(\tilde{r};0); 0 \right).
\label{eq:FilippovDegchi}
\end{equation}
Observe $\tilde{r} > 0$, and $P_R < 0 < \tilde{\gamma} < P$.
For all $\tilde{r} > 0$,
\begin{equation}
\frac{d h}{d \tilde{r}} < 0,
\label{eq:FilippovDegchiDeriv}
\end{equation}
because $\lambda_R < 0$ and $T_R(\tilde{r};0)$ is increasing and
$\lambda_L > 0$ and $T_L \left( P_R(\tilde{r};0); 0 \right)$ is decreasing.
The period of the limit cycle corresponding to a fixed point $\tilde{r}^*$ is $T = T_L + T_R$,
where $P_L = \tilde{r}^*$, $P_R$, $T_L$, and $T_R$ satisfy
\begin{align*}
\tilde{r}^* &= \frac{-\kappa_R \,\re^{-\lambda_R T_R} \varrho \big( \omega_R T_R; \frac{\lambda_R}{\omega_R} \big)}
{\sin(\omega_R T_R)}, \\
P_R &= \frac{\kappa_R \,\re^{\lambda_R T_R} \varrho \big( \omega_R T_R; -\frac{\lambda_R}{\omega_R} \big)}
{\sin(\omega_R T_R)}, \\
\tilde{\gamma} - P_R &= \frac{-\kappa_L \,\re^{-\lambda_L T_L} \varrho \big( \omega_L T_L; \frac{\lambda_L}{\omega_L} \big)}
{\sin(\omega_L T_L)}, \\
\tilde{\gamma} - P_L &= \frac{\kappa_L \,\re^{\lambda_L T_L} \varrho \big( \omega_L T_L; -\frac{\lambda_L}{\omega_L} \big)}
{\sin(\omega_L T_L)},
\end{align*}
where $\kappa_R = \frac{-\beta_R \omega_R}{a_{2R} \left( \lambda_R^2 + \omega_R^2 \right)}$
and $\kappa_L = \frac{\beta_L \omega_L}{a_{2L} \left( \lambda_L^2 + \omega_L^2 \right)}$.
By eliminating $P_L$ and $P_R$ in these equations we produce \eqref{eq:periodFilippovDeg}.

This completes our construction and basic description of the Poincar\'e map $P$
and the evolution time $T$.
We now use a series of analytical arguments to characterise fixed points of $P$ in the limit $\mu \to 0^+$.
Following the logical steps of \cite{FrPo98},
we suppose, for the moment,
that $P(\tilde{r};0)$ has a fixed point, and let $\tilde{r}^*$ be the smallest such point.
Since $P(0;0) > 0$, we must have $P(\tilde{r};0) > \tilde{r}$, for all $0 \le \tilde{r} < \tilde{r}^*$,
and so $\frac{\partial P}{\partial \tilde{r}} \left( \tilde{r}^*;0 \right) \le 1$.

Now suppose, for a contradiction, that
$\frac{\partial P}{\partial \tilde{r}} \left( \tilde{r}^*;0 \right) = 1$.
Then we need
$\frac{\partial^2 P}{\partial \tilde{r}^2} \left( \tilde{r}^*;0 \right) \ge 0$,
because $P(\tilde{r};0) > \tilde{r}$ for all $0 \le \tilde{r} < \tilde{r}^*$.
But by differentiating \eqref{eq:FilippovDegPDeriv}
and substituting $\tilde{r} = P = \tilde{r}^*$ and
$\frac{\partial P}{\partial \tilde{r}} = 1$, we get
\begin{equation}
\frac{\partial^2 P}{\partial \tilde{r}^2} \left( \tilde{r}^*;0 \right) =
\frac{\tilde{\gamma}}{\tilde{r}^* \left( \tilde{\gamma} - \tilde{r}^* \right)} -
\frac{\tilde{\gamma} \,\frac{\partial P_R}{\partial \tilde{r}}}
{P_R \left( \tilde{\gamma} - P_R \right)} +
2 \frac{d h}{d \tilde{r}}.
\label{eq:FilippovDegPDerivDeriv}
\end{equation}
The first two terms \eqref{eq:FilippovDegPDerivDeriv} are non-positive,
and the last term is negative,
thus $\frac{\partial^2 P}{\partial \tilde{r}^2} \left( \tilde{r}^*;0 \right) < 0$,
which is a contradiction.
Hence $\frac{\partial P}{\partial \tilde{r}} \left( \tilde{r}^*;0 \right) < 1$,
and so $\tilde{r}^*$ is an asymptotically stable fixed point of $P(\tilde{r};0)$.

Next suppose, for a contradiction, that
$P(\tilde{r};0)$ has other fixed points,
and let $\tilde{r}^{**}$ be the next smallest such point.
Then $P(\tilde{r};0) < \tilde{r}$ for all $\tilde{r}^* \le \tilde{r} < \tilde{r}^{**}$
and so $\frac{\partial P}{\partial \tilde{r}} \left( \tilde{r}^{**};0 \right) \ge 1$.
At a fixed point, equation \eqref{eq:FilippovDegPDeriv}
is satisfied with $P = \tilde{r}$, that is
\begin{equation}
\frac{\partial P}{\partial \tilde{r}}(\tilde{r};0) =
G(\tilde{r}) \,\re^{2 h(\tilde{r})},
\label{eq:FilippovDegPDerivAtFP}
\end{equation}
where
\begin{equation}
G(\tilde{r}) = \frac{\tilde{r} \left( \tilde{\gamma} - P_R \right)}
{P_R \left( \tilde{\gamma} - \tilde{r} \right)}.
\nonumber
\end{equation}
We have
\begin{equation}
\frac{d G}{d \tilde{r}} = \left(
\frac{\tilde{\gamma}}{\tilde{r}^* \left( \tilde{\gamma} - \tilde{r}^* \right)} -
\frac{\tilde{\gamma} \,\frac{\partial P_R}{\partial \tilde{r}}}
{P_R \left( \tilde{\gamma} - P_R \right)} \right) G(\tilde{r}),
\nonumber
\end{equation}
so $G$ is a decreasing function because
$G$ is positive and, as above, the term in large brackets is negative.
Since $h$ is decreasing,
we conclude from \eqref{eq:FilippovDegPDerivAtFP} that $\frac{\partial P}{\partial \tilde{r}}(\tilde{r};0)$ is decreasing.
But $\frac{\partial P}{\partial \tilde{r}} \left( \tilde{r}^*;0 \right) < 1$,
so we cannot have
$\frac{\partial P}{\partial \tilde{r}} \left( \tilde{r}^{**};0 \right) \ge 1$.
This is contradiction, hence $P(\tilde{r};0) < \tilde{r}$ for all $\tilde{r} > \tilde{r}^*$.

By \eqref{eq:FilippovDegPlimit}, this requires $\alpha \le 0$.
Thus $P(\tilde{r};0)$ has no fixed points if $\alpha > 0$.
If $\alpha < 0$, then $P(\tilde{r};0)$ has a fixed point by the intermediate value theorem,
and, as we have just shown, this fixed point is unique, call it $\tilde{r}^*$.
Moreover, as shown above,
$\frac{\partial P}{\partial \tilde{r}} \left( \tilde{r}^*;0 \right) < 1$, thus the fixed point is asymptotically stable.

Finally we consider $P(\tilde{r};\mu)$ with $\mu > 0$.
Notice \eqref{eq:FilippovDegfRgRScaled} and \eqref{eq:FilippovDegfLgLScaled}
are $C^1$ (with respect to both the variables $\tilde{x}$ and $\tilde{y}$ and the parameter $\mu$).
Thus the flow of \eqref{eq:FilippovDegfRgRScaled} and \eqref{eq:FilippovDegfLgLScaled}
are $C^1$ functions of the initial condition and of $\mu$.
For all $\tilde{r} > 0$, $P(\tilde{r};0)$ is well-defined and corresponds to transversal intersections with $\tilde{x} = 0$,
thus $P(\tilde{r};\mu)$ is well-defined and $C^1$, by Lemma \ref{le:mapSmoothness},
for sufficiently small $\mu > 0$.
By the IFT applied to $P$,
if $\alpha < 0$ then $P$ has a unique stable fixed point $\tilde{r}^*(\mu)$.
Hence, locally, the scaled system \eqref{eq:FilippovDegfRgRScaled}--\eqref{eq:FilippovDegfLgLScaled}
has a unique stable limit cycle, and the same is true for the original system \eqref{eq:FilippovBEB}.
In view of \eqref{eq:FilippovDegScaling},
in \eqref{eq:FilippovBEB} the size of the limit cycle is asymptotically proportional to $\mu$
(for example if $\tilde{x}_{\rm max}$ denotes the maximum $\tilde{x}$-value of the limit cycle
in \eqref{eq:FilippovDegfRgRScaled}--\eqref{eq:FilippovDegfLgLScaled} with $\mu = 0$,
then the maximum $x$-value of the limit cycle in \eqref{eq:FilippovBEB} is
$\mu \tilde{x}_{\rm max} + \cO \left( \mu^2 \right)$).
If instead $\alpha > 0$,
then, locally, $P$ has no fixed points and thus \eqref{eq:FilippovBEB} has no limit cycles.
As mentioned above, the dynamics for $\mu < 0$ follows by symmetry.
\end{proof}

\begin{proof}[Proof of Theorem \ref{th:pwscFocusFocus} (HLB 1)]
Unlike for Theorem \ref{th:FilippovDeg},
locally there must exist a unique stationary solution because \eqref{eq:pwscODE} is continuous;
the only stationary solutions are regular equilibria.
The left half-system has an unstable focus with $x$-value $\frac{-\beta \mu}{\lambda_L^2 + \omega_L^2} + \cO \left( \mu^2 \right)$,
so is admissible (in $\Omega_L$) for $\mu > 0$ because $\beta > 0$.
Similarly the right half-system has admissible stable focus for $\mu < 0$.

To complete the proof we apply Theorem \ref{th:FilippovDeg}.
In \eqref{eq:FilippovDegTransCond} we have $\beta_L = \beta_R = \beta > 0$.
In \eqref{eq:FilippovDegSlidingCond} we have $\gamma = 0$.
Also $a_{2L} a_{2R} = \left( \frac{\partial f_L}{\partial y}(0,0;0) \right)^2$
is non-zero (and hence positive) because $\rD F_L(0,0;0)$ has complex eigenvalues.
This shows that the conditions of Theorem \ref{th:FilippovDeg} are satisfied.
The conclusions of Theorem \ref{th:FilippovDeg} reduce to those Theorem \ref{th:pwscFocusFocus};
this completes the proof.
\end{proof}

\begin{proof}[Proof of Theorem \ref{th:pwscFocusNode} (HLB 2)]
This proof follows that of Theorem \ref{th:FilippovDeg} (given above)
except we need to accommodate the real eigenvalues of the stable node
and carefully consider the case $\mu < 0$
(to show that no limit cycle exists when the node is admissible).
Write
\begin{equation}
\begin{split}
f_J(x,y;\mu) &= a_{1J} x + a_2 y + a_3 \mu + \cO \left( \left( |x| + |y| + |\mu| \right)^2 \right), \\
g_J(x,y;\mu) &= b_{1J} x + b_2 y + b_3 \mu + \cO \left( \left( |x| + |y| + |\mu| \right)^2 \right),
\end{split}
\label{eq:pwscFocusNodefJgJ}
\end{equation}
for each $J \in \{ L, R \}$.
Notice $a_2 \ne 0$ by the eigenvalue condition \eqref{eq:pwscEigCondFocusNode}
and we can assume $a_2 > 0$ in view of the replacement $y \mapsto -y$.
Also by \eqref{eq:pwscEigCondFocusNode}, locally the left half-system has a unique equilibrium:
an unstable focus with $x$-value $\frac{-\beta \mu}{\lambda_L^2 + \omega_L^2} + \cO \left( \mu^2 \right)$,
that is admissible (in $\Omega_L$) for $\mu > 0$ (because $\beta > 0$).
Similarly, locally, the right half-system has a unique equilibrium:
a stable node with $x$-value $\frac{-\beta \mu}{\lambda_R^2 - \eta_R^2} + \cO \left( \mu^2 \right)$,
that is admissible (in $\Omega_R$) for $\mu < 0$.
Since $f_L$ is $C^2$ and $a_2 \ne 0$,
by the IFT we have $f_L(0,\zeta(\mu);\mu) = 0$ for a unique $C^2$ function
\begin{equation}
\zeta(\mu) = -\frac{a_3}{a_2} \,\mu + \cO \left( \mu^2 \right),
\nonumber
\end{equation}
defined in a neighbourhood of $\mu = 0$.
Notice $(x,y) = (0,\zeta(\mu))$ is a fold for both half-systems of \eqref{eq:pwscODE}.

First consider $\mu > 0$.
We introduce the scaled coordinates
\begin{align}
\tilde{x} &= \frac{x}{\mu}, &
\tilde{y} &= \frac{y - \zeta(\mu)}{\mu},
\label{eq:pwscFocusNodeScaling}
\end{align}
with which the system becomes
\begin{equation}
\begin{split}
\dot{\tilde{x}} &= a_{1J} \tilde{x} + a_2 \tilde{y} + \cO(\mu), \\
\dot{\tilde{y}} &= -\frac{\beta}{a_2} + b_{1J} \tilde{x} + b_2 \tilde{y} + \cO(\mu),
\end{split}
\label{eq:pwscFocusNodefJgJScaled}
\end{equation}
with $J = L$ for $\tilde{x} < 0$ and $J = R$ for $\tilde{x} > 0$.
Given $\tilde{r} > 0$, let $P_R(\tilde{r};\mu) < 0$ be the $\tilde{y}$-value
of the next intersection of the forward orbit of $(0,\tilde{r})$ with $\tilde{x} = 0$,
and let $T_R(\tilde{r};\mu)$ be the corresponding evolution time.
Similarly given $\tilde{r} < 0$, let $P_L(\tilde{r};\mu) > 0$ be the $\tilde{y}$-value
of the next intersection of the forward orbit of $(0,\tilde{r})$ with $\tilde{x} = 0$,
and let $T_L(\tilde{r};\mu)$ be the corresponding evolution time.
Let
\begin{equation}
P(\tilde{r};\mu) = P_L \left( P_R(\tilde{r};\mu); \mu \right),
\label{eq:pwscFocusNodeP}
\end{equation}
which corresponds to an evolution time of
\begin{equation}
T(\tilde{r};\mu) = T_R(\tilde{r};\mu) + T_L \left( P_R(\tilde{r};\mu); \mu \right).
\label{eq:pwscFocusNodeT}
\end{equation}
Locally, limit cycles must enter both $\tilde{x} < 0$ and $\tilde{x} > 0$,
thus all possible limit cycles are generated by the fixed points of $P$.

Next we analyse $P(\tilde{r};0)$ and $T(\tilde{r};0)$
(which are well-defined even though \eqref{eq:pwscFocusNodefJgJScaled}
was constructed under the assumption $\mu > 0$).
In the limit $\mu \to 0^+$, \eqref{eq:pwscFocusNodefJgJScaled} has the affine form \eqref{eq:affineODE}.
Thus for \eqref{eq:pwscFocusNodefJgJScaled} with $J = R$,
\begin{equation}
\begin{split}
P_R(\tilde{r};0) &= P_{\rm affine} \left( \tilde{r}; \lambda_R, \ri \eta_R, -\frac{\beta}{a_2} \right), \\
T_R(\tilde{r};0) &= T_{\rm affine} \left( \tilde{r}; \lambda_R, \ri \eta_R, -\frac{\beta}{a_2} \right),
\end{split}
\nonumber
\end{equation}
where the imaginary argument arises from putting
the real eigenvalues $\lambda_R \pm \eta_R$ \eqref{eq:pwscEigCondFocusNode}
in the complex form \eqref{eq:affineEigCond}.
Let
\begin{equation}
\varrho_{\rm node}(s;\nu) = 1 - \re^{\nu s} \left( \cosh(s) - \nu \sinh(s) \right).
\label{eq:auxFuncNode}
\end{equation}
Like $\varrho$ (see Fig.~\ref{fig:schemAuxFunc}),
the function $\varrho_{\rm node}$ has a double zero at $s = 0$,
but if $|\nu| > 1$ then $\varrho_{\rm node}(s;\nu) > 0$ for all $s \ne 0$.
Using the identity $\varrho(\ri s;-\ri \nu) = \varrho_{\rm node}(s;\nu)$, by Lemma \ref{le:affine}
\begin{align}
\tilde{r} &= \frac{-{\rm Im}(\kappa_R) \,\re^{-\lambda_R T_R} \varrho_{\rm node} \left( \eta_R T_R; \frac{\lambda_R}{\eta_R} \right)}
{\sinh \left( \eta_R T_R \right)},
\label{eq:pwscFocusNodeTR} \\
P_R &= \frac{{\rm Im}(\kappa_R) \,\re^{\lambda_R T_R} \varrho_{\rm node} \left( \eta_R T_R; \frac{-\lambda_R}{\eta_R} \right)}
{\sinh \left( \eta_R T_R \right)},
\label{eq:pwscFocusNodePR}
\end{align}
where
\begin{equation}
\kappa_R = \frac{-\ri \beta \eta_R}{a_2 \left( \lambda_R^2 - \eta_R^2 \right)}.
\label{eq:pwscFocusNodekappaR}
\end{equation}
Analogous to \eqref{eq:affinedTdr},
\begin{equation}
\frac{\partial T_R}{\partial \tilde{r}} = -\frac{\re^{\lambda_R T_R} \sinh(\eta_R T_R)}{\eta_R P_R},
\label{eq:pwscFocusNodedTdr}
\end{equation}
thus $T_R(\tilde{r};0)$ is an increasing function of $\tilde{r}$
(because $\eta_R > 0$ and $P_R < 0$).
Analogous to \eqref{eq:PaffineDeriv1},
\begin{equation}
\frac{\partial P_R}{\partial \tilde{r}} =
-\frac{\varrho_{\rm node} \left( \eta_R T_R; \frac{\lambda_R}{\eta_R} \right)}
{\varrho_{\rm node} \left( \eta_R T_R; -\frac{\lambda_R}{\eta_R} \right)},
\label{eq:pwscFocusNodedPdr}
\end{equation}
thus $P_R(\tilde{r};0)$ is a decreasing function of $\tilde{r}$
(because $\big| \frac{\lambda_R}{\eta_R} \big| > 1$).
By \eqref{eq:pwscFocusNodeTR}--\eqref{eq:pwscFocusNodePR},
$T_R(\tilde{r};0) \to 0$ and $P_R(\tilde{r};0) \to 0$ as $\tilde{r} \to 0$.
As $\tilde{r} \to \infty$,
\begin{equation}
\tilde{r} \sim -2 {\rm Im}(\kappa_R) \,\re^{-(\lambda_R + \eta_R) T_R},
\label{eq:pwscFocusNodeTRLimit}
\end{equation}
and\footnote{
The limiting value \eqref{eq:pwscFocusNodePRLimit} corresponds to the intersection
of the linear slow subspace of \eqref{eq:pwscFocusNodefJgJScaled} (with $J = R$ and $\mu = 0$)
which can be obtained more simply by calculating the eigenspace of $D F_R(0,0;0)$
corresponding to $\lambda_R + \eta_R$.
}
\begin{equation}
P_R(\tilde{r};0) \to \frac{-\beta}{a_2 \left( \eta_R - \lambda_R \right)}.
\label{eq:pwscFocusNodePRLimit}
\end{equation}

Lemma \ref{le:affine} can be applied to \eqref{eq:pwscFocusNodefJgJScaled} with $J = L$
(still in the limit $\mu \to 0^+$) by rotating coordinates by $180^\circ$.
In this way we obtain
\begin{equation}
\begin{split}
P_L(\tilde{r};0) &= -P_{\rm affine} \left( -\tilde{r}; \lambda_L, \omega_L, \frac{\beta}{a_2} \right), \\
T_L(\tilde{r};0) &= T_{\rm affine} \left( -\tilde{r}; \lambda_L, \omega_L, \frac{\beta}{a_2} \right),
\end{split}
\end{equation}
which are of Type \ref{it:b0poslambdapos} because $\frac{\beta}{a_2} > 0$ and $\lambda_L > 0$.
It follows that $P(\tilde{r};0) = P_L \left( P_R(\tilde{r};0); 0 \right)$
is an increasing function of $\tilde{r}$ with
$\lim_{\tilde{r} \to 0} P(\tilde{r};0) > 0$ and
$\lim_{\tilde{r} \to \infty} P(\tilde{r};0) < \infty$.
By the intermediate value theorem, $P(\tilde{r};0)$ has a fixed point $\tilde{r}^*$.
It can be shown that $\tilde{r}^*$ is asymptotically stable and unique
(we omit this step for brevity; it can be achieved by a minor adaptation of arguments
that appear in the proof of Theorem \ref{th:FilippovDeg}).
Also by repeating the arguments at the end of the proof of Theorem \ref{th:FilippovDeg}
we conclude that $P(\tilde{r};\mu)$ has a unique stable fixed point $\tilde{r}^*(\mu)$ for
sufficiently small $\mu > 0$ because \eqref{eq:pwscFocusNodefJgJScaled} is $C^1$.
Thus \eqref{eq:pwscODE} has a unique stable limit cycle with amplitude proportional to $\mu$
(due to the scaling \eqref{eq:pwscFocusNodeScaling}).

Second consider $\mu < 0$.
Let $\tilde{x} = \frac{x}{|\mu|}$ and $\tilde{y} = \frac{y - \zeta(\mu)}{|\mu|}$
and take $\mu \to 0^-$ with which the right half-system is affine with an admissible stable node
$\left( \tilde{x}_R^*, \tilde{y}_R^* \right)$, see Fig.~\ref{fig:schemFocusNode}.
Let $\ell_1$ be the line segment connecting $(0,0)$ and $\left( \tilde{x}_R^*, \tilde{y}_R^* \right)$.
Let $\ell_2$ be the linear slow subspace with $\tilde{x} \ge \tilde{x}_R^*$
(this has slope $\frac{\lambda_R + \eta_R - a_{1R}}{a_2}$).
For all $\tilde{r} > 0$, the forward orbit of $(0,\tilde{r})$ cannot cross $\ell_1$
(here $\dot{\tilde{x}} = 0$ and $\dot{\tilde{y}} > 0$) or $\ell_2$ (because $\ell_2$ is an invariant).
Thus the orbit converges to $\left( \tilde{x}_R^*, \tilde{y}_R^* \right)$
without reintersecting $\tilde{x} = 0$.
Now choose any $\tilde{r}_{\rm max} > 0$.
Since \eqref{eq:pwscFocusNodefJgJScaled} is $C^1$ there exists $\delta > 0$ such that
for all $-\delta < \mu < 0$, $\left( \tilde{x}_R^*, \tilde{y}_R^* \right)$ persists
as a stable node $\left( \tilde{x}_R^*(\mu), \tilde{y}_R^*(\mu) \right)$, and
for all $0 < \tilde{r} < \tilde{r}_{\rm max}$
the forward orbit of $(0,\tilde{r})$ converges to $\left( \tilde{x}_R^*(\mu), \tilde{y}_R^*(\mu) \right)$
without reintersecting $\tilde{x} = 0$.
Thus, locally, \eqref{eq:pwscFocusNodefJgJScaled}, and hence also \eqref{eq:pwscODE},
has no limit cycle for $\mu < 0$.
\end{proof}

\begin{figure}[b!]
\begin{center}
\includegraphics[width=5.6cm]{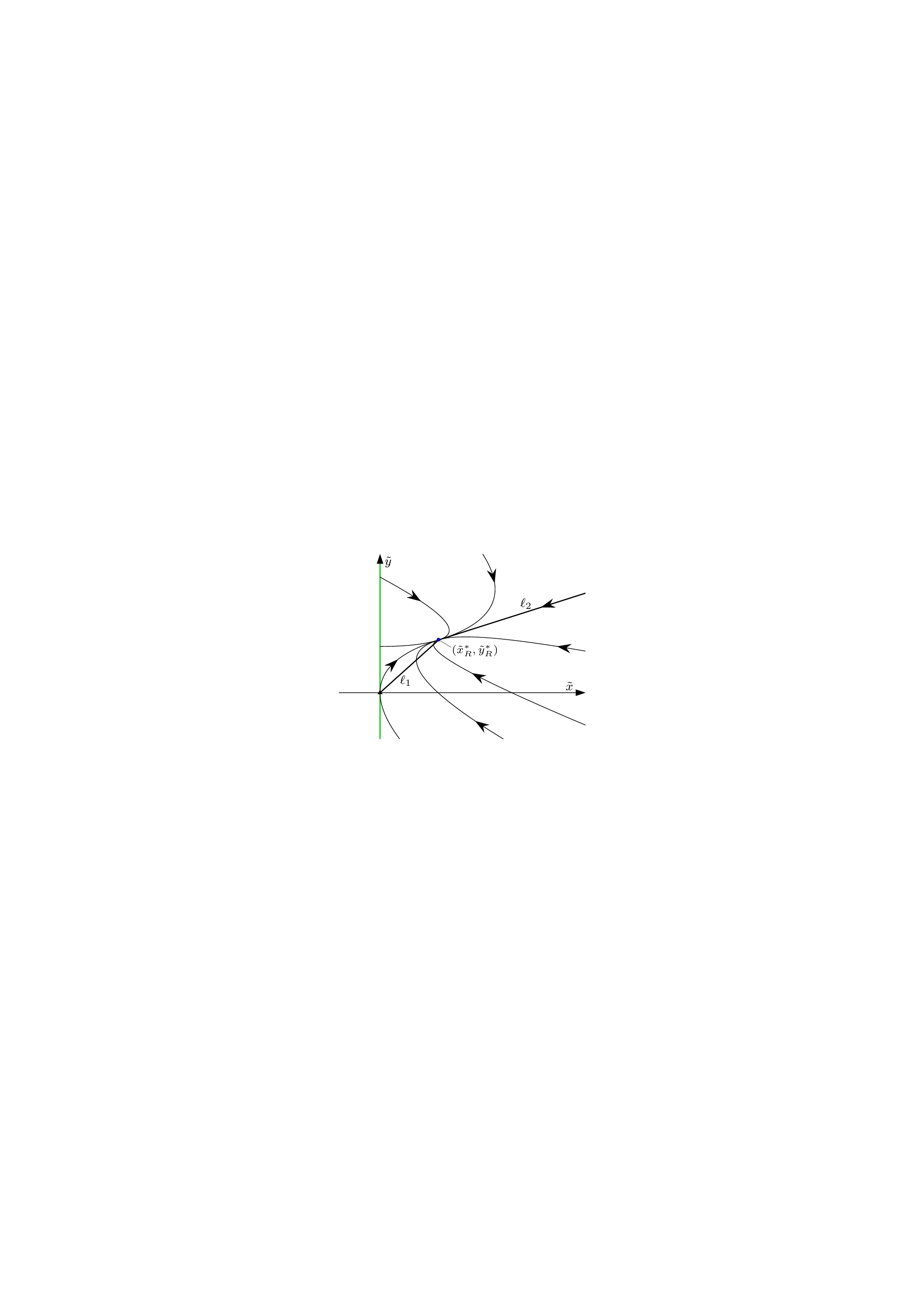}
\caption{
For HLB 2, a sketch of the local dynamics with $\mu < 0$ in scaled coordinates
indicating the line segment $\ell_1$ and the ray $\ell_2$ defined in the text.
\label{fig:schemFocusNode}
} 
\end{center}
\end{figure}

\begin{proof}[Proof of Theorem \ref{th:FilippovGeneric} (HLB 3)]
Write
\begin{equation}
\begin{split}
f_L(x,y;\mu) &= a_{1L} x + a_{2L} y + a_{3L} \mu + \cO \left( \left( |x| + |y| + |\mu| \right)^2 \right), \\
g_L(x,y;\mu) &= b_{1L} x + b_{2L} y + b_{3L} \mu + \cO \left( \left( |x| + |y| + |\mu| \right)^2 \right).
\end{split}
\label{eq:FilippovGenfLgL}
\end{equation}
Then
\begin{align}
\beta &= a_{3L} b_{2L} - a_{2L} b_{3L} \,,
\label{eq:FilippovGenbeta} \\
\gamma &= a_{2L} b_{0R} - a_{0R} b_{2L} \,,
\label{eq:FilippovGengamma}
\end{align}
where $b_{0R} = g_R(0,0;0)$.
Notice $a_{2L} \ne 0$ by \eqref{eq:FilippovEigCondFocus},
and it suffices to assume $a_{2L} > 0$ in view of the replacement $y \mapsto -y$.
Locally the left half-system has a unique equilibrium:
an unstable focus $\left( x_L^*(\mu), y_L^*(\mu) \right)$ with
$x_L^*(\mu) = \frac{-\beta \mu}{\lambda_L^2 + \omega_L^2} + \cO \left( \mu^2 \right)$.
Since $\beta > 0$ the equilibrium is admissible (in $\Omega_L$) for $\mu > 0$.

Since $f_L$ is $C^2$ and $a_{2L} \ne 0$,
by the IFT we have $f_L(0,\zeta_L(\mu);\mu) = 0$ for a unique $C^2$ function
\begin{equation}
\zeta_L(\mu) = -\frac{a_{3L}}{a_{2L}} \,\mu + \cO \left( \mu^2 \right),
\label{eq:FilippovGenzetaL}
\end{equation}
defined in a neighbourhood of $\mu = 0$.
Since $a_{2L} > 0$ and $a_{0R} < 0$,
locally $x=0$ is a crossing region for $y < \zeta_L(\mu)$
and an attracting sliding region for $y > \zeta_L(\mu)$.
On the attracting sliding region orbits evolve according to
$\dot{y} = g_{\rm slide}(y;\mu)$, where $g_{\rm slide}$ is given by \eqref{eq:gslide2}.
Here
\begin{equation}
g_{\rm slide}(y;\mu) = \frac{-\gamma}{a_{0R}} \left( y - \zeta_L(\mu) \right) -
\frac{\beta}{a_{2L}} \,\mu + \cO \left( \left( |y| + |\mu| \right)^2 \right).
\label{eq:FilippovGengslide}
\end{equation}
Pseudo-equilibria satisfy $g_{\rm slide}(y;\mu) = 0$.
Since \eqref{eq:FilippovGengslide} is $C^2$ and $\gamma \ne 0$,
by the IFT there exists a unique pseudo-equilibrium $\left( 0, y_{\rm ps}^*(\mu) \right)$ where
$y_{\rm ps}^*(\mu) = \zeta_L(\mu) - \kappa \mu + \cO \left( \mu^2 \right)$
and $\kappa = \frac{a_{0R} \beta}{a_{2L} \gamma}$.
The pseudo-equilibrium is admissible if $y_{\rm ps}^*(\mu) > \zeta_L(\mu)$.
Since $\kappa > 0$ this is the case when $\mu < 0$.
The pseudo-equilibrium is stable because it belongs to an attracting sliding region and
$\frac{\partial g_{\rm slide}}{\partial y} \left( y_{\rm ps}^*(\mu); \mu \right) =
\frac{-\gamma}{a_{0R}} + \cO(\mu) < 0$.

The following is true in a sufficiently small neighbourhood of $(x,y;\mu) = (0,0;0)$.
The forward orbit of any point,
other than $\left( x_L^*(\mu), y_L^*(\mu) \right)$ in the case $\mu \ge 0$,
eventually reaches the attracting sliding region.
This is because $a_{0R} < 0$, so orbits in $\Omega_R$ reach $x=0$,
and $\left( x_L^*(\mu), y_L^*(\mu) \right)$ is an unstable focus,
so orbits in $\Omega_L$ spiral out and must eventually reach the attracting sliding region.
With $\mu < 0$ orbits on the attracting sliding region
approach the pseudo-equilibrium, hence in this case there are no limit cycles.
With $\mu > 0$ we have $g_{\rm slide}(y;\mu) < 0$ on the attracting sliding region.
Thus sliding orbits reach $y = \zeta_L(\mu)$, then return to the attracting sliding region
after an excursion in $\Omega_L$ forming a unique stable limit cycle.
It remains for us to calculate this limit cycle to ascertain its amplitude and period.

We work in the scaled coordinates
\begin{align}
\tilde{x} &= \frac{x}{\mu}, &
\tilde{y} &= \frac{y - \zeta_L(\mu)}{\mu},
\label{eq:FilippovGenScaling}
\end{align}
assuming $\mu > 0$,
with which the left half-system becomes
\begin{equation}
\begin{split}
\dot{\tilde{x}} &= a_{1L} \tilde{x} + a_{2L} \tilde{y} + \cO(\mu), \\
\dot{\tilde{y}} &= -\frac{\beta}{a_{2L}} + b_{1L} \tilde{x} + b_{2L} \tilde{y} + \cO(\mu).
\end{split}
\label{eq:FilippovGenfLgLScaled}
\end{equation}
On $\tilde{x} = 0$ with $\tilde{y} > 0$,
orbits slide according to $\dot{\tilde{y}} = \tilde{g}_{\rm slide}(\tilde{y};\mu)$, where
\begin{equation}
\tilde{g}_{\rm slide}(\tilde{y};\mu) = \frac{-\gamma}{a_{0R}} \,\tilde{y} -
\frac{\beta}{a_{2L}} + \cO(\mu),
\label{eq:FilippovGengslideScaled}
\end{equation}
until reaching $\tilde{y} = 0$.
Let $P_L(\mu)$ be the $\tilde{y}$-value of the next intersection of the forward orbit
of $(\tilde{x},\tilde{y}) = (0,0)$ with $\tilde{x} = 0$.
That is, $0$ and $P_L(\mu)$ are the departure and arrival $\tilde{y}$-values
of the limit cycle on $\tilde{x} = 0$.
Let $T_L(\mu)$ and $T_{\rm slide}(\mu)$ be the evolution times in $\Omega_L$ and
on $\tilde{x} = 0$ respectively.

By applying Lemma \ref{le:affine} to \eqref{eq:FilippovGenfLgLScaled} with $\mu = 0$
(by rotating coordinates by $180^\circ$) we obtain
\begin{align}
P_L(0) &= \frac{-\beta}{a_{2L} \omega_L} \,\re^{\lambda_L \hat{s}}{\omega_L} \sin(\hat{s}),
\label{eq:FilippovGenPL} \\
T_L(0) &= \frac{\hat{s}}{\omega_L},
\label{eq:FilippovGenTL}
\end{align}
where $\hat{s} = \hat{s} \left( \frac{\lambda_L}{\omega_L} \right)$.
To determine $T_{\rm slide}(0)$, observe that when $\mu = 0$
equation \eqref{eq:FilippovGengslideScaled} is linear with solution
\begin{equation}
\psi_t(\tilde{y}) = \re^{\frac{-\gamma t}{a_{0R}}} \left( \tilde{y} + \kappa \right) - \kappa.
\nonumber
\end{equation}
By definition, $\psi_{T_{\rm slide}(0)} \left( P_L(0) \right) = 0$, thus
\begin{equation}
T_{\rm slide}(0) = \frac{a_{0R}}{\gamma} \,\ln \left( 1 + \frac{P_L(0)}{\kappa} \right).
\label{eq:FilippovGenTslide0}
\end{equation}
Equations \eqref{eq:FilippovGenPL}--\eqref{eq:FilippovGenTslide0} verify \eqref{eq:periodFilippovGen}.
The period of the limit cycle is an $\cO(\mu)$ perturbation of \eqref{eq:periodFilippovGen}
because \eqref{eq:FilippovGenfLgLScaled} and \eqref{eq:FilippovGengslideScaled}
are $\cO(\mu)$ perturbations from their values at $\mu = 0$.
The amplitude of the limit cycle is asymptotically proportional to $\mu$
due to the scaling \eqref{eq:FilippovGenScaling}.
\end{proof}

\begin{proof}[Proof of Theorem \ref{th:sqrt} (HLB 20)]
Without loss of generality assume $a_2 > 0$ (clockwise rotation).
By applying the IFT to
the equation $F(x,y;0;\mu) = (0,0)$, we find that locally
the left half-system has a unique equilibrium:
an unstable focus $\left( x_L^*(\mu), y_L^*(\mu) \right)$ with
$x_L^*(\mu) = \frac{-\beta \mu}{\lambda^2 + \omega^2} + \cO \left( \mu^2 \right)$.
Since $\beta > 0$, the equilibrium is admissible (in $\Omega_L$) for $\mu > 0$.

By applying the IFT to
the equation $F(z^2,y;z;\mu) = (0,0)$, we find that, since $\gamma \ne 0$,
locally the right half-system has a unique equilibrium
$\left( x_R^*(\mu), y_R^*(\mu) \right)$ with
$\sqrt{x_R^*(\mu)} = \frac{-\beta \mu}{\gamma} + \cO \left( \mu^2 \right)$,
valid for $\mu \le 0$ (because $\beta > 0$ and $\gamma > 0$).
By evaluating the Jacobian of \eqref{eq:sqrtODE} with $x > 0$
at $\left( x_R^*(\mu), y_R^*(\mu) \right)$, we find that this equilibrium is a stable node
(for sufficiently small $\mu < 0$).

Since $f$ is $C^2$ and $a_2 \ne 0$,
by the IFT we have $f(0,\zeta(\mu);0;\mu) = 0$ for a unique $C^2$ function
\begin{equation}
\zeta(\mu) = -\frac{a_3}{a_2} \,\mu + \cO \left( \mu^2 \right),
\label{eq:sqrtzeta}
\end{equation}
defined in a neighbourhood of $\mu = 0$.

First consider $\mu > 0$.
As with earlier proofs in this section, to analyse the left half-system we let
\begin{align}
\tilde{x} &= \frac{x}{\mu}, &
\tilde{y} &= \frac{y - \zeta(\mu)}{\mu},
\label{eq:sqrtScalingL}
\end{align}
with which the left half-system becomes
\begin{equation}
\begin{split}
\dot{\tilde{x}} &= a_1 \tilde{x} + a_2 \tilde{y} + \cO(\mu), \\
\dot{\tilde{y}} &= -\frac{\beta}{a_2} + b_1 \tilde{x} + b_2 \tilde{y} + \cO(\mu).
\end{split}
\label{eq:sqrtfLgLScaled}
\end{equation}
Given $\tilde{r} < 0$, let $P_L(\tilde{r};\mu)$ be the $\tilde{y}$-value of the next intersection
of the forward orbit of $(0,\tilde{r})$ with $\tilde{x} = 0$,
and let $T_L(\tilde{r};\mu)$ be the corresponding evolution time.
By Lemma \ref{le:affine} with $b_0 = \frac{\beta}{a_2}$,
\begin{equation}
\begin{split}
P_L(\tilde{r};0) &= -P_{\rm affine} \left( -\tilde{r};
\lambda, \omega, \frac{\beta}{a_2} \right), \\
T_L(\tilde{r};0) &= T_{\rm affine} \left( -\tilde{r};
\lambda, \omega, \frac{\beta}{a_2} \right),
\end{split}
\end{equation}
which are of Type \ref{it:b0poslambdapos} because $b_0 > 0$ and $\lambda > 0$.

To analyse the right half-system we employ the alternate scaling
\begin{align}
\tilde{x} &= \frac{x}{\mu^2}, &
\tilde{y} &= \frac{y - \zeta(\mu)}{\mu},
\label{eq:sqrtScalingR}
\end{align}
with which the right half-system becomes
\begin{equation}
\begin{split}
\mu \dot{\tilde{x}} &= a_4 \sqrt{\tilde{x}} + a_2 \tilde{y} + \cO(\mu), \\
\dot{\tilde{y}} &= -\frac{\beta}{a_2} + b_4 \sqrt{\tilde{x}} + b_2 \tilde{y} + \cO(\mu),
\end{split}
\label{eq:sqrtfRgRScaled}
\end{equation}
Given $\tilde{r} > 0$, let $P_R(\tilde{r};\mu)$ be the $\tilde{y}$-value of the next intersection
of the forward orbit of $(0,\tilde{r})$ with $\tilde{x} = 0$,
and let $T_R(\tilde{r};\mu)$ be the corresponding evolution time.
Since $\tilde{y}$ has the same definition in \eqref{eq:sqrtScalingL} and \eqref{eq:sqrtScalingR},
we can simply let
\begin{equation}
P(\tilde{y};\mu) = P_L \left( P_R(\tilde{y};\mu); \mu \right),
\label{eq:sqrtP}
\end{equation}
be the Poincar\'{e} map, which corresponds to an evolution time of
\begin{equation}
T(\tilde{r};\mu) = T_R(\tilde{r};\mu) + T_L \left( P_R(\tilde{r};\mu); \mu \right).
\label{eq:sqrtT}
\end{equation}

The system \eqref{eq:sqrtfRgRScaled} is {\em slow-fast} \cite{Fe79,Jo95b,Ku15};
$\tilde{x}$ is the fast variable,
$\tilde{y}$ is the slow variable,
and $\mu$ is the time-scale separation parameter.
Taking $\mu \to 0$ produces the {\em reduced system}
\begin{equation}
\begin{split}
0 &= a_4 \sqrt{\tilde{x}} + a_2 \tilde{y}, \\
\dot{\tilde{y}} &= -\frac{\beta}{a_2} + b_4 \sqrt{\tilde{x}} + b_2 \tilde{y}.
\end{split}
\label{eq:reducedSystem}
\end{equation}
Alternatively on the fast time-scale $\tau = \frac{t}{\mu}$
the limit $\mu \to 0$ produces the {\em layer equations}
\begin{equation}
\begin{split}
\frac{d \tilde{x}}{d \tau} &= a_4 \sqrt{\tilde{x}} + a_2 \tilde{y}, \\
\frac{d \tilde{y}}{d \tau} &= 0.
\end{split}
\label{eq:layerEquations}
\end{equation}

The algebraic constraint in \eqref{eq:reducedSystem} defines the critical manifold $\cM_0$:
$\tilde{y} = -\frac{a_4}{a_2} \,\sqrt{\tilde{x}}$,
which belongs to the first quadrant ($\tilde{x}, \tilde{y} > 0$) because $a_4 < 0 < a_2$,
see Fig.~\ref{fig:schemSqrt}.
By \eqref{eq:reducedSystem}, on $\cM_0$ we have
\begin{equation}
\dot{\tilde{y}} = -\frac{\beta}{a_2} + \frac{\gamma}{a_4} \,\tilde{y},
\label{eq:M0ode}
\end{equation}
which has the flow
\begin{equation}
\psi_t(\tilde{y}) = \re^{\frac{\gamma t}{a_4}}
\left( \tilde{y} - \frac{a_4 \beta}{a_2 \gamma} \right) +
\frac{a_4 \beta}{a_2 \gamma}.
\label{eq:M0soln}
\end{equation}

\begin{figure}[b!]
\begin{center}
\includegraphics[width=5.6cm]{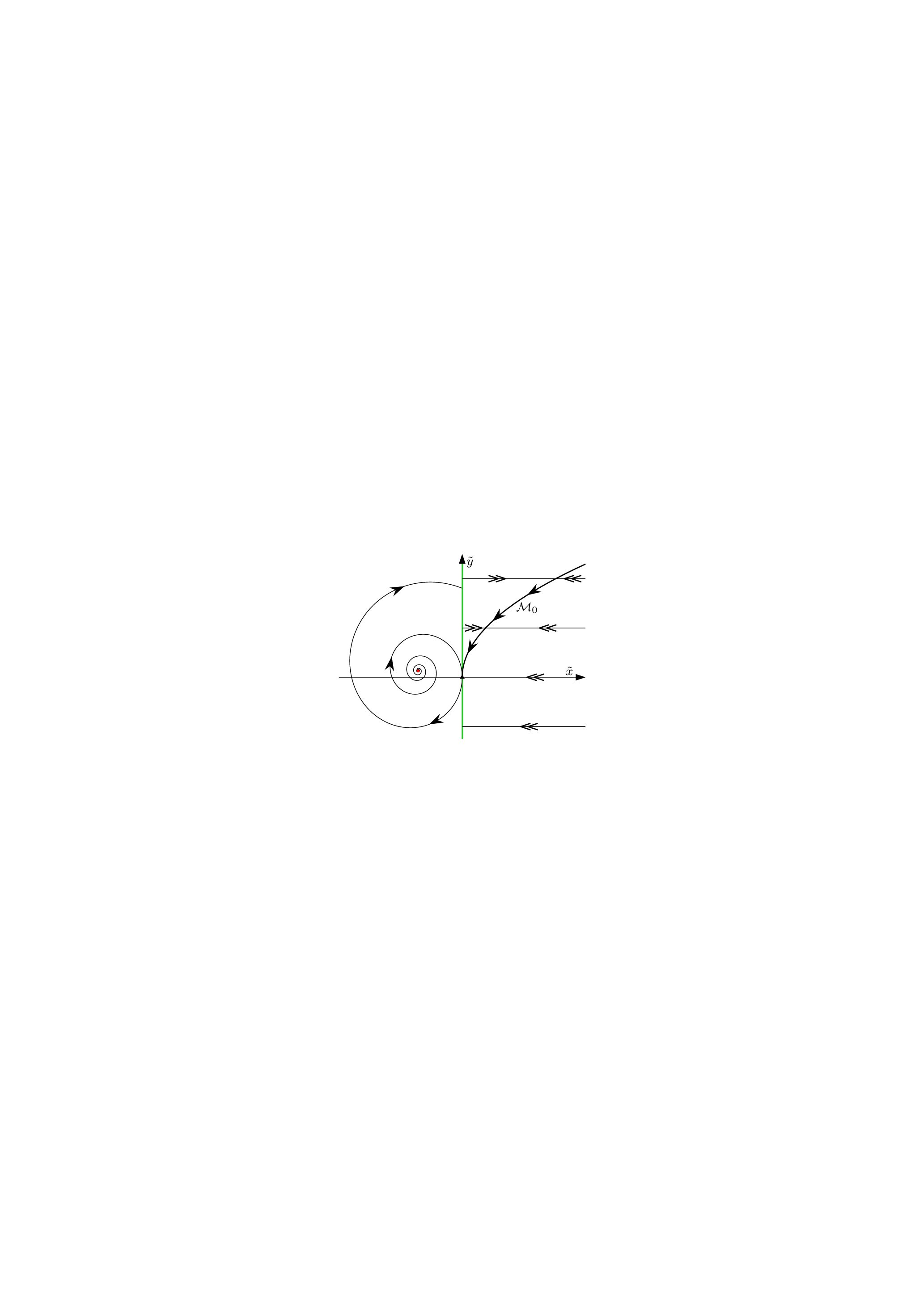}
\caption{
For HLB 20, a sketch of the local dynamics in the limit $\mu \to 0^+$.
Note, $\tilde{x}$ is defined by \eqref{eq:sqrtScalingL} for the left half-system
and by \eqref{eq:sqrtScalingR} for the right half-system.
\label{fig:schemSqrt}
} 
\end{center}
\end{figure}

With small $\mu > 0$ and given $\tilde{r} > 0$, the forward orbit of $(0,\tilde{r})$ approaches $\cM_0$
on the fast time-scale, then evolves near $\cM_0$ on the slow time-scale
until intersecting $\tilde{x} = 0$ near the origin.
In the limit $\mu \to 0$, the orbit arrives at the origin,
thus $\lim_{\mu \to 0} P_R(\tilde{r};\mu) = 0$,
with evolution time $T_R$ satisfying $\psi_{T_R}(\tilde{r}) = 0$.
From \eqref{eq:M0soln} we obtain
\begin{equation}
\lim_{\mu \to 0} T_R(\tilde{r};\mu) = \kappa \ln \left( 1 + \frac{a_2}{\kappa \beta} \,\tilde{r} \right).
\label{eq:sqrtTR0}
\end{equation}
Then
\begin{align}
\lim_{\mu \to 0} P(\tilde{r};\mu)
&= P_L(0;0) \nonumber \\
&= -P_{\rm affine} \left( 0;\lambda,\omega,\frac{\beta}{a_2} \right), \nonumber \\
&= \frac{\beta}{a_2 \omega} \,\re^{\nu \hat{s}(\nu)} \sin \left( \hat{s}(\nu) \right),
\label{eq:sqrtP0}
\end{align}
where $\nu = \frac{\lambda}{\omega}$, see Lemma \ref{le:affine}.
Also by Lemma \ref{le:affine},
\begin{equation}
T_L(0;0) = \frac{\hat{s}(\nu)}{\omega}.
\label{eq:sqrtT0}
\end{equation}
Thus in this limit $P$ has the unique super-stable fixed point \eqref{eq:sqrtP0},
and \eqref{eq:sqrtTR0}--\eqref{eq:sqrtT0} explain the limiting period \eqref{eq:periodsqrt}.

$P_R(\tilde{r};\mu)$ is $C^1$ because we can perform
a multiple time-scale analysis \cite{Ku15} on \eqref{eq:sqrtfRgRScaled}
to calculate its next order contribution.
It follows that $P(\tilde{r};\mu)$ is $C^1$
and thus has a unique stable fixed point $\tilde{r}^*(\mu)$
(equal to \eqref{eq:sqrtP0} in the limit $\mu \to 0$) by the IFT.
Hence, in a neighbourhood of the origin,
\eqref{eq:sqrtODE} has a unique stable limit cycle for sufficiently small $\mu > 0$.
The order of its minimum and maximum $x$ and $y$-values
are explained by the scalings \eqref{eq:sqrtScalingL} and \eqref{eq:sqrtScalingR}.

Second consider $\mu < 0$.
With the scaling
\begin{align}
\tilde{x} &= \frac{x}{\mu^2}, &
\tilde{y} &= \frac{y - \zeta(\mu)}{|\mu|},
\label{eq:sqrtScalingR2}
\end{align}
the right half-system becomes
\begin{equation}
\begin{split}
|\mu| \dot{\tilde{x}} &= a_4 \sqrt{\tilde{x}} + a_2 \tilde{y} + \cO(\mu), \\
\dot{\tilde{y}} &= \frac{\beta}{a_2} + b_4 \sqrt{\tilde{x}} + b_2 \tilde{y} + \cO(\mu).
\end{split}
\label{eq:sqrtfRgRScaled2}
\end{equation}
The system \eqref{eq:sqrtfRgRScaled2}
has a unique equilibrium: a stable node
$\left( \tilde{x}_R^*, \tilde{y}_R^* \right)
= \left( \frac{\beta^2}{\gamma^2}, -\frac{a_4 \beta}{a_2 \gamma} \right) + \cO(\mu)$.
It is straight-forward to see that for any $\tilde{r} > 0$,
the forward orbit of $\left( \tilde{x}, \tilde{y} \right) = \left( 0, \tilde{r} \right)$
rapidly approaches a slow manifold, then slowly approaches $\left( \tilde{x}_R^*, \tilde{y}_R^* \right)$.
Consequently, locally, \eqref{eq:sqrtODE} has no limit cycle when $\mu < 0$.
\end{proof}

\section{Proofs for slipping foci and folds}
\setcounter{equation}{0}
\setcounter{figure}{0}
\setcounter{table}{0}
\label{app:slipping}

\begin{proof}[Proof of Theorem \ref{th:slippingFocusFocus} (HLB 5)]
We first compute regular equilibria and sliding motion to verify part (i) of Theorem \ref{th:slippingFocusFocus},
and then construct and analyse a Poincar\'e map to verify part (ii).

Write
\begin{equation}
\begin{split}
f_J(x,y;\mu) &= a_{1J} x + a_{2J} y + a_{3J} \mu + \co \left( |x| + |y| + |\mu| \right), \\
g_J(x,y;\mu) &= b_{1J} x + b_{2J} y + b_{3J} \mu + \co \left( |x| + |y| + |\mu| \right),
\end{split}
\label{eq:slippingFocusFocusfJgJ}
\end{equation}
for each $J \in \{ L,R \}$.
Notice $a_{2L} > 0$, $a_{2R} > 0$, and
\begin{equation}
\gamma = a_{2L} b_{2R} - a_{2R} b_{2L} \,.
\label{eq:slippingFocusFocusgamma2}
\end{equation}

The boundary equilibria $(0,\xi_L(\mu))$ and $(0,\xi_R(\mu))$
are clockwise-rotating foci because $a_{2L} > 0$ and $a_{2R} > 0$.
Thus when $\xi_L(\mu) < \xi_R(\mu)$, as is the case for $\mu < 0$ because $\beta > 0$ by assumption,
$\Gamma$ is an attracting sliding region, see Fig.~\ref{fig:schemSlippingFocusFocus2}.
Conversely when $\xi_L(\mu) < \xi_R(\mu)$, as is the case for $\mu > 0$,
$\Gamma$ is a repelling sliding region.
By evaluating \eqref{eq:gslide2} we find we can write the sliding vector field as
\begin{equation}
g_{\rm slide}(y;\mu) = \frac{\left( y - \xi_L(\mu) \right) \left( y - \xi_R(\mu) \right) h(y;\mu)}
{f_L(0,y;\mu) - f_R(0,y;\mu)},
\label{eq:slippingFocusFocusydotslide2}
\end{equation}
where $\lim_{(y;\mu) \to (0,0)} h(y;\mu) = \gamma$.
With $\mu < 0$, in $\Gamma$ we have
$f_L > 0$, $f_R < 0$, and $\xi_L < y < \xi_R$.
Thus for sufficiently small values of $y$ and $\mu$,
if $\gamma < 0$ then $g_{\rm slide} > 0$ and so $(0,\xi_R(\mu))$ is stable,
while if $\gamma > 0$ then $g_{\rm slide} < 0$ and $(0,\xi_L(\mu))$ is stable.
This verifies part (i).

Now given $r > \xi_R(\mu)$, 
consider the forward orbit of $(x,y) = (0,r)$ that immediately enters $x > 0$.
Let $P_R(r;\mu)$ be the $y$-value of the next intersection of
this orbit with $x = 0$, and let $T_R(r;\mu)$ be the corresponding evolution time.
Similarly given $r < \xi_L(\mu)$,
consider the forward orbit of $(x,y) = (0,r)$ that immediately enters $x < 0$.
Let $P_L(r;\mu)$ be the $y$-value of the next intersection of
this orbit with $x = 0$, and let $T_L(r;\mu)$ be the corresponding evolution time.

Via a change of coordinates that translates $(0,\xi_R(\mu))$ to the origin,
Lemma \ref{le:focus} gives
\begin{equation}
P_R(r;\mu) = \xi_R(\mu) - \re^{\frac{\lambda_R \pi}{\omega_R}} \left( r - \xi_R(\mu) \right) + \co \left( |r| + |\mu| \right),
\label{eq:slippingFocusFocusPR}
\end{equation}
and $T_R(r;\mu) \to \frac{\pi}{\omega_R}$ as $r \to 0$
(here the ODEs are only $C^1$ so we only get the leading order terms of \eqref{eq:focusP}--\eqref{eq:focusT}).
Similarly
\begin{equation}
P_L(r;\mu) = \xi_L(\mu) - \re^{\frac{\lambda_L \pi}{\omega_L}} \left( r - \xi_L(\mu) \right) + \co \left( |r| + |\mu| \right),
\label{eq:slippingFocusFocusPL}
\end{equation}
and $T_L(r;\mu) \to \frac{\pi}{\omega_L}$ as $r \to 0$.

To accommodate the case $\mu < 0$, let
$\check{r}(\mu) = P_R^{-1}(\xi_L(\mu);\mu) = \xi_R(\mu) - \beta \,\re^{\frac{-\lambda_R \pi}{\omega_R}} \mu + \co(\mu)$.
Then $P = P_L \circ P_R$ is well-defined and $C^1$ (by Lemma \ref{le:mapSmoothness})
for small $r > \check{r}(\mu)$ in the case $\mu < 0$,
and small $r > \xi_R(\mu)$ in the case $\mu > 0$.
By composing \eqref{eq:slippingFocusFocusPR} and \eqref{eq:slippingFocusFocusPL} we obtain
\begin{align}
P(r;\mu) &= \xi_R(\mu) + \left( 1 + \re^{\frac{\lambda_L \pi}{\omega_L}} \right) \beta \mu \nonumber \\
&\quad+ \re^{\alpha \pi} \left( r - \xi_R(\mu) \right) + \co \left( |r| + |\mu| \right).
\label{eq:slippingFocusFocusP}
\end{align}
The Poincar\'e map $P$ is not defined for all $(r;\mu)$ near $(0;0)$,
but in view of \eqref{eq:slippingFocusFocusP} we can extend $P$ so that it is $C^1$
throughout a neighbourhood of $(r;\mu) = (0;0)$.
This allows to formally apply the IFT to solve $P(r;\mu) = r$,
and from this we obtain the unique fixed point
\begin{equation}
r^*(\mu) = \xi_R(\mu) + \frac{\Big( 1 + \re^{\frac{\lambda_L \pi}{\omega_L}} \Big) \beta}
{1 - \re^{\alpha \pi}} \,\mu + \co(\mu),
\label{eq:slippingFocusFocusyStar}
\end{equation}
assuming $\alpha \ne 0$.

First suppose $\alpha < 0$.
If $\mu < 0$ then $r^*(\mu) < \xi_R(\mu)$, which is outside the domain of $P$.
Hence $P$ has no fixed points and, locally, \eqref{eq:FilippovBEB} has no limit cycle.
If instead $\mu > 0$ then $r^*(\mu) > \xi_R(\mu)$, so $P$ has a unique fixed point
and \eqref{eq:FilippovBEB} has a unique limit cycle.
The limit cycle is stable because
$\lim_{\mu \to 0} \frac{\partial P}{\partial r}\left( r^*(\mu); \mu \right) = \re^{\alpha \pi} \in (0,1)$.
The limit cycle intersects $x=0$ at $r^*(\mu)$ and
\begin{equation}
P_R \left( r^*(\mu); \mu \right) = \xi_L(\mu)
- \frac{\Big( 1 + \re^{\frac{\lambda_R \pi}{\omega_R}} \Big) \beta}
{1 - \re^{\alpha \pi}} \,\mu + \co(\mu),
\nonumber
\end{equation}
and consequently its minimum and maximum $x$ and $y$-values are asymptotically proportional to $|\mu|$.
Its period is $T_L \left( P_R \left( r^*(\mu); \mu \right); \mu \right) + T_R \left( r^*(\mu); \mu \right)$
which limits to $\frac{\pi}{\omega_L} + \frac{\pi}{\omega_R}$ as $\mu \to 0$.
 
Finally suppose $\alpha > 0$.
If $\mu < 0$ then \eqref{eq:slippingFocusFocusyStar} implies $r^*(\mu) > \check{r}(\mu)$.
Thus $P$ has a unique fixed point and \eqref{eq:FilippovBEB} has a unique limit cycle.
This limit cycle is unstable because $\re^{\alpha \pi} > 1$
and the above properties hold for its amplitude and period.
If instead $\mu > 0$ then $r^*(\mu) < \xi_R(\mu)$ and so \eqref{eq:FilippovBEB} has no limit cycle.
\end{proof}

\begin{proof}[Proof of Theorem \ref{th:slippingFocusFold} (HLB 6)]
Here we write
\begin{equation}
\begin{split}
f_L(x,y;\mu) &= a_{1L} x + a_{2L} y + \co \left( |x| + |y| + |\mu| \right), \\
g_L(x,y;\mu) &= b_{1L} x + b_{2L} y + \co \left( |x| + |y| + |\mu| \right),
\end{split}
\label{eq:slippingFocusFoldfLgL}
\end{equation}
where $a_{2L} > 0$, and
\begin{equation}
\begin{split}
f_R(x,y;\mu) &= a_{1R} x + a_{2R} y + a_{3R} \mu \\
&\quad+ \co \left( |x| + |y| + |\mu| \right), \\
g_R(x,y;\mu) &= b_{0R} + b_{1R} x + b_{2R} y + b_{3R} \mu \\
&\quad+ \co \left( |x| + |y| + |\mu| \right),
\end{split}
\label{eq:slippingFocusFoldfRgR}
\end{equation}
where $a_{2R} > 0$, $a_{3R} = \beta$, and $b_{0R} < 0$.

The $y$-value of the fold is $\zeta_R(\mu) = -\frac{\beta}{a_{2R}} \,\mu + \co(\mu)$.
Since $\beta > 0$, for $\mu < 0$ we have $\zeta_R(\mu) > 0$
and consequently $\Gamma$ is an attracting sliding region.
Conversely for $\mu > 0$ we have $\zeta_R(\mu) < 0$
and $\Gamma$ is a repelling sliding region.
Through \eqref{eq:gslide2} we obtain
\begin{equation}
g_{\rm slide}(y;\mu) = \frac{y h(y;\mu)}{f_L(0,y;\mu) - f_R(0,y;\mu)},
\label{eq:slippingFocusFoldydotslide}
\end{equation}
where $\lim_{(y;\mu) \to (0,0)} h(y;\mu) = a_{2L} b_{0R}$.
Thus with $\mu < 0$, on $\Gamma$ we have $y > 0$
and so $g_{\rm slide}(y;\mu) < 0$ (because $a_{2L} b_{0R} < 0$).
Therefore sliding orbits approach the origin and so the origin is a stable stationary solution.
With instead $\mu > 0$ the origin is unstable because $f_R(0,0;\mu) > 0$.

Define the return maps $P_R$ and $P_L$ and return times $T_R$ and $T_L$
as in the proof of Theorem \ref{th:slippingFocusFocus}, see Fig.~\ref{fig:schemSlippingFocusFold}.
Via a change of coordinates that translates $(0,\zeta_R(\mu))$ to the origin,
Lemma \ref{le:fold} gives
\begin{equation}
P_R(r;\mu) = 2 \zeta_R(\mu) - r + \co \left( |r| + |\mu| \right),
\label{eq:slippingFocusFoldPR}
\end{equation}
and $T_R(r;\mu) = \cO(1)$.
Also Lemma \ref{le:focus} gives
\begin{equation}
P_L(r;\mu) = -\re^{\frac{\lambda_L \pi}{\omega_L}} \,r + \co \left( |r| + |\mu| \right),
\label{eq:slippingFocusFoldPL}
\end{equation}
and $T_L(r;\mu) \to \frac{\pi}{\omega_L}$ as $r \to 0$.

\begin{figure}[t!]
\begin{center}
\includegraphics[width=8.4cm]{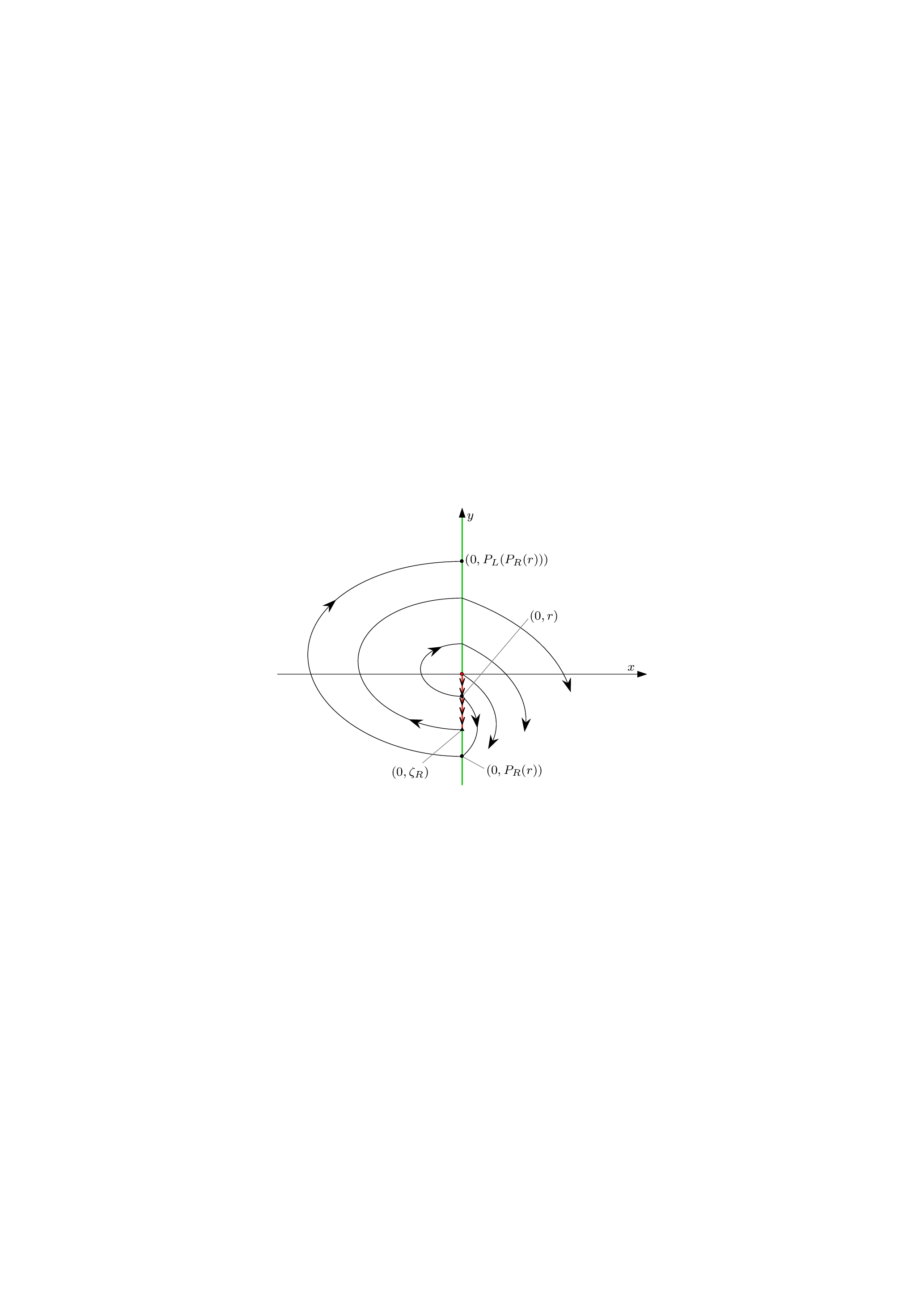}
\caption{
A phase portrait illustrating the Poincar\'e map $P = P_L \circ P_R$ for HLB 6.
\label{fig:schemSlippingFocusFold}
} 
\end{center}
\end{figure}

Let $\check{r}(\mu) = P_R^{-1}(0;\mu) = 2 \zeta_R(\mu) + \co(\mu)$.
The Poincar\'e map $P = P_L \circ P_R$ is well-defined and $C^1$ for $r > \check{r}(\mu)$ in the case $\mu < 0$,
and for $r > \zeta_R(\mu)$ in the case $\mu > 0$.
By composing \eqref{eq:slippingFocusFoldPR} and \eqref{eq:slippingFocusFoldPL} we obtain
\begin{equation}
P(r;\mu) = \re^{\frac{\alpha \pi}{\omega_L}} \left( r + \frac{2 \beta}{a_{2R}} \,\mu \right) + \co \left( |r| + |\mu| \right).
\label{eq:slippingFocusFoldP}
\end{equation}
By using the IFT to solve for fixed points of $P$
(as in the proof of Theorem \ref{th:slippingFocusFocus}) we obtain the locally unique value
\begin{equation}
r^*(\mu) = \frac{2 \beta}{a_{2R} \left( \re^{\frac{-\alpha \pi}{\omega_L}} - 1 \right)} \,\mu + \co(\mu),
\label{eq:slippingFocusFoldyStar}
\end{equation}
assuming $\alpha \ne 0$.
The proof is completed by determining the admissibility and stability $r^*(\mu)$
as in the proof of Theorem \ref{th:slippingFocusFocus} (we omit these details for brevity).
Note that the maximum $x$-value attained by the orbit segment associated with $P_R(r;\mu)$
is asymptotically proportional to $\left( r - \zeta_R(\mu) \right)^2$
and for this reason the maximum $x$-value of the limit cycle is
asymptotically proportional to $\mu^2$.
\end{proof}

\begin{proof}[Proof of Theorem \ref{th:slippingTwoFold} (HLB 7)]
We write
\begin{equation}
\begin{split}
f_J(x,y;\mu) &= a_{1J} x + a_{2J} y + a_{3J} \mu \\
&\quad+ \cO \left( \left( |x| + |y| + |\mu| \right)^2 \right), \\
g_J(x,y;\mu) &= b_{0J} + b_{1J} x + b_{2J} y + b_{3J} \mu \\
&\quad+ \cO \left( \left( |x| + |y| + |\mu| \right)^2 \right),
\end{split}
\label{eq:slippingTwoFoldfJgJ}
\end{equation}
where
\begin{align}
a_{2L} &> 0, &
a_{2R} &> 0, &
b_{0L} &> 0, &
b_{0R} &< 0.
\label{eq:slippingTwoFoldSignConditions}
\end{align}
By the IFT, locally we have
$f_J(0,\zeta_J(\mu);\mu) = 0$ for a unique $C^2$ function
$\zeta_J(\mu) = -\frac{a_{3J}}{a_{2J}} \,\mu + \cO \left( \mu^2 \right)$.
The folds $(0,\zeta_L(\mu))$ and $(0,\zeta_R(\mu))$ are the endpoints of $\Gamma$.
Notice
\begin{equation}
\zeta_L(\mu) - \zeta_R(\mu) = \beta \mu + \cO \left( \mu^2 \right).
\label{eq:slippingTwoFoldxiLminusxiR}
\end{equation}
Since $\beta > 0$, for $\mu < 0$ we have $\zeta_L(\mu) < \zeta_R(\mu)$
and so $\Gamma$ is an attracting sliding region.
With instead $\mu > 0$ we have $\zeta_L(\mu) > \zeta_R(\mu)$
and $\Gamma$ is a repelling sliding region.

From \eqref{eq:gslide2} we obtain
\begin{widetext}
\begin{equation}
g_{\rm slide}(y;\mu) =
\frac{-\left( a_{2R} b_{0L} - a_{2L} b_{0R} \right) y
+ \left( a_{3L} b_{0R} - a_{3R} b_{0L} \right) \mu + \cO \left( \left( |y| + |\mu| \right)^2 \right)}
{f_L(0,y;\mu) - f_R(0,y;\mu)}.
\label{eq:slippingTwoFoldydotslide}
\end{equation}
\end{widetext}
Solving $\dot{y}_{\rm slide}(y;\mu) = 0$ reveals (by the IFT)
a unique pseudo-equilibrium at $\left( 0, y_{\rm eq}(\mu) \right)$ where
\begin{equation}
y_{\rm eq}(\mu) = \frac{a_{3L} b_{0R} - a_{3R} b_{0L}}{a_{2R} b_{0L} - a_{2L} b_{0R}} \,\mu + \cO \left( \mu^2 \right).
\label{eq:slippingTwoFoldyEq}
\end{equation}
We can write \eqref{eq:slippingTwoFoldyEq} as the convex combination
\begin{equation}
y_{\rm eq}(\mu) = \left( 1 - s(\mu) \right) \zeta_L(\mu) + s(\mu) \zeta_R(\mu),
\nonumber
\end{equation}
where
\begin{equation}
s(\mu) = \frac{a_{2R} b_{0L}}{a_{2R} b_{0L} - a_{2L} b_{0R}}.
\nonumber
\end{equation}
Notice \eqref{eq:slippingTwoFoldSignConditions} implies $s(\mu) \in (0,1)$,
thus $\left( 0, y_{\rm eq}(\mu) \right)$ lies within the sliding region $\Gamma$.
Therefore $\left( 0, y_{\rm eq}(\mu) \right)$ is admissible and is
the unique stationary solution of \eqref{eq:FilippovBEB}.
The pseudo-equilibrium $\left( 0, y_{\rm eq}(\mu) \right)$ is unstable when $\mu > 0$ because here $\Gamma$ is repelling;
it is stable when $\mu < 0$ because
here $\Gamma$ is attracting and
\begin{equation}
\frac{\partial g_{\rm slide}}{\partial y} \left( y_{\rm eq}(\mu); \mu \right)
= -\frac{a_{2R} b_{0L} - a_{2L} b_{0R}}{f_L \left( 0, y_{\rm eq}(\mu); \mu \right) - f_R \left( 0, y_{\rm eq}(\mu); \mu \right)}
\nonumber
\end{equation}
is negative.

Define the return maps $P_R$ and $P_L$ and return times $T_R$ and $T_L$
as in the proof of Theorem \ref{th:slippingFocusFocus}.
Via a change of coordinates that translates $(0,\zeta_R(\mu))$ to the origin,
Lemma \ref{le:fold} gives
\begin{equation}
P_R(r;\mu) = 2 \zeta_R(\mu) - r + \frac{2 \sigma_R}{3} \left( r - \zeta_R(\mu) \right)^2 + \co \left( \left( |r| + |\mu| \right)^2 \right),
\label{eq:slippingTwoFoldPR}
\end{equation}
where $\sigma_R = \sigma_{\rm fold}[f_R(x,y;0),g_R(x,y;0)]$
(here we are evaluating \eqref{eq:foldsigma} using $f_R(x,y;0)$ and $g_R(x,y;0)$
in place of the functions $f(x,y)$ and $g(x,y)$ of \S\ref{sec:lemmas}).
Also $T_R(r;\mu) = \frac{-2}{b_{0R}} \,r + \cO(2)$.
Similarly
\begin{equation}
P_L(r;\mu) = 2 \zeta_L(\mu) - r + \frac{2 \sigma_L}{3} \left( r - \zeta_L(\mu) \right)^2 + \co \left( \left( |r| + |\mu| \right)^2 \right),
\label{eq:slippingTwoFoldPL}
\end{equation}
where $\sigma_L = \sigma_{\rm fold}[f_L(x,y;0),g_L(x,y;0)]$
and $T_L(r;\mu) = \frac{-2}{b_{0L}} \,r + \cO(2)$.

Locally the Poincar\'e map $P = P_L \circ P_R$
is well-defined and $C^2$ for $r > \check{r}(\mu) = P_R^{-1}(\zeta_L(\mu);\mu)$ in the case $\mu < 0$
and for $r > \zeta_R(\mu)$ in the case $\mu > 0$.
By composing \eqref{eq:slippingTwoFoldPR} and \eqref{eq:slippingTwoFoldPL} we obtain
\begin{equation}
P(r;\mu) = r + 2 \beta \mu + \frac{2 \alpha}{3} \,r^2 + \kappa_1 \mu r + \kappa_2 \mu^2 + \co \left( \left( |r| + |\mu| \right)^2 \right),
\label{eq:slippingTwoFoldP}
\end{equation}
where we have substituted \eqref{eq:slippingTwoFoldxiLminusxiR} and $\alpha = \sigma_L - \sigma_R$.
Formulas for $\kappa_1, \kappa_2 \in \mathbb{R}$ will not be needed.
We can extend $P$ so that it is $C^2$ throughout a neighbourhood of $(r;\mu) = (0;0)$,
then apply the IFT to solve $P(r;\mu) = r$, not for $r$,
but for $\mu$ (this is possible because $\frac{\partial P}{\partial \mu}(0;0) = 2 \beta \ne 0$).
This yields a unique $C^2$ function $\mu = h(r) = -\frac{\alpha}{3 \beta} \,r^2 + \co \left( r^2 \right)$.

Now suppose $\alpha < 0$.
Then $h(r) \ge 0$, so if $\mu < 0$ then $P$ has no fixed point and \eqref{eq:FilippovBEB} has no limit cycle,
while if $\mu > 0$ then $P$ has a unique fixed point
\begin{equation}
r^*(\mu) = \sqrt{\frac{-3 \beta \mu}{\alpha}} + \co \left( \sqrt{\mu} \right),
\label{eq:slippingTwoFoldyStar}
\end{equation}
and \eqref{eq:FilippovBEB} has a unique limit cycle.
This limit cycle is stable because
$\frac{\partial P}{\partial r} \left( r^*(\mu); \mu \right) = 1 + \frac{2 \alpha r^*(\mu)}{3} + \cO(\mu)$
has modulus less than $1$.
Its maximum $y$-value is $r^*(\mu)$,
while its maximum $x$-value is asymptotically proportional to $\mu$ because
the maximum $x$-value attained by the orbit segment associated with $P_R(r;\mu)$
is asymptotically proportional to $\left( r - \zeta_R(\mu) \right)^2$.
The minimum $x$ and $y$-values of the limit cycle evidently have the same asymptotic behaviour.
The period of the limit cycle is
$T_L \left( P_R \left( r^*(\mu); \mu \right); \mu \right) + T_R \left( r^*(\mu); \mu \right)
= \left( \frac{2}{b_{0L}} - \frac{2}{b_{0R}} \right) r^*(\mu) + \cO(\mu)$
which verifies \eqref{eq:slippingTwoFoldPeriod}.
The result in the case $\alpha > 0$ follows similarly.
\end{proof}

\section{Proofs for fixed foci and folds}
\setcounter{equation}{0}
\setcounter{figure}{0}
\setcounter{table}{0}
\label{app:fixed}

For the proofs of Theorems \ref{th:fixedFocusFocus}--\ref{th:fixedTwoFold} we can assume without loss of generality
that both half-systems involve clockwise rotation and use the following definition for the components
of the Poincar\'e map $P = P_L \circ P_R$, see Fig.~\ref{fig:schemPoinComposition}.
Given $r > 0$, let $P_R(r;\mu)$ denote the $y$-value at which
the forward orbit of $(x,y) = (0,r)$ next intersects $x = 0$, and let $T_R(r;\mu)$ be the corresponding evolution time.
Similarly given $r < 0$, let $P_L(r;\mu)$ denote the $y$-value at which
the forward orbit of $(x,y) = (0,r)$ next intersects $x = 0$, and let $T_L(r;\mu)$ be the corresponding evolution time.

\begin{proof}[Proof of Theorem \ref{th:fixedFocusFocus} (HLB 8)]
From Lemma \ref{le:focus} we obtain
\begin{align}
P_R(r;\mu) &= -\re^{\frac{\lambda_R \pi}{\omega_R}} r
+ \re^{\frac{\lambda_R \pi}{\omega_R}} \left( \re^{\frac{\lambda_R \pi}{\omega_R}} + 1 \right) \chi_R r^2 + \co \left( r^2 \right),
\nonumber \\
P_L(r;\mu) &= -\re^{\frac{\lambda_L \pi}{\omega_L}} r
+ \re^{\frac{\lambda_L \pi}{\omega_L}} \left( \re^{\frac{\lambda_L \pi}{\omega_L}} + 1 \right) \chi_L r^2 + \co \left( r^2 \right),
\nonumber
\end{align}
where we have omitted `focus' from the subscripts of $\chi$ for brevity.
By composing these
and substituting $\Lambda(\mu) = \beta \mu + \cO \left( \mu^2 \right)$
and $\chi_L(0) - \chi_R(0) = \alpha$, we obtain
\begin{equation}
P(r;\mu) = r + \beta \pi \mu r + \alpha \left( \re^{\frac{\lambda_R \pi}{\omega_R}} + 1 \right) r^2
+ \co \left( \left( |r| + |\mu| \right)^2 \right).
\label{eq:fixedFocusFocusP3}
\end{equation}
By construction, $r=0$ is a fixed point of $P$ for all sufficiently small values of $\mu$.
This corresponds to the boundary equilibrium at the origin.
We have $\frac{\partial P}{\partial r}(0;\mu) = 1 + \beta \pi \mu + \co(\mu)$,
thus if $\mu < 0$ then $\frac{\partial P}{\partial r}(0;\mu) < 1$ in a neighbourhood of $r=0$ and so the origin is stable,
while if $\mu > 0$ then $\frac{\partial P}{\partial r}(0;\mu) > 1$ in a neighbourhood of $r=0$ and the origin is unstable.

Since $P$ is $C^2$ and $P(0;\mu) = 0$ for all $\mu$,
the function $D(r;\mu) = \frac{1}{r} \left( P(r;\mu) - r \right)$ is $C^1$.
Moreover we can extend its domain to a neighbourhood of $(r;\mu) = (0;0)$.
This allows us to use the IFT to solve $D(r;\mu) = 0$ for $r$
(because $\frac{\partial D}{\partial r}(0;0) \ne 0$ assuming $\alpha \ne 0$) to obtain
\begin{equation}
r^*(\mu) = \frac{-\beta \pi}{\alpha \Big( \re^{\frac{\lambda_R \pi}{\omega_R}} + 1 \Big)} \,\mu + \co(\mu).
\label{eq:fixedFocusFocusyStar}
\end{equation}
This is valid (i.e.~$r^*(\mu) > 0$) when $\alpha \mu < 0$ and corresponds to a limit cycle of \eqref{eq:FilippovBEB}.
Thus, locally, \eqref{eq:FilippovBEB} has no limit cycle if $\alpha \mu > 0$
and a unique limit cycle if $\alpha \mu < 0$.
Since $\frac{\partial P}{\partial r} \left( r^*(\mu);\mu \right) = 1 - \beta \pi \mu + \co(\mu)$,
the stability of the limit cycle is opposite to that of the origin.
The statements regarding the size of the limit cycle follow from \eqref{eq:fixedFocusFocusyStar}.
Finally, Lemma \ref{le:focus} also gives
$T_R(r;\mu) = \frac{\pi}{\omega_R(\mu)} + \cO(r)$
and $T_L(r;\mu) = \frac{\pi}{\omega_L(\mu)} + \cO(r)$
which explain \eqref{eq:fixedFocusFocusPeriod}.
\end{proof}

\begin{proof}[Proof of Theorem \ref{th:fixedFocusFold} (HLB 9)]
From Lemmas \ref{le:focus} and \ref{le:fold} we obtain
\begin{align}
P_R(r;\mu) &= -r + \frac{2 \sigma_{{\rm fold},R}}{3} \,r^2 + \co \left( r^2 \right), \label{eq:fixedFocusFoldPR} \\
P_L(r;\mu) &= -\re^{\frac{\lambda_L \pi}{\omega_L}} r
+ \re^{\frac{\lambda_L \pi}{\omega_L}} \left( \re^{\frac{\lambda_L \pi}{\omega_L}} + 1 \right) \chi_{{\rm focus},L} \,r^2 + \co \left( r^2 \right),
\label{eq:fixedFocusFoldPL}
\end{align}
Since $\lambda_L(\mu) = \beta \mu + \co(\mu)$,
we can replace each instance of $\re^{\frac{\lambda_L \pi}{\omega_L}}$ in \eqref{eq:fixedFocusFoldPL} with
$1 + \frac{\beta \pi}{\omega_L(0)} \,\mu + \co(\mu)$.
Then by composing \eqref{eq:fixedFocusFoldPR} and \eqref{eq:fixedFocusFoldPL} we obtain
\begin{equation}
P(r;\mu) = r + \frac{\beta \pi}{\omega_L(0)} \,\mu r + 2 \alpha r^2 + \co \left( \left( |r| + |\mu| \right)^2 \right),
\label{eq:fixedFocusFoldP3}
\end{equation}
where we have also used $\chi_{{\rm focus},L}(0) - \frac{\sigma_{{\rm fold},R}(0)}{3} = \alpha$.

As in the proof of Theorem \ref{th:fixedFocusFocus},
$P$ has at most two fixed points: $r = 0$, corresponding to the origin, and
\begin{equation}
r^*(\mu) = \frac{-\beta \pi}{2 \alpha \omega_L(0)} \,\mu + \co(\mu),
\label{eq:fixedFocusFoldyStar}
\end{equation}
which is valid when $\alpha \mu < 0$ and corresponds to a limit cycle.
The stability of the origin and limit cycle can be determined from
the sign of $\frac{\partial P}{\partial r} - 1$, as in the previous proof.
The extremal $x$ and $y$ values of the limit cycle are asymptotically proportional to $|\mu|$ by \eqref{eq:fixedFocusFoldyStar},
except the maximum $x$-value is asymptotically proportional to $\mu^2$
due to the fold of the right half-system.
Finally, Lemmas \ref{le:focus} and \ref{le:fold} also give
$T_R(r;\mu) = \frac{-2}{g_R(0,0;\mu)} \,r + \cO \left( r^2 \right)$
and $T_L(r;\mu) = \frac{\pi}{\omega_L(\mu)} + \cO(r)$,
and the sum of these produces \eqref{eq:fixedFocusFoldPeriod}.
\end{proof}

\begin{proof}[Proof of Theorem \ref{th:fixedTwoFold} (HLB 10)]
Lemma \ref{le:fold} gives
\begin{align}
P_R(r;\mu) &= -r + \frac{2 \sigma_R}{3} \,r^2 - \frac{4 \sigma_R^2}{9} r^3
+ \frac{2 \chi_R}{15} r^4 + \co \left( r^4 \right), \nonumber \\
P_L(r;\mu) &= -r + \frac{2 \sigma_L}{3} \,r^2 - \frac{4 \sigma_L^2}{9} r^3
+ \frac{2 \chi_L}{15} r^4 + \co \left( r^4 \right), \nonumber
\end{align}
where we have omitted `fold' from the subscripts of $\sigma$ and $\chi$ for brevity.
By composing these we obtain
\begin{widetext}
\begin{equation}
P(r;\mu) = r + \frac{2}{3}(\sigma_L - \sigma_R) r^2 + \frac{4}{9}(\sigma_L - \sigma_R)^2 r^3
+ \left( \frac{2}{15}(\chi_L - \chi_R) - \frac{8}{9} \sigma_L \sigma_R (\sigma_L - \sigma_R) \right) r^4
+ \co \left( r^4 \right).
\label{eq:fixedTwoFoldP2}
\end{equation}
\end{widetext}
By then substituting
$\sigma_L(\mu) - \sigma_R(\mu) = \beta \mu + \cO \left( \mu^2 \right)$
and $\chi_L(0) - \chi_R(0) = \alpha$ we arrive at
\begin{equation}
P(r;\mu) = r + \left( \frac{2 \beta \mu}{3} + \cO \left( \mu^2 \right) \right) r^2
+ \frac{2 \alpha}{15} r^4 + \co \left( \left( |r| + |\mu| \right)^4 \right).
\label{eq:fixedTwoFoldP3}
\end{equation}
This map has the fixed point $r=0$ (corresponding to the two-fold at the origin)
which from $\frac{\partial P}{\partial r}$ we see is stable for $\mu < 0$ and unstable for $\mu > 0$ (because $\beta > 0$).
To formally obtain a second fixed point we first observe that
$P$ is $C^4$ for small $r > 0$ because $F_L$ and $F_R$ are $C^4$ (see Lemma \ref{le:mapSmoothness}).
It is a simple exercise to show that $D(r;\mu) = \frac{1}{r^2} \left( P(r;\mu) - r \right)$ is $C^2$.
Moreover, $D$ can be extended in a $C^2$ fashion to a neighbourhood of $(r;\mu) = (0,0)$.
We can then use the IFT to solve $D(r;\mu) = 0$ for $\mu$
(possible because $\frac{\partial D}{\partial \mu} = \frac{2 \beta}{3} \ne 0$),
and by inverting the result we obtain the fixed point
\begin{equation}
r^*(\mu) = \sqrt{\frac{-5 \beta \mu}{\alpha}} + \co \left( \sqrt{|\mu|} \right).
\label{eq:fixedTwoFoldyStar}
\end{equation}
Under the restriction $r > 0$, this point exists and is unique
when $\alpha \mu < 0$ and corresponds to a limit cycle of \eqref{eq:FilippovBEB}.
The period of the limit cycle is
$T_L \left( P_R \left( r^*(\mu); \mu \right); \mu \right) + T_R \left( r^*(\mu); \mu \right)$,
where $T_R(r;\mu) = \frac{-2}{g_R(0,0;\mu)} \,r + \cO \left( r^2 \right)$
and $T_L(r;\mu) = \frac{-2}{g_L(0,0;\mu)} \,r + \cO \left( r^2 \right)$
by Lemma \ref{le:fold}, and these combine to produce \eqref{eq:fixedTwoFoldPeriod}.
\end{proof}

\section{Proofs for impacting systems}
\setcounter{equation}{0}
\setcounter{figure}{0}
\setcounter{table}{0}
\label{app:impact}

We may write the vector field of \eqref{eq:impactingODE}
subject to \eqref{eq:FLimpactingCond} and \eqref{eq:impactingEqCond} as
\begin{equation}
\begin{split}
f(x,y;\mu) &= a_1 x + a_2 y + \co \left( |x| + |y| + |\mu| \right), \\
g(x,y;\mu) &= b_1 x + b_2 y + b_3 \mu + \co \left( |x| + |y| + |\mu| \right).
\end{split}
\label{eq:impactingfg}
\end{equation}
Notice $a_2 > 0$ and $\beta = -a_2 b_3$.
Thus $\beta > 0$ implies $b_3 < 0$.
Let
\begin{equation}
A = \rD F(0,0;0) = \begin{bmatrix} a_1 & a_2 \\ b_1 & b_2 \end{bmatrix}.
\nonumber
\end{equation}
Assuming $\det(A) \ne 0$, locally there exists a unique regular equilibrium
$\left( x^*(\mu), y^*(\mu) \right)$ with
\begin{equation}
x^*(\mu) = \frac{-\beta}{\det(A)} \,\mu + \co(\mu).
\label{eq:impactingxStar}
\end{equation}

\begin{proof}[Proof of Lemma \ref{le:impacting}]
By \eqref{eq:impactingxStar} we have $\left( x^*(\mu), y^*(\mu) \right) \in \Omega_L$ when $\mu > 0$.
Since $\det(A) > 0$, if $\lambda < 0$ then $\left( x^*(\mu), y^*(\mu) \right)$ is a stable node or focus,
while if $\lambda > 0$ then $\left( x^*(\mu), y^*(\mu) \right)$ is a unstable node or focus.

Now suppose $\mu < 0$ and the consider the forward orbit of $(x,y) = (0,r)$ where $r > 0$.
This point is mapped to $(0,-\gamma r + \co(r))$,
then, since the origin is an invisible fold,
evolves in $\Omega_L$ and next intersects $x=0$ at
$y = \gamma r + \co(r)$ by Lemma \ref{le:fold}.
Thus the sign of $\ln(\gamma)$ determines the stability of the pseudo-equilibrium.
\end{proof}

\begin{proof}[Proof of Theorems \ref{th:impactingA}--\ref{th:impactingC} (HLBs 11--13)]
Here we derive and analyse two Poincar\'e maps, one for $\mu < 0$ and one for $\mu > 0$,
assuming $\beta > 0$ and $\lambda \ln(\gamma) < 0$.
From the fixed points of these maps, subject to different assumptions on the values of $\alpha$ and $\omega$,
we are able to verify all statements in Theorems \ref{th:impactingA}--\ref{th:impactingC}.
In view of the substitution $(x,y,t) \mapsto (x,-y,-t)$, we assume $\lambda > 0$ which implies $\gamma < 1$.

First suppose $\mu > 0$.
By Lemma \ref{le:impacting} the system has an unstable regular equilibrium in $\Omega_L$.
Let
\begin{align}
\tilde{x} &= \frac{x}{\mu}, &
\tilde{y} &= \frac{y}{\mu},
\label{eq:impactingScalingPos}
\end{align}
with which the ODEs become
\begin{equation}
\begin{split}
\dot{\tilde{x}} &= a_1 \tilde{x} + a_2 \tilde{y} + \cO(\mu), \\
\dot{\tilde{y}} &= b_3 + b_1 \tilde{x} + b_2 \tilde{y} + \cO(\mu).
\end{split}
\label{eq:impactingfgScaledPos}
\end{equation}
The impact law becomes
$\tilde{y} \mapsto \tilde{\phi}(\tilde{y}) =
\frac{1}{\mu} \phi \left( \mu \tilde{y} \right)$ and observe
\begin{equation}
\tilde{\phi}(\tilde{y}) = -\gamma \tilde{y} + \cO(\mu).
\label{eq:impactingphiScaled}
\end{equation}

For any $\tilde{r} < 0$, let $P_L(\tilde{r};\mu)$ be the $\tilde{y}$-component
of the next intersection of the forward orbit of $(0,\tilde{r})$ with $\tilde{x} = 0$
(leave $P_L(\tilde{r};\mu)$ undefined if this orbit does not return to $\tilde{x} = 0$).
Also let $T_L(\tilde{r};\mu)$ be the corresponding evolution time.
Then, given $\tilde{r} > 0$, our Poincar\'e map is
\begin{equation}
P(\tilde{r};\mu) = P_L \big( \tilde{\phi}(\tilde{r}); \mu \big),
\label{eq:impactingP}
\end{equation}
see Fig.~\ref{fig:schemPoinImpacting}, corresponding to an evolution time of
\begin{equation}
T(\tilde{r};\mu) = T_L \big( \tilde{\phi}(\tilde{r}); \mu \big),
\label{eq:impactingT}
\end{equation}
because the impact law is assumed to be instantaneous.

\begin{figure}[b!]
\begin{center}
\includegraphics[width=5.6cm]{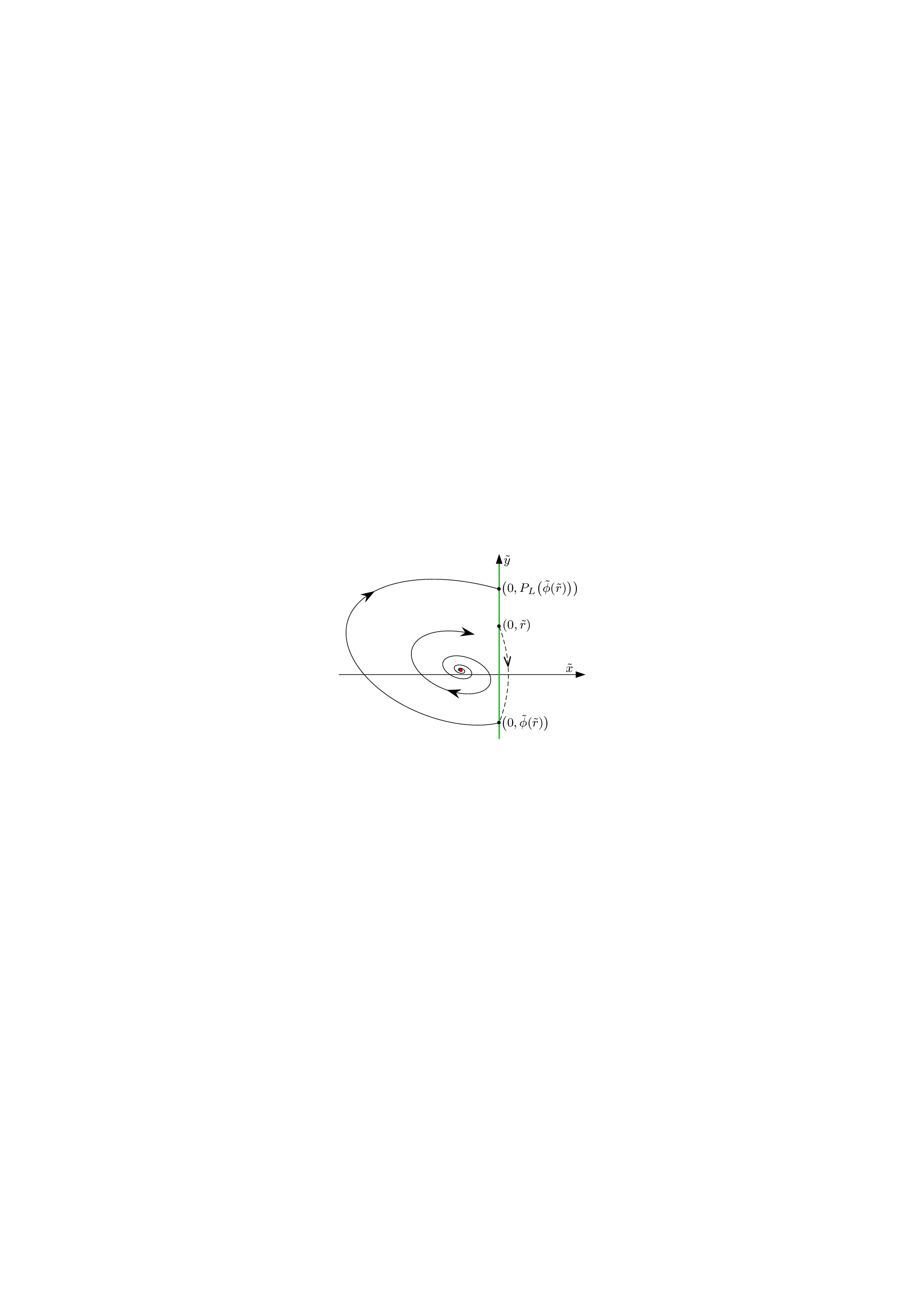}
\caption{
An illustration of the Poincar\'e map \eqref{eq:impactingP} for HLBs 11--12 with $\mu > 0$.
\label{fig:schemPoinImpacting}
} 
\end{center}
\end{figure}

As in the proof of Theorem \ref{th:FilippovDeg},
we now study the scaled system in the limit $\mu \to 0$.
With $\mu = 0$ the ODEs \eqref{eq:impactingfgScaledPos} are affine
and the impact law \eqref{eq:impactingphiScaled} is linear.
In the case that the regular equilibrium is a node (HLB 13),
as in the proof of Theorem \ref{th:pwscFocusNode} we can
use the slow invariant subspace of the node to demonstrate that the system has no limit cycle
(we omit these arguments for brevity).
Now suppose the regular equilibrium is a focus (HLBs 11--12).
By Lemma \ref{le:affine},
\begin{equation}
\begin{split}
P_L(\tilde{r};0) &= -P_{\rm affine} \left( -\tilde{r}; \lambda, \omega, -b_3 \right), \\
T_L(\tilde{r};0) &= T_{\rm affine} \left( -\tilde{r}; \lambda, \omega, -b_3 \right),
\end{split}
\end{equation}
which are of Type III because $\lambda > 0$ and $b_3 < 0$.

By \eqref{eq:affineT2}--\eqref{eq:affineP2}, any fixed point of $P(\tilde{r};0)$,
call it $\tilde{r}^*$, satisfies
\begin{equation}
\begin{split}
\gamma \tilde{r}^* &= \frac{-\kappa \re^{-\lambda T_0}
\varrho \left( \omega T_0; \frac{\lambda}{\omega} \right)}
{\sin(\omega T_0)}, \\
-\tilde{r}^* &= \frac{\kappa \re^{\lambda T_0}
\varrho \left( \omega T_0; -\frac{\lambda}{\omega} \right)}
{\sin(\omega T_0)}
\end{split}
\label{eq:impactingT0}
\end{equation}
where $T_0 = T_L(\tilde{r}^*;0)$
and $\kappa = \frac{-b_3 \omega}{\lambda^2 + \omega^2}$.
Note that these combine to produce the desired formula \eqref{eq:periodImpactingA}.
Using \eqref{eq:PaffineDeriv2} we obtain
\begin{equation}
\frac{\partial P}{\partial \tilde{r}}(\tilde{r};0) =
\frac{\gamma^2 \tilde{r}}{P_L(-\gamma \tilde{r};0)} \,\re^{2 \lambda T_L(-\gamma \tilde{r};0)},
\label{eq:impactingPDeriv}
\end{equation}
and with $\tilde{r} = \tilde{r}^*$ this simplifies to
\begin{equation}
\frac{\partial P}{\partial \tilde{r}}(\tilde{r}^*;0) =
\gamma^2 \re^{2 \lambda T_L(-\gamma \tilde{r}^*;0)}.
\label{eq:impactingPDeriv2}
\end{equation}
In the next few steps we use the fact that $T_L(-\gamma \tilde{r};0)$
is a decreasing function of $\tilde{r}$ (by Lemma \ref{le:affine}).

As in the proof of Theorem \ref{th:FilippovDeg}, suppose for the moment
that $P(\tilde{r};0)$ has a fixed point and let $\tilde{r}^*$ be the smallest such point.
Since $P(0;0) > 0$, we must have $P(\tilde{r};0) > \tilde{r}$
for all $0 \le \tilde{r} < \tilde{r}^*$,
and so $\frac{\partial P}{\partial \tilde{r}}(\tilde{r}^*;0) \le 1$.
Actually $\frac{\partial P}{\partial \tilde{r}}(\tilde{r}^*;0) < 1$
because $\frac{\partial P}{\partial \tilde{r}} \left( \tilde{r}^*;0 \right) = 1$ implies
$\frac{\partial^2 P}{\partial \tilde{r}^2} \left( \tilde{r}^*;0 \right)
= 2 \lambda \frac{d}{d \tilde{r}} T_L(-\gamma \tilde{r}^*;0) < 0$ which is not possible.
Since $\tilde{r}^*$ is the smallest fixed point,
by \eqref{eq:impactingPDeriv2} we must have
$\frac{\partial P}{\partial \tilde{r}} < 1$ at every fixed point.
Hence $P(\tilde{r};0)$ has no other fixed points
and so $P(\tilde{r};0) < \tilde{r}$ for all $\tilde{r} > \tilde{r}^*$.

This situation requires $\alpha \le 0$ because as $\tilde{r} \to \infty$ we have
$P(\tilde{r};0) \sim \gamma \,\re^{\frac{\lambda \pi}{\omega}} \tilde{r}
= \re^\alpha \tilde{r}$ (see Lemma \ref{le:affine}).
Thus in the case $\alpha > 0$ (HLB 12), $P(\tilde{r};0)$ has no fixed points
(and thus \eqref{eq:impactingODE} has no limit cycle),
while if $\alpha < 0$ (HLB 11) then $P(\tilde{r};0)$ has a fixed point by the intermediate value theorem
and, as we have just shown, this point is unique and asymptotically stable.
Since $P$ is $C^1$ the fixed point persists for small $\mu > 0$
and corresponds to a stable limit cycle of \eqref{eq:impactingODE} of size asymptotically proportional to $\mu$
(see the final part of the proof of Theorem \ref{th:FilippovDeg} for details).

Second suppose $\mu < 0$.
Let
\begin{align}
\tilde{x} &= -\frac{x}{\mu}, &
\tilde{y} &= -\frac{y}{\mu},
\label{eq:impactingScalingNeg}
\end{align}
with which the ODEs become
\begin{equation}
\begin{split}
\dot{\tilde{x}} &= a_1 \tilde{x} + a_2 \tilde{y} + \cO(\mu), \\
\dot{\tilde{y}} &= -b_3 + b_1 \tilde{x} + b_2 \tilde{y} + \cO(\mu)
\end{split}
\label{eq:impactingfgScaledNeg}
\end{equation}
We define $P_L$ and $T_L$ as above but now have
\begin{equation}
\begin{split}
P_L(\tilde{r};0) &= -P_{\rm affine} \left( -\tilde{r}; \lambda, \omega, b_3 \right), \\
T_L(\tilde{r};0) &= T_{\rm affine} \left( -\tilde{r}; \lambda, \omega, b_3 \right),
\end{split}
\end{equation}
which are of Type I because $b_3 < 0$
and $\omega = \ri \eta$ in the case of HLB 13.
Again any fixed point $\tilde{r}^*$ of $P(\tilde{r};0)$, defined by \eqref{eq:impactingP},
satisfies \eqref{eq:impactingT0} where $T_0 = T_L(\tilde{r}^*;0)$,
but now $\kappa = \frac{b_3 \omega}{\lambda^2 + \omega^2}$.
Thus again we have \eqref{eq:periodImpactingA}.

Since $P_{\rm affine}$ is of Type I we have $P(\tilde{r};0) \to 0$ as $\tilde{r} \to 0$.
Moreover $\frac{\partial P}{\partial \tilde{r}}(\tilde{r};0) \to \gamma$ as $\tilde{r} \to 0$
(as in the proof of Lemma \ref{le:impacting}).
Thus if $\tilde{r}^*$ is the smallest positive fixed point of $P(\tilde{r};0)$
then $\frac{\partial P}{\partial \tilde{r}}(\tilde{r}^*;0) \ge 1$.
By repeating the above arguments we can show that
$\frac{\partial P}{\partial \tilde{r}}(\tilde{r}^*;0) < 1$
and, if it exists, $\tilde{r}^*$ is the only positive fixed point of $P(\tilde{r};0)$.

If the regular equilibrium is a focus (HLBs 11--12)
then again $P(\tilde{r};0) \sim \re^\alpha \tilde{r}$ as $\tilde{r} \to \infty$.
Thus if $\alpha < 0$ (HLB 11), $P(\tilde{r};0)$ has no fixed points
(and thus \eqref{eq:impactingODE} has no limit cycle),
while if $\alpha > 0$ (HLB 12) then $P(\tilde{r};0)$ has a fixed point by the intermediate value theorem
and this point is unique and unstable.
Since $P$ is $C^1$ it persists for small $\mu < 0$
and corresponds to an unstable limit cycle of \eqref{eq:impactingODE}.

If the regular equilibrium is a node (HLB 13)
then $P(\tilde{r};0) \to \infty$ as $\tilde{r} \to \frac{-b_3}{\lambda + \eta}$,
thus $P(\tilde{r};0)$ has a fixed point by the intermediate value theorem.
As above this point is unique, unstable, persists for small $\mu < 0$,
and corresponds to an unstable limit cycle of \eqref{eq:impactingODE}.
\end{proof}

\section{Proof for impulsive systems (HLB 14)}
\setcounter{equation}{0}
\setcounter{figure}{0}
\setcounter{table}{0}
\label{app:impulse}

For sufficiently small $y > 0$, the forward orbit of $(r,\theta) = \left( y, \frac{\pi}{2} \right)$
undergoes an impulse then evolves via \eqref{eq:impulsiveODEpolar}
until $\theta = -\frac{3 \pi}{2} \mapsto \frac{\pi}{2}$ at some $r = P(y;\mu)$
(at which point another impulse is applied).
Let $T(y;\mu)$ denote the corresponding evolution time.
In order to 'blow up' the dynamics about the origin we write
\begin{equation}
\tilde{r} = \frac{r}{y},
\nonumber
\end{equation}
with which
\begin{equation}
\begin{split}
\dot{\tilde{r}} &= \lambda \tilde{r} + y \tilde{r}^2 \cF(\theta) + \co(y), \\
\dot{\theta} &= -\omega + y \tilde{r} \cG(\theta) + \co(y).
\end{split}
\label{eq:impulsiveODEpolar2}
\end{equation}
Let $\varphi_t(\tilde{r},\theta)$ and $\psi_t(\tilde{r},\theta)$, respectively,
denote the $r$ and $\theta$-components of the flow of \eqref{eq:impulsiveODEpolar2}.
Then $P(y;\mu)$ and $T(y;\mu)$ satisfy
\begin{equation}
\begin{split}
\varphi_{T(y;\mu)} \left( \tfrac{1}{y} R(y;\mu), \Theta(y;\mu) \right) &= \frac{1}{y} \,P(y;\mu), \\
\psi_{T(y;\mu)} \left( \tfrac{1}{y} R(y;\mu), \Theta(y;\mu) \right) &= -\frac{3 \pi}{2},
\end{split}
\label{eq:impulsivePT}
\end{equation}
and are $C^2$ because the flow intersects the positive $y$-axis transversally.

We write the flow as a series expansion in $y$:
\begin{equation}
\begin{split}
\varphi_t \left( \tfrac{1}{y} R(y;\mu), \Theta(y;\mu) \right) &=
\Phi^{(0)}_t(y;\mu) + y \Phi^{(1)}_t(y;\mu) + \co(y), \\
\psi_t \left( \tfrac{1}{y} R(y;\mu), \Theta(y;\mu) \right) &=
\Psi^{(0)}_t(y;\mu) + y \Psi^{(1)}_t(y;\mu) + \co(y).
\end{split}
\label{eq:impulsiveFlowExpansion}
\end{equation}
We also write
\begin{equation}
\begin{split}
R(y;\mu) &= \gamma y + \zeta y^2 + \co \left( y^2 \right), \\
\Theta(y;\mu) &= \phi + \xi y + \co(y),
\end{split}
\label{eq:impulsiveRTheta}
\end{equation}
for some constants $\zeta, \xi \in \mathbb{R}$
and we have neglected the $\mu$-dependency for brevity.
By substituting \eqref{eq:impulsiveFlowExpansion} into the first equation of
\eqref{eq:impulsiveODEpolar2} we obtain
\begin{equation}
\dot{\Phi}^{(0)}_t + y \dot{\Phi}^{(1)}_t = \lambda \Phi^{(0)}_t
+ y \Big( \lambda \Phi^{(1)}_t + \Phi^{(0)^{\scriptstyle 2}}_t \cF \left( \Psi^{(0)}_t \right) \Big)
+ \co(y),
\label{eq:impulsivephidotExpansion}
\end{equation}
with initial condition
\begin{equation}
\Phi^{(0)}_0 + y \Phi^{(1)}_0 = \gamma + \zeta y + \co(y).
\nonumber
\end{equation}
To first order, $\dot{\Phi}^{(0)}_t = \lambda \Phi^{(0)}_t$ and $\Phi^{(0)}_0 = \gamma$, thus
\begin{equation}
\Phi^{(0)}_t = \re^{\lambda t} \gamma.
\label{eq:impulsivePhi0}
\end{equation}
Similarly the second equation in \eqref{eq:impulsiveODEpolar2} gives
\begin{equation}
\dot{\Psi}^{(0)}_t + y \dot{\Psi}^{(1)}_t = -\omega + y \Phi^{(0)}_t \cG \left( \Psi^{(0)}_t \right) + \co(y),
\label{eq:impulsivepsidotExpansion}
\end{equation}
with initial condition
\begin{equation}
\Psi^{(0)}_0 + y \Psi^{(1)}_0 = \phi + \xi y + \co(y).
\nonumber
\end{equation}
To first order, $\dot{\Psi}^{(0)}_t = -\omega$ and $\Psi^{(0)}_0 = \phi$, thus
\begin{equation}
\Psi^{(0)}_t = \phi - \omega t.
\label{eq:impulsivePsi0}
\end{equation}

Next we write $T$ and $P$ as series expansions in $y$:
\begin{equation}
\begin{split}
T &= T^{(0)} + y T^{(1)} + \co(y), \\
P &= y P^{(0)} + y^2 P^{(1)} + \co \left( y^2 \right).
\end{split}
\label{eq:impulsivePTExpansion}
\end{equation}
By substituting \eqref{eq:impulsivePsi0} and \eqref{eq:impulsivePTExpansion}
into the second equation of \eqref{eq:impulsivePT} we obtain
\begin{equation}
\phi - \omega T^{(0)} - y \omega T^{(1)} + y \Psi^{(1)}_{T^{(0)}} = -\frac{3 \pi}{2} + \co(y),
\label{eq:impulsiveTExpansion}
\end{equation}
and from the constant terms in \eqref{eq:impulsiveTExpansion} we obtain
\begin{equation}
T^{(0)} = \frac{1}{\omega} \left( \phi + \frac{3 \pi}{2} \right).
\label{eq:impulsiveT0}
\end{equation}
By similarly substituting \eqref{eq:impulsivePhi0} and \eqref{eq:impulsivePTExpansion}
into the first equation of \eqref{eq:impulsivePT} we obtain
\begin{equation}
\re^{\lambda T^{(0)}} \,\gamma + y \lambda T^{(1)} \re^{\lambda T^{(0)}} \,\gamma + y \Phi^{(1)}_{T^{(0)}} =
P^{(0)} + y P^{(1)} + \co(y),
\label{eq:impulsivePExpansion}
\end{equation}
and from the constant terms in \eqref{eq:impulsivePExpansion} we obtain
\begin{equation}
P^{(0)} = \gamma \re^{\lambda T^{(0)}} = \re^{\Lambda}.
\nonumber
\end{equation}
Thus
\begin{equation}
P^{(0)}(\mu) = 1 + \beta \mu + \co(\mu).
\label{eq:impulsiveP02}
\end{equation}

In order to complete the proof it suffices to evaluate $P^{(1)}$ at $\mu = 0$.
Thus we now set $\mu = 0$ with which $\Lambda = 0$.
Then the $y$-terms in \eqref{eq:impulsiveTExpansion} and \eqref{eq:impulsivePExpansion} imply
\begin{align}
T^{(1)} &= \frac{1}{\omega} \,\Psi^{(1)}_{T^{(0)}}, \nonumber \\
P^{(1)} &= \Phi^{(1)}_{T^{(0)}} + \frac{\lambda}{\omega} \,\Psi^{(1)}_{T^{(0)}}.
\label{eq:impulsiveP1}
\end{align}
Next we show that $P^{(1)}(0) = \alpha$ \eqref{eq:impulsiveNondegCond}.

The $y$-terms in \eqref{eq:impulsivephidotExpansion} give
$\dot{\Phi}^{(1)}_t = \lambda \Phi^{(1)}_t + \re^{2 \lambda t} \,\gamma^2 \cF(\phi - \omega t)$ and
$\Phi^{(1)}_0 = \zeta$, thus
\begin{equation}
\Phi^{(1)}_t = \re^{\lambda t} \left( \zeta + \gamma^2
\int_0^t \re^{\lambda s} \,\cF(\phi  - \omega s) \,ds \right).
\nonumber
\end{equation}
By substituting \eqref{eq:impulsiveT0} (and using $\Lambda = 0$
and $\theta = \phi - \omega s$) we obtain
\begin{equation}
\Phi^{(1)}_{T^{(0)}} = \frac{\zeta}{\gamma} + \frac{1}{\omega} \,\re^{\frac{-3 \pi \lambda}{2 \omega}}
\int_{\frac{-3 \pi}{2}}^\phi \re^{\frac{-\lambda \theta}{\omega}} \,\cF(\theta) \,d\theta.
\label{eq:impulsivePhi1}
\end{equation}
Similarly the $y$-terms in \eqref{eq:impulsivepsidotExpansion} give
$\dot{\Psi}^{(1)}_t = \re^{\lambda t} \,\gamma \cG(\phi - \omega t)$ and
$\Psi^{(1)}_0 = \xi$, thus
\begin{equation}
\Psi^{(1)}_t = \xi + \int_0^t \re^{\lambda s} \,\gamma \cG(\phi - \omega s) \,ds,
\nonumber
\end{equation}
and by substituting \eqref{eq:impulsiveT0} we arrive at
\begin{equation}
\Psi^{(1)}_{T^{(0)}} = \xi + \frac{1}{\omega} \,\re^{\frac{-3 \pi \lambda}{2 \omega}}
\int_{\frac{-3 \pi}{2}}^\phi \re^{\frac{-\lambda \theta}{\omega}} \,\cG(\theta) \,d\theta.
\label{eq:impulsivePsi1}
\end{equation}

In summary we have shown
\begin{equation}
P(y;\mu) = y + \beta \mu y + \alpha y^2 + \co \left( \left( |y| + |\mu| \right)^2 \right),
\label{eq:impulsiveP}
\end{equation}
for sufficiently small values of $y \ge 0$ and $\mu \in \mathbb{R}$.
Locally $P$ has at most two fixed points: $y = 0$,
corresponding to the boundary equilibrium at the origin, and
\begin{equation}
y^*(\mu) = -\frac{\beta}{\alpha} \,\mu + \co(\mu),
\label{eq:impulsiveyStar}
\end{equation}
which is valid when $y^*(\mu) > 0$ and corresponds to a limit cycle
(formally \eqref{eq:impulsiveyStar} can be obtained via the IFT
by solving for zeros of $\frac{1}{y} \left( P(y;\mu) - y \right)$ as in the proof of Theorem \ref{th:fixedFocusFocus}).
Since $\beta > 0$, the system has a unique limit cycle if $\alpha \mu < 0$
and no limit cycle if $\alpha \mu > 0$.
The limit cycle intersects the positive $y$-axis at $y^*(\mu)$,
thus has amplitude asymptotically proportional to $|\mu|$.
Its period is given by \eqref{eq:impulsiveT0}, to leading order.

Stability is determined from
\begin{equation}
\frac{\partial P}{\partial y}(y;\mu) = 1 + \beta \mu + 2 \alpha y + \co \left( |y| + |\mu| \right).
\nonumber
\end{equation}
Observe $\frac{\partial P}{\partial y}(0;\mu) = 1 + \beta \mu + \co(\mu)$,
thus the origin is stable if $\mu < 0$ and is unstable if $\mu > 0$.
Also $\frac{\partial P}{\partial y} \left( y^*(\mu);\mu \right) = 1-\beta \mu + \co(\mu)$,
thus the stability of the limit cycle is opposite to that of the origin.
\manualEndProof

\section{Proofs for hysteresis}
\setcounter{equation}{0}
\setcounter{figure}{0}
\setcounter{table}{0}
\label{app:hysteresis}

\begin{proof}[Proof of Theorem \ref{th:hysteresisPseudoEq} (HLB 15)]
For each $J \in \{ L, R \}$, write
\begin{equation}
\begin{split}
f_J(x,y) &= a_{0J} + a_{1J} x + a_{2J} y + \cO \left( \left( |x| + |y| \right)^2 \right), \\
g_J(x,y) &= b_{0J} + b_{1J} x + b_{2J} y + \cO \left( \left( |x| + |y| \right)^2 \right),
\end{split}
\label{eq:hysteresisfJgJ}
\end{equation}
where $a_{0R} < 0 < a_{0L}$.
The assumption \eqref{eq:hysteresisPseudoEq} implies
\begin{equation}
\frac{b_{0L}}{a_{0L}} = \frac{b_{0R}}{a_{0R}}.
\label{eq:hysteresisSameSlopes}
\end{equation}
Also
\begin{equation}
\alpha = a_{0L} b_{2R} + a_{2L} b_{0R} - a_{0R} b_{2L} - a_{2R} b_{0L} \,.
\label{eq:hysteresisalpha}
\end{equation}

Let $\varphi^R_t(x,y)$ and $\psi^R_t(x,y)$, respectively,
denote the $x$ and $y$-components of the flow of $F_R$.
By asymptotic matching we obtain
\begin{equation}
\begin{split}
\varphi^R_t(\mu,y) &= \mu + a_{0R} t + a_{1R} \mu t + a_{2R} y t \\
&\quad+ \frac{a_{0R} a_{1R} + a_{2R} b_{0R}}{2} \,t^2 + \co \left( \left( |y| + |\mu| + |t| \right)^2 \right), \\
\psi^R_t(\mu,y) &= y + b_{0R} t + b_{1R} \mu t + b_{2R} y t \\
&\quad+ \frac{a_{0R} b_{1R} + b_{0R} b_{2R}}{2} \,t^2 + \co \left( \left( |y| + |\mu| + |t| \right)^2 \right).
\end{split}
\label{eq:hysteresisEqphipsi}
\end{equation}
Define $P_R(r;\mu)$ and $T_R(r;\mu)$ by
\begin{align}
\varphi^R_{T_R(r;\mu)}\left( \mu, r \right) &= -\mu
\label{eq:hysteresisEqTRdef} \\
\psi^R_{T_R(r;\mu)}\left( \mu, r \right) &= P_R(r;\mu),
\label{eq:hysteresisEqPRdef}
\end{align}
see Fig.~\ref{fig:schemPoinHysteresis}.
Using \eqref{eq:hysteresisEqphipsi} we obtain
\begin{align}
T_R(r;\mu) &= \frac{-2}{a_{0R}} \,\mu + \frac{2 a_{2R}}{a_{0R}^2} \,\mu r
- \frac{2 a_{2R} b_{0R}}{a_{0R}^3} \,\mu^2 \nonumber \\
&\quad+ \co \left( \left( |r| + |\mu| \right)^2 \right), \label{eq:hysteresisEqTR} \\
P_R(r;\mu) &= r - \frac{2 b_{0R}}{a_{0R}} \,\mu
+ c_{1R} \mu r
+ c_{2R} \mu^2 \nonumber \\
&\quad+ \co \left( \left( |r| + |\mu| \right)^2 \right), \label{eq:hysteresisEqPR}
\end{align}
where we define
\begin{equation}
c_{1J} = \frac{2 \left( a_{2J} b_{0J} - a_{0J} b_{2J} \right)}{a_{0J}^2},
\label{eq:hysteresisc1J}
\end{equation}
for each $J \in \{ L, R \}$
(a formula for $c_{2R}$ will not be needed).

Define $P_L(r;\mu)$ and $T_L(r;\mu)$ analogously, Fig.~\ref{fig:schemPoinHysteresis}.
Via the replacement $(R,\mu) \mapsto (L,-\mu)$ in \eqref{eq:hysteresisEqTR}--\eqref{eq:hysteresisEqPR} we obtain
\begin{align}
T_L(r;\mu) &= \frac{2}{a_{0L}} \,\mu - \frac{2 a_{2L}}{a_{0L}^2} \,\mu r
- \frac{2 a_{2L} b_{0L}}{a_{0L}^3} \,\mu^2 \nonumber \\
&\quad+ \co \left( \left( |r| + |\mu| \right)^2 \right), \label{eq:hysteresisEqTL} \\
P_L(r;\mu) &= r + \frac{2 b_{0L}}{a_{0L}} \,\mu
- c_{1L} \mu r
+ c_{2L} \mu^2 \nonumber \\
&\quad+ \co \left( \left( |r| + |\mu| \right)^2 \right), \label{eq:hysteresisEqPL}
\end{align}
where $c_{2L} \in \mathbb{R}$.

\begin{figure}[b!]
\begin{center}
\includegraphics[width=8.4cm]{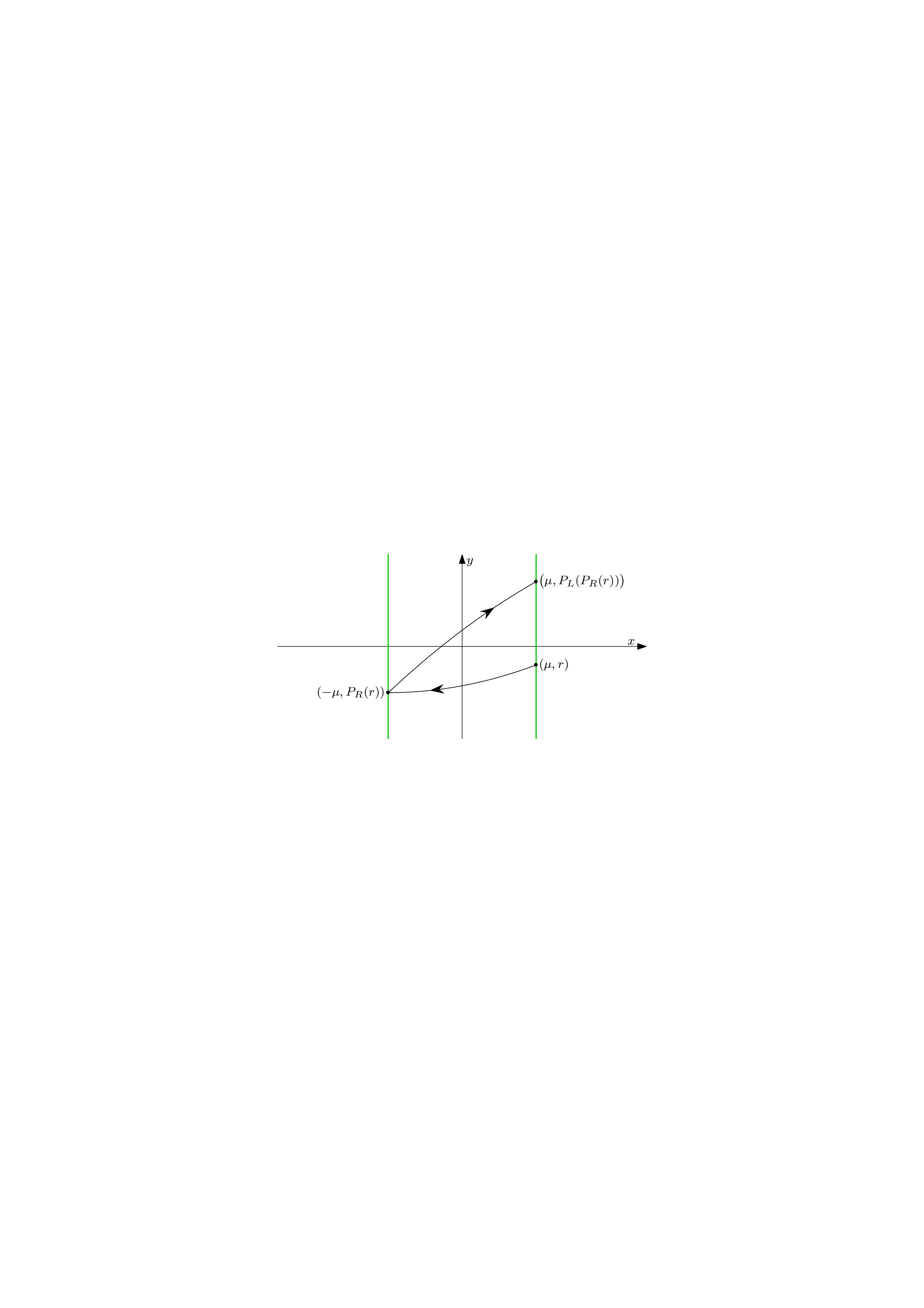}
\caption{
An illustration of the Poincar\'e map $P = P_L \circ P_R$ for HLB 15.
\label{fig:schemPoinHysteresis}
} 
\end{center}
\end{figure}

Let $P = P_L \circ P_R$ be our Poincar\'e map.
This corresponds to an evolution time of
\begin{equation}
T(r;\mu) = T_R(r;\mu) + T_L \left( P_R(r;\mu); \mu \right).
\label{eq:hysteresisEqT}
\end{equation}
By composing \eqref{eq:hysteresisEqPR} and \eqref{eq:hysteresisEqPL} we obtain
\begin{align}
P(r;\mu) &= r + 2 \left( \frac{b_{0L}}{a_{0L}} - \frac{b_{0R}}{a_{0R}} \right) \mu
- \left( c_{1L} - c_{1R} \right) \mu r \nonumber \\
&\quad+ \left( c_{2L} + c_{2R} \right) \mu^2 + \co \left( \left( |r| + |\mu| \right)^2 \right), \nonumber
\end{align}
and via careful use of \eqref{eq:hysteresisSameSlopes}, \eqref{eq:hysteresisalpha},
and \eqref{eq:hysteresisc1J} we find this reduces to
\begin{equation}
P(r;\mu) = r + \frac{2 \alpha}{a_{0L} a_{0R}} \, \mu r + \left( c_{2L} + c_{2R} \right) \mu^2 + \co \left( \left( |r| + |\mu| \right)^2 \right).
\label{eq:hysteresisEqP2}
\end{equation}

Since $P$ is $C^2$ and $P(r;0) = r$ for all $r$,
the function $D(r;\mu) = \frac{1}{\mu} \left( P(r;\mu) - r \right)$ is $C^1$.
Its domain can be extended so that it is $C^1$ throughout a neighbourhood of $(r;\mu) = (0;0)$.
If $\alpha \ne 0$ the IFT then implies the existence of a unique $C^1$ function
$r^*(\mu)$ such that $D \left( r^*(\mu); \mu \right) = 0$ for all values of $\mu$ in a neighbourhood of $0$, and
\begin{equation}
r^*(\mu) = \frac{a_{0L} a_{0R} \left( c_{2L} + c_{2R} \right)}{2 \alpha} \,\mu + \co \left( \mu \right).
\label{eq:hystereEqyStar}
\end{equation}
Notice $r^*(\mu)$ is a fixed point of $P(r;\mu)$ and corresponds to a limit cycle of \eqref{eq:hysteresis}.
Since $\frac{\partial P}{\partial r}(r;\mu) = 1 - \frac{2 \alpha \mu}{a_{0L} a_{0R}} + \co \left( |r| + |\mu| \right)$,
the limit cycle is stable if $\alpha < 0$ and unstable if $\alpha > 0$.
From \eqref{eq:hysteresisEqTR}, \eqref{eq:hysteresisEqTL}, and \eqref{eq:hysteresisEqT}
we see that its period is given by \eqref{eq:hysteresisPseudoEqPeriod}.
\end{proof}

\begin{proof}[Proof of Theorem \ref{th:hysteresisTwoFold} (HLB 17)]
For each $J \in \{ L,R \}$, write
\begin{equation}
\begin{split}
f_J(x,y) &= a_{1J} x + a_{2J} y + a_{3J} x^2 + a_{4J} x y + a_{5J} y^2 \\
&\quad+ \cO \left( \left( |x| + |y| \right)^3 \right), \\
g_J(x,y) &= b_{0J} + b_{1J} x + b_{2J} y + b_{3J} x^2 + b_{4J} x y + b_{5J} y^2 \\
&\quad+ \cO \left( \left( |x| + |y| \right)^3 \right).
\end{split}
\label{eq:hysteresisTwoFoldfJgJ}
\end{equation}
Without loss of generality we assume clockwise rotation, thus
\begin{equation}
\begin{aligned}
a_{2L} &> 0, &
a_{2R} &> 0, \\
b_{0L} &> 0, &
b_{0R} &< 0.
\end{aligned}
\label{eq:hysteresisTwoFolda2Jb0J}
\end{equation}
Also
\begin{equation}
\sigma_{{\rm fold},J} = \frac{a_{1J}}{b_{0J}} + \frac{b_{2J}}{b_{0J}} - \frac{a_{5J}}{a_{2J}}.
\label{eq:hysteresisTwoFoldalphaJ}
\end{equation}

As in the previous proof, let $\varphi^R_t(x,y)$ and $\psi^R_t(x,y)$, respectively,
denote the $x$ and $y$-components of the flow of $F_R$
and define $P_R(r;\mu)$ and $T_R(r;\mu)$ by \eqref{eq:hysteresisEqTRdef}--\eqref{eq:hysteresisEqPRdef}.
By asymptotic matching we obtain
\begin{widetext}
\begin{align}
\varphi^R_t \left( \mu, r \right) &= \mu + a_{1R} \mu t + a_{2R} r t + \frac{a_{2R} b_{0R}}{2} \,t^2
+ a_{3R} \mu^2 t + a_{4R} \mu r t + a_{5R} r^2 t
+ \left( \frac{a_{1R}^2}{2} + \frac{a_{2R} b_{1R}}{2} + \frac{a_{4R} b_{0R}}{2} \right) \mu t^2 \nonumber \\
&\quad+ \left( \frac{a_{1R} a_{2R}}{2} + \frac{a_{2R} b_{2R}}{2} + a_{5R} b_{0R} \right) r t^2
+ \left( \frac{a_{1R} a_{2R} b_{0R}}{6} + \frac{a_{2R} b_{0R} b_{2R}}{6} + \frac{a_{5R} b_{0R}^2}{3} \right) t^3
+ \co \left( \left( |\mu| + |r| + |t| \right)^3 \right),
\label{eq:hysteresisTwoFoldphi} \\
\psi^R_t(\mu,r) &= r + b_{0R} t + b_{1R} \mu t + b_{2R} r t + \frac{b_{0R} b_{2R}}{2} \,t^2
+ \cO \left( \left( |\mu| + |r| + |t| \right)^3 \right),
\label{eq:hysteresisTwoFoldpsi}
\end{align}
\end{widetext}
where we have not stated the cubic terms of $\psi^R_t$ as we will not need them.
Away from $(r;\mu) = (0;0)$ the forward orbit of $(x,y) = (\mu,r)$
intersects $x = -\mu$ transversally and so $P_R$ and $T_R$ are $C^3$ functions
of $r$ and $\mu$ by Lemma \ref{le:mapSmoothness}.

We wish to solve \eqref{eq:hysteresisEqTRdef} for $t = T_R(r;\mu)$.
Since $\varphi^R_t \left( \mu, r \right)$ has no $t$-term
we cannot use the IFT to solve \eqref{eq:hysteresisEqTRdef} for $t$.
But \eqref{eq:hysteresisEqTRdef} does have a $\mu$-term
so we can solve for $\mu$.
First observe that if $\mu = 0$ then \eqref{eq:hysteresisEqTRdef} has two solutions: $t = 0$ and
$t = \frac{-2}{b_{0R}} \,r + \cO \left( r^2 \right)$.
We are only interested in extending the second solution to $\mu > 0$
because the first solution corresponds to backwards evolution for $\mu > 0$.
Hence we substitute the form
\begin{equation}
t = \frac{-2}{b_{0R}} \,r + S,
\nonumber
\end{equation}
into \eqref{eq:hysteresisTwoFoldphi}.
By the IFT there exists a unique
$C^3$ function $h$ such that $\varphi^R_t(h(r,S),r) = -\mu$
throughout a neighbourhood of $(r,S) = (0,0)$.
Moreover, we can write this function as
\begin{equation}
h(r,S) = \frac{a_{2R}}{2} \,r S - \frac{a_{2R} \sigma_{{\rm fold},R}}{3 b_{0R}} r^3
+ \co \left( \left( |r| + \sqrt{|S|} \right)^3 \right),
\label{eq:hysteresisTwoFoldmu}
\end{equation}
where we have substituted \eqref{eq:hysteresisTwoFoldalphaJ}
and only explicitly stated the terms that will be needed below
(e.g.~the $S^2$-term in \eqref{eq:hysteresisTwoFoldmu} is not needed).
In summary, $T_R(r;\mu) = \frac{-2}{b_{0R}} r + S$,
where $S$ is given implicitly by $\mu = h(r,S)$.

Recall, $P_R$ is defined by \eqref{eq:hysteresisEqPRdef}.
Here we show that
\begin{equation}
P_R(r;\mu) = -\sqrt{r^2 + c_{1R} \mu + c_{2R} \,r^3
+ \co \left( \big( \big| \mu^{\frac{1}{3}} \big| + |r| \big)^3 \right)},
\label{eq:hysteresisTwoFoldPR}
\end{equation}
and obtain formulas for $c_{1R}, c_{2R} \in \mathbb{R}$.
By replacing $\mu$ in \eqref{eq:hysteresisTwoFoldPR} with \eqref{eq:hysteresisTwoFoldmu}, we obtain
\begin{align}
P_R(r;\mu)^2 &= r^2 + \frac{a_{2R} c_{1R}}{2} \,r S
+ \left( c_{2R} - \frac{a_{2R} c_{1R} \sigma_{{\rm fold},R}}{3 b_{0R}} \right) r^3 \nonumber \\
&\quad+ \co \left( \left( |r| + \sqrt{|S|} \right)^3 \right).
\label{eq:hysteresisTwoFoldPRrhs}
\end{align}
By putting $t = \frac{-2}{b_{0R}} \,r + S$ into \eqref{eq:hysteresisTwoFoldpsi} we obtain
\begin{equation}
\psi^R_{T_R}(\mu,r) = -r + b_{0R} S
+ \cO \left( \left( |r| + \sqrt{|S|} \right)^3 \right),
\nonumber
\end{equation}
and so
\begin{equation}
\psi^R_{T_R}(\mu,r)^2 = r^2 - 2 b_{0R} r S
+ \co \left( \left( |r| + \sqrt{|S|} \right)^3 \right).
\label{eq:hysteresisTwoFoldPRlhs}
\end{equation}
Asymptotic matching of \eqref{eq:hysteresisTwoFoldPRrhs} and \eqref{eq:hysteresisTwoFoldPRlhs} yields
\begin{align}
c_{1R} &= \frac{-4 b_{0R}}{a_{2R}}, &
c_{2R} &= \frac{-4 \sigma_{{\rm fold},R}}{3},
\end{align}
and justifies the form \eqref{eq:hysteresisTwoFoldPR}.
This completes our derivation of $P_R$.

We define $P_L$ and $T_L$ analogously and (repeating the above calculations) obtain
\begin{equation}
P_L(r;\mu) = \sqrt{r^2 + \frac{4 b_{0L}}{a_{2L}} \,\mu - \frac{4 \sigma_{{\rm fold},L}}{3} \,r^3
+ \co \left( \big( \big| \mu^{\frac{1}{3}} \big| + |r| \big)^3 \right)},
\label{eq:hysteresisTwoFoldPL}
\end{equation}
and $T_L(r;\mu) = \frac{2}{b_{0L}} r + \cO \left( \big( \big| \mu^{\frac{1}{3}} \big| + |r| \big)^2 \right)$.
As usual let $P = P_L \circ P_R$.
Then by composing \eqref{eq:hysteresisTwoFoldPR} and \eqref{eq:hysteresisTwoFoldPL} we arrive at
\begin{equation}
P(r;\mu) = \sqrt{r^2 + 4 \kappa \mu + \frac{4 \alpha}{3} \,r^3
+ \co \left( \big( \big| \mu^{\frac{1}{3}} \big| + |r| \big)^3 \right)},
\label{eq:hysteresisTwoFoldP2}
\end{equation}
where $\kappa$ and $\alpha$ are given by 
\eqref{eq:hysteresisTwoFoldkappa} and \eqref{eq:hysteresisTwoFoldNondegCond}.

To compute fixed points of $P$ in a formal manner
we search for zeros of the modified displacement function
\begin{align}
D(r;\mu) &= P(r;\mu)^2 - r^2 \\
&= 4 \kappa \mu + \frac{4 \alpha}{3} \,r^3
+ \co \left( \big( \big| \mu^{\frac{1}{3}} \big| + |r| \big)^3 \right).
\label{eq:hysteresisTwoFold}
\end{align}
Since $\kappa \ne 0$, the equation $D(r;\mu) = 0$ has a locally unique $C^3$ solution $\mu$
(by the IFT)
which we can invert to produce
\begin{equation}
r^*(\mu) = \left( \frac{-3 \kappa}{\alpha} \right)^{\frac{1}{3}} \mu^{\frac{1}{3}}
+ \co \left( \mu^{\frac{1}{3}} \right),
\label{eq:hysteresisTwoFoldyStar}
\end{equation}
assuming $\alpha \ne 0$.
If $r^*(\mu) < 0$, this is not a valid fixed point of $P$ because $P \ge 0$
in view of \eqref{eq:hysteresisTwoFoldP2}.
We are only interested in $\mu > 0$,
thus if $\alpha > 0$, $P$ has no fixed points (because $\kappa > 0$),
while if $\alpha < 0$, $P$ has the unique fixed point $r^*(\mu)$.
This value corresponds to a stable limit cycle because $D$ is a decreasing function of $r$.

The maximum $y$-value of the limit cycle is $r^*(\mu)$;
similarly its minimum $y$-value is asymptotically proportional to $\mu^{\frac{1}{3}}$.
Since both half-systems have an invisible fold (quadratic tangency) at the origin,
the minimum and maximum $x$-values of the limit cycle are asymptotically proportional
to the square of the minimum and maximum $y$-values
(this can be established formally via \eqref{eq:hysteresisTwoFoldphi}).

The evolution time associated with $P(r;\mu)$ is
\begin{equation}
T(r;\mu) = T_R(r;\mu) + T_L \left( P_R(r;\mu); \mu \right),
\label{eq:hysteresisTwoFoldT}
\end{equation}
hence the period of the limit cycle is
$T = \left( \frac{2}{b_{0L}} - \frac{2}{b_{0R}} \right) r^*(\mu)
+ \co \left( \mu^{\frac{1}{3}} \right)$.
\end{proof}

\section{Proofs for time delay}
\setcounter{equation}{0}
\setcounter{figure}{0}
\setcounter{table}{0}
\label{app:timeDelay}

Let us first describe the behaviour of a typical orbit of \eqref{eq:timeDelay} crossing $x=0$.
Let $p(t) \in \mathbb{R}^2$ denote the location of the orbit at time $t$.
Suppose $p(0) = (0,r)$ for some $r \in \mathbb{R}$.
Further suppose there exists $J \in \{ L,R \}$ such that $p(t) \in \Omega_J$ for all $t \in (-\mu,0)$
and $p(t) \notin \Omega_J$ for arbitrarily small $t > 0$.
Then the orbit next experiences a switch at $p(\mu) = \big( G_J(r;\mu), H_J(r;\mu) \big)$
where we define $G_J(r;\mu) = \varphi^J_\mu(0,r)$ and $H_J(r;\mu) = \psi^J_\mu(0,r)$
and where $\varphi^J_t(x,y)$ and $\psi^J_t(x,y)$ denote the
$x$ and $y$-components of the flow of $F_J$.
By varying $r$ (and keeping $\mu > 0$ fixed) the points $\big( G_L(r;\mu), H_L(r;\mu) \big)$
and $\big( G_R(r;\mu), H_R(r;\mu) \big)$ trace out curves
where orbits of \eqref{eq:timeDelay} experience a switch;
these are shown green in Fig.~\ref{fig:schemPoinTimeDelay}.

Next we define a Poincar\'e map $P$ (this will be well-defined locally
subject to the assumptions of Theorems \ref{th:timeDelayPseudoEq} and \ref{th:timeDelayTwoFold}).
Let $P_R(r;\mu)$ denote the $y$-value of the first intersection of the forward orbit of
$\left( G_L(r;\mu), H_L(r;\mu) \right)$ with $x = 0$ following the right half-system
and let $T_R(r;\mu)$ denote the corresponding evolution time, Fig.~\ref{fig:schemPoinTimeDelay}.
These are defined implicitly by
\begin{align}
0 &= \varphi^R_{T_R(r;\mu)}\left( G_L(r;\mu), H_L(r;\mu) \right), 
\label{eq:timeDelayEqTRdef} \\
P_R(r;\mu) &= \psi^R_{T_R(r;\mu)}\left( G_L(r;\mu), H_L(r;\mu) \right).
\label{eq:timeDelayEqPRdef}
\end{align}
Similarly let $P_L(r;\mu)$ denote the $y$-value of the first intersection of the forward orbit of
$\left( G_R(r;\mu), H_R(r;\mu) \right)$ with $x = 0$ following the left half-system,
and let $T_L(r;\mu)$ denote the corresponding evolution time.
Then let $P = P_L \circ P_R$.
This corresponds to an evolution time of
\begin{equation}
T(y;\mu) = 2 \mu + T_R(y;\mu) + T_L \left( P_R(y;\mu); \mu \right).
\label{eq:timeDelayEqT}
\end{equation}

\begin{figure}[b!]
\begin{center}
\includegraphics[width=8.4cm]{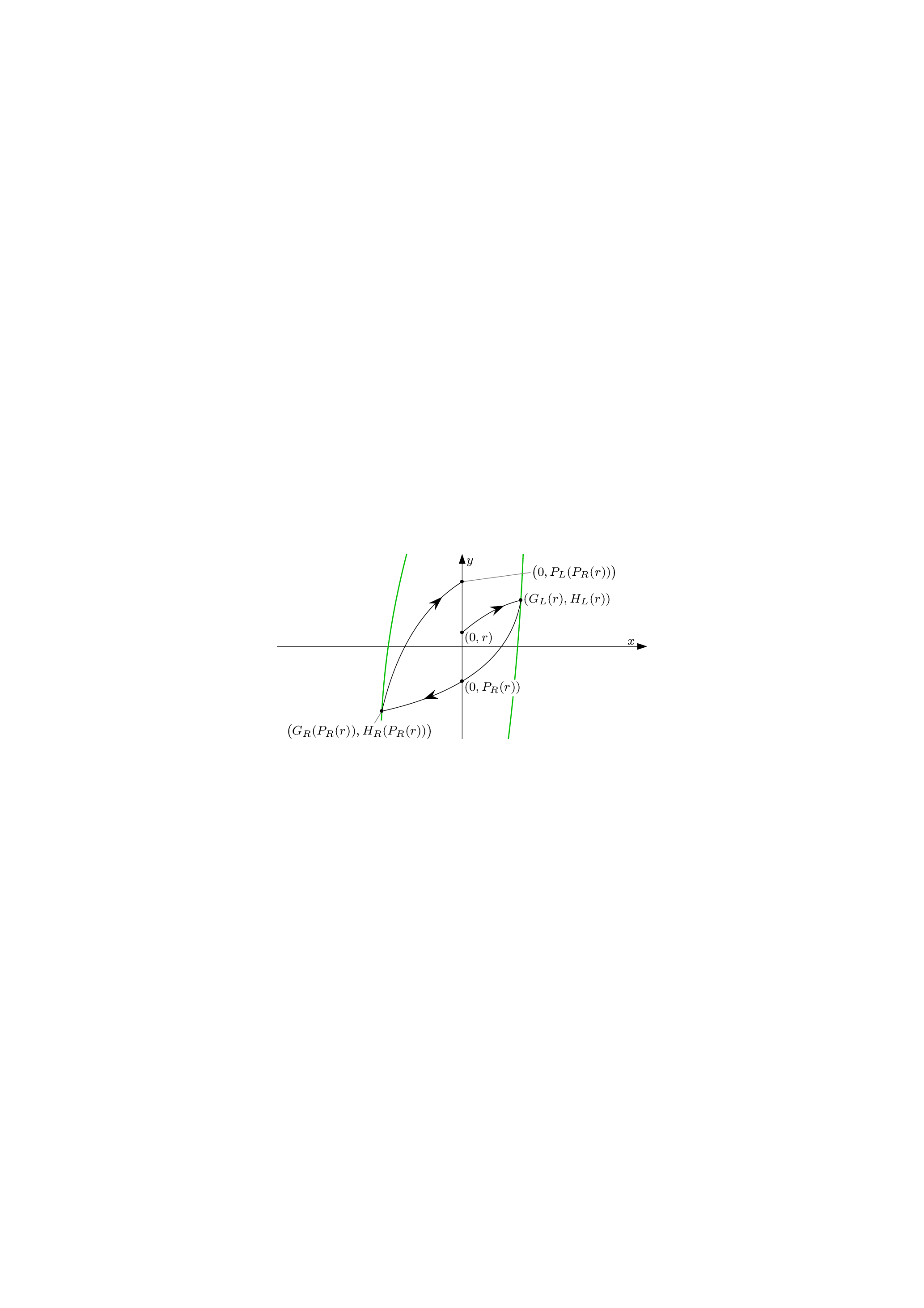}
\caption{
An illustration of the Poincar\'e map $P = P_L \circ P_R$ for HLB 16.
\label{fig:schemPoinTimeDelay}
} 
\end{center}
\end{figure}

\vspace{2mm} 			
\begin{proof}[Proof of Theorem \ref{th:timeDelayPseudoEq} (HLB 16)]
As in the proof of Theorem \ref{th:hysteresisPseudoEq}, we write
\begin{equation}
\begin{split}
f_J(x,y) &= a_{0J} + a_{1J} x + a_{2J} y + \cO \left( \left( |x| + |y| \right)^2 \right), \\
g_J(x,y) &= b_{0J} + b_{1J} x + b_{2J} y + \cO \left( \left( |x| + |y| \right)^2 \right),
\end{split}
\label{eq:timeDelayfJgJ}
\end{equation}
where $a_{0R} < 0 < a_{0L}$ and we have
\eqref{eq:hysteresisSameSlopes} and \eqref{eq:hysteresisalpha}.

By using asymptotic matching to calculate the first few terms of the flow
of the left half-system, we obtain
\begin{equation}
\begin{split}
G_L(r;\mu) &= a_{0L} \mu + a_{2L} r \mu + \frac{a_{0L} a_{1L} + a_{2L} b_{0L}}{2} \,\mu^2 \\
&\quad+ \co \left( \left( |r| + |\mu| \right)^2 \right), \\
H_L(r;\mu) &= r + b_{0L} \mu + b_{2L} r \mu + \frac{a_{0L} b_{1L} + b_{0L} b_{2L}}{2} \,\mu^2 \\
&\quad+ \co \left( \left( |r| + |\mu| \right)^2 \right).
\end{split}
\label{eq:timeDelayEqGLHL}
\end{equation}
Similarly via asymptotic matching of the right half-system,
from \eqref{eq:timeDelayEqTRdef}--\eqref{eq:timeDelayEqPRdef} we obtain
\begin{align}
T_R(r;\mu) &= \frac{-1}{a_{0R}} \,G_L
+ \frac{a_{0R} a_{1R} - a_{2R} b_{0R}}{2 a_{0R}^3} \,G_L^2 \nonumber \\
&\quad+ \frac{a_{2R}}{a_{0R}^2} \,G_L H_L + \co \left( \left( |r| + |\mu| \right)^2 \right), 
\label{eq:timeDelayEqTR} \\
P_R(r;\mu) &= H_L - \frac{b_{0R}}{a_{0R}} \,G_L \nonumber \\
&\quad- \frac{a_{2R} b_{0R}^2 - a_{0R} b_{0R}
\left( a_{1R} + b_{2R} \right) + a_{0R}^2 b_{1R}}{2 a_{0R}^3} \,G_L^2 \nonumber \\
&\quad+ \frac{a_{2R} b_{0R} - a_{0R} b_{2R}}{a_{0R}^2} \,G_L H_L + \co \left( \left( |r| + |\mu| \right)^2 \right).
\label{eq:timeDelayEqPR}
\end{align}
By combining \eqref{eq:timeDelayEqGLHL}--\eqref{eq:timeDelayEqPR} we obtain
\begin{align}
T_R(r;\mu) &= -\frac{a_{0L}}{a_{0R}} \,\mu + \cO \left( \left( |r| + |\mu| \right)^2 \right),
\label{eq:timeDelayEqTR2} \\
P_R(r;\mu) &= r + c_{1R} r \mu + c_{2R} \mu^2 + \co \left( \left( |r| + |\mu| \right)^2 \right),
\label{eq:timeDelayEqPR2}
\end{align}
where
\begin{equation}
c_{1R} = \frac{a_{0R} b_{2L} - a_{2L} b_{0R}}{a_{0R}}
+ \frac{a_{0L} \left( a_{2R} b_{0R} - a_{0R} b_{2R} \right)}{a_{0R}^2},
\label{eq:timeDelayEqc1R}
\end{equation}
while a formula for $c_{2R} \in \mathbb{R}$ will not be needed.

By symmetry we can simply swap the $L$'s and $R$'s
in \eqref{eq:timeDelayEqTR2}--\eqref{eq:timeDelayEqPR2} to obtain
\begin{align}
T_L(r;\mu) &= \frac{-a_{0R}}{a_{0L}} \,\mu + \cO \left( \left( |r| + |\mu| \right)^2 \right),
\label{eq:timeDelayEqTL} \\
P_L(r;\mu) &= r + c_{1L} r \mu + c_{2L} \mu^2 + \co \left( \left( |r| + |\mu| \right)^2 \right),
\label{eq:timeDelayEqPL}
\end{align}
where
\begin{equation}
c_{1L} = \frac{a_{0L} b_{2R} - a_{2R} b_{0L}}{a_{0L}}
+ \frac{a_{0R} \left( a_{2L} b_{0L} - a_{0L} b_{2L} \right)}{a_{0L}^2},
\label{eq:timeDelayEqc1L}
\end{equation}
and $c_{2L} \in \mathbb{R}$.
By then combining the above formulas we arrive at
\begin{align}
T(r;\mu) &= \left( 2 - \frac{a_{0L}}{a_{0R}} - \frac{a_{0R}}{a_{0L}} \right) \mu + \cO \left( \left( |r| + |\mu| \right)^2 \right).
\label{eq:timeDelayEqT2} \\
P(r;\mu) &= r + \kappa \alpha r \mu + \left( c_{2L} + c_{2R} \right) \mu^2 + \co \left( \left( |r| + |\mu| \right)^2 \right),
\label{eq:timeDelayEqP2}
\end{align}
where $\kappa = \frac{a_{0R} - a_{0L}}{a_{0L} a_{0R}} > 0$
and $\alpha$ is given by \eqref{eq:hysteresisalpha}.

By repeating steps in the proof of Theorem \ref{th:hysteresisPseudoEq}, we obtain a unique fixed point
\begin{equation}
r^*(\mu) = -\frac{c_{2L} + c_{2R}}{\kappa \alpha} \,\mu + \co \left( \mu \right),
\label{eq:timeDelayEqyStar}
\end{equation}
assuming $\alpha \ne 0$.
This fixed point corresponds to a limit cycle of \eqref{eq:timeDelay}
with minimum and maximum $x$-values asymptotically proportional to $\mu$.
Since $\frac{\partial P}{\partial y}(y;\mu) = 1 + \kappa \alpha \mu + \co(1)$,
the limit cycle is stable if $\alpha < 0$ and unstable if $\alpha > 0$.
Its period is given by \eqref{eq:timeDelayEqT2} evaluated at $r = r^*(\mu)$.
\end{proof}

\begin{proof}[Proof of Theorem \ref{th:timeDelayTwoFold} (HLB 18)]
\begin{widetext}
As in the proof of Theorem \ref{th:hysteresisTwoFold},
we write the components of $F_J$ as \eqref{eq:hysteresisTwoFoldfJgJ}
and assume clockwise rotation (without loss of generality) so that we have
\eqref{eq:hysteresisTwoFolda2Jb0J}; also \eqref{eq:hysteresisTwoFoldalphaJ}.
Here we obtain
\begin{equation}
\begin{split}
G_L(r;\mu) &= a_{2L} \mu r + \frac{a_{2L} b_{0L}}{2} \,\mu^2 + \cO \left( \left( |r| + |\mu| \right)^3 \right), \\
H_L(r;\mu) &= r + b_{0L} \mu + b_{2L} \mu r + \frac{b_{0L} b_{2L}}{2} \,\mu^2 + \cO \left( \left( |r| + |\mu| \right)^3 \right),
\end{split}
\label{eq:timeDelayTwoFoldGLHL}
\end{equation}
and
\begin{align}
\varphi^R_t \left( G_L(r;\mu), H_L(r;\mu) \right) &=
a_{2L} \mu r + a_{2R} b_{0L} \mu t + a_{2R} r t + \frac{a_{2R} b_{0R}}{2} \,t^2
+ \frac{a_{2L} b_{0L}}{2} \,\mu^2 + a_{5R} r^2 t \nonumber \\
&\quad+ \left( \frac{a_{1R} a_{2R}}{2} + \frac{a_{2R} b_{2R}}{2} + a_{5R} b_{0R} \right) r t^2 + \left( \frac{a_{1R} a_{2R} b_{0R}}{6}
+ \frac{a_{2R} b_{0R} b_{2R}}{6} + \frac{a_{5R} b_{0R}^2}{3} \right) t^3 \nonumber \\
&\quad+ U + \co \left( \left( |r| + |\mu| + |t| \right)^3 \right),
\label{eq:timeDelayTwoFoldphi} \\
\psi^R_t \left( G_L(r;\mu), H_L(r;\mu) \right) &=
r + b_{0L} \mu + b_{0R} t + b_{2R} r t + \frac{b_{0R} b_{2R}}{2} \,t^2
+ b_{2L} \mu r + b_{0L} b_{2R} \mu t + \frac{b_{0L} b_{2L}}{2} \,\mu^2 \nonumber \\
&\quad+ \cO \left( \left( |r| + |\mu| + |t| \right)^3 \right),
\label{eq:timeDelayTwoFoldpsi}
\end{align}
where $U$ represents all third order terms involving $\mu$
(these will not be needed below).

By using \eqref{eq:timeDelayTwoFoldphi} to solve \eqref{eq:timeDelayEqTRdef} we obtain
\begin{align}
T_R(r;\mu) &= \frac{-2}{b_{0R}} \,r
+ \left( \frac{a_{2L}}{a_{2R}} - \frac{2 b_{0L}}{b_{0R}} \right) \mu + \frac{2 \sigma_{{\rm fold},R}}{3 b_{0R}} \,r^2
+ c_{1R} \mu r + c_{2R} \mu^2 + \co \left( \left( |r| + |\mu| \right)^2 \right),
\label{eq:timeDelayTwoFoldTR}
\end{align}
for some $c_{1R}, c_{2R} \in \mathbb{R}$.
By substituting \eqref{eq:timeDelayTwoFoldTR} into \eqref{eq:timeDelayTwoFoldpsi} we obtain
\begin{align}
P_R(r;\mu) &= -r - a_{2L} \kappa \mu
+ \frac{2 \sigma_{{\rm fold},R}}{3} \,r^2 + d_{1R} \mu r + d_{2R} \mu^2 + \co \left( \left( |r| + |\mu| \right)^2 \right),
\label{eq:timeDelayTwoFoldPR}
\end{align}
for some $d_{1R}, d_{2R} \in \mathbb{R}$.
Swapping $L$'s and $R$'s gives
\begin{align}
P_L(r;\mu) &= -r + a_{2R} \kappa \mu
+ \frac{2 \sigma_{{\rm fold},L}}{3} \,r^2 + d_{1L} \mu r + d_{2L} \mu^2 + \co \left( \left( |r| + |\mu| \right)^2 \right),
\label{eq:timeDelayTwoFoldPL}
\end{align}
where $d_{1L}, d_{2L} \in \mathbb{R}$.
By composing \eqref{eq:timeDelayTwoFoldPR} and \eqref{eq:timeDelayTwoFoldPL} we obtain
\begin{align}
P(r;\mu) &= r + \left( a_{2L} + a_{2R} \right) \kappa \mu + \frac{2 \alpha}{3} \,r^2
+ \ell_1 \mu r + \ell_2 \mu^2 + \co \left( \left( |r| + |\mu| \right)^2 \right),
\label{eq:timeDelayTwoFoldP2}
\end{align}
for some $\ell_1, \ell_2 \in \mathbb{R}$.
\end{widetext}

As in the proof of Theorem \ref{th:slippingTwoFold}
we can use the IFT to solve $P(r;\mu) = r$ for $\mu$ and invert to obtain
the unique fixed point
\begin{equation}
r^*(\mu) = \sqrt{\frac{-3 \left( a_{2L} + a_{2R} \right) \kappa}{2 \alpha}} \sqrt{\mu} + \co \left( \sqrt{\mu} \right),
\label{eq:timeDelayTwoFoldyStar}
\end{equation}
assuming $\alpha < 0$.
This corresponds to the desired limit cycle and is stable because
$\frac{\partial P}{\partial r} < 1$ at $r = r^*(\mu)$.

The maximum $y$-value of the limit cycle is $H_L \left( y^*(\mu); \mu \right) = y^*(\mu) + \co \left( \sqrt{\mu} \right)$,
and the minimum $y$-value is similarly asymptotically proportional to $\sqrt{\mu}$.
From \eqref{eq:timeDelayTwoFoldphi} we find that maximum $x$-value
is $-\frac{a_{2R}}{2 b_{0R}} \,r^*(\mu)^2 + \cO(\mu)$,
and the minimum $x$-value is similarly asymptotically proportional to $\mu$.
Finally, by \eqref{eq:timeDelayEqT}, \eqref{eq:timeDelayTwoFoldTR}
(and the analogous formula for $T_L$), and \eqref{eq:timeDelayTwoFoldyStar},
we see that the period of the limit cycle is given by \eqref{eq:timeDelayTwoFoldPeriod}.
\end{proof}

\section{Proof for intersecting switching manifolds (HLB 19)}
\setcounter{equation}{0}
\setcounter{figure}{0}
\setcounter{table}{0}
\label{app:fourPiece}

Given a point $(0,r)$ on the positive $y$-axis,
let $P_1(r;\mu)$ denote the $x$-value of the first intersection of the forward orbit of $(0,r)$ with the $x$-axis
and let $T_1(r;\mu)$ denote the corresponding evolution time.
Define $P_2(r;\mu)$, $P_3(r;\mu)$, and $P_4(r;\mu)$ similarly, see Fig.~\ref{fig:schemPoinFourPiece},
and let $T_2(r;\mu)$, $T_3(r;\mu)$, and $T_4(r;\mu)$ denote the corresponding evolution times.
By \eqref{eq:fourPieceSpiralCond} these quantities are well-defined
for small values of $r > 0$ and $\mu \in \mathbb{R}$.
Let
\begin{equation}
P = P_4 \circ P_3 \circ P_2 \circ P_1 \,,
\label{eq:fourPieceP}
\end{equation}
denote the Poincar\'e map on the positive $y$-axis.
The corresponding evolution time is
\begin{equation}
T = T_1 + T_2 \circ P_1 + T_3 \circ P_2 \circ P_1 + T_4 \circ P_3 \circ P_2 \circ P_1 \,.
\label{eq:fourPieceT}
\end{equation}

\begin{figure}[t!]
\begin{center}
\includegraphics[width=8.4cm]{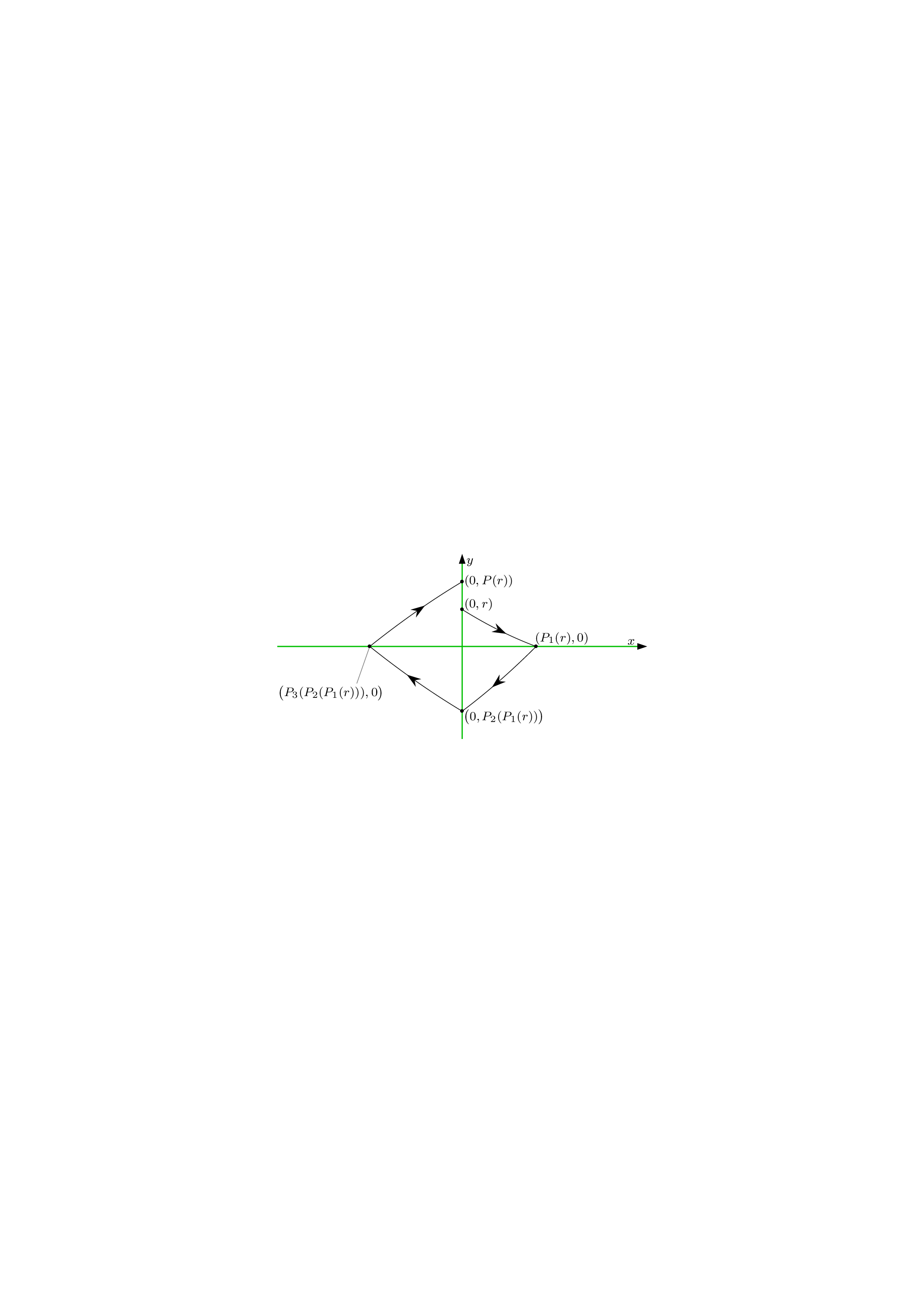}
\caption{
An illustration of the Poincar\'e map \eqref{eq:fourPieceP} for HLB 19.
\label{fig:schemPoinFourPiece}
} 
\end{center}
\end{figure}

For clarity now we suppress the $\mu$-dependency for a moment.
We write
\begin{equation}
\begin{split}
f_1(x,y) &= a_0 + a_1 x + a_2 y + \cO \left( \left( |x| + |y| \right)^2 \right), \\
g_1(x,y) &= b_0 + b_1 x + b_2 y + \cO \left( \left( |x| + |y| \right)^2 \right),
\end{split}
\label{eq:fourPiecef1g1}
\end{equation}
where $a_0 > 0$ and $b_0 < 0$.
Also
\begin{align}
m_1 &= \frac{b_0}{a_0},
\label{eq:fourPiecem1} \\
\xi_1 &= \frac{1}{a_0} \left( \frac{a_2 b_0}{a_0} - (a_1 + b_2) + \frac{a_0 b_1}{b_0} \right).
\label{eq:fourPiecexi1}
\end{align}
Let $\varphi_t(x,y)$ and $\psi_t(x,y)$ denote the $x$ and $y$-components of the flow of $F_1$.
By asymptotic matching we obtain
\begin{equation}
\begin{split}
\varphi_t(0,r) &= a_0 t + a_2 r t + \frac{a_0 a_1 + a_2 b_0}{2} \,t^2 + \co \left( \left( |r| + |t| \right)^2 \right), \\
\psi_t(0,r) &= r + b_0 t + b_2 r t + \frac{a_0 b_1 + b_0 b_2}{2} \,t^2 + \co \left( \left( |r| + |t| \right)^2 \right).
\end{split}
\label{eq:fourPiecevarphipsi}
\end{equation}
Solving $\psi_t(0,r) = 0$ for $t = T_1$ gives
\begin{equation}
T_1(r) = \frac{-1}{b_0} \,r + \left( \frac{b_2}{2 b_0^2} - \frac{a_0 b_1}{2 b_0^3} \right) r^2 + \co \left( r^2 \right).
\label{eq:fourPiecet1}
\end{equation}
By then using this to evaluate $\varphi_t(0,r)$ we obtain
\begin{equation}
P_1(r) = \frac{-1}{m_1} \,r - \frac{\xi_1}{2 m_1^2} \,r^2 + \co \left( r^2 \right).
\label{eq:fourPieceP1}
\end{equation}
Similarly
\begin{equation}
\begin{split}
P_2(r) &= m_2 r + \frac{m_2 \xi_2}{2} \,r^2 + \co \left( r^2 \right), \\
P_3(r) &= \frac{-1}{m_3} \,r + \frac{\xi_3}{2 m_3^2} \,r^2 + \co \left( r^2 \right), \\
P_4(r) &= m_4 r - \frac{m_4 \xi_4}{2} \,r^2 + \co \left( r^2 \right),
\end{split}
\label{eq:fourPieceP2P3P4}
\end{equation}
and, to leading order,
\begin{equation}
\begin{split}
T_2(r) &= \frac{-1}{f_2(0,0)} \,r + \cO \left( r^2 \right), \\
T_3(r) &= \frac{1}{g_3(0,0)} \,r + \cO \left( r^2 \right), \\
T_4(r) &= \frac{1}{f_4(0,0)} \,r + \cO \left( r^2 \right).
\end{split}
\label{eq:fourPiecet2t3t4}
\end{equation}
By composing according to \eqref{eq:fourPieceP} we obtain
\begin{equation}
P(r) = \Lambda r + \left( \frac{(\xi_1 - \xi_2) \Lambda}{2 m_1}
+ \frac{(\xi_3 - \xi_4) \Lambda^2}{2 m_4} \right) r^2 + \co \left( r^2 \right),
\label{eq:fourPieceP2}
\end{equation}
where $\Lambda$ is given by \eqref{eq:fourPieceLambda}.
By evaluating \eqref{eq:fourPieceT} we obtain
\begin{widetext}
\begin{equation}
T(r) = \left( \frac{-1}{g_1(0,0)} + \frac{1}{m_1 f_2(0,0)} - \frac{\Lambda}{m_4 f_3(0,0)} + \frac{\Lambda}{g_4(0,0)} \right) r
+ \cO \left( r^2 \right).
\label{eq:fourPieceT2}
\end{equation}
\end{widetext}

Now we examine the $\mu$-dependency of $P$.
By substituting $\Lambda(\mu) = 1 + \beta \mu + \cO \left( \mu^2 \right)$ 
into \eqref{eq:fourPieceP2} we obtain
\begin{equation}
P(r;\mu) = r + \beta r \mu + \frac{\alpha}{2} \,r^2 + \co \left( \left( |r| + |\mu| \right)^2 \right).
\label{eq:fourPieceP3}
\end{equation}
The map $P$ has at most two fixed points: $r = 0$, corresponding to the pseudo-equilibrium at the origin, and
\begin{equation}
r^*(\mu) = -\frac{2 \beta}{\alpha} \,\mu + \co \left( \mu \right),
\label{eq:fourPiecerStar}
\end{equation}
which is valid when $r^*(\mu) > 0$ and in this case corresponds to a limit cycle
(formally \eqref{eq:fourPiecerStar} can be obtained via the IFT
by solving for zeros of $\frac{1}{r} \left( P(r;\mu) - r \right)$ as in the proof of Theorem \ref{th:fixedFocusFocus}).
Since $\beta > 0$ we conclude that, locally, \eqref{eq:fourPieceODE}
has a unique limit cycle if $\alpha \mu < 0$ and no limit cycle if $\alpha \mu > 0$.
In view of \eqref{eq:fourPiecerStar}, the maximum $y$-value of the limit cycle
is asymptotically proportional to $|\mu|$,
and by symmetry this is also true for the other extremal values.
From \eqref{eq:fourPieceT2} and \eqref{eq:fourPiecerStar} we obtain
\eqref{eq:periodFourPiece} for the period.

Finally, since $\frac{\partial P}{\partial r}(0;\mu) = 1 + \beta \mu + \co(\mu)$,
the origin is stable if $\mu < 0$ and unstable if $\mu > 0$.
Since $\frac{\partial P}{\partial r}(r^*(\mu);\mu) = 1 - \beta \mu + \co(\mu)$,
the stability of the limit cycle is opposite to that of the origin.
\manualEndProof

\onecolumngrid
~\\
\noindent
\renewcommand{\bibsection}{}
{\bf References}

\end{document}